\documentclass[10pt]{amsart}

\usepackage{amsthm}
\usepackage{amssymb,amsmath,amsfonts,microtype}
\usepackage{graphicx}
\usepackage{pdfsync}

\newcommand{\comment}[1]{}

\usepackage{esint}

\newtheorem{thm}{Theorem}[section]

\newtheorem{lem}{Lemma}[section]
\newtheorem{prop}{Proposition}[section]

\newtheorem*{thm1'}{Theorem 1'}

\newtheorem*{thm2'}{Theorem 2'}
\newtheorem*{thmB}{Theorem B}
\newtheorem*{propnB}{Proposition B}
\newtheorem*{corB}{Corollary B}
\newtheorem*{thmA}{Theorem A}
\newtheorem*{thmC}{Theorem C}
\newtheorem*{thmB'}{Theorem B$^\prime$}
\newtheorem*{thmA'}{Theorem A$^\prime$}
\newtheorem*{propnA'}{Proposition A$^\prime$}
\newtheorem*{propnB'}{Proposition B$^\prime$}
\theoremstyle{remark}

\theoremstyle{definition}

\theoremstyle{remark}

\DeclareMathOperator{\lcm}{lcm}
\newcommand{\R}{\mathbb{R}}

\newcommand{\Z}{\mathbb{Z}}
\newcommand{\N}{\mathbb{N}}
\newcommand{\FF}{\mathbb{F}}
\newcommand{\E}{\mathbb{E}}

\newcommand{\RR}{\mathcal{R}}
\newcommand{\MM}{\mathcal{M}}
\newcommand{\NN}{\mathcal{N}}
\newcommand{\HH}{\mathcal{H}}
\newcommand{\BB}{\mathcal{B}}
\newcommand{\PP}{\mathcal{P}}
\newcommand{\GG}{\mathcal{G}}

\newcommand{\F}{\mathcal{F}}

\newcommand{\f}{\mathfrak{f}}

\newcommand{\uu}{\underline{2}}
\newcommand{\ux}{\underline{x}}
\newcommand{\uy}{\underline{y}}
\newcommand{\un}{\underline{n}}
\newcommand{\ur}{\underline{r}}
\newcommand{\ut}{\underline{t}}
\newcommand{\us}{\underline{s}}

\newcommand{\of}{\bar{f}}

\newcommand{\al}{\alpha}
\newcommand{\la}{\lambda}
\newcommand{\lm}{\lambda}
\newcommand{\D}{\delta}

\newcommand{\de}{\delta}

\newcommand{\De}{\Delta}

\newcommand{\Ga}{\Gamma}
\newcommand{\eps}{\varepsilon}
\newcommand{\si}{\sigma}

\newcommand{\VE}{\varepsilon}

\newcommand{\1}{\mathbf{1}}

\newcommand{\subs}{\subseteq}
\newcommand{\ls}{\lesssim}

\newcommand{\p}{\prime}
\newcommand{\wh}{\widehat}


\numberwithin{equation}{section}
\newcommand{\eq}{\begin{equation}
\newcommand{\ee}{\end{equation}}}

%
\setlength{\topmargin}{-0.2in}
\setlength{\oddsidemargin}{.0in}
\setlength{\textwidth}{6.5truein}
\setlength{\textheight}{9.1truein}
\setlength{\evensidemargin}{.0in}

\newcommand{\comp}{\operatorname{complex}}
\newcommand{\ind}{\operatorname{index}}

\begin{document}

\title[Weak hypergraph regularity 
and applications  to geometric Ramsey theory]{Weak hypergraph regularity 
and applications \\ to geometric Ramsey theory}
\author{Neil Lyall\quad\quad\quad \'{A}kos Magyar}
\thanks{The first and second authors were partially supported by grants NSF-DMS 1702411 and NSF-DMS 1600840, respectively.}

\address{Department of Mathematics, The University of Georgia, Athens, GA 30602, USA}
\email{lyall@math.uga.edu}
\email{magyar@math.uga.edu}

\subjclass[2010]{11B30}


\maketitle

\setlength{\parskip}{3pt}

\begin{abstract} Let $\Delta=\Delta_1\times\ldots\times \Delta_d\subseteq\R^n$, where $\R^n=\R^{n_1}\times\cdots\times\R^{n_d}$ with each $\Delta_i\subs\R^{n_i}$ a non-degenerate simplex of $n_i$ points.
 We prove that any set $S\subs \R^n$, with $n=n_1+\cdots +n_d$ of positive upper Banach density necessarily contains an isometric copy of all sufficiently large dilates of the configuration $\Delta$. In particular any such set $S\subs \R^{2d}$ contains a $d$-dimensional cube of side length $\lm$, for all $\lm\geq \lm_0(S)$. 
 We also prove analogous results with the underlying space being the integer lattice.
The proof is based on a weak hypergraph regularity lemma and an associated counting lemma developed in the context of Euclidean spaces and the integer lattice.

\end{abstract}


\section{Introduction}

\subsection{Existing Results I: Distances and Simplices in Subsets of $\R^n$}\label{d&s}
Recall that the \emph{upper Banach density} of a measurable set $S\subseteq\R^n$ is defined by
\eq\label{density}
\de^*(S)=\lim_{N\rightarrow\infty}\sup_{t\in\R^n}\frac{|S\cap(t+Q(N))|}{|Q(N)|},\ee
where $|\cdot|$ denotes Lebesgue measure on $\R^n$ and $Q(N)$ denotes the cube $[-N/2,N/2]^n$.

A result of Furstenberg, Katznelson, and Weiss \cite{FKW86} states that
if $S\subseteq\mathbb{R}^2$ has positive upper Banach density, then its distance set
 $\{|x-x'|\,:\, x,x'\in S\}$ 
contains all sufficiently large numbers.
Note that the distance set of any set of positive Lebesgue measure in $\R^n$ automatically contains all sufficiently small numbers (by the Lebesgue density theorem) and  that it is easy to construct a set of positive upper density which does not contain a fixed distance  by  placing small balls centered on an appropriate square grid.

\begin{thmA}[Furstenberg, Katznelson, and Weiss \cite{FKW86}]\label{Dist}
If $S\subseteq\R^2$ with $\D^*(S)>0$, 
then there exists a  $\lm_0=\lm_0(S)$ such that $S$ is guaranteed to contain pairs of points $\{x_1,x_2\}$ with $|x_2-x_1|=\lm$ for all $\lambda\geq \lm_0$.
\end{thmA}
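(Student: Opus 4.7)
My plan is a Fourier-analytic counting argument. First, since $\D^*(S)>0$, for any $\delta<\D^*(S)$ the definition \eqref{density} supplies a cube $Q_N = t + [-N/2,N/2]^2$ of side $N \gg \lambda$ with $|S \cap Q_N| \geq \delta N^2$. Set $f := \1_{S \cap Q_N'}$ and $g := \1_{S \cap Q_N}$, where $Q_N' \subset Q_N$ is the concentric cube of side $N-2\lambda$, so that every pair $(x_1,x_2) \in \mathrm{supp}(f) \times \mathrm{supp}(g)$ at distance $\lambda$ is automatically contained in $S \cap Q_N$. It then suffices to establish positivity of the spherical counting integral
\[
T(\lambda) := \int_{\R^2} f(x)\,(g * \sigma_\lambda)(x)\,dx,
\]
where $\sigma_\lambda$ denotes the uniform probability measure on the circle of radius $\lambda$.

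The core is a Fourier split. By Plancherel, $T(\lambda) = \int \wh{f}(\xi)\,\overline{\wh{g}(\xi)}\,\wh{\sigma}_\lambda(\xi)\,d\xi$, and I would decompose at a frequency threshold $\rho$. On the low-frequency range $|\xi| \leq \rho$ with $\lambda\rho \ls 1$ one has $\wh{\sigma}_\lambda(\xi) \approx 1$, and a Plancherel--Cauchy--Schwarz estimate (using $\wh{f}(0) = \int f \gs \delta N^2$ and $|\mathrm{supp}(f)| \ls N^2$ to control $|\wh{f}(\xi)-\wh{f}(0)|$ for small $|\xi|$) yields a main term $\gs \delta^2 N^2$ upon choosing $\rho \asymp N^{-1}$. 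On the high-frequency range $|\xi|>\rho$, the classical stationary-phase bound $|\wh{\sigma}_\lambda(\xi)| \ls (\lambda|\xi|)^{-1/2}$ controls the error. Since this decay is borderline in $L^2$, I would replace $\sigma_\lambda$ by the smoothed annular measure $\mu_\lambda^\eta := \sigma_\lambda * \phi_\eta$ for a bump $\phi_\eta$ at scale $\eta$; its Fourier transform acquires the rapidly decaying factor $\wh{\phi}(\eta\xi)$, and with $\rho$ and $\eta$ tuned appropriately in terms of $\delta$ the error becomes $o(\delta^2 N^2)$ as $\lambda \to \infty$. This produces, for each $\lambda \geq \lambda_0(\delta,\eta)$, a pair of points of $S$ at some distance in $[\lambda-\eta,\lambda+\eta]$.

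To upgrade from approximate to exact distance, I would invoke the Lebesgue density theorem: by restricting to a positive-measure subset of $S$ with uniform density control (obtained by pigeonholing the scale at which density exceeds $1-\delta_0$), I may assume the pair $(s,s')$ obtained above consists of density points. Parametrising perturbations $(v,v') \in \R^4$ with $s+v \in S \cap B(s,r)$ and $s'+v' \in S \cap B(s',r)$ for a suitable $r>\eta$, the distance map $(v,v') \mapsto |s+v-s'-v'|$ has non-vanishing gradient off the diagonal, so a coarea argument shows that its image over the positive-measure ``good'' perturbation set covers an open interval about $|s-s'|$ and in particular realises the exact value $\lambda$. The main obstacle is the borderline Fourier decay $(\lambda|\xi|)^{-1/2}$ of the circle in $\R^2$: the thickening scale $\eta$, the cutoff $\rho$, and the cube side $N$ must be balanced delicately in terms of $\delta$ and $\lambda$ so that the main term truly dominates the high-frequency error uniformly as $\lambda \to \infty$.
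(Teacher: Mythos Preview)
Your Fourier splitting has a genuine gap in the ``medium-frequency'' shell. With $\rho\asymp N^{-1}$ and $N\gg\lambda$, the annulus $N^{-1}<|\xi|<\lambda^{-1}$ is nonempty; on it $\wh{\sigma}_\lambda(\xi)$ is still essentially $1$ and your smoothing factor $\wh{\phi}(\eta\xi)$ is also $\approx 1$, yet $\wh f,\wh g$ are no longer pinned near $\wh f(0)$. Both available estimates fail: Cauchy--Schwarz with Plancherel bounds the error by $\|f\|_2\|g\|_2\le\delta N^2$, which swamps the main term $\delta^2 N^2$; the pointwise bound $|\wh f|\le\|f\|_1\le\delta N^2$ gives an error $\ls\delta^2N^4/\lambda^2$, which is small only if $N\ll\lambda$, contradicting $N\gg\lambda$. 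No tuning of $\rho,\eta,N$ in terms of $\delta$ alone can resolve this, and indeed it cannot: as noted in the introduction, for any fixed $\lambda$ one can place small balls on a well-chosen grid to produce a set of density $\delta$ in an arbitrarily large cube with no pair at distance $\lambda$, so $T(\lambda)$ can vanish at any prescribed scale regardless of how large $\lambda$ is.

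What your argument is missing is exactly the mechanism behind Proposition~B (equivalently Proposition~\ref{approx-1}): the count is controlled not by $\wh f(0)$ alone but by a local uniformity norm $\|1_S-\E(1_S|\GG_{L,Q})\|_{U^1_L(Q)}$ (Lemma~\ref{vN-1}), and an energy increment over an admissible sequence of scales (Lemma~\ref{KvN-1}) makes this small at \emph{some} scale $L_j$ depending on $S$, not merely on $\delta$. This gives $\NN^1_{\la\De^0,Q}(1_S,1_S)\ge\delta^2-O(\eps)$ for $\la\in[L_{j+1},L_j]$, and the lacunary/contrapositive argument then covers all large $\la$. Regarding your step~4: the conclusion is salvageable, but the coarea justification is not---a positive-measure set can have nowhere-dense image under a submersion. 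The correct tool is Steinhaus: if $A_i:=S\cap B(s_i,r)$ has density $>1-\delta_0$ in $B(s_i,r)$, then $A_1-A_2\supset B(s_1-s_2,cr)$, whence the distance set $\{|x_1-x_2|:x_i\in A_i\}$ contains the interval $[|s_1-s_2|-cr,\,|s_1-s_2|+cr]\ni\lambda$ once $\eta<cr$. But repairing step~4 does not help, since step~3 already fails.
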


This result was later reproved using Fourier analytic techniques by Bourgain in \cite{Bourg87} where he established the following more general result for all configurations of $n$ points in $\R^n$ whose affine span is $n-1$ dimensional, namely for all  non-degenerate simplices.

\begin{thmB}[Bourgain \cite{Bourg87}]\label{BourSimp}
Let $\Delta\subseteq\R^{n}$ be a non-degenerate simplex of $n$ points.
If $S\subseteq\R^n$  with $\D^*(S)>0$, then
there exists a threshold $\lm_0=\lm_0(S,\Delta)$ such that $S$ contains an isometric copy of $\lm\Delta$ for all $\lambda\geq \lm_0$.
\end{thmB}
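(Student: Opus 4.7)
The plan is to count isometric copies of $\lm\Delta$ contained in $S$ via a multilinear averaging operator and to show this count is positive, on average over a dyadic window in $\lm$, once $\lm$ is sufficiently large. Normalize so that, by pigeonholing on translates of large cubes, $f=\1_{S\cap Q}$ has mean density at least $\D=\D^*(S)/2$ on some large cube $Q\subseteq\R^n$. Writing $\Delta=\{0,v_1,\ldots,v_{n-1}\}$ with $\{v_i\}$ linearly independent (by non-degeneracy), consider the multilinear form
\[
N_\lm(f)=\int_{\R^n}\int_{SO(n)} f(x)\prod_{i=1}^{n-1} f(x+\lm R v_i)\, dR\, dx.
\]
Any lower bound $N_\lm(f)>0$ produces an isometric copy of $\lm\Delta$ in $S$.

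Expand the $SO(n)$-average as an iterated spherical convolution. Place the vertices $x_i=x+\lm Rv_i$ sequentially: given $x,x_1,\ldots,x_{i-1}$, the vertex $x_i$ is constrained to the intersection of $i$ spheres of specified radii $\sim\lm$ centered at previously placed points, which by non-degeneracy of $\Delta$ is a genuine $(n-i)$-dimensional sphere. This rewrites $N_\lm(f)$ as a nested convolution of $f$ with rescaled surface measures $\sigma_{\lm,i}$ of codimension $i$. The core of the argument is to compare $N_\lm$ with a smoothed counterpart $\widetilde{N}_\lm$, in which each $\sigma_{\lm,i}$ is replaced by its average over radii in the window $\lm'\in[\lm,\lm(1+\tau)]$, an operation that effectively convolves $f$ with the characteristic function of a thin annulus of thickness $\sim\tau\lm$.

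Using the standard decay $|\widehat{\sigma_\lm}(\xi)|\ls(\lm|\xi|)^{-(n-1)/2}$ iteratively, together with Plancherel in $x$ and an $L^2$ integration in $\lm$, one obtains
\[
\int_\Lambda^{2\Lambda} |N_\lm(f)-\widetilde{N}_\lm(f)|^2\,\frac{d\lm}{\lm}\longrightarrow 0\qquad\text{as } \Lambda\to\infty,
\]
for any fixed $\tau>0$. On the other hand, $\widetilde{N}_\lm(f)$ admits a uniform lower bound $c(\D,\tau)>0$: at the coarse scale $\tau\lm$, averaging $f$ against thickened annuli is comparable to averaging against balls of comparable radius, and the density $\D$ on $Q$ forces the iterated average to be $\gs\D^n$ on a positive-measure subset of $Q$, with the non-degeneracy of $\Delta$ ensuring the admissible perturbations at each step are absorbed by the thickness $\tau\lm$.

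The main obstacle lies in balancing parameters. The Fourier error incurs losses depending on $\tau^{-1}$ and on $|Q|$, while the lower bound for $\widetilde{N}_\lm(f)$ degenerates as $\tau\to 0$. One must choose $\tau$ small but fixed, then take $\Lambda$ large enough to make the error integral smaller than $\Lambda\cdot c(\D,\tau)^2$. It then follows that $N_\lm(f)>c(\D,\tau)/2>0$ for most $\lm\in[\Lambda,2\Lambda]$, and dyadic exhaustion over $[\lm_0,\infty)$ gives the conclusion for all $\lm\geq\lm_0(S,\Delta)$. The non-degeneracy hypothesis is critical throughout, as it guarantees the full $(n-1)/2$ decay at each stage of the iterated Fourier estimate; in a degenerate configuration, one spherical stage would drop to lower codimension and the error bounds would no longer close.
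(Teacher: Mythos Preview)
Your argument has a genuine gap at the final step. You establish that $N_\lm(f)>c(\D,\tau)/2>0$ for \emph{most} $\lm\in[\Lambda,2\Lambda]$, and then assert that ``dyadic exhaustion over $[\lm_0,\infty)$ gives the conclusion for all $\lm\geq\lm_0(S,\Delta)$.'' This does not follow: positivity on a large-measure subset of each dyadic window still permits an unbounded sequence of exceptional scales $\lm_k\to\infty$ at which $N_{\lm_k}(f)=0$, whereas the theorem demands positivity for \emph{every} $\lm\geq\lm_0$. An $L^2$-in-$\lm$ bound simply cannot be upgraded to a pointwise-in-$\lm$ bound without further input. There is also a structural issue earlier: you fix a single cube $Q$ and then send $\Lambda\to\infty$, but once $\lm$ exceeds the diameter of $Q$ no isometric copy of $\lm\Delta$ can have all its vertices in $S\cap Q$, so $N_\lm(f)$ and $\widetilde{N}_\lm(f)$ both vanish for trivial reasons. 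The cube must be rechosen at each scale, and the error estimate must then be made uniform in that choice (depending only on $\D$), which you have not arranged.

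The paper closes this gap by passing through a stronger intermediate statement (Proposition~B): for any lacunary sequence $1\geq\lm_1\geq\cdots\geq\lm_J$ with $J=J(\delta)$ fixed, and any $S\subseteq[0,1]^n$ with $|S|\geq\delta$, there is some $j$ for which $S$ contains a copy of $\lm\Delta$ for \emph{all} $\lm\in[\lm_{j+1},\lm_j]$. Theorem~B then follows by contradiction: if it failed there would be an infinite lacunary sequence of bad $\lm$'s, and rescaling a dense cube to $[0,1]^n$ makes any $J$ consecutive terms violate Proposition~B. The Fourier decay you invoke is indeed the engine behind Proposition~B, but it is deployed via an energy-increment (Koopman--von~Neumann) argument over the finitely many scales $L_1,\dots,L_J$, producing a single scale at which $1_S$ is well-approximated by a locally constant function and hence an entire interval of good $\lm$---not merely a set of positive measure in $\lm$.
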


Recall that a finite point configuration $\Delta'$ is said to be an isometric copy of $\lm \Delta$
if there exists a bijection $\phi:\Delta\to \Delta'$ such that $|\phi(v)-\phi(w)|=\lm\,|v-w|$ for all $v,w\in\Delta$, i.e. if $\Delta'$ is obtained from $\lambda\Delta$ (the dilation of $\Delta$ by a factor $\lambda$) via a rotation and translation.

 
 

Bourgain deduced Theorem B as an immediate consequence of the following stronger quantitative result for measurable subsets of the unit cube of positive measure. In the proposition below, and throughout this article, we shall refer to a decreasing sequence $\{\lm_j\}_{j=1}^J$ as \emph{lacunary} if  $\lm_{j+1}\leq\lm_j/2$ for all $1\leq j<J$.

\begin{propnB}[Bourgain \cite{Bourg87}]\label{thmS2}  Let $\Delta\subseteq\R^{n}$ be a non-degenerate simplex of $n$ points.
For any $0<\delta\leq 1$  there exists a constant $J=O_\Delta(\delta^{-3n})$ such that if $1\geq \lm_1\geq\cdots\geq\lm_J$ is any lacunary sequence and $S\subseteq[0,1]^n$ with $|S|\geq\delta$, then there exists $1\leq j< J$ such that $S$ contains an isometric copy of $\lm\Delta$ for all $\lambda\in [\lm_{j+1},\lm_j]$.
\end{propnB}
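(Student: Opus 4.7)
The plan is to follow Bourgain's Fourier analytic approach. After translating one vertex to the origin, write $\De = \{0, v_2, \ldots, v_n\}$ with $v_2, \ldots, v_n$ linearly independent, and introduce the counting form
\[
 N_\lm(S) := \int_{\R^n} \int_{O(n)} \1_S(x) \prod_{i=2}^n \1_S(x+\lm U v_i) \, dU \, dx,
\]
which is a positive multiple of the (Haar-weighted) number of isometric copies of $\lm\De$ contained in $S$. It suffices to produce some $j < J$ such that $N_\lm(S) > 0$ for every $\lm \in [\lm_{j+1}, \lm_j]$. Letting $\nu_\lm$ denote the push-forward of Haar measure on $O(n)$ under the map $U \mapsto (\lm U v_2, \ldots, \lm U v_n) \in (\R^n)^{n-1}$, Plancherel rewrites $N_\lm(S)$ as a multilinear integral against the oscillatory transform
\[
 \wh{\nu_\lm}(\xi_2, \ldots, \xi_n) = \int_{O(n)} e^{-2\pi i \lm \sum_{i} U v_i \cdot \xi_i} \, dU.
\]

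The first analytic input is a polynomial Fourier decay estimate $|\wh{\nu_\lm}(\xi)| \ls (1+\lm|\xi|)^{-\al}$ for some $\al = \al(\De) > 0$, obtained from stationary phase on $O(n)$ using the non-degeneracy of $\De$. The second input is a multi-scale frequency decomposition of $\1_S$: for each $j$, split $\1_S = L_j + M_j + H_j$ via Littlewood--Paley projectors, where $L_j$ retains frequencies $|\xi| \ls \lm_j^{-1}$, $H_j$ frequencies $|\xi| \gs \lm_{j+1}^{-1}$, and $M_j$ the transition band. Substituting into $N_\lm(S)$, the $L_j$-only contribution produces a main term comparable to $\de^n$ for all $\lm \leq 1$; terms containing an $H_j$ factor are small by the Fourier decay of $\wh{\nu_\lm}$ once $\lm \in [\lm_{j+1}, \lm_j]$, since then $\lm|\xi| \gs 1$ on the support of $H_j$; and terms containing an $M_j$ factor are controlled, via Plancherel and the trivial bound $\|\wh{\nu_\lm}\|_\infty \leq 1$, by $\|M_j\|_2$ times a constant depending on $\De$.

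The final step is a pigeonhole over the $J$ lacunary scales. Orthogonality of the disjoint frequency bands defining $M_j$ yields $\sum_{j=1}^{J-1} \|M_j\|_2^2 \leq \|\1_S\|_2^2 \leq \de$, so that for some $j$ one has $\|M_j\|_2^2 \ls \de/J$. Choosing $J = C(\De)\,\de^{-3n}$ and calibrating the Littlewood--Paley cutoffs accordingly forces both the $M_j$ and $H_j$ contributions to be much smaller than the main term $\gs \de^n$, uniformly in $\lm \in [\lm_{j+1}, \lm_j]$. The main obstacle is exactly this uniformity in $\lm$ across an entire lacunary interval, rather than at a single scale: the Littlewood--Paley thresholds must be set so that the stationary phase decay of $\wh{\nu_\lm}$ kicks in throughout the interval, and the multilinear errors must be tracked carefully enough, using the lacunary gap $\lm_j/\lm_{j+1} \geq 2$, to extract the explicit $O(\de^{-3n})$ bound on $J$.
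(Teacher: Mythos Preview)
Your sketch is a faithful outline of Bourgain's original argument and is correct as far as it goes. The paper, however, does not reproduce Bourgain's proof; instead it deduces Proposition~B from its own Proposition~\ref{PropnSimp} via a different, more ``physical space'' route. In place of your Littlewood--Paley decomposition $\1_S = L_j + M_j + H_j$ and the stationary phase decay of $\wh{\nu_\lm}$, the paper introduces the local $U^1_L(Q)$-norm
\[
\|f\|_{U^1_L(Q)}^2 = \fint_{t\in Q}\Bigl|\fint_{x\in t+Q(L)} f(x)\,dx\Bigr|^2 dt,
\]
proves a generalized von~Neumann inequality $|\NN^1_{\la\De^0,Q}(f_1,\ldots,f_n)|\leq\min_i\|f_i\|_{U^1_L(Q)}+O(\eps)$ (Lemma~\ref{vN-1}), and then runs an energy-increment on the conditional expectations $\E(f|\GG_{L_j,Q})$ onto cubic grids (Lemma~\ref{KvN-1}) to find a good scale. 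The pigeonhole on $\sum_j\|M_j\|_2^2\leq\de$ in your argument is replaced by the monotonicity of $\|\E(f|\GG_{L_j,Q})\|_2^2$ along the $\eps$-admissible sequence of scales.

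The two arguments are morally the same---frequency projections versus spatial averaging over grids---but the paper's formulation has a specific payoff: the $U^1_L$ norm and the grid $\si$-algebras $\GG_{L,Q}$ slot directly into the weak hypergraph regularity machinery of Sections~\ref{two} and~\ref{generalcase}, where box-type norms $\|\cdot\|_{\Box_L(Q_{e'})}$ and local $\si$-algebras on sub-cubes replace the $U^1_L$ norm. Your Fourier approach would not generalize so cleanly to products of simplices, which is the paper's main concern. As a side benefit the paper's version also yields the slightly better dependence $J=O(\eps^{-2}\log\eps^{-1})$ in place of $O(\de^{-3n})$.
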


In \cite{LM17} the authors provided a short direct proof of Theorem B without using Proposition B. It is based on the observation that uniformly distributed sets $S\subs \R^d$ contain the expected ``number" of isometric copies of dilates $\la\De$ and that all sets of positive upper density become uniformly distributed at sufficiently large scales. 
However, for the purposes of this paper it will be important to recall Bourgain's indirect approach.

To see that Proposition B implies Theorem B notice that if Theorem B were not to hold for some set $S\subseteq\R^n$ of upper Banach density $\delta^*(S)>\delta>0$, then there must exist a lacunary sequence $\la_1\geq \cdots\geq \lm_J\geq1$, with $J$ the constant in Proposition B, such that $S$ does not contain an isometric copy of $\la_j \Delta$ for any $1\leq j\leq J$. Taking a sufficiently large cube $Q$ with side length  $N\geq \lm_1$ and $\,|S\cap Q|\geq \de |Q|\,$ and scaling back $Q\to [0,1]^n$ contradicts Proposition B. 

We further note that by taking $\la_j=2^{-j}$ in Proposition B we obtain the following ``Falconer-type" result for subsets of $[0,1]^n$ of positive Lebesgue measure.

\begin{corB}\label{BourSimpComp}
If  $\Delta\subseteq\R^{n}$ is a non-degenerate simplex of $n$ points,
then any $S\subseteq[0,1]^n$ with $|S|>0$ will necessarily contain an isometric copy of $\lm\Delta$ for all $\lambda$ in some interval of length at least $\exp(-C_{\Delta} |S|^{-3n})$. 
\end{corB}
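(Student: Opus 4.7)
The plan is to deduce Corollary B as an immediate consequence of Proposition B by simply making a careful choice of lacunary sequence, following the hint supplied in the paragraph preceding the statement.

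First, set $\delta = |S| > 0$, so that Proposition B applies with the constant $J = O_\Delta(\delta^{-3n})$; say $J \leq C'_\Delta \, \delta^{-3n}$. I would then take the specific lacunary sequence $\lambda_j = 2^{-j}$ for $j = 1, \ldots, J$, which trivially satisfies $\lambda_1 \leq 1$ and $\lambda_{j+1} = \lambda_j/2$. Proposition B then guarantees the existence of some index $1 \leq j < J$ for which $S$ contains an isometric copy of $\lambda \Delta$ for every $\lambda$ in the interval $[\lambda_{j+1}, \lambda_j] = [2^{-(j+1)}, 2^{-j}]$.

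The length of this interval is $2^{-j} - 2^{-(j+1)} = 2^{-(j+1)}$, and since $j+1 \leq J \leq C'_\Delta \, \delta^{-3n}$, we obtain
\[
2^{-(j+1)} \;\geq\; 2^{-J} \;\geq\; \exp\!\bigl(-(C'_\Delta \ln 2)\, \delta^{-3n}\bigr) \;=\; \exp\!\bigl(-C_\Delta\, |S|^{-3n}\bigr),
\]
with $C_\Delta := C'_\Delta \ln 2$, as desired.

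There is really no substantive obstacle here: the entire content of the corollary is packaged into Proposition B, and the role of the proof is only to convert the pigeonhole statement ``there exists $j$ in a lacunary chain'' into the quantitative statement ``there is an interval of prescribed length.'' The only mildly subtle point is matching constants, namely verifying that the polynomial dependence of $J$ on $\delta^{-1}$ translates, via the choice $\lambda_j = 2^{-j}$, into exponential length in $\delta^{-3n}$ for the resulting interval; this is exactly what the computation above records.
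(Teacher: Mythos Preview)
Your proposal is correct and matches the paper's own approach exactly: the paper simply states that Corollary B follows from Proposition B by taking $\lambda_j = 2^{-j}$, and your computation spells out precisely the arithmetic that converts the bound $J = O_\Delta(\delta^{-3n})$ into the interval-length lower bound $\exp(-C_\Delta |S|^{-3n})$.
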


Bourgain further demonstrated in \cite{Bourg87} that no result along the lines of Theorem B can hold for configurations that contain any three points in arithmetic progression along a line, specifically   showing that for any $n\geq1$ there are sets of positive upper Banach density in $ \R^n$ which do not contain an isometric copy of configurations of the form $\{0, y, 2y\}$ with $|y|=\lm$ for all sufficiently large $\lambda$. 
This should be contrasted with the following remarkable result of Tamar Ziegler.

\begin{thmC}[Ziegler \cite{Ziegler06}]

Let $\F$ be any configuration of $k$ points in $\R^n$ with $n\geq2$.

If $S\subseteq\R^n$ has positive upper  density, then there exists a threshold $\lm_0=\lm_0(S,\F)$ such that $S_\VE$ contains an isometric copy of $\lm \F$ for all $\lambda\geq \lm_0$ and any $\VE>0$, where $S_\VE$ denotes the $\VE$-neighborhood of $S$.
\end{thmC}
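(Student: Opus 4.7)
The plan is to follow Ziegler's ergodic-theoretic approach via the Furstenberg correspondence principle. Given $S \subs \R^n$ with $\de^*(S) > 0$, one constructs a probability space $(X, \mathcal{B}, \mu)$ equipped with a measure-preserving $\R^n$-action $\{T_v\}_{v \in \R^n}$ and a set $A \in \mathcal{B}$ with $\mu(A) = \de^*(S)$, such that for all $u_1, \ldots, u_{k-1} \in \R^n$,
\begin{equation*}
\mu(A \cap T_{u_1}^{-1} A \cap \cdots \cap T_{u_{k-1}}^{-1} A) \leq \de^*\bigl(S \cap (S - u_1) \cap \cdots \cap (S - u_{k-1})\bigr).
\end{equation*}
Writing $\F = \{v_0, v_1, \ldots, v_{k-1}\}$ and setting $u_j = v_j - v_0$, the theorem reduces to establishing the lower bound
\begin{equation*}
\liminf_{\lambda \to \infty} \int_{SO(n)} \mu\bigl(A \cap T_{\lambda R u_1}^{-1} A \cap \cdots \cap T_{\lambda R u_{k-1}}^{-1} A\bigr)\, dR > 0,
\end{equation*}
with the $\VE$-thickening of $S$ precisely absorbing the gap between positivity of the rotationally-averaged correlation and positivity for an individual rotation $R$.

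The core of the argument is to identify a \emph{characteristic factor} $\mathcal{Z} \subseteq \mathcal{B}$ for this family of spherically-averaged multilinear expressions: a sub-$\sigma$-algebra such that the $\lambda \to \infty$ behavior of the average above is unchanged when each $\1_A$ is replaced by its conditional expectation $\E(\1_A \mid \mathcal{Z})$. Without the rotational average, configurations containing long arithmetic progressions (the source of Bourgain's one-dimensional counterexample) would force $\mathcal{Z}$ to be a high-step Host--Kra nilfactor. Integration over $R \in SO(n)$ introduces Bessel-type decay in the Fourier variables dual to the rotation directions, and this decay should collapse the effective characteristic factor to a much tamer object: a nilfactor of bounded step, or in favorable cases just the Kronecker factor of the $\R^n$-action.

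With such a characteristic factor in hand, the problem reduces to an analogous positivity statement on a model nilsystem. There the relevant quantity can be computed using equidistribution of spherical shells $\lambda \cdot S^{n-1}$ on compact nilmanifolds, together with Fourier analysis and the lower bound $\mu(A) \geq \de^*(S) > 0$. Combining the resulting positivity with the correspondence principle yields, for each sufficiently large $\lambda$, a point $x \in \R^n$ and rotation $R$ with $x + \lambda R u_j$ lying within $\VE$ of $S$ for every $j$, which is exactly an isometric copy of $\lambda \F$ in $S_\VE$.

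The main obstacle is the construction and analysis of the characteristic factor for the rotation-averaged multiple recurrence average, particularly for configurations $\F$ with rich internal arithmetic structure. The hypothesis $n \geq 2$ is essential here: without a nontrivial rotation group there is no spherical averaging to exploit, and Bourgain's three-term progression example shows the conclusion genuinely fails in dimension one. Quantifying the Bessel decay uniformly on the characteristic factor while remaining compatible with the nilpotent structure supplied by Host--Kra theory is the delicate step that any such proof must navigate.
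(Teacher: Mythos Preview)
The paper does not prove Theorem~C; it is quoted as a background result of Ziegler \cite{Ziegler06} and no argument for it appears anywhere in the text. So there is no ``paper's own proof'' to compare against, and your proposal cannot be evaluated on that basis.

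That said, what you have written is a high-level sketch of Ziegler's strategy rather than a proof, and it is imprecise in places. The Furstenberg correspondence reduction and the passage to a characteristic factor are indeed the backbone of \cite{Ziegler06}, but your description of how the rotational average interacts with the factor structure is speculative: the $SO(n)$-average does not ``collapse the effective characteristic factor'' to the Kronecker factor or to a nilfactor of step independent of the configuration. In Ziegler's argument the characteristic factor is a pro-nilfactor whose step genuinely depends on $k$, and the rotational average enters only at the final stage, on the nilsystem itself, where equidistribution is used to obtain positivity. Your last paragraph candidly flags the hard step without carrying it out, so as it stands the proposal is an outline of the correct approach, not a proof; the substantive work --- building the Host--Kra structure theory for $\R^n$-actions and executing the nilmanifold computation --- is precisely what \cite{Ziegler06} supplies and what your sketch defers.
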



Bourgain's example was later generalized by Graham \cite{Graham90} to establish that the condition that $\eps>0$ in Theorem C is necessary and cannot be strengthened to $\eps=0$  for any given  \emph{non-spherical} configuration $\F$ in $\R^n$ for any $n\geq1$, that is for any finite configuration of points that cannot be inscribed in some sphere. We  note that the sets constructed by Bourgain and Graham have the property that for any $\VE>0$ their $\VE$-neighborhoods will contain arbitrarily large cubes and hence trivially satisfy Theorem C with $\lm_0=0$.

It is natural to ask if \emph{any} spherical configuration $\F$, beyond the known example of simplices, has the property that every positive upper Banach density subset of $\R^n$, for some sufficiently large $n$, contains an isometric copy of $\lm \F$ for all sufficiently large $\lambda$, and even to conjecture that this ought to hold for \emph{all} spherical configurations. 
The first breakthrough in this direction came in  \cite{LM17} when the authors established this  for configurations of four points forming a $2$-dimensional rectangle in $\R^4$ and more generally for any configuration that is the direct product of two non-degenerate simplices in $\R^n$ for suitably large $n$.   

The purpose of this article is to present a strengthening of the results in \cite{LM17} and to extend them to cover configurations with a higher dimensional product structure   in both the Euclidean and discrete settings.


\subsection{New Results I: Rectangles and Products of Simplices in Subsets of $\R^n$}\


The first main result of this article is the following

\begin{thm}\label{Rect}
Let $\RR$ be $2^d$ points forming the vertices of a  fixed $d$-dimensional rectangle in $\R^{2d}$.
 \begin{itemize}
\item[(i)]
If $S\subseteq\R^{2d}$ has positive upper Banach density, then
 there exists a threshold $\lm_0=\lm_0(S,\RR)$ such that $S$ contains an isometric copy of $\lm\RR$ for all $\lambda\geq \lm_0$.
 
\medskip
 
\item[(ii)]
For any $0<\delta\leq 1$ there exists a constant $c=c(\delta,\RR)>0$ such that any $S\subseteq[0,1]^{2d}$ with $|S|\geq\delta$ is guaranteed to contain an isometric copy of $\lm\RR$ for all $\lambda$ in some interval of length at least $c$.
\end{itemize}
Moreover, if $\RR$ has sidelengths given by $t_1,\dots,t_d$, then the isometric copies of $\lm\RR$ in both \emph{(i)} and \emph{(ii)} above can all be realized in  the special form $\{x_{11},x_{12}\}\times\cdots\times\{x_{d1},x_{d2}\}\subseteq\R^{2}\times\cdots\times\R^{2}$ with each $|x_{j2}-x_{j1}|=\lm t_j$.
\end{thm}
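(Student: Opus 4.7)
The plan is to follow Bourgain's lacunary-scale strategy and reduce the theorem to the following quantitative statement: for every $\delta>0$ there exists $J=J(\delta,\mathcal{R})$ such that for any lacunary sequence $\lambda_1\geq\cdots\geq\lambda_J\geq 1$ and any $S\subseteq[0,1]^{2d}$ with $|S|\geq\delta$, some $1\leq j<J$ has the property that $S$ contains a product-form isometric copy of $\lambda\mathcal{R}$ for every $\lambda\in[\lambda_{j+1},\lambda_j]$. Part (ii) then follows immediately by taking $\lambda_j=2^{-j}$, and part (i) follows by the rescaling argument recalled after Proposition B in the excerpt.

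Writing $\R^{2d}=(\R^2)^d$, $x=(x_1,\ldots,x_d)$, $y=(y_1,\ldots,y_d)$ with each $x_j,y_j\in\R^2$, product-form copies of $\lambda\mathcal{R}$ are counted by the $2^d$-linear functional
\[
T_\lambda(f) = \int_{(\R^2)^d} \int_{\prod_j S^1(\lambda t_j)} \prod_{\omega\in\{0,1\}^d} f\bigl(x_1+\omega_1 y_1,\ldots,x_d+\omega_d y_d\bigr)\, d\sigma(y)\, dx,
\]
where $S^1(r)$ denotes the circle of radius $r$ in $\R^2$ with normalized length measure. A product-form copy exists precisely when $T_\lambda(1_S)>0$, and the real aim is a $\delta$-dependent lower bound $T_\lambda(1_S)\gtrsim_\delta 1$ at some $\lambda$ in one of the $J$ lacunary windows.

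The core of the argument is the weak $d$-hypergraph regularity framework announced in the paper's title. At a macro scale $\rho\gg\lambda_1$, I would decompose $f=1_S$ as $f=g+h+e$ relative to a product partition of $(\R^2)^d$ into $\rho$-cells: here $g$ is the conditional expectation of $f$ onto the product $\sigma$-algebra (a box step function), $h$ is Gowers $d$-cube uniform at scale $\rho$, and $e$ is $L^2$-small. The structured contribution $T_\lambda(g)$ is bounded below by $\delta^{2^d}$ up to a boundary error via Jensen's inequality: on each cell where $g\equiv\alpha_B$ the product $\prod_\omega g(x+\omega\cdot y)=\alpha_B^{2^d}$ for most $(x,y)$, and $\sum_B |B|\,\alpha_B^{2^d}\geq\delta^{2^d}$ since the $\alpha_B$ average to $\delta$. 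The contribution of $h$ is controlled by a generalized von Neumann counting lemma, obtained by iterated Cauchy--Schwarz combined with single-variable $L^2$ bounds for the spherical averaging operator on each $\R^2$ factor, reducing it to $h$'s box norm at scale $\rho$; the error $e$ contributes negligibly in $L^1$. A pigeonhole over the $J$ lacunary scales then selects a $\lambda_j$ at which the macro scale $\rho$, the uniformity threshold, and $\lambda$ can all be coupled consistently.

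The main obstacle is the counting lemma: one must bridge the macro scale $\rho$, at which $h$'s Gowers uniformity is controlled by the regularity decomposition, and the micro scale $\lambda\ll\rho$, at which the spheres are being integrated. The decay of spherical means on $L^2(\R^2)$ at radius $\lambda t_j$ must be exploited after each Cauchy--Schwarz step to translate box-norm smallness of $h$ at scale $\rho$ into smallness of its $2^d$-linear contribution to $T_\lambda$, and the parameters of the regularity decomposition must be chosen so that the resulting error tolerances are compatible with the lower bound $\delta^{2^d}$ coming from the structured part. The lacunary pigeonhole is precisely what gives the flexibility to pick a scale where these competing tolerances can be balanced simultaneously.
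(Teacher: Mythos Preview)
Your overall lacunary strategy and the von Neumann step are correct and match the paper: iterated Cauchy--Schwarz with $L^2$ circle-average bounds does control $T_\lambda$ by the local $d$-box norm $\|\cdot\|_{\Box_L}$ (this is Lemma~\ref{Lem4.5}). The gap is in your decomposition. You take $g$ to be the conditional expectation of $1_S$ onto a product partition into $\rho$-cubes and then assert that $h=f-g-e$ is ``Gowers $d$-cube uniform at scale $\rho$''. These two things are incompatible. Conditioning onto a grid of cubes makes $f-g$ small only in the $U^1_\rho$-norm; it does \emph{not} make the $d$-box norm small for $d\geq 2$, and the von Neumann inequality you correctly derived controls $T_\lambda(h)$ by the box norm, not by $U^1$. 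Conversely, the weak \emph{hypergraph} regularity lemma that does make $\|f-\E(f|\mathcal{B})\|_{\Box_L}$ small produces a $\sigma$-algebra $\mathcal{B}$ generated by sets living on $(d-1)$-dimensional coordinate projections $Q_{\mathfrak f'}$, $\mathfrak f'\in\HH_{d,d-1}$; then $g=\E(f|\mathcal{B})$ is \emph{not} constant on cubes and your Jensen argument for $T_\lambda(g)\geq\delta^{2^d}$ breaks down.

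The paper resolves exactly this tension by an induction on the hypergraph level $k=1,\dots,d$ (Proposition~\ref{Lem4.4}). At level $k$ one applies a parametric weak hypergraph regularity lemma (Lemma~\ref{Lem4.6}) to get $\sigma$-algebras on $(k-1)$-factors; the structured part $\bar f_e$ then expands as a bounded linear combination of products of indicators at level $k-1$, and one invokes the counting lemma at level $k-1$ on each such product. Only at the base case $k=1$ does the structured part become constant on cubes, and there the comparison to $\MM^1_\lambda$ (and hence the Jensen lower bound) is immediate. So the ``Jensen on the structured part'' step you want at the top level is actually the terminal output of a $d$-step recursion, not a one-shot argument. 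The scale-bridging obstacle you identify at the end is real but secondary; the primary missing ingredient is this inductive descent through $\HH_{d,k}^{\un}$.
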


The multi-dimensional extension of Szemer\'{e}di's theorem on arithmetic progressions in sets of positive density due to Furstenberg and Katznelson \cite{FK78} implies, and is equivalent to the fact, that there are isometric copies of $\la\RR$ in $S$ for \emph{arbitrarily} large $\la$, with sides parallel to the coordinate axis. Theorem \ref{Rect} states that there is an isometric copy of $\la \RR$ in $S$ for \emph{every} sufficiently large $\la$, but only with sides parallel to given 2-dimensional coordinate subspaces which provides an extra degree of freedom for each side vector of the rectangle $\RR$.

A weaker version of Theorem \ref{Rect}, with $\R^{2d}$ replaced with $\R^{5d}$, was later established by Durcik and Kova\v{c} in \cite{DK} using  an adaptation of arguments of the second author with Cook and Pramanik in \cite{CMP}. This approach also makes direct use of the full strength of the multi-dimensional Szemer\'edi theorem  and as such leads to quantitatively weaker results.   


Our arguments work for more general patterns where $d$-dimensional rectangles are replaced with direct products of non-degenerate simplices.

\begin{thm}\label{Prod}
Let $\Delta=\Delta_1\times\cdots\times\Delta_d\subseteq\R^n$, where $\R^n=\R^{n_1}\times\cdots\times\R^{n_d}$ and each $\Delta_j\subseteq\R^{n_j}$ is a non-degenerate simplex of $n_j$ points.

\begin{itemize}
\item[(i)]
If $S\subseteq\R^{n}$ has positive upper Banach density, then
 there exists a threshold $\lm_0=\lm_0(S,\Delta)$ such that $S$ contains an isometric copy of $\lambda\Delta$ for all $\lambda\geq \lm_0$.
 
\medskip
 
\item[(ii)]
For any $0<\delta\leq 1$ there exists a constant $c=c(\delta,\Delta)>0$ such that any $S\subseteq[0,1]^{n}$ with $|S|\geq\delta$ is guaranteed to contain an isometric copy of $\lm\Delta$ for all $\lambda$ in some interval of length at least $c$.
\end{itemize}
Moreover the isometric copies of $\lm\Delta$ in both \emph{(i)} and \emph{(ii)} above can all be realized in  the special form $\Delta_1'\times\cdots\times\Delta_d'$ with each $\Delta_j'\subseteq\R^{n_j}$ an isometric copy of $\lm\Delta_j$.
\end{thm}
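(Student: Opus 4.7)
The plan is to follow the indirect lacunary strategy recalled in the excerpt after Proposition B, reducing both parts of Theorem 1.2 to a single quantitative Proposition-B analogue for $\Delta$: for every $0<\delta\leq 1$ there exists $J=J(\delta,\Delta)$ such that whenever $1\geq \lambda_1\geq\cdots\geq\lambda_J$ is lacunary and $S\subseteq[0,1]^n$ has $|S|\geq\delta$, there is $1\leq j<J$ with the property that $S$ contains a product-form isometric copy of $\lambda\Delta$ for every $\lambda\in[\lambda_{j+1},\lambda_j]$. Part (ii) is immediate from this (take $c$ equal to the length of one such interval), and (i) follows by the rescaling argument recalled in the excerpt. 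For a fixed scale $\lambda$, a product-form copy of $\lambda\Delta$ in $S$ is witnessed by positivity of the multilinear form
$$T_{\lambda,\Delta}(f)=\int \prod_{(i_1,\ldots,i_d)} f(x_{1,i_1},\ldots,x_{d,i_d})\,d\sigma_{\lambda\Delta_1}(\ux_1)\cdots d\sigma_{\lambda\Delta_d}(\ux_d),$$
where $f=1_S$, the index $(i_1,\ldots,i_d)$ ranges over $\prod_j\{1,\ldots,n_j\}$, and $\sigma_{\lambda\Delta_j}$ is the natural probability measure on $n_j$-tuples in $\R^{n_j}$ forming an isometric copy of $\lambda\Delta_j$.

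To produce a uniform lower bound on $T_{\lambda,\Delta}(1_S)$ along some lacunary interval, I would apply a weak hypergraph regularity decomposition of $f$ adapted to the $d$-partite product space $\R^{n_1}\times\cdots\times\R^{n_d}$: for any prescribed $\eta>0$, write $f=g+h$, where $g$ is a bounded sum of $O_{\delta,\eta}(1)$ structured building blocks depending on the coordinate blocks in a hypergraph-local way (for instance, finite sums of products of indicators of sets in fewer than $d$ of the coordinate blocks), and the error $h$ is small, $\|h\|_\square\leq\eta$, in a $d$-partite Gowers-type box norm matched to the product structure of $\Delta$. A counting lemma, proved by a telescoping Cauchy--Schwarz performed separately in each of the $d$ coordinate blocks and using the $L^2$-boundedness of the simplex-averaging operators $\sigma_{\lambda\Delta_j}\ast$ on $\R^{n_j}$, then yields $|T_{\lambda,\Delta}(f)-T_{\lambda,\Delta}(g)|\ls \eta$ uniformly in $\lambda$.

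For the structured part, multilinearity in the $d$ blocks reduces $T_{\lambda,\Delta}(g)$ to a bounded sum of products $\prod_{j=1}^d T_{\lambda,\Delta_j}(g_j)$ of single-coordinate simplex counts against bounded nonnegative marginals $g_j:\R^{n_j}\to[0,1]$ of appropriate density. A functional version of Bourgain's Proposition B, obtained by level-set decomposition from the indicator statement, asserts that along any lacunary sequence all but $O_{\delta,\Delta_j}(1)$ scales are good for the $j$-th factor, meaning $T_{\lambda,\Delta_j}(g_j)$ stays close to its expected value $\big(\int g_j\big)^{n_j}$ throughout the corresponding sub-interval. Pigeonholing simultaneously over the $d$ coordinates, at the cost of summing the individual exceptional-scale budgets into $J=O_{\delta,\Delta}(1)$, produces an interval $[\lambda_{j+1},\lambda_j]$ on which every factor, and hence $T_{\lambda,\Delta}(g)$, is bounded below by a positive constant depending only on $\delta$ and $\Delta$. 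Choosing $\eta$ smaller than half this constant absorbs the regularity error and delivers the desired lacunary pigeonhole statement.

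The main technical obstacle is the simultaneous construction of the $d$-partite box norm $\|\cdot\|_\square$ and its matched regularity/counting pair: the norm must reflect the $d$-fold product (rather than arithmetic) structure of $\Delta$, so one cannot appeal directly to Gowers $U^s$ norms on $\R^n$; instead one needs a cube-type norm built at the level of the coordinate blocks whose face functions can be paired in $L^2$ against the simplex averages $\sigma_{\lambda\Delta_j}$ via a chain of Cauchy--Schwarz inequalities, while still admitting a Frieze--Kannan decomposition with an $L^\infty$-bounded structured part. Making these three requirements coexist in the continuous Euclidean setting (and later transporting the whole scheme to $\Z^n$) is where I expect the bulk of the technical work, and the principal novelty of the proof, to lie.
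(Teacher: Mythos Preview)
Your overall strategy --- reduce both parts to a lacunary Proposition-B type statement, compare $T_{\lambda,\Delta}(1_S)$ to a quantity that admits a H\"older lower bound, and control the difference via a box norm and weak regularity --- is exactly what the paper does. However, there is a genuine structural gap in the middle step of your plan.

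You claim that for the structured part $g$, ``multilinearity in the $d$ blocks reduces $T_{\lambda,\Delta}(g)$ to a bounded sum of products $\prod_{j=1}^d T_{\lambda,\Delta_j}(g_j)$ of single-coordinate simplex counts''. This is only true when $d=2$. The box norm that actually controls $T_{\lambda,\Delta}$ through Cauchy--Schwarz is the $d$-partite cube norm (one doubling in each block), and the weak regularity lemma matched to \emph{that} norm produces a structured part whose atoms are intersections of sets depending on $d-1$ of the coordinate blocks, not on single blocks. For $d=3$, say, the atoms look like $\mathbf{1}_{A}(x_1,x_2)\mathbf{1}_{B}(x_1,x_3)\mathbf{1}_{C}(x_2,x_3)$, and plugging these into $T_{\lambda,\Delta}$ does \emph{not} factor into a product of three single-simplex averages; it yields a level-$(d-1)$ multilinear expression of the same general shape as the original. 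If instead you used a norm whose structured part \emph{is} a sum of tensor products $\prod_j g_j(x_j)$ (the $U^1$ norm in each block), the counting lemma fails: that norm is too weak to control $T_{\lambda,\Delta}$ for $d\geq 2$.

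The paper resolves this by inducting on the hypergraph level $k$: one step of regularity at level $k$ reduces the problem to a (parametric family of) level-$(k-1)$ counting problems, and one repeats until reaching $k=1$, where the expression genuinely factors into single-simplex counts and Bourgain's input applies. A second issue you will hit in making this work over $\R^n$ (already visible at $d=2$) is that the box norm must be \emph{localized at a scale} $L$, written $\|\cdot\|_{\Box_L}$ in the paper, and the regularity lemma must be run along an admissible sequence of scales so that the structured $\sigma$-algebras live at one scale while the von-Neumann estimate is applied at a much smaller one; a global $d$-partite box norm on $[0,1]^n$ will not interface correctly with the simplex averages $\sigma_{\lambda\Delta_j}$.
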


\emph{Quantitative Remark.} A careful analysis of our proof reveals that the constant $c(\delta,\Delta)$ can be taken greater than $W_d(C'_\Delta\delta^{-3n_1\cdots n_d})^{-1}$ 
where
$W_k(m)$ is  a tower of exponentials defined by $W_1(m)=\exp(m)$ and $W_{k+1}(m)=\exp(W_k(m))$ for $k\geq1$.



\subsection{Existing Results II: Distances and Simplices in Subsets of $\Z^n$}\label{d&sZ}

The problem of counting isometric copies of a given non-degenerate simplex in $\Z^n$ (with one vertex fixed) has been extensively studied via its equivalent formulation as the number of ways  a quadratic form can be represented as a sum of squares of linear forms, see \cite{Kitaoka} and \cite{Siegel}. This was exploited by the second author in \cite{Magy08} and \cite{Magy09} to establish
analogous results to those described in Section \ref{d&s} above for subsets of the integer lattice $\Z^n$ of positive upper density.

Recall that the \emph{upper Banach density} of a set $S\subseteq\Z^n$ is analogously defined by
\eq\label{BD}
\D^*(S)=\lim_{N\rightarrow\infty}\sup_{t\in\R^n}\frac{|S\cap(t+Q(N))|}{|Q(N)|},\ee
where $|\cdot|$ now denotes counting measure on $\Z^n$ and $Q(N)$ the discrete cube $[-N/2,N/2]^n\cap\Z^n$.

In light of the fact that any pairs of distinct points 
$\{x_1,x_2\}$ in $\Z^n$ has the property that the square of the distance between them $|x_2-x_1|^2$ is always a positive integer we introduce the convenient notation
\[\sqrt{\N}:=\{\lm\,:\,\lm>0 \text{ and } \lm^2\in\Z\}.\]

\begin{thmA'}[Magyar \cite{Magy08}]\label{discrete_distance} Let $0<\delta\leq1$.

If $S\subseteq\Z^{5}$ has  upper Banach density at least $\delta$, then
there exists an integer $q_0=q_0(\delta)$ and $\lm_0=\lm_0(S)$ such that
 $S$  contains pairs of points 
$\{x_1,x_2\}$ with $|x_2-x_1|=q_0\lm$ for all $\lm\in\sqrt{\N}$ with $\lm\geq\lm_0$.
\end{thmA'}

\smallskip

\begin{thmB'}[Magyar \cite{Magy09}]\label{Discrete_Simplices}
Let $0<\delta\leq1$ and $\Delta\subseteq\Z^{2n+3}$ be a non-degenerate simplex of $n$ points.

\begin{itemize}
\item[(i)]
If $S\subseteq\Z^{2n+3}$ has  upper Banach density at least $\delta$, 
then
there exists an integer $q_0=O(\exp(C_\Delta\delta^{-13n}))$ and $\lm_0=\lm_0(S, \Delta)$ such that $S$ contains an isometric copy of $q_0\lm\Delta$ for all $\lm\in\sqrt{\N}$ with $\lambda\geq \lm_0$.
 
\medskip
 
\item[(ii)]
If $N\geq \exp(2C_{\Delta} \delta^{-13n})$, then any $S\subseteq\{1,\dots,N\}^{2n+3}$ with cardinality $|S|\geq\delta N^{2n+3}$ will necessarily contain an isometric copy of $\lm\Delta$ for some $\lm\in\sqrt{\N}$ with $1\leq\lambda\leq N$.
\end{itemize}
\end{thmB'}


Note that the fact that $S\subseteq\Z^n$ could fall entirely into a fixed congruence class of some integer $1\leq q\leq\delta^{-1/n}$  ensures that the $q_0$ that appears in Theorems A$^\prime$ and B$^\prime$ above must be divisible by the  least common multiple of all integers $1\leq q\leq \delta^{-1/n}$. Indeed if $S=(q\Z)^n$ with $1\leq q\leq \delta^{-1/n}$ then $S$ has upper Banach density at least $\delta$, however the distance between any two points  $x,y\in S$ is of the form $|x-y|=q\la$ for some $\lm\in\sqrt{\N}$.

However, in both Theorems A$^\prime$ and Part (i) of Theorem B$^\prime$, one can take $q_0=1$ if the sets $S$ are assumed to be suitably uniformly distributed on congruence classes of small modulus. This leads via an easy density increment strategy to short new proofs, see \cite{LM19} for Theorem A$^\prime$ and Section \ref{Appendix} for Part (i) of Theorem B$^\prime$. 

The original argument in \cite{Magy09} deduced Theorem B$^\prime$ from the following discrete analogue of Proposition B.

\begin{propnB'}[Magyar \cite{Magy09}]\label{PropB'}  Let $\Delta\subseteq\Z^{2n+3}$ be a non-degenerate simplex of $n$ points.

For any $0<\delta\leq 1$  there exist constants  $J=O_\Delta(\delta^{-3n})$ and $q_0=O(\exp(C_\Delta\delta^{-13n}))$  such that if $N\geq \lm_1\geq\cdots\geq\lm_J\geq1$ is any lacunary sequence in $q_0\sqrt{\N}$ and $S\subseteq\{1,\dots,N\}^{2n+3}$ with cardinality $|S|\geq\delta N^{2n+3}$, then $S$ will necessarily contain an isometric copy of $\lm_j\Delta$ for some $1\leq j\leq J$.
\end{propnB'}


To see that Proposition B$^\prime$ implies Theorem B$^\prime$ notice that if Part (i) of Theorem B$^\prime$ were not to hold for some set $S\subseteq\Z^{2n+3}$ of upper Banach density $\delta^*(S)>\delta>0$ with $q_0$ from Proposition B$^\prime$, then there must exist a lacunary sequence $\la_1\geq\cdots\geq\lm_J\geq 1$ in $q_0\sqrt{\N}$, with $J$ the constant from Proposition B$^\prime$, such that $S$ does not contain an isometric copy of $\la_j\Delta$ for any $1\leq j\leq J$. Since we can find a sufficiently large cube $Q$ with integer side length $N$ that is divisible by $q_0$ and greater than $\lm_1$ such that   $\,|S\cap Q|\geq \de |Q|\,$, this contradicts Proposition B$^\prime$. 
Part (ii) of Theorem B$^\prime$ follows from Proposition B$^\prime$ by taking $\lm_j=2^{J-j}q_0$.



\subsection{New Results II: Rectangles and Products of Simplices in Subsets of $\Z^n$}\

We will also establish the following discrete analogues of Theorem \ref{Rect} and \ref{Prod}.

\begin{thm}\label{RectZ}
Let $0<\delta\leq 1$ and $\RR$ be $2^d$ points forming the vertices of a $d$-dimensional rectangle in $\Z^{5d}$.
 \begin{itemize}
\item[(i)]
If $S\subseteq\Z^{5d}$ has upper Banach density at least $\delta$, then
there exist integers $q_0=q_0(\delta,\RR)$ and $\lm_0=\lm_0(S,\RR)$ such that
$S$ contains an isometric copy of $q_0\lm\RR$ for all $\lm\in\sqrt{\N}$ with $\lambda\geq \lm_0$.
 
\medskip
 
\item[(ii)]
There exists a constant $N(\delta,\RR)$ such that if $N\geq N(\delta,\RR)$, then any $S\subseteq\{1,\dots,N\}^{5d}$ with cardinality $|S|\geq\delta N^{5d}$ will necessarily contain an isometric copy of $\lm\RR$ for some $\lm\in\sqrt{\N}$ with $1\leq\lambda\leq N$.
\end{itemize}
If $\RR$ has side lengths given by $t_1,\dots,t_d$, then each of the isometric copies 
in \emph{(i)} and \emph{(ii)} above can be realized in  the form $\{x_{11},x_{12}\}\times\cdots\times\{x_{d1},x_{d2}\}\subseteq\Z^{5}\times\cdots\times\Z^{5}$ with each $|x_{j2}-x_{j1}|=q_0\lm t_j$ and $\lm  t_j$, respectively.
\end{thm}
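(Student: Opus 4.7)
The plan is to reduce Theorem \ref{RectZ} to a lacunary finitary statement in direct analogy with the reduction of Theorem B$^\prime$ to Proposition B$^\prime$: for any $\delta>0$ there should exist constants $J=J(\delta,\RR)$ and $q_0=q_0(\delta,\RR)$ such that whenever $N\geq \lm_1\geq\cdots\geq\lm_J\geq 1$ is a lacunary sequence in $q_0\sqrt{\N}$, every $S\subseteq\{1,\dots,N\}^{5d}$ with $|S|\geq \delta N^{5d}$ contains an isometric copy of $\lm_j\RR$ in the prescribed product form for some $j$. Part (i) then follows by contradiction: pass to a cube of side divisible by $q_0$ on which $S$ has density at least $\delta$ and rescale. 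Part (ii) follows by specializing $\lm_j=2^{J-j}q_0$.

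To prove the lacunary proposition I would write the count of the desired configurations as a multilinear average. Since $\RR$ factors as $d$ pairs of points lying in coordinate copies of $\Z^5$, the weighted count of isometric copies of $\lm\RR$ in $S$ of the required product form is
\[ N(S;\lm) \,=\, \sum_{\substack{x_{j,0},\,x_{j,1}\in\Z^5\\ j=1,\dots,d}} \prod_{\varepsilon\in\{0,1\}^d} \1_S\bigl(x_{1,\varepsilon_1},\dots,x_{d,\varepsilon_d}\bigr)\,\prod_{j=1}^d \sigma_{j,\lm}\bigl(x_{j,1}-x_{j,0}\bigr), \]
where $\sigma_{j,\lm}$ is a suitably normalized indicator of the discrete sphere of squared-radius $\lm^2 t_j^2$ in $\Z^5$. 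This multilinear form is of exactly the shape to which a $d$-partite, $d$-uniform \emph{weak hypergraph regularity lemma} in the lattice setting applies. Combined with a matching counting lemma, it should follow that $N(S;\lm)$ is close to its main term $\delta^{2^d}\prod_j |\sigma_{j,\lm}|$ whenever $\1_S$ is sufficiently uniform in an appropriate box norm on $(\Z^5)^d$. If $\1_S$ fails this uniformity, a standard density-increment on a structured product set should yield a gain of $\gs \delta^{O(1)}$; iterating at most $J$ times either produces a configuration or exhausts the lacunary scales.

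The principal obstacle is calibrating the counting lemma in the discrete setting: the spherical measures $\sigma_{j,\lm}$ are singular, and their Fourier transforms in $\Z^5$ have only limited minor-arc decay, so one must carefully separate the structured component of $\1_S$ from the uniform component at exactly the scale dictated by the modulus $q_0$ arising in the major-arc approximation of $\widehat{\sigma_{j,\lm}}$. This is precisely what forces the dimension $5$ per factor (below which minor-arc estimates become insufficient) and what introduces the modulus $q_0$, via the same circle-method analysis that underlies Theorems A$^\prime$ and B$^\prime$. Once the counting lemma is in place, the density-increment machinery and the extraction of $q_0$ proceed in direct analogy with the Euclidean argument for Theorem \ref{Rect}, with the single-distance input supplied by Theorem A$^\prime$.
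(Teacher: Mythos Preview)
Your reduction to a lacunary finitary statement and your identification of the multilinear form $N(S;\lm)$ are both exactly right, as is your explanation of why dimension $5$ per factor and the modulus $q_0$ appear. The overall architecture---weak hypergraph regularity plus a counting lemma---is also the paper's architecture. The gap is in how you propose to run the argument once $\1_S$ fails to be box-norm uniform.

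You write that in the non-uniform case a ``standard density-increment on a structured product set'' gives a gain, and that iterating over the lacunary scales finishes. But large box norm of $\1_S-\delta$ only produces sets $B_1,\dots,B_d$ with $B_j\subseteq\Z^5$ \emph{arbitrary} such that $S$ has increased density on $B_1\times\cdots\times B_d$. To iterate you would then have to count isometric copies of $\lm_{j+1}\RR$ inside $S\cap(B_1\times\cdots\times B_d)$, i.e.\ evaluate a multilinear form with the spherical weights $\sigma_{j,\lm}$ against functions supported on the $B_j$. The discrete spherical averages have no good behaviour relative to arbitrary subsets of $\Z^5$: the circle-method estimates you invoke for $\widehat{\sigma_{j,\lm}}$ are tied to arithmetic progressions (congruence classes and boxes), not to generic sets. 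So the density-increment loop does not close.

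The paper avoids this entirely. It never increments density. Instead it compares the configuration count $\NN^d_{\la\RR,q,Q}(\1_S;\,e\in\HH_{d,d}^{\un})$ directly to the \emph{unrestricted} box count $\MM^d_{\la,q,Q}(\1_S;\,e\in\HH_{d,d}^{\un})$, which is at least $\delta^{2^d}-O(\eps)$ by H\"older. The comparison $\NN^d\approx\MM^d$ is proved by induction on the hypergraph level $1\le k\le d$: at level $k$ the weak regularity lemma (in a parametric, local form adapted to grids $Q_{\ut}(q_j,L_j)$) replaces each $f_e$ by a conditional expectation that is a bounded linear combination of indicators $\prod_{\f'\in\partial\pi(e)}1_{A_{\f'}}$, which are products of level-$(k-1)$ objects. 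The multilinear form then \emph{factors} into level-$(k-1)$ forms, and one invokes the inductive hypothesis. The base case $k=1$ is precisely the single-distance comparison in $\Z^5$ underlying Theorem~A$^\prime$. The modulus is built inductively as well: one sets $q_k(\eps)=q_{k-1}(\eps_1)$ with $\eps_1$ a doubly-exponentially small function of $\eps$, which is why $q_0(\delta,\RR)$ ends up tower-type rather than the single exponential you might expect from a naive density-increment picture.

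In short: keep your reduction and your multilinear setup, but replace the density-increment iteration by the inductive reduction of hypergraph level via the regularity decomposition. The ``single-distance input'' you mention is exactly the base case of that induction, not an external black box fed into a density-increment engine.
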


Our arguments again work for more general patterns where $d$-dimensional rectangles are replaced with direct products of non-degenerate simplices.

\begin{thm}\label{ProdZ}
Let $0<\delta\leq 1$ and $\Delta=\Delta_1\times\cdots\times\Delta_d\subseteq\Z^n$, where $\Z^n=\Z^{2n_1+3}\times\cdots\times\Z^{2n_d+3}$ and each $\Delta_i\subseteq\Z^{2n_i+3}$ is a non-degenerate simplex of $n_i$ points.

\begin{itemize}
\item[(i)]
If $S\subseteq\Z^{n}$ has upper Banach density at least $\delta$, then
there exist integers $q_0=q_0(\delta,\Delta)$ and $\lm_0=\lm_0(S,\Delta)$ such that
$S$ contains an isometric copy of $q_0\lm\Delta$ for all $\lm\in\sqrt{\N}$ with $\lambda\geq \lm_0$.

\medskip
 
\item[(ii)]
There exists a constant $N(\delta,\Delta)$ such that if $N\geq N(\delta,\Delta)$, then any $S\subseteq\{1,\dots,N\}^{n}$ with cardinality $|S|\geq\delta N^{n}$ will necessarily contain an isometric copy of $\lm\Delta$ for some $\lm\in\sqrt{\N}$ with $1\leq\lambda\leq N$.\end{itemize}
Moreover, each of the isometric copies 
in \emph{(i)} and \emph{(ii)} above can be realized in  the special form $\Delta_1'\times\cdots\times\Delta_d'$ with each $\Delta_i'\subseteq\Z^{2n_i+3}$ an isometric copy of $q_0\lm\Delta_j$ and $\lm\Delta_j$, respectively.
\end{thm}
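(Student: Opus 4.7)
The plan is to establish a quantitative lacunary version of Theorem \ref{ProdZ}(ii), in direct analogy with Proposition B$'$, and then deduce both parts of Theorem \ref{ProdZ} from it. Specifically, I would prove: there exist constants $J = J(\delta,\Delta)$ and $q_0 = q_0(\delta,\Delta)$ such that whenever $N \geq \lm_1 \geq \cdots \geq \lm_J \geq 1$ is a lacunary sequence in $q_0\sqrt{\N}$ and $S \subseteq \{1,\ldots,N\}^n$ satisfies $|S| \geq \delta N^n$, then $S$ contains an isometric copy of $\lm_j\Delta$ of the special product form $\Delta_1' \times \cdots \times \Delta_d'$ for some $1 \leq j \leq J$. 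Part (i) then follows by the contrapositive embedding argument already illustrated in the excerpt (choosing a lacunary sequence of length $J$ along which no copy appears, embedding $S$ in a large cube $Q$ whose side length is divisible by $q_0$, and rescaling), while Part (ii) is obtained by the explicit choice $\lm_j = 2^{J-j}q_0$.

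To prove this lacunary proposition, I would apply the weak hypergraph regularity lemma developed earlier in the paper to $f = 1_S$, viewed as a function on the $d$-fold product $\Z^{2n_1+3} \times \cdots \times \Z^{2n_d+3}$. This decomposes $f = g + h$, where $g$ is \emph{structured} as a bounded sum of products of functions each depending on a single factor $\Z^{2n_i+3}$, while $h$ is pseudorandom in the Gowers-type seminorm adapted to the product hypergraph. The associated counting lemma then shows that the weighted count of product-form isometric copies of $\lm\Delta$ in $S$ equals the corresponding count for $g$, up to an error negligible relative to the main term provided $\lm \leq N$; the input to this counting lemma on each factor is the single-simplex count from \cite{Magy09} that already underlies Proposition B$'$.

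With the problem reduced to the structured piece $g$, the product structure of $g$ together with the product structure of $\lm\Delta$ causes the $d$-fold count to essentially factor as a product of single-simplex counts, one on each factor $\Z^{2n_i+3}$. For each factor I would apply Proposition B$'$ to the relevant marginal $g_i$ (or to an appropriate superlevel set thereof) to produce scales at which the count of isometric copies of $\lm\Delta_i$ is at least the expected main term. Choosing $J$ to exceed the $d$-fold product of the bounds from Proposition B$'$, and $q_0$ to be a common multiple of the $d$ individual moduli supplied by Proposition B$'$, a pigeonhole argument yields a single common scale $\lm_j$ at which \emph{all} $d$ factors simultaneously produce many copies, whence $S$ contains the desired product configuration.

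The main obstacle I anticipate is executing the final pigeonhole in a manner compatible with the joint product structure: the marginals $g_i$ extracted by regularity are not indicator functions of sets of density $\geq \delta$, so Proposition B$'$ must be applied through a careful slicing or thresholding in the complementary coordinates to identify many fibers of density comparable to $\delta$. A secondary technical point is ensuring the discrete modulus $q_0$ and the threshold $\lm_0$ behave tensorially across the $d$ factors; this forces the quantitative bounds to acquire the tower-type dependence on $\delta$ mentioned in the Quantitative Remark following Theorem \ref{Prod}.
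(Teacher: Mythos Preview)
Your outer scaffolding---reduce to a lacunary proposition analogous to Proposition~B$'$, then deduce (i) by contrapositive embedding and (ii) by taking $\lambda_j=2^{J-j}q_0$---is exactly what the paper does. The gap is in the middle, in how you describe the output of the regularity step and how you reach the single-simplex base case.

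A single application of the weak hypergraph regularity lemma at level $d$ does \emph{not} produce a structured piece that is a bounded sum of products of functions each depending on a single factor $\Z^{2n_i+3}$. What it produces (see Lemmas~\ref{Lem4.6} and~\ref{Lem5.12}) is, locally on each grid-cube, an expansion over atoms that are products of functions indexed by $(d-1)$-element subsets of $\{1,\dots,d\}$. For $d=2$ this happens to coincide with ``single factor'' functions, which is why your description is correct in that case; for $d\geq 3$ it does not. The paper therefore does not go from level $d$ to level $1$ in one shot: it proves the counting lemma (Proposition~\ref{Lem5.10}) by induction on $1\leq k\leq d$, at each step applying regularity to drop from level $k$ to level $k-1$ and invoking the counting lemma at level $k-1$. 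Only at the base $k=1$ does one land on genuine single-simplex counts.

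Relatedly, at that base one cannot simply invoke Proposition~B$'$ on ``the marginal $g_i$'' and then pigeonhole the $d$ scales together. After regularity one is left with a large \emph{parametric family} of bounded functions (indexed by grid positions $\ut$ and by atoms $\ur$), none of which is the indicator of a set of density $\geq\delta$. The paper handles this with a parametric/simultaneous version of the single-simplex counting lemma (Proposition~\ref{Cor5.1}) that finds a \emph{single} interval of scales $[L_{j+1},L_j]$ working uniformly for all functions in the family outside a small exceptional set; the alignment across factors is built into the $(\eps,q)$-admissible sequence of scales rather than achieved by an a posteriori pigeonhole. Your anticipated fix via ``slicing or thresholding'' would not recover this; you would still need to control an exponentially large family of marginals simultaneously, which is what forces both the parametric lemmas and the tower-type dependence.
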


\emph{Quantitative Remark.} A careful analysis of our proof reveals that the constant $q_0(\delta,\Delta)$ (and consequently also $N(\delta,\Delta)$) can be taken less than $W_d(C'_\Delta\delta^{-13n_1\cdots n_d})$ where $W_k(m)$ is  a tower of exponentials defined by $W_1(m)=\exp(m)$ and $W_{k+1}(m)=\exp(W_k(m))$ for $k\geq1$.


\subsection{Notations and Outline.} We will consider the parameters $d,n_1,\ldots,n_d$ fixed and will not indicate the dependence on them. Thus we will write $f=O(g)$ 
if $|f|\leq C(n_1,\ldots,n_d) g$. If the implicit constants in our estimates depend on additional parameters $\eps,\de,K,\ldots$ the we will write $f=O_{\eps,\de,K,\dots} (g)$. 
We will use the notation $f\ll g$ to indicate that $|f|\leq c\,g$ for some constant $c>0$ sufficiently small for our purposes. 


Given an $\eps>0$ and a (finite or infinite) sequence $L_0\geq L_1\geq \cdots >0$, we will say that the sequence is $\eps$-\emph{admissible} if $L_j/L_{j+1}\in\N$ and $L_{j+1}\ll\eps^2 L_j$ for all $j\geq 1$. Moreover, if $q\in\N$ is given and $L_j\in\N$ for all $1\leq j\leq J$, then we will call the sequence $L_0\geq L_1\geq\cdots \geq L_J$ $(\eps,q)$-\emph{admissible} if in addition $L_J/q\in\N$. Such sequences of scales will often appear in our statements both in the continuous and the discrete case.


Our proofs are based on a weak hypergraph regularity lemma and an associated counting lemma developed in the context of Euclidean spaces and the integer lattice.
In Section \ref{FFSECTION} we introduce our approach in the model case of finite fields and prove an analogue of Theorem \ref{Rect} in this setting. In Section \ref{Base} we review Theorem \ref{Prod} for a single simplex and ultimately establish the base case of our general  inductive approach to Theorem \ref{Prod}. In Section \ref{two} we address Theorem \ref{Prod} for the direct product of two simplices, this provides a new proof (and strengthening) of the main result of \cite{LM17} and serves as a gentle preparation for the more complicated general case which we present in the Section \ref{generalcase}. The proof of Theorem \ref{ProdZ} is outlined in 
Sections \ref{BaseZ} and \ref{generalcaseZ}, while a short direct proof of Part (i) of Theorem B$^\prime$ is presented in Section \ref{Appendix}.


\section{Model case: vector spaces over finite fields.}\label{FFSECTION}

In this section we will illustrate our general method by giving a complete proof of Theorem \ref{Rect} in the model setting of $\FF_q^n$ where $\FF_q$ denotes the finite field of $q$ elements. We do this as the
notation and arguments are more transparent in this setting yet many of the main ideas are still present.

We say that two vectors $u,v\in\FF_q^n$ are orthogonal, if $x\cdot y=0$, where ``$\cdot$" stands for the
usual dot product. A rectangle in $\FF_q^n$ is then a set $\RR=\{x_1,y_1\}\times\cdots\times \{x_n,y_n\}$ with side vectors $y_i-x_i$
being pairwise orthogonal. 

The finite field analogue of Theorem \ref{Rect} is the following

\begin{prop}\label{thmF1} For any $0<\delta\leq 1$ there exists an integer $q_0=q_0(\delta)$ with the following property:

If  $q\geq q_0$ and $t_1,\dots,t_d\in\FF_q^*$, then any $S\subs \FF_q^{2d}$ with $|S|\geq \delta \,q^{2d}$ will contain points
 \[\{x_{11},x_{12}\}\times\cdots\times\{x_{d1},x_{d2}\}\subseteq V_1\times\cdots\times V_d \quad\text{with \ $|x_{j2}-x_{j1}|^2=t_j$  for  $1\leq j\leq d$}\]
  where
we have written $\FF_q^{2d}=V_1\times\cdots\times V_d$ with $V_j\simeq \FF_q^2$ pairwise orthogonal coordinate subspaces.
\end{prop}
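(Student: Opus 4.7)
The plan is a direct Fourier/Gowers-box-norm argument. Let $f=\1_S$, so that $\E f\ge\delta$. For each $j$ set $\sigma_j=\{v\in V_j:|v|^2=t_j\}$ and let $\nu_j:=(|V_j|/|\sigma_j|)\1_{\sigma_j}$ be its mean-one normalization. Since $t_j\ne 0$, an elementary character-sum computation in $V_j\simeq\FF_q^2$ gives $|\sigma_j|=q+O(1)$, and the Weil bound on Kloosterman-type sums yields the Fourier decay
\[\bigl|\widehat{\nu_j}(\xi)\bigr|\ls q^{-1/2}\quad\text{for all }\xi\ne 0,\qquad \widehat{\nu_j}(0)=1.\]
Writing a point of $\FF_q^{2d}=V_1\times\cdots\times V_d$ in blocks $y=(y_1,\ldots,y_d)$ and letting $y+\eps\cdot v$ denote the point with $j$-th block $y_j+\eps_j v_j$, the number of configurations of the desired form in $S$ equals $q^{2d}\prod_j|\sigma_j|\cdot T(f)$ with
\[T(f):=\E_{y\in\FF_q^{2d}}\,\E_{v_1,\ldots,v_d}\,\prod_{j=1}^d\nu_j(v_j)\prod_{\eps\in\{0,1\}^d} f(y+\eps\cdot v),\]
so it suffices to show $T(f)>0$ for all sufficiently large $q$.

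The next step is to decompose $\nu_j=1+\omega_j$ with $\omega_j:=\nu_j-1$ mean-zero, and to expand $T(f)=\sum_{A\subs[d]}T_A(f)$ accordingly. For $A=\emptyset$ the $v_j$-averages decouple, and the substitution $a_j^0=y_j,\ a_j^1=y_j+v_j$ identifies $T_\emptyset(f)$ with the $d$-fold Gowers box norm of $f$ over the product $V_1\times\cdots\times V_d$, so that by standard iterated Cauchy--Schwarz
\[T_\emptyset(f)=\|f\|_{\square^d}^{2^d}\ge(\E f)^{2^d}\ge\delta^{2^d}.\]
For a nonempty $A$, say with $1\in A$, I would isolate the first factor: group the $2^d$ copies of $f$ into $2^{d-1}$ pairs differing only in the first block, change variables $a=y_1,\ b=y_1+v_1$, and fix the remaining coordinates to write the inner $(a,b)$-average as
\[\E_{a,b\in V_1}\omega_1(b-a)\,G(a)\,G(b)\;=\;\sum_{\xi\in V_1}\widehat{\omega_1}(\xi)\,|\widehat G(\xi)|^2,\]
for some $G:V_1\to[0,1]$ built from the fixed-parameter slices of $f$. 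Plancherel together with the Weil bound give this $\le\|\widehat{\omega_1}\|_\infty\|G\|_2^2\ls q^{-1/2}$; averaging over the remaining coordinates and using the independence of the $v_j$ together with $\E|\omega_j|\le 2$ for $j\in A\setminus\{1\}$ yields $|T_A(f)|\ls_d q^{-1/2}$.

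Summing the $2^d$ terms produces $T(f)\ge\delta^{2^d}-C_d q^{-1/2}$, which is strictly positive once $q\ge q_0(\delta):=(2C_d\delta^{-2^d})^2$; since each configuration is counted with positive weight and $|v_j|^2=t_j\ne0$ forces $v_j\ne0$, degenerate terms cause no problem. The main technical obstacle is the estimate on the $T_A$ for nonempty $A$: the two-dimensionality of each factor $V_j\simeq\FF_q^2$ is used precisely here, as it is what guarantees the Weil-type decay $\widehat{\nu_j}(\xi)=O(q^{-1/2})$ and hence a quantitative separation of the main term from the error.
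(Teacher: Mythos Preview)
Your argument is correct and is genuinely more direct than the paper's. The paper deduces Proposition~\ref{thmF1} from a counting lemma (Proposition~\ref{main-rect-ff}) proved by induction on the hypergraph level $k$: at each step it invokes a weak hypergraph regularity lemma (Lemma~\ref{KvN-ff}) to reduce to level $k-1$, with the Fourier identity \eqref{k=1} serving only as the base case $k=1$. You instead expand $\prod_j\nu_j=\prod_j(1+\omega_j)$ once, identify the main term $T_\emptyset$ with the $d$-fold box norm, and kill every error term $T_A$ by a single application of the Fourier bound in one coordinate together with the trivial $L^1$ bound $\E|\omega_j|\le 2$ on the remaining factors. This bypasses regularity entirely, and in fact gives the sharper conclusion $T(f)\ge\delta^{2^d}-C_d\,q^{-1/2}$, whereas the paper's route yields $\mathcal N_{\underline t}(1_S)\ge\delta^{2^d}-O(\eps)-O_\eps(q^{-1/2})$ with the $\eps$-loss coming from the regularity decomposition.

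The reason the paper takes the longer route is structural: the finite-field section is written to model the Euclidean and integer-lattice arguments of Sections~\ref{Base}--\ref{generalcaseZ}, where no clean decomposition $\nu=1+\omega$ with uniformly small $\widehat\omega$ is available, and the regularity/von-Neumann machinery becomes essential. For the finite-field statement in isolation, your approach is both simpler and quantitatively stronger.
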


\subsection{Overview of the proof of Proposition \ref{thmF1} }

Write $\FF_q^{2d}=V_1\times\ldots\times V_d$ with $V_j\simeq \FF_q^2$ pairwise orthogonal coordinate subspaces.
For any $\ut:=(t_1,\ldots,t_d)\in\FF_q^*$ and $S\subseteq \FF_q^{2d}$ we define
\[\NN_{\ut}(1_S):= \E_{\ux_{1}\in V_1^2, \dots,\ux_{d}\in V_d^2}\!\!\!\!\!\!\!\!\prod\limits_{(\ell_1,\dots,\ell_d)\in\{1,2\}^d} 1_S(x_{1\ell_1},\dots,x_{d\ell_d})\,\prod\limits_{j=1}^d\sigma_{t_j}(x_{j2}-x_{j1})
\]
where we used the shorthand notation $\ux_j:=(x_{j1},x_{j2})$ for each $1\leq j\leq d$ and  the averaging notation: \[\E_{x\in A} f(x):=\frac{1}{|A|}\sum_{x\in A} f(x)\] for a finite set $A\neq\emptyset$. We have also used the notation
\[\sigma_t(x)=\begin{cases}
q \quad \text{if $|x|^2=t$}\\
0 \quad \text{otherwise}
\end{cases}\] 
for each $t\in\FF_q^*$. Note that the function $\si_t$ may be viewed as the discrete analogue of the normalized surface area measure on the sphere  of radius $\sqrt{t}$. It is
well-known, see \cite{IR07}, that
\[\E_{x\in\FF_q^2}\ \sigma_t(x)=1+O(q^{-1/2})\]
and for all $\xi\neq 0$ one has
\[\hat{\sigma}_t(\xi):=\E_{x\in\FF_q^2}\ \sigma_t(x)\,e^{2\pi
i\frac{x\cdot\xi}{q}}=O(q^{-1/2}).\]

Note that if $\NN_{\ut}(1_S)>0$, then this  implies that $S$ contains a rectangle of the form $\{x_{11},x_{12}\}\times \cdots\times \{x_{d1},x_{d2}\}$ with $x_{j1},x_{j2}\in V_j$ and  $|x_{j2}-x_{j1}|^2=t_j$  for  $1\leq j\leq d$. 

\smallskip

Our approach to  Proposition  \ref{thmF1} in fact  establishes the following quantitatively stronger result.

\begin{prop}\label{propF1} For any $0<\VE\leq 1$ there exists an integer $q_0=q_0(\VE)$ with the following property:

If  $q\geq q_0$, then for  any $S\subs \FF_q^{2d}$ and  $t_1,\dots,t_d\in\FF_q^*$ one has
\[\NN_{\ut}(1_S)>\left(\frac{|S|}{q^{2d}}\right)^{2^d}-\VE\]
 where we have written $\FF_q^{2d}=V_1\times\ldots\times V_d$ with $V_j\simeq \FF_q^2$ pairwise orthogonal coordinate subspaces.
\end{prop}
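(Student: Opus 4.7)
I would prove the slightly stronger statement that $\NN_{\ut}(f) \geq (\E f)^{2^d} - \VE$ for any function $f:\FF_q^{2d}\to [0,1]$ whenever $q\geq q_0(\VE)$; taking $f=\mathbf{1}_S$ then yields the proposition. The argument is by induction on $d$, exploiting the self-similarity of the box structure of the configuration under conditioning on one pair of variables.

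For the base case $d=1$, a direct Fourier expansion yields $\NN_t(f) = \sum_{\xi} \widehat\sigma_t(\xi)\,|\widehat f(\xi)|^2$; the main ($\xi=0$) term contributes $\delta^2+O(q^{-1/2})$ while the remainder is bounded in absolute value by $O(q^{-1/2})\|f\|_2^2=O(q^{-1/2})$ using Parseval and the uniform Fourier decay $|\widehat\sigma_t(\xi)|=O(q^{-1/2})$ for $\xi\neq 0$ recalled above. For the inductive step, the key observation is that upon fixing the last pair $\ux_d=(x_{d1},x_{d2})$, the product over the $2^d$ corners factors as
\[ \prod_{\ell}f(\ux_\ell) \;=\; \prod_{\ell'\in\{1,2\}^{d-1}} g_{x_{d1},x_{d2}}(x_{1\ell'_1},\ldots,x_{(d-1)\ell'_{d-1}}), \qquad g_{y_1,y_2}(z):=f(z,y_1)f(z,y_2)\in[0,1], \]
so that $\NN_{\ut}(f)=\E_{\ux_d}\sigma_{t_d}(x_{d2}-x_{d1})\,\NN_{t_1,\ldots,t_{d-1}}(g_{x_{d1},x_{d2}})$, reducing a $d$-dimensional count to an average of $(d{-}1)$-dimensional counts for the doubled functions $g_{x_{d1},x_{d2}}$.

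Applying the inductive hypothesis with a parameter $\VE'>0$ to be chosen gives $\NN_{t_1,\ldots,t_{d-1}}(g_{x_{d1},x_{d2}})\geq \psi(x_{d1},x_{d2})^{2^{d-1}}-\VE'$, where $\psi(y_1,y_2):=\E_z f(z,y_1)f(z,y_2)$. Jensen's inequality applied to the probability measure $\sigma_{t_d}\,d\ux_d/m_d$ (with $m_d=\E\sigma_{t_d}=1+O(q^{-1/2})$) and the convex function $t\mapsto t^{2^{d-1}}$ then produces
\[ \E_{\ux_d}\sigma_{t_d}\,\psi^{2^{d-1}} \;\geq\; m_d^{1-2^{d-1}}\bigl(\E_{\ux_d}\sigma_{t_d}\,\psi\bigr)^{2^{d-1}}, \]
while a second application of the $d=1$ case together with Cauchy--Schwarz in the auxiliary variable $z$ yields $\E_{\ux_d}\sigma_{t_d}\psi=\E_z\NN_{t_d}(f(z,\cdot))\geq \delta^2-O(q^{-1/2})$. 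Assuming $\delta^{2^d}>\VE$ (the complementary case being trivial by nonnegativity of $\NN_{\ut}$), linearizing $(\delta^2-O(q^{-1/2}))^{2^{d-1}}\geq \delta^{2^d}-O_d(q^{-1/2})$, using $m_d^{1-2^{d-1}}=1-O(q^{-1/2})$, and choosing $\VE'=\VE/(2\cdot 2^d)$ together with $q\geq q_0(\VE,d)$ sufficiently large closes the induction.

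The main obstacle is the quantitative handling of the Jensen step together with careful tracking of how the cumulative $O(q^{-1/2})$ errors propagate through the $d$ levels of the induction; the small-$\delta$ regime must also be dispatched separately via the trivial bound $\NN_{\ut}(f)\geq 0$. The method succeeds because $\sigma_t$ is simultaneously a near-probability density ($m_d\to 1$) and Fourier-quasirandom ($\|\widehat\sigma_t\|_\infty=O(q^{-1/2})$), features specific to spheres and unavailable for the collinear configurations excluded by the Bourgain counterexamples discussed in the introduction.
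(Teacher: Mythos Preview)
Your argument is correct and takes a genuinely different, more elementary route than the paper. The paper does \emph{not} prove the lower bound directly; instead it establishes the two-sided counting lemma (Proposition~\ref{main-rect-ff}) \[\NN_{\ut}(f_e;\,e\in\HH_{d,k}^{\uu}) = \MM(f_e;\,e\in\HH_{d,k}^{\uu}) + O(\eps) + O_\eps(q^{-1/2})\] by induction on $k$, using a generalized von-Neumann inequality (Lemma~\ref{vN-rect-ff}) together with a weak hypergraph regularity lemma (Lemma~\ref{KvN-ff}), and then combines this with the box-norm lower bound $\MM(1_S)\geq(|S|/q^{2d})^{2^d}$. Your proof bypasses the regularity lemma entirely: the recursive factorization $\NN_{\ut}(f)=\E_{\ux_d}\sigma_{t_d}\,\NN_{t_1,\ldots,t_{d-1}}(g_{x_{d1},x_{d2}})$ together with Jensen's inequality (exploiting that $\sigma_{t_d}\geq 0$ and $t\mapsto t^{2^{d-1}}$ is convex on $[0,1]$) reduces the $d$-dimensional lower bound directly to the $(d{-}1)$-dimensional one, with the Fourier estimate entering only at the bottom level.

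What each approach buys: your argument is shorter, avoids any decomposition of $f$, and makes transparent that only the nonnegativity and Fourier decay of $\sigma_t$ are needed for the \emph{one-sided} bound. The paper's approach is deliberately heavier because Section~\ref{FFSECTION} is a model for the Euclidean and integer-lattice arguments in Sections~\ref{Base}--\ref{generalcaseZ}; there one genuinely needs the full counting-lemma comparison $\NN\approx\MM$ (not just a lower bound), the multilinear form with distinct functions $f_e$, and the regularity machinery to pass between scales --- none of which your Jensen shortcut supplies. So your proof is a clean solution to Proposition~\ref{propF1} in isolation, but it does not serve the paper's expository purpose of previewing the method.
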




A crucial observation in the proof of Proposition \ref{propF1} is that the  averages  $\NN_{\ut}(1_S)$ can be compared to ones which can be easily estimated from below. 
We define, for any $S\subseteq \FF_q^{2d}$,  the (unrestricted) count
\[
\MM(1_S):=\E_{\ux_{1}\in V_1^2, \dots,\ux_{d}\in V_d^2}\!\!\!\!\!\!\!\!\prod\limits_{(\ell_1,\dots,\ell_d)\in\{1,2\}^d} 1_S(x_{1\ell_1},\dots,x_{d\ell_d}).\]
It is easy to see, by carefully applying Cauchy-Schwarz $d$ times to $\E_{x_{11}\in V_1,\dots,x_{d1}\in V_d}1_S(x_{11},\dots,x_{d1})$, that 
\eq\label{maintermff}
\MM(1_S)\geq \left(\frac{|S|}{q^{2d}}\right)^{2^d}.\ee

Our approach to Proposition \ref{propF1} therefore reduces to establishing that for any $\VE>0$ one has
\eq\label{Fd=k}
\NN_{\ut} (1_S)\,=\,\MM(1_S)+O(\eps) + O_{\eps}(q^{-1/2}).
\ee

The validity of (\ref{Fd=k}) will follow immediately from the $d=k$ case of Proposition \ref{main-rect-ff} below. 
However, before we can state this \emph{counting lemma} we need to introduce some further notation from the theory of hypergraphs, notation that we shall ultimately  make use of   throughout the paper.

\subsection{Hypergraph Notation and a Counting Lemma}\label{hbnotation}\

In order to streamline our notation we will make use the language of hypergraphs. 
For $J:=\{1,\ldots,d\}$ and  $1\leq k\leq d$, we let
$\HH_{d,k}=\{e\subs J;\ |e|=k\}$ denote the full $k$-regular hypergraph on the vertex set $J$. For $K:=\{jl;\ j\in J,\, l\in\{1,2\}\}$
we define the projection $\pi:K\to J$ as $\pi(jl):=j$ and use this in turn to define the hypergraph bundle 
\[
\HH_{d,k}^{\uu}:=\{e\subs K;\ |e|=|\pi(e)|=k\}\] using the shorthand notation $\underline{2}=(2,2,\ldots,2)$ to indicate that
$|\pi^{-1}(j)|=2$ for all $j\in J$. 

Notice when $k=d$ then $\HH_{d,d}$ consists of one element, the set $e=\{1,\ldots,d\}$, and
\[\HH_{d,d}^{\underline{2}}=\{\,\{1l_1,\ldots,dl_d\};\ (l_1,\ldots,l_d)\in \{1,2\}^d\}.\]

Let $V:=\FF_q^{2d}$ and $V=V_1\times\ldots\times V_d$ with $V_j\simeq \FF_q^2$ pairwise orthogonal coordinate subspaces. For a given $\underline{x}=(x_{11},x_{12},\ldots,x_{d1},x_{d2})\in V^2$ with $x_{j1},\,x_{j2}\in V_j$ and a given edge
$e=\{1l_1,\ldots,dl_d\}$, we write \[\underline{x}_e:=(x_{1l_1},\ldots,x_{dl_d}).\] Note that the map $\ux\to\ux_e$ defines a
projection $\pi_e:V^2\to V$.
With this notation, we can clearly now write
\[\NN_{\ut}(1_S)=\E_{\ux\in V^2} \prod_{e\in \HH_{d,d}^{\uu}} 1_S(\ux_e)\ \prod_{j=1}^d
\sigma_{t_j}(x_{j2}-x_{j1})\]
\[\MM(1_S)=\E_{\ux\in V^2} \prod_{e\in \HH_{d,d}^{\uu}} 1_S(\ux_e).\]

Now for any $1\leq k\leq d$ and any  edge $e'\in\HH_{d,k}$, i.e. $e'\subs \{1,\ldots,d\}$, $|e'|=k$, we let $V_{e'}:=\prod_{j\in e'} V_j$. For every $\ux\in V^2$ and $e\in\HH_{d,k}^{\uu}$, we define $\ux_e:=\pi_e(\ux)$ where  $\pi_e: V^2\to V_{\pi(e)}$ is the natural projection map. 

Our key \emph{counting lemma}, Proposition \ref{main-rect-ff} below, which we will establish by induction on $1\leq k\leq d$ below, is then the statement that given a family of functions $f_e:V_{\pi(e)}\to [-1,1],\,e\in\HH_{d,k}^{\uu}$, the averages (generalizing those discussed above) which are defined by
\eq\label{multi-count-k}
\NN_{\ut} (f_e;\ e\in \HH_{d,k}^{\uu}):=\E_{\ux\in V^2} \prod_{e\in \HH_{d,k}^{\uu}} f_e(\ux_e)\ \prod_{j=1}^d
\sigma_{t_j}(x_{j2}-x_{j1})
\ee
\eq\label{multi-box-k}
\MM(f_e;\,e\in \HH_{d,k}^{\uu}):=\E_{\ux\in V^2} \prod_{e\in \HH_{d,k}^{\uu}} f_e(\ux_e).
\ee
are approximately equal. Specifically, one has

\begin{prop}[Counting Lemma]\label{main-rect-ff}  Let $1\leq k\leq d$ and $0<\eps\leq 1$. For any collection of functions \[\text{$f_e: V_{\pi(e)}\to [-1,1]$ with $e\in\HH_{d,k}^{\uu}$}\] 
one has
\eq\label{main-rect-f}
\NN_{\ut} (f_e;\ e\in \HH_{d,k}^{\uu})\,=\,\MM(f_e;\,e\in \HH_{d,k}^{\uu})+O(\eps) + O_{\eps}(q^{-1/2}).
\ee
\end{prop}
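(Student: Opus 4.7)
The plan is to prove the equality by expanding the difference $\NN_\ut-\MM$ into $2^d$ pieces, one per subset $T\subseteq\{1,\ldots,d\}$, and controlling each piece by one-variable Fourier analysis on a single coordinate subspace $V_{j_0}$. Setting $w_j:=x_{j2}-x_{j1}$ and $\rho_{t_j}:=\sigma_{t_j}-1$, the stated bounds on $\widehat{\sigma}_{t_j}$ immediately give $\|\widehat{\rho}_{t_j}\|_{\infty}=O(q^{-1/2})$ on $V_j\simeq\FF_q^2$. Expanding
\[
\prod_{j=1}^d \sigma_{t_j}(w_j) \;=\; \sum_{T \subseteq \{1,\ldots,d\}} \prod_{j \in T} \rho_{t_j}(w_j)
\]
and substituting into (\ref{multi-count-k}) splits $\NN_\ut(f_e;e\in\HH_{d,k}^{\uu})$ into $2^d$ summands, of which the $T=\emptyset$ one is by definition $\MM(f_e;e\in\HH_{d,k}^{\uu})$. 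It therefore suffices to bound the contribution of each nonempty $T$ by $O(q^{-1/2})$.

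Fix a nonempty $T$ and an index $j_0\in T$. I partition the edges of $\HH_{d,k}^{\uu}$ into three classes depending on whether they contain the vertex $j_0 1$, the vertex $j_0 2$, or neither. Because $\pi$ is injective on every edge, no edge can contain both $j_0 1$ and $j_0 2$, so the product of the $f_e$'s factors as
\[
\prod_{e \in \HH_{d,k}^{\uu}} f_e(\ux_e) \;=\; F_1(x_{j_0 1}; \widetilde{\ux})\, F_2(x_{j_0 2}; \widetilde{\ux})\, G(\widetilde{\ux}),
\]
where $\widetilde{\ux}$ denotes $\ux$ with the pair $(x_{j_0 1}, x_{j_0 2})$ removed and $|F_1|,|F_2|,|G|\leq 1$ pointwise. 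Freezing $\widetilde{\ux}$ and applying Plancherel on $V_{j_0}$ to the $(x_{j_0 1},x_{j_0 2})$-integral yields
\[
\E_{x_{j_0 1}, x_{j_0 2} \in V_{j_0}} F_1\, F_2\, \rho_{t_{j_0}}(x_{j_0 2} - x_{j_0 1}) \;=\; \sum_{\xi \in V_{j_0}} \widehat{F}_1(\xi)\, \widehat{F}_2(-\xi)\, \widehat{\rho}_{t_{j_0}}(\xi),
\]
which by Cauchy--Schwarz together with Plancherel and $\|\widehat{\rho}_{t_{j_0}}\|_\infty=O(q^{-1/2})$ is bounded by $O(q^{-1/2})\,\|F_1\|_{L^2(V_{j_0})}\|F_2\|_{L^2(V_{j_0})}=O(q^{-1/2})$, uniformly in $\widetilde{\ux}$.

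Taking the outer expectation in $\widetilde{\ux}$ and using that expectations over distinct coordinate subspaces factor,
\[
\Big|\E_\ux \prod_e f_e(\ux_e) \prod_{j\in T} \rho_{t_j}(w_j)\Big| \;\leq\; O(q^{-1/2})\,\prod_{j\in T\setminus\{j_0\}}\E_{x_{j1},x_{j2}} |\rho_{t_j}(w_j)|,
\]
and since each $\E|\rho_{t_j}|\leq \E\sigma_{t_j}+1 = 2+O(q^{-1/2})$, the contribution of every nonempty $T$ is $O(q^{-1/2})$. Summing over the $2^d-1$ nonempty subsets gives $|\NN_\ut-\MM|=O(q^{-1/2})$, which in particular implies the stated bound $O(\eps)+O_\eps(q^{-1/2})$ for every $\eps>0$. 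The conceptually delicate step is the factorization in the second paragraph: it is precisely the hypergraph-bundle condition $|e|=|\pi(e)|$ (forbidding any edge from meeting both $j_0 1$ and $j_0 2$) that makes $F_1$ and $F_2$ depend on disjoint halves of the $V_{j_0}$-variables, allowing the single-coordinate Plancherel step to yield the $O(q^{-1/2})$ gain; without this structural feature cross-terms would appear and the Fourier saving would be lost.
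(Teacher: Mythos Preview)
Your proof is correct and in fact establishes the stronger bound $|\NN_{\ut}-\MM|=O(q^{-1/2})$ with no $\eps$-loss, but it follows a genuinely different route from the paper. The paper proceeds by induction on $k$: it invokes a weak hypergraph regularity lemma (Lemma~\ref{KvN-ff}) to decompose each $f_e$ as a structured part $\of_e=\E(f_e\mid\bigvee_{\f'}\BB_{\f'})$ plus a remainder small in the box norm, uses the generalized von-Neumann inequality (Lemma~\ref{vN-rect-ff}) to discard the remainder, and then exploits the product structure of the atoms of $\BB_e$ to rewrite $\NN_{\ut}(1_{A_e};e\in\HH_{d,k}^{\uu})$ and $\MM(1_{A_e};e\in\HH_{d,k}^{\uu})$ as level-$(k-1)$ expressions, closing the induction. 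This regularity-plus-counting machinery is what produces the $O(\eps)+O_\eps(q^{-1/2})$ form of the error. By contrast, you bypass the induction entirely: the expansion $\prod_j\sigma_{t_j}=\sum_T\prod_{j\in T}\rho_{t_j}$ together with the bundle condition $|e|=|\pi(e)|$ (so that no edge touches both $j_01$ and $j_02$) lets a single one-variable Plancherel step on $V_{j_0}$ harvest the $q^{-1/2}$ saving directly. Your argument is more elementary and quantitatively sharper in the finite-field model; the paper's approach is deliberately chosen because it is the one that survives in the Euclidean and integer-lattice settings of Sections~\ref{Base}--\ref{generalcaseZ}, where no global Fourier decomposition of the sphere measure is available and the regularity/von-Neumann framework becomes essential.
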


If we apply this Proposition with $d=k$ and $f_e=1_S$ for all $e\in\HH_{d,d}^{\uu}$, then Theorem \ref{thmF1} clearly follows given the lower bound (\ref{maintermff}).

\subsection{Proof of Proposition \ref{main-rect-ff}}

We will establish  Proposition \ref{main-rect-ff} by inducting on $1\leq k\leq d$.

 For $k=1$ the result follows from the
 basic observation that
if $f_1,f_2:\FF_q^2\to [-1,1]$ and let $t\in\FF_q^*$, then
\begin{align}\E_{x_1,x_2\in\FF_q^2}\ f_1(x_1)f_2(x_2)\,\si_t(x_2-x_1)&= \sum_{\xi\in\FF_q^2} \hat{f}_1(\xi)\hat{f}_2(\xi)\hat{\si}_t(\xi) \nonumber \\
& =\hat{f}_1(0)\hat{f}_2(0) + O(q^{-1/2})\label{k=1} \\
&= \E_{x_1,x_2\in\FF_q^2}\ f_1(x_1)f_2(x_2)+O(q^{-1/2})\nonumber
\end{align}
by the properties of the function $\hat{\si}$ given above.

To see how this implies Proposition \ref{main-rect-ff} for $k=1$ we note that since $\HH^{\uu}_{d,1}=\{jl:\ 1\leq j\leq d,\,1\leq l\leq 2\}$ it follows that
\begin{align*}
\NN_{\ut} (f_e;\ e\in \HH_{d,1}^{\uu}) & = \prod_{j=1}^d \E_{x_{j1},x_{j2}\in\FF_q^2}\ f_{j1}(x_{j1})
f_{j2}(x_{j2})\,\si_t(x_{j2}-x_{j1})\\
 & =\,\prod_{j=1}^d \E_{x_{j1},x_{j2}\in\FF_q^2}\ f_{j1}(x_{j1}) f_{j2}(x_{j2})+O(q^{-1/2})=\MM(f_e;\,e\in
 \HH_{d,1}^{\uu})+O(q^{-1/2}).
\end{align*}

The induction step has two main ingredients, 
the first is an estimate of the type which is often referred to as a  generalized von-Neumann
inequality, namely

\begin{lem}\label{vN-rect-ff}
Let $1\leq k\leq d$. For any collection of functions $f_e: V_{\pi(e)}\to [-1,1]$ with $e\in\HH_{d,k}^{\uu}$
 one has
\eq\label{vN-rect-f}
\NN_{\ut} (f_e;\ e\in \HH_{d,k}^{\uu})\,\leq\,\min_{e\in\HH_{d,k}^{\uu}} \|f_e\|_{\Box(V_{\pi(e)})} + O(q^{-1/2})
\ee
where for any $e\in\HH_{d,k}^{\uu}$ and $f: V_{\pi(e)}\to [-1,1]$ we define 
\eq
\|f\|^{2^k}_{\Box(V_{\pi(e)})}:=\E_{\ux\in V_{\pi(e)}^2} \prod_{e\in \HH_{d,k}^{\uu}} f(\ux_e).
\ee
\end{lem}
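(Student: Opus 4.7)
My plan is to deduce Lemma~\ref{vN-rect-ff} by a two-step argument: a ``peeling'' procedure that successively replaces each weight $\sigma_{t_j}$ by $1$ via the $k=1$ identity \eqref{k=1}, followed by the standard Gowers--Cauchy--Schwarz box-norm inequality applied to the resulting unweighted count $\MM$.

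The key structural observation enabling the peeling is that, for each fixed $j\in\{1,\ldots,d\}$, any edge $e\in\HH_{d,k}^{\uu}$ contains at most one of the two vertices $(j,1)$ and $(j,2)$; this is exactly the bundle condition $|\pi(e)|=|e|$ built into the definition of $\HH_{d,k}^{\uu}$. Consequently, when the pair $(x_{j1},x_{j2})$ is integrated out with everything else held as parameters, the product $\prod_{e:\,j\in\pi(e)} f_e(\ux_e)$ splits cleanly as $g_{j1}(x_{j1})\cdot g_{j2}(x_{j2})$, where $g_{j\ell}:=\prod_{e:\,(j,\ell)\in e} f_e$ and $|g_{j\ell}|\leq 1$. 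The inner average then has exactly the separable form handled in \eqref{k=1}:
\[
\E_{x_{j1},x_{j2}}\,g_{j1}(x_{j1})\,g_{j2}(x_{j2})\,\sigma_{t_j}(x_{j2}-x_{j1}) \;=\; (\E_{x_{j1}} g_{j1})(\E_{x_{j2}} g_{j2}) + O(q^{-1/2}),
\]
and since $g_{j1}$ and $g_{j2}$ depend on disjoint variables, the main term on the right equals the same integrand with $\sigma_{t_j}$ replaced by $1$. The remaining $\sigma_{t_i}$'s for $i\neq j$ sit entirely in the outer expectation and contribute only a total mass $\E\prod_{i\neq j}\sigma_{t_i}=1+O(q^{-1/2})$, so peeling the $j$-th weight costs only $O(q^{-1/2})$ in total.

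Iterating this peeling for $j=1,\ldots,d$ telescopes to $\NN_{\ut}(f_e;e\in\HH_{d,k}^{\uu})=\MM(f_e;e\in\HH_{d,k}^{\uu})+O(q^{-1/2})$. The unweighted average $\MM$ is then controlled by the standard Gowers--Cauchy--Schwarz inequality for box norms, which gives $|\MM(f_e;e\in\HH_{d,k}^{\uu})|\leq\prod_{e}\|f_e\|_{\Box(V_{\pi(e)})}\leq\|f_{e_0}\|_{\Box(V_{\pi(e_0)})}$, where $e_0$ attains the minimum and we have used $\|f_e\|_\Box\leq 1$ for every $e\ne e_0$ (which follows from $|f_e|\leq 1$). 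Combining the two estimates yields the lemma. The main thing to be careful about is that the factorisation $\prod_{e:\,j\in\pi(e)} f_e=g_{j1}\cdot g_{j2}$ is exact at each stage, so that no $\sigma_{t_j}$ is ever squared---this is precisely what keeps each inner average in the setting of \eqref{k=1} and hence the cumulative error at $O(q^{-1/2})$.
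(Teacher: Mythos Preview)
Your argument is correct and takes a genuinely different route from the paper's. The paper proves Lemma~\ref{vN-rect-ff} by running a \emph{weighted} iterated Cauchy--Schwarz: at each of $k$ steps it applies the one-variable estimate \eqref{beginning} (a squared version of \eqref{k=1}) to replace a pair $(x_{j1},x_{j2})$, eventually arriving at $\NN_{\ut}^{2^k}\le\|f_{e_0}\|_\Box^{2^k}+O(q^{-1/2})$ directly. You instead first peel off \emph{all} $d$ weights $\sigma_{t_j}$ using \eqref{k=1} to obtain $\NN_{\ut}=\MM+O(q^{-1/2})$, and only then invoke the unweighted Gowers--Cauchy--Schwarz inequality. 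Your factorisation $\prod_{e:\,j\in\pi(e)}f_e=g_{j1}\cdot g_{j2}$ is exactly right (this is the bundle condition), and the error control via $\E\prod_{i\neq j}\sigma_{t_i}=1+O(q^{-1/2})$ is clean because the $\sigma_{t_i}$ are nonnegative.

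What each approach buys: your peeling actually establishes the full Counting Lemma (Proposition~\ref{main-rect-ff}) with the sharper error $O(q^{-1/2})$ in place of $O(\eps)+O_\eps(q^{-1/2})$, so over $\FF_q$ the weak regularity machinery (Lemma~\ref{KvN-ff}) and the induction on $k$ become unnecessary. The paper's weighted Cauchy--Schwarz approach, by contrast, is deliberately modelled on what survives in $\R^n$ and $\Z^n$: there the analogue of \eqref{k=1} is the scale-dependent estimate of Lemma~\ref{vN-1}, which only yields control in the local $U^1_L$-norm rather than an exact identity, so a direct peeling $\NN\to\MM$ is unavailable and one genuinely needs the von-Neumann inequality together with the regularity/Koopman--von Neumann decomposition. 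In short, your proof is slicker for the finite-field model, while the paper's argument is the one that transfers.
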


The corresponding inequality for the multilinear expression $\MM (f_e;\ e\in \HH_{d,k}^{\uu})$, namely the fact that
\[\MM (f_e;\ e\in \HH_{d,k}^{\uu})\leq \prod_{e\in\HH_{d,k}^{\uu}} \|f_e\|_{\Box(V_{\pi(e)})}\leq \min_{e\in\HH_{d,k}^{\uu}}
\|f_e\|_{\Box(V_{\pi(e)})}\]
is well-known and is referred to as the Gowers-Cauchy-Schwarz inequality \cite{Gow06}.

\smallskip

The second and main ingredient is an approximate decomposition of a graph to simpler ones, and is essentially the so-called weak (hypergraph) regularity lemma of Frieze and Kannan \cite{FK96}. We choose to state this from a somewhat more abstract/probabilistic point of view, a perspective that will be particularly helpful when we consider our general results in the continuous and discrete settings.

We will first introduce this in the case $d=2$. A bipartite graph with (finite) vertex sets $V_1$, $V_2$ is a set $S\subs V_1\times V_2$ and a function $f:V_1\times V_2\to \R$ may be viewed as weighted bipartite graph with weights $f(x_1,x_2)$ on the edges $(x_1,x_2)$. If $\PP_1$ and $\PP_2$ are partitions of $V_1$ and $V_2$ respectively then $\PP=\PP_1\times \PP_2$ is a partition $V_1\times V_2$ and we let $\E(f|\PP)$ denote the function that is constant and equal to $\E_{x\in A} f(x)$ on each atom $A=A_1\times A_2$ of $\PP$. 
The weak regularity lemma states that for any $\eps>0$ and for any weighted graph $f:V_1\times V_2\to [-1,1]$ there exist partitions $\PP_i$ of $V_i$ with $|\PP_i|\leq 2^{O(\eps^{-2})}$ for $i=1,2$, so that
\eq\label{R1}
\quad |\E_{x_1\in V_1}\E_{x_2\in V_2} (f-\E(f|\PP))(x_1,x_2)\,\,\1_{U_1}(x_1)\1_{U_2}(x_2)|\leq \eps\ee
for all $U_1\subs V_1$ and $U_2\subs V_2$. Informally this means that the graph $f$ can be approximated with precision $\eps$ with the ``low complexity" graph $\E(f,\PP)$. 
If we consider the $\si$-algebras $\BB_i$ generated by the partitions $\PP_i$ and the $\si$-algebra $\BB=\BB_1\vee \BB_2$ generated by $\PP_1\times\PP_2$ then we have $\E(f|\BB)$, the so-called conditional expectation function of $f$. Moreover it is easy to see, using Cauchy-Schwarz, that estimate \eqref{R1} follows from
\eq\label{R2} \|f-\E(f|\BB_1\vee \BB_2)\|_{\Box(V_1\times V_2)} \leq \eps.\ee
With this more probabilistic point of view the weak regularity lemma says that the function $f$ can be approximated with precision $\eps$ by a low complexity function $\E(f|\BB_1\bigvee \BB_2)$, corresponding to $\si$-algebras $\BB_i$ on $V_i$ generated by $O(\eps^{-2})$ sets. This formulation is also referred to as a Koopman- von Neumann type decomposition, see Corollary 6.3 in \cite{TaoMontreal}.


We will need a natural extension to $k$-regular hypergraphs. See \cite{Tao06,Gow06}, and also \cite{CFZ} for extension to sparse hypergraphs. Given an edge $e'\in\HH_{d,k}$ of $k$ elements we define its boundary $\partial e':=\{\f'\in\HH_{d,k-1};\ \f'\subs e'\}$. For each $\f'=e'\backslash\{j\}\in\partial e'$ let
 $\BB_\f'$ be a  $\si$-algebra on $V_{\f'}:=\prod_{j\in\f'} V_j$ and $\bar{\BB}_{\f'}:=\{U\times V_j;\ U\in\BB_{\f'}\}$ denote its pull-back over the space $V_{e'}$. 
The $\si$-algebra $\,\BB=\bigvee_{\f'\in\partial e'} \BB_{\f'}\,$ is the smallest $\si$-algebra on $\partial e'$ containing $\bar{\BB}_{\f'}$ for all
$\f'\in\partial e'$. Note that the atoms of $\BB$ are of the form $A=\bigcap_{\f'\in\partial e'} A_{\f'}$ where $A_{\f'}$ is an atom
of $\bar{\BB}_{\f'}$. We say that the complexity of a $\si$-algebra $\BB_{\f'}$ is at most $m$, and write $\comp(\BB_{\f'})\leq m$, if it is generated by $m$ sets.

\begin{lem}[Weak hypergraph regularity lemma]\label{KvN-ff} 
Let $1\leq k\leq d$ and $f_e: V_{\pi(e)}\to [-1,1]$ be a given function for each $e\in\HH_{d,k}^{\uu}$.
For any $\VE>0$
there exists $\si$-algebras $\BB_{\f'}$ on $V_{\f'}$ for each  $\f'\in\HH_{d,k-1}$ such that
\eq\label{compl-f} \comp(\BB_{\f'})=O(\eps^{-2^{k+1}})\ee
and
\eq\label{KvN-f} \|f_e-\E(f_e|\bigvee_{\f'\in\partial \pi(e)} \BB_{\f'})\|_{\Box (V_{\pi(e)})}\,\leq\,\eps \quad\text{for all \,$e\in\HH_{d,k}^{\uu}$}.
\ee
\end{lem}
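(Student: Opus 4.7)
The plan is a standard energy-increment argument, in the spirit of Frieze--Kannan and Gowers, adapted to the present hypergraph-bundle setting. I would begin with the trivial $\sigma$-algebras $\BB_{\f'}=\{\emptyset,V_{\f'}\}$ for all $\f'\in\HH_{d,k-1}$ and iterate. At each stage, track the total energy
$$ E\ :=\ \sum_{e\in\HH_{d,k}^{\uu}}\Bigl\|\E\Bigl(f_e\,\Big|\,\bigvee_{\f'\in\partial\pi(e)}\BB_{\f'}\Bigr)\Bigr\|_{L^2(V_{\pi(e)})}^2, $$
which is bounded by $|\HH_{d,k}^{\uu}|=O(1)$ since each $f_e$ is $[-1,1]$-valued. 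The iteration halts precisely when \eqref{KvN-f} is satisfied for every $e\in\HH_{d,k}^{\uu}$, so it suffices to show a definite energy gain whenever it fails.

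Suppose then that at some stage there exists an edge $e\in\HH_{d,k}^{\uu}$ for which $\|f_e-\E(f_e|\BB^e)\|_{\Box(V_{\pi(e)})}>\eps$, where $\BB^e:=\bigvee_{\f'\in\partial\pi(e)}\BB_{\f'}$. I would invoke the dual characterization of the box norm: unfolding the definition of $\|\cdot\|_{\Box(V_{\pi(e)})}^{2^k}$ and applying Cauchy--Schwarz in each coordinate direction of the cube $\{0,1\}^{\pi(e)}$ (the Gowers--Cauchy--Schwarz argument, cf.\ \cite{Gow06}), one extracts sets $U_{\f'}\subseteq V_{\f'}$, one for each codimension-one face $\f'\in\partial\pi(e)$, such that
$$ \Bigl|\,\E_{\ux\in V_{\pi(e)}^2}\bigl(f_e-\E(f_e|\BB^e)\bigr)(\ux_e)\,\prod_{\f'\in\partial\pi(e)}\1_{U_{\f'}}(\ux_{\f'})\,\Bigr|\ \geq\ \eps^{2^k}. $$
I would then adjoin the set $U_{\f'}$ to each of the corresponding $\BB_{\f'}$, producing refined algebras $\widetilde\BB_{\f'}$ and a refined join $\widetilde\BB^e\supseteq\BB^e$ with respect to which the tensor indicator $\prod_{\f'}\1_{U_{\f'}}$ is measurable.

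A standard Pythagorean computation then quantifies the energy gain. Since $\prod_{\f'}\1_{U_{\f'}}$ is bounded by $1$ and $\widetilde\BB^e$-measurable, the correlation inequality above together with Cauchy--Schwarz forces
$$ \|\E(f_e|\widetilde\BB^e)-\E(f_e|\BB^e)\|_{L^2}\ \geq\ \eps^{2^k}, $$
and hence $\|\E(f_e|\widetilde\BB^e)\|_2^2-\|\E(f_e|\BB^e)\|_2^2\geq \eps^{2^{k+1}}$ by orthogonality of successive conditional expectations. Refinement of the $\BB_{\f'}$ can only increase $\|\E(f_{e'}|\BB^{e'})\|_2^2$ for any other edge $e'$, so each iteration raises $E$ by at least $\eps^{2^{k+1}}$. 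As $E$ stays in $[0,O(1)]$, at most $O(\eps^{-2^{k+1}})$ iterations can occur, and each $\BB_{\f'}$ acquires at most that many new sets, giving the complexity bound \eqref{compl-f}.

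The main technical obstacle is the box-norm duality step, namely establishing that large $\|\cdot\|_{\Box(V_{\pi(e)})}$ implies large correlation with a product of indicators $\prod_{\f'}\1_{U_{\f'}}$ indexed by the codimension-one faces $\partial\pi(e)$, with the quantitatively correct exponent $2^k$. Everything else is bookkeeping: one must check that adjoining each $U_{\f'}$ to the individual $\BB_{\f'}$ on $V_{\f'}$ really produces a refinement of the joint algebra $\BB^e$ on $V_{\pi(e)}$ in which the product function is measurable, but this is immediate from the coordinate product structure $V_{\pi(e)}=\prod_{j\in\pi(e)}V_j$ and the definition of the pullbacks $\bar\BB_{\f'}$.
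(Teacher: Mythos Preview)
Your proposal is correct and follows essentially the same energy-increment argument as the paper: start from trivial $\BB_{\f'}$, and whenever \eqref{KvN-f} fails at some edge $e$, use the box-norm to extract sets $U_{\f'}\subseteq V_{\f'}$ whose product correlates with $g_e=f_e-\E(f_e|\BB^e)$, adjoin them, and record an energy gain of order $\eps^{2^{k+1}}$. One small imprecision: your displayed correlation inequality should be an average over $V_{\pi(e)}$, not $V_{\pi(e)}^2$ --- the second-copy variables $(x_{12},\dots,x_{k2})$ are \emph{fixed} when one passes from large box norm to the product-of-indicators correlation (and the passage from bounded functions $h_{j,\ux'}$ to indicators costs a harmless factor $2^{-k}$), exactly as in the paper's derivation of \eqref{KvN-step1-f}.
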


The proof of Lemmas \ref{vN-rect-ff} and \ref{KvN-ff}  are presented in Section \ref{fflemma1} below.
We close this subsection by demonstrating how these lemmas can be combined to establish  Proposition \ref{main-rect-ff}. 

\begin{proof}[Proof of Proposition \ref{main-rect-ff}]\

Let $\VE>0$, $2\leq k\leq d$ and assume that the lemma holds for $k-1$. It follows from Lemma \ref{KvN-ff} that  there exists  $\si$-algebras $\BB_{\f'}$ of complexity $O(\eps^{-2^{k+1}})$ on $V_{\f'}$ for each $\f'\in\HH_{d,k-1}$ for which \eqref{KvN-f} holds for all \,$e\in\HH_{d,k}^{\uu}$. 
For each  $e\in\HH_{d,k}^{\uu}$ we let
 $\of_e:=\E(f_e|\bigvee_{\f'\in\partial \pi(e)} \BB_{\f'})$ and write $f_e= \of_e+h_e$.
By Lemma \ref{vN-rect-ff} and multi-linearity we have that
\eq\label{reduction1}
\NN_{\ut}(f_e;\ e\in\HH_{d,k}^{\uu}) = \NN_{\ut}(\of_e;\ e\in\HH_{d,k}^{\uu}) +
O(\eps)+O(q^{-1/2})
\ee
and also by the Gowers-Cauchy-Schwarz inequality
\eq\label{reduction2}
\MM(f_e;\ e\in\HH_{d,k}^{\uu})  = \MM(\of_e;\ e\in\HH_{d,k}^{\uu}) + O(\eps).
\ee

The conditional expectation functions $\of_e$ are linear combinations of the indicator functions $1_{A_e}$ of the atoms $A_e$ of
the $\si$-algebras $\BB_e:= \bigvee_{\f'\in\partial \pi(e)} \BB_{\f'}$. Since the number of terms in this linear combination is at most
$2^{C\eps^{-2^{k+1}}}$, with coefficients at most 1 in modulus, plugging these into the multi-linear 
expressions $\NN_{\ut}(\of_e;\ e\in\HH_{d,k}^{\uu})$ and $\MM(\of_e;\ e\in\HH_{d,k}^{\uu})$ one obtains a linear combination of expressions of the form
$\NN_{\ut}(1_{A_e};\ e\in\HH_{d,k}^{\uu})$ and $\MM(1_{A_e};\ e\in\HH_{d,k}^{\uu})$ respectively with each $A_e$ being an atoms of $\BB_e$ for all
$e\in \HH_{d,k}^{\uu}$.

The key observation is that these expressions are at level $k-1$ instead of $k$. Indeed, $1_{A_e}=\prod_{\f'\in\partial \pi(e)}
1_{A_{e\f'}}$ where $A_{e\f'}=A'_{e\f'}\times V_j$, with $A'_{e\f'}$ being an atom of $\BB_{\f'}$ when $\f'=\pi(e)\backslash\{j\}$. If
$e=(j_1l_1,\ldots,jl,\ldots,j_kl_k)$, let $p_{\f'}(e):=(j_1l_1,\ldots,j_kl_k)\in\HH_{d,k-1}^{\uu}$, obtained from $e$ by removing
the $jl$-entry. Then we have $1_{A_{e\f'}}(\ux_e)=1_{A'_{e\f'}}(\ux_{p_\f'(e)})$ since $x_{jl}\in V_j$, and hence
\[1_{A_e}(\ux_e) = \prod_{\f'\in \partial \pi(e)} 1_{A'_{e\f'}}(\ux_{p_\f'(e)}).\]
It therefore follows that
\begin{align*}
\NN_{\ut}(1_{A_e};\ e\in\HH_{d,k}^{\uu}) &= \E_{\ux\in V^2} \prod_{e\in\HH_{d,k}^{\uu}} \prod_{\f'\in \partial \pi(e)}
1_{A'_{e\f'}}(\ux_{p_{\f'}(e)})\ \prod_{j=1}^d \si_{t_j}(x_{j2}-x_{j1})\\
&= \E_{\ux\in V^2} \prod_{\f\in\HH^{\uu}_{d,k-1}} \underbrace{\prod_{\substack{e\in\HH_{d,k}^{\uu},\,\f'\in \partial
\pi(e)\\ p_{\f'}(e)=\f}}1_{A'_{e\f'}}(\ux_{p_{\f'}(e)})
}_{=:g_{\f}} \ \prod_{j=1}^d \si_{t_j}(x_{j2}-x_{j1})
= \NN_{\ut}(g_{\f};\ \f\in\HH_{d,k-1}^{\uu})
\end{align*}
and similarly that
\[\MM(1_{A_e};\ e\in\HH_{d,k}^{\uu})=\MM(g_{\f};\ \f\in\HH_{d,k-1}^{\uu}).\]

It then follows from the induction hypotheses that
\[\NN_{\ut}(1_{A_e};\ e\in\HH_{d,k}^{\uu}) = \MM(1_{A_e};\ e\in\HH_{d,k}^{\uu}) + O(\eps_1)+O_{\eps_1}(q^{-1/2})\]
for any $\VE_1>0$. If we choose $\eps_1:=2^{-C_1\,\eps^{-2^{k+1}}}$\!\!, with $C_1\gg 1$ sufficiently large, then $\eps_1\,2^{C\eps^{-2^{k+1}}}\!\!\!\!=O(\eps)$ and it follows that
\[\NN_{\ut}(\of_e;\ e\in\HH_{d,k}^{\uu}) = \MM(\of_e;\ e\in\HH_{d,k}^{\uu}) + O(\eps)+O_{\eps}(q^{-1/2}).\]

This, together with \eqref{reduction1} and \eqref{reduction2}, establishes that \eqref{main-rect-f} hold for $d=k$ as required.\end{proof}


\subsection{Proof of Lemmas \ref{vN-rect-ff} and \ref{KvN-ff}}\label{fflemma1}

\begin{proof}[Proof of Lemma \ref{vN-rect-ff}]
We start by observing the following consequence of (\ref{k=1}), namely that
\eq\label{beginning}
\left|\E_{x_1,x_2\in\FF_q^2} f_1(x_1) f_2(x_2)\si_t(x_2-x_1) \right|^2 
\leq 
\E_{x_1,x_2\in\FF_q^2} f_1(x_1) f_1(x_2) + O(q^{-1/2})\ee
for any $f_1,f_2:\FF_q^2\to [-1,1]$ and  $t\in\FF_q^*$.

Now, fix an edge, say $e_0=(11,21,\ldots,k1)$. Partition the edges $e\in \HH_{d,k}^{\uu}$ into three groups; the first group
consisting of edges $e$ for which $1\notin \pi(e)$, the second where $11\in e$ and write $e=(11,e')$ with $e'\in
\HH_{d-1,k-1}^{\uu}$ and the third when $12\in e$, using the notation $\HH_{d-1,k-1}^{\uu}:=\{(j_2l_2,\ldots,j_kl_k)\}$.
Accordingly we can write
\eq\label{vN-step1-f}
\NN_t(f_e;\ e\in\HH_{d,k}^{\uu})=\E_{\ux\in V^2} \prod_{1\notin \pi(e)}f_e(\ux_e) \!\!\!\!\!\!\prod_{e'\in \HH_{d-1,k-1}^{\uu}}\!\!\!\!\!\!\!\!\!
f_{(11,e')}(x_{11},\ux_{e'}) \!\!\!\!\!\!\prod_{e'\in \HH_{d-1,k-1}^{\uu}} \!\!\!\!\!\!\!\!f_{(12,e')}(x_{12},\ux_{e'}) \,\prod_{j=1}^d \si_{t_j}(x_{j2}-x_{j1}).
\ee
If for given $x\in V_1$ and $\ux'=(x_{21},x_{22},\ldots,x_{d1},x_{d2})\in V_2^2\times\ldots\times V_d^2$ we define
\[g_1(x,\ux'):=\prod_{e'\in \HH_{d-1,k-1}^{\uu}} f_{(11,e')}(x,\ux_{e'})\quad\quad\text{and}\quad\quad g_2(x,\ux'):=\prod_{e'\in \HH_{d-1,k-1}^{\uu}}
f_{(12,e')}(x,\ux_{e'})\]
then we can write
\begin{align}\label{vN-step2-f}
\NN_t(f_e;\ e\in\HH_{d,k}^{\uu}) &= \E_{x_{21},x_{22},\ldots,x_{d1},x_{d2}} \prod_{1\notin\pi(e)} f_e(\ux_e) \prod_{j=2}^d \si_{t_j}(x_{j2}-x_{j1})\\
&\quad\quad\quad\quad\quad\quad\quad\quad\times \E_{x_{11},x_{12}}\ g_1(x_{11},\ux') g_2(x_{12},\ux')\,\si_{t_1}(x_{12}-x_{11}).\nonumber
\end{align}
By (\ref{beginning}) we can estimate the inner sum in \eqref{vN-step2-f} by the square root of
\[\E_{x_{11},x_{12}}\ g_1(x_{11},\ux') g_1(x_{12},\ux') + O(q^{-1/2}).\]
Thus by Cauchy-Schwarz, and the fact that $f_e: V_{\pi(e)}\to [-1,1]$ for all $e\in\HH_{d,k}^{\uu}$, we can conclude that
\eq\label{vN-step3-f}
\NN_t(f_e;\ e\in\HH_{d,k}^{\uu})^2 \leq \E_{x_{11},x_{12},\dots,x_{d1},x_{d2}}\prod_{e'\in \HH_{d-1,k-1}^{\uu}}\!\!\!\!
f_{(11,e')}(x_{11},\ux_{e'}) f_{(11,e')}(x_{12},\ux_{e'}) \,\prod_{j=2}^d \si_{t_j}(x_{j2}-x_{j2}).
\ee

The expression on the right hand side of the inequality above is similar to that in (\ref{vN-step1-f}) except for the following changes. The functions $f_e$ for
$1\notin e$ are eliminated i.e. replaced by 1, as well as the factor $\si_{t_1}$. The functions $f_{(12,e')}$, are replaced by
$f_{(11,e')}$ for all $e'\in \HH^{\uu}_{d-1,k-1}$. Repeating the same procedure for $j=2,\ldots,k$ one eliminates all the factors
$\si_{t_j}$ for $1\leq j\leq k$, moreover all the functions $f_e$ for edges $e$ such that $j\notin \pi(e)$ for some $1\leq j\leq k$, which leaves only the edges $e$ so that $\pi(e)=(1,2,\ldots,k)$, moreover for such edges the functions $f_e$ are eventually replaced by
$f_{e_0}=f_{11,21,\ldots,k1}$. The factors $\si_{t_j}(x_{j2}-x_{j1})$ are not changed for $j>k$ however as the function $f_{e_0}$ does not depend on the variables $x_{jl}$ for $j>k$, averaging over these variables gives rise to a factor of $1+O(q^{-1/2})$. Thus one obtains the following final estimate
\eq\label{vN-step4-f}
\NN_t(f_e;\ e\in\HH_{d,k}^{\uu})^{2^k} \leq \E_{x_{11},x_{12},\ldots,x_{k1},x_{k2}} \prod_{\pi(e)=(1,\ldots,k)} f_{e_0}(\ux_e) + O
(q^{-1/2}) = \|f_{e_0}\|_{\Box(V_{\pi(e_0)})}^{2^k} + O(q^{-1/2}).
\ee
This proves the lemma, as it is clear that the above procedure can be applied to any edge in place of $e_0=(11,21,\ldots,k1)$.
\end{proof}

\begin{proof}[Proof of Lemma \ref{KvN-ff}]
For a function $f_e: V_{\pi (e)}\to [-1,1]$ and a $\si$-algebra $\BB_{\pi (e)}$ on
$V_{\pi (e)}$ define the \emph{energy} of $f_e$ with respect to $\BB_{\pi(e)}$ as
\[\mathcal{E} (f_e,\BB_{\pi (e)}):= \|\E(f_e|\BB_{\pi (e)})\|_2^2 = \E_{x\in V_{\pi (e)}}\, |\E(f_e|\BB_{\pi (e)})(x)|^2,\]
and for a family of functions $f_e$ and $\si$-algebras $\BB_{\pi (e)}$, $e\in\HH_{d,k}^{\uu}$ its total energy as
\[\mathcal{E} (f_e,\BB_{\pi (e)};\, e\in\HH_{d,k}^{\uu}):= \sum_{e\in\HH_{d,k}^{\uu}} \mathcal{E} (f_e,\BB_{\pi (e)}).\]
We will show that if \eqref{KvN-f} does not hold for a family of $\si$-algebras $\,\BB_{\pi (e)} = \bigvee_{\f'\in\partial {\pi
(e)}} \BB_{\f'}\,$, then the $\si$-algebras $\BB_{\f'}$ can be refined so that the total energy of the system increases by a
quantity depending only on $\eps$. Since the functions $f_e$ are bounded the total energy of the system is $O(1)$, the
energy increment process must stop in $O_{\eps}(1)$ steps, and \eqref{KvN-f} must hold. The idea of this procedure appears
already in the proof of Szemer\'{e}di's regularity lemma \cite{Szem75}, and have been used since in various places
\cite{FK96,Tao06,Gow06}.

\smallskip

Initially set $\BB_{\f'}:=\{\emptyset, V_{\f'}\}$ and hence $\BB_{\pi (e)}= \{\emptyset, V_{\pi (e)}\}$ to be the trivial
$\si$-algebras. Assume that in general \eqref{KvN-f} does not hold for a family of $\si$-algebras $\BB_{\f'}$, with $\f'\in\HH_{d,k-1}$.
Then there exists an edge $e\in\HH_{d,k}^{\uu}$ so that $\|g_e\|_{\Box(V_{\pi(e)})}\geq\eps$, with $g_e:=f_e- \E(f_e|\BB_{\pi (e)})$. Let
$e=(11,\ldots,k1)$ for simplicity of notation, hence $\pi(e)=(1,\ldots,k)$. Then, with notation $\ux'=(x_{12},\ldots,x_{k2})$, one
has
\begin{align*}
\eps^{2^k}\leq\|g_e\|_{\Box(V_{\pi(e)})}^{2^k} &= \E_{x_{11},x_{12},\ldots,x_{k1},x_{k 2}} \prod_{l_1,\ldots,l_k=1,2} g_e(x_{1l_1},\ldots,x_{kl_k})\\
&\leq \E_{x_{12},\ldots,x_{k 2}} \Bigl|\E_{x_{11},\ldots,x_{k1}} g_e(x_{11},\ldots,x_{k1})\,\prod_{j=1}^k
h_{j,\ux'}(x_{11},\ldots,x_{j-1\, 1},x_{j+1\, 1},\ldots,x_{k1})\Bigr|
\end{align*}
for some functions $h_{j,\ux'}$ that are bounded by 1 in magnitude.
Indeed if and edge $e\neq (11,\ldots,k1)$ then $x_e$ does not depend at least one of the variables $x_{j1}$. Thus there must be
an $\ux'$ for which the inner sum in the above expression is at least $\eps^{2^k}$. Fix such an $\ux'$. Decomposing the  functions $h_{j,\ux'}$  into their positive and negative parts and then writing them as an
average of indicator functions, one obtains that there sets $B_j\subs V_{\pi(e)\backslash\{j\}}$ such that
\[
\Bigl|\E_{x_{11},\ldots,x_{k1}} g_e(x_{11},\ldots,x_{k1})\,\prod_{j=1}^k 1_{B_j}(x_{11},\ldots,x_{j-1\, 1},x_{j+1\,
1},\ldots,x_{k1})\Bigr|\,\geq\, 2^{-k}\,\eps^{2^k}
\]
which can be written more succinctly, using the inner product notation, as
\eq\label{KvN-step1-f}
\Bigl|\langle f_e-\E(f_e|\BB_{\pi(e)}),\prod_{j=1}^k 1_{B_j}\rangle \Bigr|\,\geq\,2^{-k}\,\eps^{2^k}.
\ee

For $\f'=\partial\pi(e)\backslash\{j\}$ let $\BB_{\f'}'$ be the $\si$-algebra generated by $\BB_{\f'}$ and the set $B_j$ and let
$\BB_{\pi(e)}':= \bigvee_{\f'\in\partial {\pi (e)}} \BB_{\f'}'\,$. Since the functions $1_{B_j}$ are measurable with respect to the
$\si$-algebra $\BB_{\pi(e)}'$ for all $1\leq j\leq k$, we have that
\eq\label{KvN-step2-f}
\langle f_e-\E(f_e|\BB_{\pi(e)}'),\prod_{j=1}^k 1_{B_j}\rangle=0
\ee
and hence, by Cauchy-Schwarz, that
\eq\label{KvN-step3-f}
\|\E(f_e|\BB_{\pi(e)}')-\E(f_e|\BB_{\pi(e)})\|_2^2 = \|\E(f_e|\BB_{\pi(e)}')\|_2^2 - \|\E(f_e|\BB_{\pi(e)})\|_2^2
\geq\,2^{-2k}\,\eps^{2^{k+1}}.
\ee

Note that the first equality above follows from the fact that conditional expectation function $\E(f|\BB)$ is the orthogonal projection
of $f$ to the subspace of $\BB$-measurable functions in $L^2$. This also implies that energy of a function is always increasing
when the underlying $\si$-algebra is refined, and \eqref{KvN-step3-f} tells us that the energy of $f_e$ is increased by at least
$c_k\,\eps^{2^{k+1}}$. 

For $\f'\notin\partial\pi(e)$ we set $\BB_{\f'}':=\BB_{\f'}$. Then the total energy of the family $f_e$ with
respect to the system $\,\BB_{\pi(e)}'=\bigvee_{\f'\in\partial \pi(e)} \BB_{\f'}'$, $e\in\HH^{\uu}_{d,k}$ is also increased by at least
$c_k\,\eps^{2^{k+1}}$. 

It is clear that the complexity of the $\si$-algebras $\BB_{\f'}$ are increased by at most 1, hence, as
explained above, the lemma follows by applying this energy increment process at most $O(\eps^{-2^{k+1}})$ times.
\end{proof}


\section{The base case of an inductive strategy to establish Theorem \ref{Prod}}\label{Base}

In this section we will ultimately establish the base case of our more general inductive argument.
We however  start by giving a quick review of the proof of  Theorem \ref{Prod} when $d=1$ (which contains both Theorem B and Corollary B as stated in Section \ref{d&s}), namely the case of a single simplex. This was originally addressed in \cite{Bourg87} and revisited in \cite{LM17} and \cite{LM18}.

\subsection{A Single Simplex in $\R^n$}\label{S1}\label{S4.1}

Let $Q\subs\R^n$ be a fixed cube and let $l(Q)$ denotes its side length.

Let $\De^0=\{v_1=0,v_2,\ldots,v_n\}\subs\R^n$ be a fixed non-degenerate simplex and define $t_{kl}:=v_k\cdot v_l$ for $2\leq
k,l\leq n$ where $``\cdot"$ is the dot product on $\R^n$. 
Given $\la>0$, a simplex $\De=\{x_1=0,x_2,\ldots,x_n\}\subs\R^n$ is
isometric to $\la\De^0$ if and only if $x_k\cdot x_l=\lm^2 t_{kl}$ for all $2\leq k,l\leq n$. Thus the configuration space
$S_{\la\De^0}$ of isometric copies of $\la\De^0$ is a non-singular real variety given by the above equations. Let $\si_{\la\De^0}$
be natural normalized surface area measure on $S_{\la\De^0}$, described in \cite{Bourg87}, \cite{LM17}, and \cite{LM18}. 
It is clear that the variable
$x_1$ can be replaced by any of the variables $x_i$ by redefining the constants $t_{kl}$.

For any family of functions $f_1,\ldots,f_n:Q\to [-1,1]$ and $0<\lm\ll l(Q)$ we define the multi-linear expression
\eq\label{multilin-config-1}
\NN^1_{\la\De^0,Q}(f_1,\ldots,f_n) := \fint_{x_1\in Q} \int_{x_2,\ldots,x_n} f_1(x_1)\ldots
f_n(x_n)\,d\si_{\la\De_0}(x_2-x_1,\ldots,x_n-x_1)\,dx_1.
\ee

We note that all of our functions are 1-bounded and both integrals, in fact all integrals in this paper, are normalized.
Recall that we are using the normalized integral notation
$\fint_A f:=\frac{1}{|A|}\int_A f.$
Since the normalized measure
$\si_{\la\De^0}$ is supported on $S_{\la\De_0}$ we will not indicate the support of the variables $(x_2,\ldots,x_n)$ explicitly.

Note also that if $S\subs Q$ is a measurable set and $\NN^1_{\la\De^0,Q}(1_S,\dots,1_S)>0$ then $S$ must contain an isometric copy of $\la\De^0$. 
The following proposition (with $Q=[0,1]^n$) is a quantitatively stronger version of Proposition B that appeared in Section \ref{d&s} and hence immediately establishes Theorem \ref{Prod} for $d=1$.

\begin{prop}\label{PropnSimp}

For any $0<\VE\leq 1$  there exists an integer $J=O(\VE^{-2}\log\VE^{-1})$ with the following property: 

Given any lacunary sequence $l(Q)\geq \lm_1\geq\cdots\geq\lm_J$  and $S\subseteq Q$, there is some $1\leq j< J$ such that 
\eq
\NN^1_{\la\De^0,Q}(1_S,\dots,1_S)>\left(\frac{|S|}{|Q|}\right)^n-\VE
\ee 
for all $\lambda\in [\lm_{j+1},\lm_j]$.
\end{prop}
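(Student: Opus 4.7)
My plan is to follow Bourgain's Fourier-analytic strategy, with a pigeonhole over the lacunary scales followed by a mild Lipschitz-in-$\la$ upgrade from a single scale to the full interval $[\la_{j+1},\la_j]$. Fix a radial Schwartz function $\eta$ on $\R^n$ with $\wh\eta(0)=1$ and $\wh\eta$ supported near the origin, and for a small constant $c=c(\eps)$ decompose
\[
\si_{\la\De^0} \;=\; \si^\flat_{\la\De^0} + \si^\sharp_{\la\De^0}
\]
by convolving with the tensor-product kernel $\eta_{c\la}^{\otimes(n-1)}$ (where $\eta_t(x)=t^{-n}\eta(x/t)$) in the $n-1$ difference variables $x_k-x_1$. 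This localizes the low-frequency piece $\si^\flat_{\la\De^0}$ to frequencies $|\xi|\ls (c\la)^{-1}$ and induces a splitting $\NN^1_{\la\De^0,Q}(1_S,\ldots,1_S) = \NN^\flat(\la)+\NN^\sharp(\la)$.

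For the main term $\NN^\flat(\la)$, the smoothed kernel averages each vertex over a neighborhood of scale $c\la \ll l(Q)$, so boundary losses near $\partial Q$ are absorbed into an $O(\eps)$ error. Since $\int\si^\flat_{\la\De^0}=1+O(\eps)$, iterated Cauchy--Schwarz, peeling off one vertex at a time, yields
\[
\NN^\flat(\la) \;\geq\; \left(\frac{|S|}{|Q|}\right)^n - O(\eps).
\]

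For the error $\NN^\sharp(\la)$, I would use Plancherel together with the stationary-phase decay estimate $|\wh\si_{\la\De^0}(\xi)|\ls (\la|\xi|)^{-\kappa}$ for some $\kappa=\kappa(\De^0)>0$, valid because $S_{\la\De^0}$ is a smooth non-degenerate real variety, as in \cite{Bourg87,LM17}. The high-frequency localization of $\si^\sharp_{\la\De^0}$ combined with the lacunary separation $\la_{j+1}\leq \la_j/2$ ensures that the relevant Fourier annuli are essentially disjoint, so Plancherel yields
\[
\sum_{j=1}^{J}|\NN^\sharp(\la_j)|^2 \;\ls\; 1
\]
uniformly in $S\subs Q$. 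By pigeonhole, some $j$ satisfies $|\NN^\sharp(\la_j)|\ls J^{-1/2}$, which is $\ls\eps$ as soon as $J\gs \eps^{-2}$.

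To upgrade this single-scale bound to every $\la\in[\la_{j+1},\la_j]$, I would subdivide each factor-of-two interval into $O(\log\eps^{-1})$ finer sub-intervals of ratio $1+\eps^{a}$ on which $\la\mapsto\NN^\sharp(\la)$ varies by at most $\eps$; the required Lipschitz-in-$\la$ estimate follows by differentiating the Fourier-analytic formula for $\NN^\sharp(\la)$ and reapplying the stationary-phase decay one more time. Running the pigeonhole argument over this refined family forces $J\asymp\eps^{-2}\log\eps^{-1}$, and combining the two bounds gives $\NN^\flat(\la)+\NN^\sharp(\la) > (|S|/|Q|)^n - \eps$ throughout $[\la_{j+1},\la_j]$. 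The main obstacle is precisely this uniform-in-$\la$ step: obtaining control on the entire interval rather than at a single scale is what forces the extra logarithmic factor beyond the naive $\eps^{-2}$ coming from Plancherel orthogonality.
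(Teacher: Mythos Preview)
Your error-term pigeonhole over lacunary scales is in the right spirit---the underlying $L^2$ orthogonality is precisely what drives the energy increment in the paper---and the Lipschitz-in-$\la$ upgrade is a reasonable device. But the main-term argument has a genuine gap. Convolving $\si_{\la\De^0}$ with $\eta_{c\la}^{\otimes(n-1)}$ for $c=c(\eps)$ \emph{small} does not give $\NN^\flat(\la)\geq (|S|/|Q|)^n-O(\eps)$: the smoothed measure $\si^\flat_{\la\De^0}$ is still supported in a $c\la$-neighborhood of the configuration variety, so the vertices $x_2,\ldots,x_n$ remain fully constrained and cannot be decoupled. ``Iterated Cauchy--Schwarz, peeling off one vertex at a time'' is exactly how one proves the \emph{upper bound} in the generalized von Neumann inequality (Lemma~\ref{vN-1}); it does not yield lower bounds. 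To obtain $\geq\de^n$ one needs the vertices to range independently---as in the box average $\MM^1_{\la,Q}$---and that requires replacing $1_S$ by a function that is approximately constant at a scale $\gg\la$, not $\ll\la$. Your single scale $c\la$ cannot serve both purposes.

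The paper resolves this with a two-scale argument. An $L^2$ energy increment along an $\eps$-admissible sequence of scales (Lemma~\ref{KvN-1}) locates a pair $L_j\gg L_{j+1}$ with $\|1_S-\E(1_S|\GG_{L_j,Q})\|_{U^1_{L_{j+1}}(Q)}\leq\eps$. For any $\la$ with $L_{j+1}\ll\la\ll L_j$, the von Neumann inequality at the small scale $L_{j+1}$ lets one replace $1_S$ by $\of=\E(1_S|\GG_{L_j,Q})$; since $\of$ is constant on cubes of side $L_j\gg\la$, one has $\NN^1_{\la\De^0,Q}(\of,\ldots,\of)=\MM^1_{\la,Q}(\of,\ldots,\of)+O(\eps)$, and the latter is at least $\de^n-O(\eps)$ by Jensen. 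The extra $\log\eps^{-1}$ in $J$ then arises from embedding the given lacunary sequence into an $\eps$-admissible one, not from differentiating in $\la$.
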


Our approach to establishing Proposition \ref{PropnSimp} is to compare the above expressions to simpler ones for which it is easy to
obtain lower bounds.
Given a scale $0<\lm\ll l(Q)$ we define the multi-linear expression
\eq\label{multilin-box-1}
\MM^1_{\lm,Q}(f_1,\ldots,f_n) := \fint_{t\in Q}\fint_{x_1,x_2,\ldots,x_n\in t+Q(\lm)} f_1(x_1)\ldots f_n(x_n)\,dx_1\dots dx_n\,dt
\ee
where $Q(\lm)=[-\frac{\lm}{2},\frac{\lm}{2}]^n$ and $t+Q(\lm)$ is the shift of the cube $Q(\lm)$ by the vector $t$. 
Note that if $S\subs Q$ is
a set of
measure $|S|\geq \de |Q|$ for some $\de>0$, then for a given $\eps >0$, H\"older implies
 \eq\label{ML-lowerb-1}
\MM^1_{\lm,Q}(1_S,\dots,1_S) = \fint_{t\in Q} \left(\fint_{x\in t+Q(\lm)} 1_S(x)\,dx\right)^n dt \geq \left(\fint_{t\in Q}\fint_{x\in
t+Q(\lm)} 1_S(x)\,dx\,dt\right)^n \geq\de^n-O(\eps),\ee
for all scales $0<\lm\ll\eps\, l(Q)$.

Recall that for any $\VE>0$ we call a sequence $L_1\geq \cdots \geq L_J$ $\eps$-\emph{admissible} if $L_j/L_{j+1}\in\N$ and $L_{j+1}\ll\eps^2 L_j$ for all $1\leq j<J$.  Note that given any lacunary sequence $l(Q)\geq \lm_1\geq\cdots\geq\lm_{J'}$ with $J'\gg ( \log\VE^{-1})\,J$,
one can always finds an $\eps$-admissible sequence of scales  $l(Q)\geq L_1\geq\cdots \geq L_{J}$ such that for each $1\leq j<J$ the interval $[L_{j+1}, L_j]$  contains at least two consecutive elements  from the original lacunary sequence.

In light of this observation, and the one above regarding a lower bound for  $\MM^1_{\la,Q}(1_S,\dots,1_S)$, our proof of Proposition \ref{PropnSimp}   reduces to establishing the following ``counting lemma".



\begin{prop}\label{approx-1}

Let $0<\eps<1$. There exists an integer $J_1=O(\eps^{-2})$ such that for any $\eps$-admissible sequence of scales $l(Q)\geq L_1\geq\cdots \geq L_{J_1}$ and $S\subseteq Q$ there is some $1\leq j< J_1$ such that
\eq\label{1.4}
\NN^1_{\la\De^0,Q}(1_S,\dots,1_S)=\MM^1_{\la,Q}(1_S,\dots,1_S)+O(\eps)\ee
for all $\lm\in[L_{j+1},L_j]$.
\end{prop}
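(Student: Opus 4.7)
\smallskip

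\textbf{Proof plan for Proposition \ref{approx-1}.}

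The plan is to show that both $\NN^1_{\la\De^0,Q}(1_S,\dots,1_S)$ and $\MM^1_{\la,Q}(1_S,\dots,1_S)$ are well approximated by a common ``low-frequency'' expression, and then to pigeonhole on the admissible sequence to locate a scale range where the approximation error is $O(\eps)$. Concretely, fix a smooth nonnegative mollifier $\psi$ with $\int\psi=1$ whose Fourier transform $\hpsi$ is nearly $1$ on a ball around the origin and rapidly decaying, and set $\psi_L(x):=L^{-n}\psi(x/L)$, $f_L:=f*\psi_L$. The target of comparison for both expressions will be the ``structured count''
\[
\A_L(f_1,\ldots,f_n):=\fint_{t\in Q}\prod_{j=1}^n f_{j,L}(t)\,dt.
\]

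First I would establish two one-sided comparisons. For the box count it is immediate from Fubini that $\MM^1_{\la,Q}(f_1,\ldots,f_n)=\A_\la(f_1,\ldots,f_n)+O(\la/l(Q))$, since averaging $f_j$ over the random translate $t+Q(\la)$ is literally convolution with a box kernel at scale $\la$. For the simplex count the comparison uses the Fourier-analytic input from \cite{Bourg87}, namely that $\hat\si_{\la\De^0}$ admits a polynomial decay estimate of the form $|\hat\si_{\la\De^0}(\xi)|\ls (1+\la|\xi|)^{-\al}$ for some $\al>0$. Combining this decay with a multilinear Cauchy--Schwarz / generalized von Neumann argument (isolating one factor at a time while bounding the others in $L^\infty$), I can bound
\[
\bigl|\NN^1_{\la\De^0,Q}(f_1,\ldots,f_n)-\A_\la(f_1,\ldots,f_n)\bigr|
\;\ls\;\min_{1\leq j\leq n}\|f_j-f_{j,\la}\|_{L^2(Q)}+O(\la/l(Q)).
\]
An analogous estimate holds for $\MM^1_{\la,Q}-\A_\la$ by the same Cauchy--Schwarz reduction (this is easier since $\hat\psi_\la$ is automatically well-behaved). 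In both estimates the right-hand side measures the $L^2$-mass of $1_S$ carried by Fourier modes of size $|\xi|\gtrsim 1/\la$ up to $|\xi|\ls 1/\la$; more precisely, by telescoping the difference $f-f_{L}$ between admissible scales, it is dominated by the $L^2$-norm of $1_S$ on a frequency annulus essentially concentrated near $|\xi|\sim 1/\la$.

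Next comes the pigeonhole step. Given an $\eps$-admissible sequence $l(Q)\geq L_1\geq\cdots\geq L_{J_1}$ with $L_{j+1}\ll \eps^2 L_j$, the corresponding annular pieces of $1_S$, say $g_j:=1_S*\psi_{L_{j+1}}-1_S*\psi_{L_j}$, are essentially ``frequency-disjoint'' because the admissibility condition guarantees a geometric separation between the scales $1/L_j$ and $1/L_{j+1}$. Hence by a Plancherel / telescoping argument,
\[
\sum_{j=1}^{J_1-1}\|g_j\|_{L^2(Q)}^2\;\ls\;\|1_S\|_{L^2(Q)}^2\;\leq\;1.
\]
Choosing $J_1=C\eps^{-2}$ with $C$ large enough forces some index $j$ with $\|g_j\|_{L^2(Q)}\leq \eps$. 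For this $j$ and any $\la\in[L_{j+1},L_j]$, both $\|1_S-(1_S)_\la\|_{L^2(Q)}$ appearing in the comparisons above reduce, up to harmless tail terms absorbed by admissibility $L_{j+1}\ll\eps^2 L_j$, to $\|g_j\|_{L^2(Q)}=O(\eps)$. Combining the two one-sided comparisons gives $\NN^1_{\la\De^0,Q}(1_S,\ldots,1_S)=\MM^1_{\la,Q}(1_S,\ldots,1_S)+O(\eps)$ uniformly on $[L_{j+1},L_j]$, which is the desired conclusion.

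The main obstacle I anticipate is the multilinear reduction for $\NN^1$: peeling off the $n$ factors one by one via Cauchy--Schwarz while invoking only the Fourier decay of $\hat\si_{\la\De^0}$ to obtain a clean $L^2$-type error $\|f_j-f_{j,\la}\|_{L^2}$, rather than a weaker Sobolev-type error or a loss of the form $\eps^{1/n}$. This requires using the nondegeneracy of $\De^0$ (so that for each $j$, integrating out the other $n-1$ variables against $\si_{\la\De^0}$ yields an operator with the same kind of Fourier decay) and a careful symmetrization so that each of the $n$ vertices plays the role of the ``distinguished'' vertex in turn. Once this symmetric form of the generalized von Neumann inequality is in place, the frequency-annulus pigeonhole is routine.
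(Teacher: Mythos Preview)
Your scheme has the right architecture --- a von Neumann inequality plus an $L^2$ energy pigeonhole over the admissible scales --- but the norm appearing in your von Neumann estimate is wrong, and this is a genuine gap, not a technicality. You bound
\[
\bigl|\NN^1_{\la\De^0,Q}(f,\ldots,f)-\A_\la(f,\ldots,f)\bigr|\ \ls\ \|f-f_\la\|_{L^2(Q)},
\]
and then assert that for $\la\in[L_{j+1},L_j]$ this quantity ``reduces, up to harmless tail terms'', to the annular piece $\|g_j\|_{L^2}$. That last reduction is false: $\|1_S-(1_S)_\la\|_{L^2}^2$ is essentially $\int_{|\xi|\gs 1/\la}|\widehat{1_S}|^2$, i.e.\ the sum of \emph{all} the annular energies $\|g_k\|_{L^2}^2$ for $k\geq j$, and for a generic indicator function this is of order $1$ regardless of how small $\|g_j\|_{L^2}$ is. (Think of $S$ a random set of density $1/2$: $\|1_S-(1_S)_\la\|_{L^2}\approx 1/2$ for every $\la$.) The Fourier decay $|\hat\si_{\la\De^0}(\xi)|\ls (1+\la|\xi|)^{-\al}$ does not rescue this, because at the critical range $|\xi|\sim 1/\la$ the decay gives only $O(1)$.

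The paper's proof repairs exactly this point. Its von Neumann inequality (Lemma~\ref{vN-1}) bounds $\NN^1$ not by an $L^2$ norm but by the $U^1_L(Q)$ norm $\|f\|_{U^1_L(Q)}=\|f\ast\chi_L\|_{L^2(Q)}$ at a scale $L\ll \la$, so the relevant error is $\|f-\of\|_{U^1_{L_{j+1}}(Q)}$ with $\of$ the coarse average at scale $L_j\gg\la$. The extra convolution by $\chi_{L_{j+1}}$ kills the frequencies $|\xi|\gg 1/L_{j+1}$, while subtracting $\of$ kills the frequencies $|\xi|\ll 1/L_j$; the error is now genuinely the single annular contribution, which the pigeonhole (Lemma~\ref{KvN-1}) controls. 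In your language: the structured approximation should live at the \emph{coarse} scale $L_j$ (not at $\la$ itself), and the von Neumann inequality must be stated for a norm that already smooths at the \emph{fine} scale $L_{j+1}$; only the combination of the two localizes the error to the annulus. Once you make those two adjustments your outline becomes essentially the paper's proof.
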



There are two main ingredients in the proof of Proposition \ref{approx-1}, this will be typical to all of our arguments. The first ingredient is a result which establishes that the our multi-linear forms $\NN^1_{\la\De^0,Q}(f_1,\dots,f_n)$ are controlled by an appropriate norm which measures the
uniformity of distribution of functions $f:Q\to [-1,1]$ with respect to particular scales $L$. This is analogous to estimates in additive combinatorics \cite{Gow06}  which are often referred to as generalized von-Neumann inequalities. 

The result below was proved in \cite{LM17} for $Q=[0,1]^n$, however a simple scaling of the variables $x_i$ transfers the result to an
arbitrary cube $Q$.

\begin{lem}[A Generalized von-Neumann inequality \cite{LM17}]\label{vN-1}

Let $\VE>0$, $0<\la\ll l(Q)$, and $0<L\ll \eps^6 \la$. 

For any collections of functions $f_1,\ldots,f_n:\,Q\to [-1,1]$ we have
\eq\label{vN-11a}
|\NN^1_{\la\De^0,Q}(f_1,\ldots,f_n)| \leq \min_{i=1,\ldots,n} \|f_i\|_{U^1_L(Q)} + O(\eps)
\ee
where for any $f:Q\to[-1,1]$ we define
\eq\label{U-norm}
\|f\|_{U^1_L(Q)}^2 := \fint_{t\in Q} \Bigl|\fint_{x\in t+Q(L)} f(x)\,dx\Bigr|^2 dt
\ee
with $t+Q(L)$ denoting the shift of the cube $Q(L)=[-\frac{L}{2},\frac{L}{2}]^n$  by the vector $t$. 
\end{lem}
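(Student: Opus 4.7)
The plan is to single out one function, say $f_1$, and estimate the multilinear form by a Cauchy--Schwarz argument combined with a mollification of the surface measure at scale $L$. First, rewrite
\[
\NN^1_{\la\De^0,Q}(f_1,\ldots,f_n) = \fint_{x_1\in Q} f_1(x_1)\, F(x_1)\, dx_1,
\]
where $F(x_1):=\int f_2(x_2)\cdots f_n(x_n)\,d\si_{\la\De^0}(x_2-x_1,\ldots,x_n-x_1)$ satisfies $\|F\|_\infty \le 1$ since $\si_{\la\De^0}$ is a probability measure and $|f_i|\le 1$. Because the definition of $\si_{\la\De^0}$ is symmetric in the vertices (the choice of which vertex to place at $x_1$ is arbitrary and only redefines the constants $t_{kl}$), the analogous bound with any $f_i$ in place of $f_1$ will follow from the same argument, so it suffices to produce the $\|f_1\|_{U^1_L(Q)}$ bound.

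The heart of the proof is a \emph{regularity estimate}
\[
\|F - F * \psi_L\|_{L^2(Q)} \,=\, O(\eps),
\]
valid when $L\ll \eps^6\la$, where $\psi_L(x)=L^{-n}\1_{Q(L)}(x)$. This reflects the fact that the surface measure $\si_{\la\De^0}$ on the non-singular configuration variety $S_{\la\De^0}$ is essentially smooth at scale $\la$: its Fourier transform enjoys polynomial decay of the form $|\wh{\si}_{\la\De^0}(\xi)|\ls (\la|\xi|)^{-\delta}$ for some $\delta>0$, which can be established either by stationary phase or by an iterative argument expressing $\si_{\la\De^0}$ as a fibered integral of spherical measures (adjoin the vertices one at a time; each fiber integration is against a spherical measure on $\R^n$ whose Fourier transform enjoys the classical $(n-1)/2$ decay). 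Applying Plancherel, together with the standard fact that $1-\wh{\psi}_L(\xi)=O(L|\xi|)$ for $|\xi|\le 1/L$ and is bounded otherwise, transfers this Fourier decay of $\si_{\la\De^0}$ into the required $O(\eps)$ bound once $L$ is sufficiently small compared to $\la$; the exponent $6$ in $L\ll\eps^6\la$ absorbs both the decay rate $\delta$ and contributions from edge effects near $\partial Q$.

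With the regularity estimate in hand, the rest is routine. Using the symmetry of $\psi_L$ we transfer the mollification from $F$ to $f_1$,
\[
\fint_Q f_1\cdot F\, dx \,=\, \fint_Q f_1\cdot (F*\psi_L)\, dx + O(\eps) \,=\, \fint_Q (f_1*\psi_L)\cdot F\, dx + O(\eps),
\]
and then apply Cauchy--Schwarz together with $\|F\|_\infty\le 1$ to conclude
\[
\Bigl|\fint_Q (f_1*\psi_L)\cdot F\, dx\Bigr| \,\le\, \|f_1*\psi_L\|_{L^2(Q)}\,\|F\|_{L^2(Q)} \,\le\, \|f_1\|_{U^1_L(Q)},
\]
which is the desired bound. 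The main obstacle is the Fourier decay estimate for $\wh{\si}_{\la\De^0}$ on the configuration variety; everything else is bookkeeping. Non-degeneracy of $\De^0$ is essential here, as it is precisely what guarantees $S_{\la\De^0}$ is a smooth non-singular submanifold of the correct dimension and allows the relevant stationary phase / curvature arguments to proceed.
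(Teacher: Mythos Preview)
Your overall framework is the right one, and for $n=2$ (where $F=f_2*\sigma$ is an honest convolution) the argument goes through exactly as you wrote. The gap is in the regularity estimate $\|F-F*\psi_L\|_{L^2(Q)}=O(\eps)$ for $n\geq 3$. The function
\[
F(x_1)=\int \prod_{i\geq 2} f_i(x_1+y_i)\,d\si_{\la\De^0}(y_2,\dots,y_n)
\]
is \emph{not} a convolution with $\si_{\la\De^0}$; its Fourier transform in $x_1$ is the $(n{-}2)$-fold convolution
\[
\wh F(\xi)=\int_{\eta_2+\cdots+\eta_n=\xi}\ \prod_{i=2}^n \wh f_i(\eta_i)\ \wh\si_{\la\De^0}(-\eta_2,\dots,-\eta_n)\ d\eta_2\cdots d\eta_{n-1}.
\]
Pointwise decay $|\wh\si_{\la\De^0}(\eta)|\ls(\la|\eta|)^{-\delta}$ on the hyperplane $\sum\eta_i=\xi$ does not by itself bound $\int_{|\xi|>1/L}|\wh F(\xi)|^2\,d\xi$: after pulling out $\sup|\wh\si|$ you are left with a convolution of the $|\wh f_i|$, and there is no control on $\|\wh f_i\|_{L^1}$. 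So ``Plancherel plus decay of $\wh\si_{\la\De^0}$'' does not deliver the regularity of $F$ once $n\geq 3$.

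The approach in \cite{LM17} and \cite{Bourg87} (and, in the discrete analogue, the paper's own proof of Lemma~\ref{Lem5.4}) sidesteps this by applying Cauchy--Schwarz \emph{before} any Fourier analysis, using the fibration $d\si_{\la\De^0}=d\si_{\la\De^{0\prime}}(y_2,\dots,y_{n-1})\,d\mu^{y_2,\dots,y_{n-1}}(y_n)$ to isolate a single vertex. One obtains
\[
|\NN^1_{\la\De^0,Q}(f_1,\dots,f_n)|^2\ \leq\ |Q|^{-1}\int |\wh f_n(\xi)|^2\,H_\la(\xi)\,d\xi,\qquad H_\la(\xi)=\int |\wh{\mu^{y_{<n}}}(\xi)|^2\,d\si_{\la\De^{0\prime}}(y_{<n}),
\]
so that only the decay of a \emph{single} spherical measure $\mu^{y_{<n}}$ in $\R^n$ is needed. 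Inserting $1=\wh\psi_L(\xi)^2+(1-\wh\psi_L(\xi)^2)$ and using that $H_\la$ is bounded and $H_\la(\xi)(1-\wh\psi_L(\xi)^2)=O(\eps)$ uniformly when $L\ll\eps^6\la$ gives the $\|f_n\|_{U^1_L(Q)}$ bound; re-indexing the base vertex then yields the bound for every $f_i$. Your fibered-spherical description is exactly the right ingredient, but it enters through this Cauchy--Schwarz reduction to one variable rather than through a direct regularity statement about $F$.
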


The corresponding inequality for the multilinear expression $\MM^1_{\la,Q}(f_1,\dots,f_n)$, namely the fact that
\[\MM^1_{\la,Q}(f_1,\dots,f_n)\leq  \min_{i=1,\ldots,n} \|f_i\|_{U^1_L(Q)} + O(\eps)\]
whenever $0<L\ll \eps^6 \la$ follows easily from Cauchy-Schwarz 
together with the simple observation that 
\[\|f\|_{U^1_L(Q)}\leq \|f\|_{U^1_{L'}(Q)}+O(\eps)\]
whenever $L'\ll\eps L$.

The second key ingredient, proved in \cite{LM18} and generalized in Lemma \ref{KvN-1'} below, is a  Koopman-von Neumann type decomposition of functions
where the underlying $\si$-algebras are generated by cubes of a fixed length. To recall it, let $Q\subs\R^n$ be a cube, 
$L>0$ be scale that divides $l(Q)$, $Q(L)=[-\frac{L}{2},\frac{L}{2}]^n$, and $\GG_{L,Q}$ denote the collection of cubes $t+Q(L)$ partitioning the cube $Q$ and $\Ga_{L,Q}$ denote the grids corresponding to the centers of the cubes. 
By a slightly abuse of notation we also write $\mathcal{G}_{L,Q}$ for the $\si$-algebra generated by the grid.
Recall that the conditional expectation function $\E(f|\GG_{L,Q})$ is constant and equal to $\fint_A f$ on each cube $A\in\GG_{L,Q}$.

\begin{lem}[A Koopman-von Neumann type decomposition \cite{LM18}]\label{KvN-1}

Let $0<\VE\leq 1$ and $Q\subseteq\R^n$ be a cube.

There exists an integer $\bar{J}_1=O(\VE^{-2})$ such that for any $\eps$-admissible sequence $l(Q)\geq L_1\geq\cdots \geq L_{\bar{J}_1}$ and function $f:Q\to[-1,1]$ there is some $1\leq j< \bar{J}_1$ such that
\eq\label{KvN-11}
\|f-\E(f|\GG_{L_j,Q})\|_{U^1_{L_{j+1}}(Q)} \leq \eps
\ee
\end{lem}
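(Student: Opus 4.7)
The plan is a standard energy-increment argument. Define the energies $\mathcal{E}_j := \|\E(f|\GG_{L_j,Q})\|_2^2$ for $1 \leq j \leq \bar{J}_1$. Since $L_{j+1}$ divides $L_j$, the partition $\GG_{L_{j+1},Q}$ refines $\GG_{L_j,Q}$, so $\mathcal{E}_j$ is non-decreasing and lies in $[0,1]$. Setting $g_j := f - \E(f|\GG_{L_j,Q})$ and using orthogonality of nested conditional expectations, one has $\mathcal{E}_{j+2} - \mathcal{E}_j = \|\E(g_j|\GG_{L_{j+2},Q})\|_2^2$. The goal is to show that whenever the conclusion fails at index $j$, i.e.\ $\|g_j\|_{U^1_{L_{j+1}}(Q)} > \eps$, this two-step energy jump satisfies $\mathcal{E}_{j+2} - \mathcal{E}_j \gs \eps^2$. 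Since the telescoping sums $\sum_k(\mathcal{E}_{2k+1}-\mathcal{E}_{2k-1})$ and $\sum_k(\mathcal{E}_{2k+2}-\mathcal{E}_{2k})$ are each bounded by $1$, at most $O(\eps^{-2})$ indices $j$ can fail, so $\bar{J}_1 = O(\eps^{-2})$ suffices.

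The crux is the implication: $\|g_j\|_{U^1_{L_{j+1}}(Q)} > \eps$ forces $\|\E(g_j|\GG_{L_{j+2},Q})\|_2 > \eps/2$. Observing that $\|h\|_{U^1_L(Q)}^2 = \fint_Q |h * K_L|^2$ with $K_L := |Q(L)|^{-1}\1_{Q(L)}$ a normalized box kernel, I would decompose $g_j = \E(g_j|\GG_{L_{j+2},Q}) + r_j$, where $r_j$ has mean zero on every cube of $\GG_{L_{j+2},Q}$. Young's inequality (with $\|K_L\|_1 = 1$) bounds the main piece by $\|\E(g_j|\GG_{L_{j+2},Q}) * K_{L_{j+1}}\|_2 \leq \|\E(g_j|\GG_{L_{j+2},Q})\|_2$. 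For the remainder, at each point $c$ the cube $c + Q(L_{j+1})$ decomposes into the $\GG_{L_{j+2},Q}$-cubes entirely contained in it, on which $r_j$ has mean zero, together with a boundary layer of relative measure $O(L_{j+2}/L_{j+1}) = O(\eps^2)$ by $\eps$-admissibility; hence $|r_j * K_{L_{j+1}}(c)| = O(\eps^2)$ uniformly in $c$. The triangle inequality then yields $\|\E(g_j|\GG_{L_{j+2},Q})\|_2 \geq \|g_j\|_{U^1_{L_{j+1}}(Q)} - O(\eps^2) > \eps - O(\eps^2)$, which exceeds $\eps/2$ for $\eps$ small.

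The main obstacle is the mismatch between the sliding-window $U^1_L$ norm, which averages $g$ over every translate of $Q(L)$, and the grid-aligned conditional expectation, which uses only the fixed partition $\GG_{L,Q}$; an averaging-over-shifts identity would relate the two, but it produces a shifted grid rather than the one the lemma specifies. The mean-zero-plus-boundary approximation described above is the cleaner bridge, and the $\eps$-admissibility hypothesis $L_{j+2} \ll \eps^2 L_{j+1}$ is tuned exactly so that the boundary error is $O(\eps^2)$: small enough that the linear-in-$\eps$ lower bound on $\|\E(g_j|\GG_{L_{j+2},Q})\|_2$ persists, thereby preserving the optimal $O(\eps^{-2})$ count rather than the suboptimal $O(\eps^{-4})$ that would arise from a weaker $\eps^2$ lower bound.
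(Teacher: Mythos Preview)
Your proof is correct and follows essentially the same energy-increment strategy as the paper: both arguments show that failure of \eqref{KvN-11} at step $j$ forces $\|\E(g_j|\GG_{L_{j+2},Q})\|_2^2 \gtrsim \eps^2$, hence $\mathcal{E}_{j+2}-\mathcal{E}_j \gtrsim \eps^2$, and then pigeonhole on the bounded energies gives $\bar J_1 = O(\eps^{-2})$. The only cosmetic difference is in how the key inequality $\|g\|_{U^1_{L_{j+1}}(Q)} \leq \|\E(g|\GG_{L_{j+2},Q})\|_2 + O(\eps^2)$ is obtained: the paper first replaces the sliding center $t$ by the nearest $\GG_{L_{j+2},Q}$-grid point (incurring $O(L_{j+2}/L_{j+1})$), then applies Cauchy--Schwarz over the sub-cubes, whereas you decompose $g_j = \E(g_j|\GG_{L_{j+2},Q}) + r_j$, apply Young to the first piece, and use the mean-zero boundary estimate on $r_j$---both routes exploit $L_{j+2}/L_{j+1}\ll\eps^2$ in the same way.
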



\begin{proof}[Proof of Proposition \ref{approx-1}] Let $\GG_{L_j,Q}$ be the grid obtained from Lemma \ref{KvN-1} for the functions $f=1_S$ for some fixed  $\eps>0$. Let $\of:=\E(f|\GG_{L_j,Q})$, then by \eqref{vN-11a} and multi-linearity, we have 
\[\NN^1_{\la\De^0,Q}(f,\ldots,f) = \NN^1_{\la\De^0,Q}(\of,\ldots,\of) + O(\eps),\]
and also
\[\MM^1_{\la,Q}(f,\ldots,f) = \MM^1_{\la,Q}(\of,\ldots,\of) + O(\eps)\]
provided for $ \eps{^{-6}} L_{j+1}\ll \lm$.
Thus in showing \eqref{1.4} one can replace the functions $f$ with $\of$. 
If we make the additional assumption that $\la\ll \eps L_j$ then it is easy to see,
using the fact that the function $\of$ is constant on the cubes $Q_t(L_j)\in\GG_{L_j,Q}$, that
\[\NN^1_{\la\De^0,Q}(\of,\ldots,\of) = \MM^1_{\la,Q}(\of,\ldots,\of) + O(\eps).\]

Since the condition $\VE^{-6}L_{j+1}\ll\lm\ll\VE L_j$ can be replaced with $L_{j+1}\ll\lm\ll L_j$ if one passes to a subsequence of scales, for example $L_j'=L_{5j}$, this  completes the proof of Proposition  \ref{approx-1}.
\end{proof}

\subsection{The base case of a general inductive strategy}\label{base}\

In this section, as preparation to handle the case of products of simplices, we prove a parametric version of Proposition \ref{approx-1}, namely Proposition \ref{approx-par-1} below, which will serve as the base case for later inductive arguments.

Let $Q=Q_1\times\cdots\times Q_d$ with $Q_i\subs\R^{n_i}$ be cubes of equal side length $l(Q)$. Let $L$ be a scale dividing $l(Q)$ and for each $\ut=(t_1,\ldots,t_d)\in\Ga_{L,Q}$ let $Q_{\ut}(L)=\ut+Q(L)$ and $Q_{t_i}(L)=t_i+Q_i(L)$. Note that $Q_{\ut}(L)=Q_{t_1}(L)\times\dots\times Q_{t_d}(L)$. Here $Q(L)=[-\frac{L}{2},\frac{L}{2}]^n$ and $Q_i(L)=[-\frac{L}{2},\frac{L}{2}]^{n_i}$ for each $1\leq i\leq d$.

Let $\De_i^0=\{v^i_1,\ldots,v^i_{n_i}\}\subs\R^{n_i}$ be a non-degenerate simplex for each $1\leq i\leq d$.


\begin{prop}[Parametric  Counting Lemma on $\R^n$ for Simplices]\label{approx-par-1}\

Let $0<\VE\leq 1$ and $R\geq1$.
There exists an integer $J_1=J_1(\VE,R)=O(R\,\VE^{-4})$ such that for any $\eps$-admissible sequence of scales $L_0\geq L_1\geq\cdots \geq L_{J_1}$ with the property that  $L_0$ divides $l(Q)$ and collection of functions  
\[\text{$f_{k,\ut}^{i,r}:\,Q_{t_i}(L_0)\to [-1,1]$ \ with \ $1\leq i\leq d$, $1\leq k\leq n_i$, $1\leq r\leq R$ and $\ut\in \Ga_{L_0,Q}$}\] 
there exists  $1\leq j< J_1$ and a set $T_\eps\subs\Ga_{L_0,Q}$ of size $|T_\eps |\leq \eps |\Ga_{L_0,Q}|$ such that
\eq\label{approx-p11}
\NN^1_{\la\De_i^0,Q_{t_i}(L_0)}(f_{1,\ut}^{i,r},\ldots,f_{n_i,\ut}^{i,r}) = \MM^1_{\la,Q_{t_i}(L_0)}(f_{1,\ut}^{i,r},\ldots,f_{n_i,\ut}^{i,r}) + O(\eps)
\ee
for all $\lm\in[L_{j+1}, L_j]$ and $\ut\notin T_\eps$ 
uniformly in $1\leq i\leq d$ and $1\leq r\leq R$.
\end{prop}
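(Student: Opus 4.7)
The plan is to lift the proof of Proposition~\ref{approx-1} to the parametric setting by running the Koopman--von Neumann decomposition separately on each sub-cube $Q_{t_i}(L_0)$ and then combining the resulting scale-selections via a pigeonhole over the parameter $\ut$. Since $L_0$ divides $l(Q)$ and each ratio $L_j/L_{j+1}$ is a positive integer, the sequence $L_1\geq\cdots\geq L_{J_1}$ restricts to an $\VE$-admissible sequence of sub-scales of every cube $Q_{t_i}(L_0)$, so the grids $\GG_{L_j,Q_{t_i}(L_0)}$ are well-defined.

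First I would apply Lemma~\ref{KvN-1}, used in the quantitative form in which the underlying energy-increment argument (the $\si$-algebras $\GG_{L_j,Q_{t_i}(L_0)}$ form a refining chain in $j$, energies are monotone and bounded by $1$, and each violation of \eqref{KvN-11} forces an $\Omega(\VE^2)$ jump in energy) shows that for any single function the scales violating the conclusion \eqref{KvN-11} form a set of size at most $O(\VE^{-2})$, to each function $f_{k,\ut}^{i,r}$ on its cube $Q_{t_i}(L_0)$. Taking the union over the $R\sum_i n_i=O(R)$ triples $(i,r,k)$ attached to a fixed $\ut$ produces a set $B(\ut)\subseteq\{1,\dots,J_1\}$ of scales that are bad for at least one of these triples, with $|B(\ut)|\leq C_0 R\VE^{-2}$.

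Second I would double-count over $\ut$: since $\sum_{\ut}|B(\ut)|\leq C_0 R\VE^{-2}\,|\Gamma_{L_0,Q}|$, averaging over $j\in\{1,\dots,J_1\}$ yields a single scale $j^*$ for which $T_\VE:=\{\ut:j^*\in B(\ut)\}$ has cardinality at most $C_0 R\VE^{-2}|\Gamma_{L_0,Q}|/J_1$. Choosing $J_1$ to be an appropriately large multiple of $R\VE^{-4}$ (one factor of $\VE^{-1}$ is needed to ensure $|T_\VE|\leq\VE|\Gamma_{L_0,Q}|$ and the remaining factor is absorbed by the subsequence step below) produces the exceptional set of the required size.

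Finally, for each $\ut\notin T_\VE$ and each pair $(i,r)$ I would repeat the proof of Proposition~\ref{approx-1} on the cube $Q_{t_i}(L_0)$ at scale $j^*$: setting $\bar f_k:=\E(f_{k,\ut}^{i,r}\mid\GG_{L_{j^*},Q_{t_i}(L_0)})$, multilinearity together with the von-Neumann inequality (Lemma~\ref{vN-1}) allow replacement of each $f_{k,\ut}^{i,r}$ by $\bar f_k$ in $\NN^1_{\lm\De_i^0,Q_{t_i}(L_0)}$ at cost $O(\VE)$, Cauchy--Schwarz does the same for $\MM^1_{\lm,Q_{t_i}(L_0)}$, and the fact that $\bar f_k$ is constant on each cube of side $L_{j^*}$ then gives $\NN^1=\MM^1+O(\VE)$ once $\lm\ll\VE L_{j^*}$. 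The main obstacle is the tension between this condition and the requirement $L_{j^*+1}\ll\VE^6\lm$ coming from Lemma~\ref{vN-1}: the $\VE$-admissibility $L_{j^*+1}\ll\VE^2 L_{j^*}$ is too weak on its own to leave a nontrivial range of $\lm\in[L_{j^*+1},L_{j^*}]$. Exactly as in the base case, this is resolved by passing at the outset to a fixed subsequence such as $L'_j=L_{5j}$, which costs only an absolute constant factor in the length of the admissible chain and yields the stated bound $J_1=O(R\VE^{-4})$.
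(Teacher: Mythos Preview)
Your proposal is correct and follows essentially the same route as the paper. The only organizational difference is that the paper packages the simultaneous Koopman--von Neumann step as a separate statement (Lemma~\ref{KvN-1'}), proved by a \emph{global} energy increment summed over all $\ut$ and all functions, whereas you obtain the same conclusion by first bounding, for each fixed $\ut$, the number of bad scales via the per-function energy increment underlying Lemma~\ref{KvN-1}, and then pigeonholing over $j$; these two arguments are equivalent and yield the same quantitative bound $J_1=O(R\VE^{-3})$ (well within the stated $O(R\VE^{-4})$). One minor point: your count ``one factor of $\VE^{-1}$ for the exceptional-set bound and the remaining factor absorbed by the subsequence'' slightly overcharges, since the subsequence $L'_j=L_{5j}$ costs only an absolute constant; this is harmless, and indeed the paper makes exactly the same subsequence move at the end of its proof.
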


The proof of Proposition \ref{approx-par-1} will follow from Lemma \ref{vN-1} and the 
 following generalization of Lemma \ref{KvN-1} in which we simultaneously consider a family of functions supported on the subcubes in a partition of an original cube $Q$.


\begin{lem}[A simultaneous Koopman-von Neumann type decomposition]\label{KvN-1'}\

Let $0<\VE\leq 1$, $m\geq1$, and $Q\subseteq\R^n$ be a cube.
There exists an integer $\bar{J}_1=O(m\VE^{-3})$ such that for any $\eps$-admissible sequence $L_0\geq L_1\geq\cdots \geq L_{\bar{J}_1}$ with the property that  $L_0$ divides $l(Q)$, and collection of functions \[f_{1,t},\dots,f_{m,t}:\,Q_t(L_0)\to [-1,1]\] defined for each $t\in\Ga_{L_0,Q}$, there is some $1\leq j< \bar{J}_1$ and a set $T_{\eps}\subs \Ga_{L_0,Q}$ of size $|T_{\eps}|\leq \eps |\Ga_{L_0,Q}|$ such that 
\eq\label{KvN-12}
\|f_{i,t}-\E(f_{i,t}|\GG_{L_j,Q_t(L_0)})\|_{U^1_{L_{j+1}}(Q_t(L_0))} \leq\eps\ee
for all $1\leq i\leq m$ and $t\notin T_{\eps}$.
\end{lem}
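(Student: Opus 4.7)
The plan is to adapt the energy-increment argument that proves Lemma \ref{KvN-1} by working with an \emph{aggregate} energy that averages over the base point $t \in \Ga_{L_0,Q}$ and sums over the family index $i$. For each scale $L_j$ in the admissible sequence we set
\[
\Phi(L_j) := \E_{t \in \Ga_{L_0,Q}} \sum_{i=1}^{m} \bigl\|\E(f_{i,t}\,|\,\GG_{L_j,Q_t(L_0)})\bigr\|_{L^2(Q_t(L_0))}^2.
\]
Since each $f_{i,t}$ takes values in $[-1,1]$, one has $0 \leq \Phi(L_j) \leq m$, and $\Phi(L_j)$ is non-decreasing in $j$ because the grid $\GG_{L_j,Q_t(L_0)}$ refines as $L_j$ shrinks (the admissibility condition $L_j/L_{j+1} \in \N$ is what makes this refinement even make sense). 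This is the quantity whose total growth we will budget.

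Call $t \in \Ga_{L_0,Q}$ \emph{$j$-bad} if there exists some $1 \leq i \leq m$ with
\[
\bigl\|f_{i,t} - \E(f_{i,t}\,|\,\GG_{L_j,Q_t(L_0)})\bigr\|_{U^1_{L_{j+1}}(Q_t(L_0))} > \eps.
\]
The per-function, per-cube content of the proof of Lemma \ref{KvN-1} in \cite{LM18} shows that for any such bad pair $(i,t)$ the $L^2$ energies at consecutive scales satisfy
\[
\bigl\|\E(f_{i,t}\,|\,\GG_{L_{j+1},Q_t(L_0)})\bigr\|_2^2 - \bigl\|\E(f_{i,t}\,|\,\GG_{L_j,Q_t(L_0)})\bigr\|_2^2 \,\geq\, c\,\eps^2;
\]
the $\eps$-admissibility $L_{j+1} \ll \eps^2 L_j$ is used precisely to absorb the boundary mismatch between averaging over continuous translates (which defines the $U^1_{L_{j+1}}$ norm) and averaging over the fixed grid $\GG_{L_{j+1},Q_t(L_0)}$, after which the Pythagorean identity for nested conditional expectations produces the clean increment above. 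Because the energies of the indices $i' \neq i$ are in any case non-decreasing, each $j$-bad $t$ contributes at least $c\,\eps^2$ to $\Phi(L_{j+1}) - \Phi(L_j)$.

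Let $T^{(j)} \subseteq \Ga_{L_0,Q}$ denote the set of $j$-bad $t$. If, contrary to the conclusion of the lemma, $|T^{(j)}| > \eps\,|\Ga_{L_0,Q}|$ for every $1 \leq j < \bar{J}_1$, then summing the increments above over $j$ yields
\[
\Phi(L_{\bar{J}_1}) - \Phi(L_1) \,\geq\, c\,\eps^3\,(\bar{J}_1 - 1).
\]
Combined with $\Phi \leq m$ this forces $\bar{J}_1 \leq C m \eps^{-3}$, so choosing $\bar{J}_1$ to be a constant larger than this threshold must produce at least one scale $j$ for which $|T^{(j)}| \leq \eps\,|\Ga_{L_0,Q}|$, and $T_\eps := T^{(j)}$ is the required exceptional set. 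The only genuine obstacle is the single-scale uniformity-to-energy-increment step quoted above, but this is precisely the per-cube content of the proof of Lemma \ref{KvN-1} in \cite{LM18} applied with $Q_t(L_0)$ in place of $Q$; the new ingredients here are only the averaging over the parameter $t$ and the indexing over $i$, and these together cost one extra factor of $\eps^{-1}$ and one factor of $m$ that upgrade the bound $O(\eps^{-2})$ of Lemma \ref{KvN-1} to the stated $O(m\eps^{-3})$.
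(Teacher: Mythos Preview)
Your approach is essentially the paper's own: define an aggregate energy $\Phi(L_j)=\E_{t}\sum_i\|\E(f_{i,t}|\GG_{L_j})\|_2^2$, show that failure of \eqref{KvN-12} on a set of $t$'s of density $>\eps$ forces $\Phi$ to jump by at least $c\eps^3$, and conclude by pigeonhole since $0\le\Phi\le m$.

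One small correction: the energy increment should land at scale $L_{j+2}$, not $L_{j+1}$. The boundary mismatch you refer to is between the continuous $U^1_{L_{j+1}}$-averaging and a \emph{grid} conditional expectation $\E(\cdot\,|\,\GG_{L'})$, and absorbing it requires $L'\ll\eps^2 L_{j+1}$, not $L_{j+1}\ll\eps^2 L_j$; hence the natural target is $L'=L_{j+2}$, which is exactly how the paper runs the step. This two-step increment does not affect the final $O(m\eps^{-3})$ bound.
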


\smallskip

\begin{proof}[Proof of Proposition \ref{approx-par-1}] Fix $1\leq i\leq d$. For $1\leq k\leq n_i$ and $\ut=(t_1,\ldots,t_d)\in \Ga_{L_0, Q}\,$, we will abuse notation and write \[f_{k,\ut}^{i,r}(x_1,\ldots,x_d):= f_{k,\ut}^{i,r}(x_i)\] for $(x_1,\ldots,x_d)\in Q_{\ut}(L_0)$.

If we apply Lemma \ref{KvN-1'} to the family of functions $f_{k,\ut}^{i,r}$ on $Q_{\ut}(L_0)$ for
$\,1\leq i\leq d$, $1\leq  k\leq n_i$, and $1\leq r\leq R$, with $m=(n_1+\ldots +n_d)R$, then this produces a grid $\GG_{L_j,Q}$ for some $1\leq j\leq \bar{J}_1=O(\eps^{-3}R)$, and a set $T_\eps\subs\Ga_{L_0,Q}$ of size $|T_\eps |\leq\,\eps |\Ga_{L_0,Q}|$, such that
\[ \|f_{k,\ut}^{i,r} - \E(f_{k,\ut}^{i,r}|\GG_{L_j,Q})\|_{U^1_{L_{j+1}}(Q_{\ut}(L_0))} \leq \eps\]
uniformly for $1\leq i\leq d$, $1\leq k\leq n_i$ and $1\leq r\leq R$ for $t\notin T_{\eps}.$

Since $f_{k,\ut}^{i,r}(x_1,\ldots,x_d)=f_{k,\ut}^{i,r}(x_i)$ for $(x_1,\ldots,x_d)\in Q_{\ut}(L_0)$ it is easy to see that
\[\|f_{k,\ut}^{i,r}-\E(f_{k,\ut}^{i,r}|\GG_{L_j,Q})\|_{U^1_{L_{j+1}}(Q_{\ut}(L_0))} = \|f_{k,\ut}^{i,r}-\E(f_{k,\ut}^{i,r}|\GG_{L_j,Q_i})\|_{U^1_{L_{j+1}}(Q_{t_i}(L_0))}.\]

Let $\of_{k,\ut}^{i,r}:= \E(f_{k,\ut}^{i,r}| \GG_{L_j,Q_{i}})\,$, then by Lemma \ref{vN-1}, one has
\[\NN^1_{\la\De_i^0,Q_{t_i}(L_0)}(f_{1,\ut}^{i,r},\ldots,f_{n_i,\ut}^{i,r}) = \NN^1_{\la\De_i^0,Q_{t_i}(L_0)}(\of_{1,\ut}^{i,r},\ldots,\of_{n_i,\ut}^{i,r}) + O(\eps),\]\\
and
\[\MM^1_{\la,Q_{t_i}(L_0)}(f_{1,\ut}^{i,r},\ldots,f_{n_i,\ut}^{i,r}) = \MM^1_{\la,Q_{t_i}(L_0)}(\of_{1,\ut}^{i,r},\ldots,\of_{n_i,\ut}^{i,r}) + O(\eps)\]

\smallskip
\noindent
for all $\ut\notin T_\eps$  provided $\eps^{-6} L_{j+1}\ll\lm$. 
Finally, if we also have $\la\ll \eps L_j$ then it is easy to see that
\[\NN^1_{\la\De_i^0,Q_{t_i}(L_0)}(\of_{1,\ut}^{i,r},\ldots,\of_{n_i,\ut}^{i,r}) = \MM^1_{\la,Q_{t_i}(L_0)}(
\of_{1,\ut}^{i,r},\ldots,\of_{n_i,\ut}^{i,r})+O(\eps)\]
as the functions $\of_{k,\ut}^{i,r}$ are constant on cubes $Q_{t_i}(L_j)$ of $\GG_{L_j,Q_i}$, which are of size $L_j\ll\eps L_0$. 

Passing first to a subsequence of scales, for example $L_j'=L_{5j}$, the condition $\VE^{-6}L_{j+1}\ll\lm\ll\VE L_j$ can be replaced with $L_{j+1}\ll\lm\ll L_j$ so this  completes the proof of the Proposition.
\end{proof}

\smallskip

We conclude this section with a sketch of the proof of Lemma \ref{KvN-1'}. These arguments are standard, see for example the  proof of Lemma \ref{KvN-1} given in \cite{LM17}.

\begin{proof}[Proof of Lemma \ref{KvN-1'}] 

First we make an observation about the $U^1_L(Q)$-norm. 
Suppose $0<L'\ll\eps^2 L$ with $L'$ dividing $L$. If $s\in\Ga_{L',Q}$ and $t\in Q_s(L')$ then $|t-s|=O(L')$ and hence
\[\fint_{x\in Q_t(L)} g(x)\,dx = \fint_{x\in Q_s(L)} g(x)\,dx + O(L'/L)\]
for any function $g:Q\to [-1,1]$. Moreover, since the cube $Q_s(L)$ is partitioned into the smaller cubes $Q_t(L')$, we have by Cauchy-Schwarz
\[\Bigl|\fint_{x\in Q_s(L)}\, g(x)\,dx\Bigr|^2 \leq \E_{t\in \Ga_{L',Q_s(L)}}\Bigl|\fint_{x\in Q_t(L')} g(x)\,dx\Bigr|^2.\]

From these observations it is easy to see that
\[ \|g\|_{U^1_L(Q)}^2 = \fint_{t\in Q}\Bigl|\fint_{x\in Q_t(L)} g(x)\,dx\Bigr|^2\,dt \leq \E_{t\in \Ga_{L',Q}}\Bigl|\fint_{x\in Q_t(L')}g(x)\,dx\Bigr|^2 + O(L'/L)\]
and we note that the right side of the above expression is $\|\E(g|\GG_{L',Q})\|_{L^2(Q)}^2$ since the conditional expectation function $\E(g|\GG_{L',Q})$ is constant and equal to $\fint_{x\in Q_t(L')} g(x)\,dx$ on the cubes $Q_t(L')$.

Suppose that \eqref{KvN-12} does not hold for some $1\leq i\leq m$ for every $t$ in some set $T_{\eps}\subs \Ga_{L_0,Q}$ of size $|T_{\eps}|> \eps\,|\Ga_{L_0,Q}|$. If we apply the above observation to $g:=f_{i,t}-\E(f_{i,t}|\GG_{L_{j},Q_t(L_0)})$, for every $t\in T_{\eps}$, we obtain by orthogonality that
\[ \sum_{i=1}^m \|\E(f_{i,t}|\GG_{L_{j+2},Q_t(L_0)})\|_{L^2(Q_t(L_0))}^2 \geq \sum_{i=1}^m \|\E(f_{i,t}|\GG_{L_j,Q_t(L_0)})\|_{L^2(Q_t(L_0))}^2 +c\eps^2\]
for some constant $c>0$. 

If we now define $f_i:Q\to [-1,1]$ such that $f_i|_{(Q_t(L_0))}=f_{i,t}$, for $1\leq i\leq m$, average over $t\in \Ga_{L_0,Q}$, and use the fact $\|f_i\|_{L^2(Q)}^2=\E_{t\in \Ga_{L_0,Q}}  \|f_{i,t}\|_{L^2(Q_t(L_0))}^2$, we obtain
\eq\label{KvN-13}
\sum_{i=1}^m \|\E(f_{i}|\GG_{L_{j+2},Q})\|_{L^2(Q)}^2 \geq \sum_{i=1}^m \|\E(f_i|\GG_{L_j,Q})\|_{L^2(Q)}^2 +c\eps^3.\ee

It is clear that the sums in the above expressions are bounded by $m$ for all $j\geq 1$, thus \eqref{KvN-13} cannot hold for some $1\leq j\leq \bar{J}_1$ for $\bar{J}_1:= C\,m\,\eps^{-3}$. This implies that \eqref{KvN-12} must hold for some $1\leq j\leq \bar{J}_1$, for all $1\leq i\leq m$ and all $t\notin T_{\eps}$ for a set  $T_{\eps}\subs \Ga_{L_0,Q}$ of size $|T_{\eps}|\leq \eps\,|\Ga_{L_0,Q}|$.
\end{proof}



\section{Product of two simplices in $\R^n$}\label{two}

Although not strictly necessary, we discuss in this section the special case $d=2$ of Theorem \ref{Prod}. This already gives an
improvement of the main results of \cite{LM17}, but more importantly serves as a gentle preparation for the more complicated general case, presented in the Section \ref{generalcase}, which involve both a plethora of different scales and the hypergraph bundle notation introduced in Section \ref{hbnotation}.


\subsection{Proof of Theorem \ref{Prod} with $d=2$}\

Let $Q=Q_1\times Q_2$ with $Q_1\subs\R^{n_1}$ and $Q_2\subs\R^{n_2}$ be cubes of equal side length $l(Q)$ and $\De^0=\De^0_1\times\De^0_2$ with $\De^0_1=\{v_{11},\ldots, v_{1n_1}\}\subs\R^{n_1}$ and $\De^0_2=\{v_{11},\ldots, v_{2n_2}\}\subs\R^{n_2}$  two non-degenerate simplices. 

In order to ``count" configurations of the form $\De=\De_1\times\De_2\subs\R^{n_1+n_2}$ with $\De_1$ and $\De_2$ isometric copies of $\la\De^0_1$ and $\la\De^0_2$ respectively for some $0<\lm\ll l(Q)$ in a set $S\subs Q$ we introduce
 the multi-linear expression
\begin{align*}\label{multilin-config-2}
\NN^2_{\la\De^0,Q}(\{f_{kl}\}
) &:= \fint_{x_{11}\in Q_1}\fint_{x_{21}\in Q_2}\int_{x_{12},\ldots,x_{1n_1}} \int_{x_{22},\ldots,x_{2n_2}}\,
\prod_{k=1}^{n_1}\prod_{l=1}^{n_2} \
f_{kl}(x_{1k},x_{2l})\nonumber\\
 & \quad \quad \quad \quad d\si_{\la\De^0_1}(x_{12}-x_{11},\ldots,x_{1n_1}-x_{11})\, d\si_{\la\De^0_2}(x_{22}-x_{21},\ldots,x_{2n_2}-x_{21})\,dx_{21}\,dx_{11}
\end{align*}
for any family of functions $f_{kl}:Q_1\times Q_2\to [-1,1]$ with $1\leq k\leq n_1$ and $1\leq l\leq n_2$. 

Indeed, if $f_{kl}=1_S$ for all $1\leq k\leq n_1$ and $1\leq l\leq n_2$ then the above expression is 0 unless there exists a configuration $\De\subs S$ of the form $\De_1\times\De_2$ with $\De_1$ and $\De_2$ isometric copies of $\la\De^0_1$ and $\la\De^0_2$ respectively.

The short argument presented in Section \ref{d&s} demonstrating how both Theorem B and Corollary B follow from Proposition B, and hence from Proposition \ref{PropnSimp}, applies equally well to each of our main theorems. This reduces our main theorems to  analogous quantitative results involving an arbitrary lacunary sequence of scales. 
In the case $d=2$ of Theorem \ref{Prod} this  stronger quantitative result takes the following form:

\begin{prop}\label{PropnSimp2}

For any $0<\VE\ll 1$  there exists an integer $J=O(\exp(C\eps^{-13}))$ with the following property: 

Given any lacunary sequence $l(Q)\geq \lm_1\geq\cdots\geq\lm_J$  and $S\subseteq Q$, there is some $1\leq j< J$ such that 
\eq
\NN^2_{\la\De^0,Q}(\{1_S\})>\left(\frac{|S|}{|Q|}\right)^{n_1n_2}-\VE
\ee 
for all $\lambda\in [\lm_{j+1},\lm_j]$.
\end{prop}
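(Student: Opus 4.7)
The plan is to compare $\NN^2_{\la\De^0,Q}(\{1_S\})$ with a corresponding product ``box count''
\[
\MM^2_{\la,Q}(\{f_{kl}\}) := \fint_{\ut\in Q}\,\fint_{\substack{x_{1k}\in Q_{t_1}(\la)\\ 1\leq k\leq n_1}}\fint_{\substack{x_{2l}\in Q_{t_2}(\la)\\ 1\leq l\leq n_2}} \prod_{k,l} f_{kl}(x_{1k},x_{2l}),
\]
and to verify, via iterated H\"older/Cauchy--Schwarz, that $\MM^2_{\la,Q}(1_S,\ldots,1_S) \geq (|S|/|Q|)^{n_1 n_2} - O(\eps)$ whenever $\la \ll \eps\,l(Q)$, mirroring both \eqref{ML-lowerb-1} in the $d=1$ case and \eqref{maintermff} in the finite-field model. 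The proposition then reduces, exactly as in the passage from Proposition \ref{approx-1} to Proposition \ref{PropnSimp}, to a two-dimensional counting lemma: for any $\eps$-admissible sequence $l(Q) \geq L_1 \geq \cdots \geq L_{J_2}$ of length $J_2 = O(\exp(C\eps^{-13}))$, there is some $1 \leq j < J_2$ so that
\[
\NN^2_{\la\De^0,Q}(1_S,\ldots,1_S) = \MM^2_{\la,Q}(1_S,\ldots,1_S) + O(\eps) \quad \text{for all } \la \in [L_{j+1}, L_j].
\]

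To establish this counting lemma I would follow the finite-field blueprint of Section \ref{FFSECTION}, with two main ingredients. The first is a generalized von Neumann inequality: for any $1$-bounded family $\{f_{kl}\}$ and suitable scale $L \ll \eps^{c} \la$,
\[
|\NN^2_{\la\De^0,Q}(\{f_{kl}\})| \leq \min_{k,l}\|f_{kl}\|_{\Box(L)} + O(\eps),
\]
where the two-dimensional box norm
\[
\|f\|^4_{\Box(L)} := \fint_{\ut\in Q}\left|\fint_{x_1\in Q_{t_1}(L)}\fint_{x_2\in Q_{t_2}(L)} f(x_1,x_2)\,dx_1\,dx_2\right|^4
\]
is the natural 2D analogue of the $U^1_L(Q)$-norm from Lemma \ref{vN-1}. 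This is obtained by applying Lemma \ref{vN-1} twice---once for each coordinate direction, with the other set of variables treated as parameters---and assembling the output via Cauchy--Schwarz in the manner used in the proof of Lemma \ref{vN-rect-ff}. The same box norm controls $\MM^2_{\la,Q}(\{f_{kl}\})$ by a direct application of Cauchy--Schwarz.

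The second ingredient is a two-dimensional Koopman--von Neumann decomposition: given an $\eps$-admissible sequence as above, there is $1 \leq j < J_2$ such that
\[
\|1_S - \E(1_S\,|\,\GG_{L_j,Q})\|_{\Box(L_{j+1})} \leq \eps,
\]
where $\GG_{L_j,Q}$ is the grid of product cubes $Q_{t_1}(L_j)\times Q_{t_2}(L_j)$. The proof is an energy-increment argument analogous to Lemma \ref{KvN-1}: if the inequality fails, expanding the fourth power of the box norm produces a product set $B_1\times B_2 \subseteq Q_1\times Q_2$ exhibiting non-trivial correlation with $1_S - \E(1_S|\GG_{L_j,Q})$, and one can refine the grids on $Q_1$ and $Q_2$ by $B_1$ and $B_2$ respectively so that the total $L^2$-energy of the conditional expectation jumps by $\Omega(\eps^{O(1)})$. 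Since the energy is bounded, the process terminates in $O(\eps^{-O(1)})$ steps, and because each step forces a descent by several admissible scales, one obtains the single-exponential bound $J_2 = O(\exp(C\eps^{-13}))$. Finally, once $1_S$ is $\eps$-approximated in box norm by $\bar{1}_S := \E(1_S \,|\, \GG_{L_j,Q})$, which is constant on product cubes of side $L_j$, choosing $\la \ll \eps L_j$ ensures that both the simplex and box integrations remain inside a single such cube, and a short direct computation gives $\NN^2_{\la\De^0,Q}(\bar{1}_S,\ldots) = \MM^2_{\la,Q}(\bar{1}_S,\ldots) + O(\eps)$.

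The main obstacle is the two-dimensional Koopman--von Neumann step: unlike the single-scale decomposition of Lemma \ref{KvN-1}, here one must simultaneously refine the grids in both factors $Q_1$ and $Q_2$, and one must verify that a failure of the box-norm estimate produces a genuine product rectangle of correlation rather than a more entangled set. This is precisely where the single-exponential dependence $\exp(C\eps^{-13})$ in the number of scales enters: it combines the fourth power in the definition of $\|\cdot\|_{\Box(L)}$ with the $\eps^{-O(1)}$ scaling gaps between successive admissible scales forced by Lemma \ref{vN-1} at each refinement.
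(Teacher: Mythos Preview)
Your proposal has a genuine gap at its core: the ``box norm'' you write down,
\[
\|f\|^4_{\Box(L)} = \fint_{\ut\in Q}\Bigl|\fint_{x_1\in Q_{t_1}(L)}\fint_{x_2\in Q_{t_2}(L)} f(x_1,x_2)\Bigr|^4,
\]
is not the norm that controls $\NN^2_{\la\De^0,Q}$. Applying Lemma~\ref{vN-1} in each direction and Cauchy--Schwarz (exactly as you suggest) actually yields the genuine local box norm
\[
\|f\|_{\Box_L(Q_1\times Q_2)}^4 = \fint_{\ut\in Q}\fint_{x_{11},x_{12}\in Q_{t_1}(L)}\fint_{x_{21},x_{22}\in Q_{t_2}(L)} f(x_{11},x_{21})f(x_{12},x_{21})f(x_{11},x_{22})f(x_{12},x_{22}),
\]
which involves four copies of $f$ on a combinatorial rectangle; see Lemma~\ref{vN-2}. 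Your quantity is essentially an $L^4$-variant of the $U^1_L$-norm and is strictly weaker: a function depending on $x_1$ alone with mean zero on $L$-cubes has your norm $\approx 0$ but genuine box norm $\approx 1$.

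This matters because conditioning on the grid $\GG_{L_j,Q}$ controls your norm but \emph{not} the genuine box norm, so your Koopman--von~Neumann step does not give what the von~Neumann inequality needs. The paper's Lemma~\ref{KvN-2} instead runs the energy increment with $\sigma$-algebras $\BB_{\ut}=\BB_{1,\ut}\vee\BB_{2,\ut}$ generated locally by arbitrary products $B_1\times B_2$ (which you allude to, but then contradict by writing $\GG_{L_j,Q}$). The resulting $\of=\E(1_S\,|\,\BB)$ is therefore \emph{not} constant on product cubes, and your ``short direct computation'' at the end does not apply. What one must do instead is expand $\of_{\ut}$ as a bounded sum $\sum_{\ur}\al_{\ur,\ut}\,1_{A^{r_1}_{1,\ut}}\otimes 1_{A^{r_2}_{2,\ut}}$, observe that $\NN^2$ and $\MM^2$ of each tensor term factor as $\NN^1\cdot\NN^1$ and $\MM^1\cdot\MM^1$ respectively (equations \eqref{approx-13a}--\eqref{approx-13b}), and then invoke the \emph{parametric} $d=1$ counting lemma, Proposition~\ref{approx-par-1}, simultaneously across all atoms and all $\ut$. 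It is precisely this reduction to level $k-1$, with $R=\exp(C\eps^{-12})$ parameters coming from the atom count, that produces the single-exponential bound $J=O(\exp(C\eps^{-13}))$; your sketch omits this step entirely.
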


Our approach to establishing Proposition \ref{PropnSimp2} is again to compare the above expressions to simpler ones for which it is easy to
obtain lower bounds.
For any  $0<\lm\ll l(Q)$ and family of functions $f_{kl}:Q_1\times Q_2\to [-1,1]$ with $1\leq k\leq n_1$ and $1\leq l\leq n_2$ we  consider  
\[\MM^2_{\lm,Q}(\{f_{kl}\}):= \fint_{\ut\in Q} \fint_{\ux_{1}\in (t_1+Q_1(\lm))^{n_1}} \fint_{\ux_{2}\in (t_2+Q_2(\lm))^{n_2}}\prod_{k=1}^{n_1}\prod_{l=2}^{n_2} f_{kl}(x_{1k},x_{2l})\,d\ux_{2}\,d\ux_{1}\,d\ut\]
where $\ut=(t_1,t_2)\in Q_1\times Q_2$, $\ux_i=(x_{i1},\dots,x_{in_i})$ and
 $Q_i(\lm)=[-\frac{\lm}{2},\frac{\lm}{2}]^{n_i}$ for $i=1,2$. 
 
Note that if $S\subs Q$ is
a set of
measure $|S|\geq \de |Q|$ for some $\de>0$, then careful applications of H\"older's inequality give
\[
\MM^2_{\lm,Q}(\{1_S\}) \geq \fint_{\ut\in Q} \left(\fint_{(x_1,x_2)\in \ut+Q(\lm)} 1_S(x_1,x_2)\,dx_1dx_2\right)^{n_1n_2} d\ut \,
\geq\de^{n_1n_2}-O(\eps)\]
for all scales $0<\lm\ll\eps\, l(Q)$.


\smallskip

In light of the observation above, and the discussion preceding Proposition \ref{approx-1}, we see that Proposition \ref{PropnSimp2}, and hence Theorem \ref{Prod} when $d=2$, will follows as a consequence of the following

\begin{prop}\label{approx-2}

Let $0<\eps\ll1$. 
There exists an integer $J_2=
O(\exp(C\VE^{-12}))$
such that for any 
$\VE$-admissible sequence of scales $l(Q)\geq L_1\geq\cdots \geq L_{J_2}$ and $S\subseteq Q$ there is some $1\leq j< J_2$ such that
\eq\label{approx-22}
\NN^2_{\la\De^0,Q}(\{1_S\})= \MM^2_{\la,Q}(\{1_S\}) + O(\eps)
\ee
for all $\lm\in[L_{j+1},L_j]$.
\end{prop}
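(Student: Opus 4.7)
The plan is to mirror the finite-field strategy of Section \ref{FFSECTION} (Proposition \ref{main-rect-ff}) in the Euclidean setting, combining a generalized von-Neumann inequality with a Koopman--von Neumann decomposition and reducing the resulting ``low-complexity'' piece to the one-simplex parametric counting lemma, Proposition \ref{approx-par-1}.

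\textbf{Step 1: a generalized von-Neumann inequality.} I would first introduce an appropriate box-type norm $\|\cdot\|_{\Box_L(Q_1\times Q_2)}$, measuring uniformity of a function $f:Q_1\times Q_2\to[-1,1]$ with respect to product cubes of side $L$, and prove that for any family $\{f_{kl}\}$ and any $L\ll\VE^c\,\la$ one has
\[|\NN^2_{\la\De^0,Q}(\{f_{kl}\})|\leq\min_{k,l}\|f_{kl}\|_{\Box_L(Q_1\times Q_2)}+O(\VE).\]
This is obtained by iterating Lemma \ref{vN-1}: freeze the variables in $Q_2$ and apply Lemma \ref{vN-1} to the slice functions $x_1\mapsto f_{kl}(x_1,x_{2l})$ to collapse the factor $\sigma_{\la\De^0_1}$ into an $L$-average, then repeat with the roles of the two factors reversed. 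This is the direct Euclidean analogue of the repeated Cauchy-Schwarz argument carried out in (\ref{vN-step1-f})--(\ref{vN-step4-f}).

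\textbf{Step 2: a weak regularity lemma.} Next, I would prove the Euclidean version of Lemma \ref{KvN-ff} at level $k=2$: there exist $\sigma$-algebras $\BB_1$ on $Q_1$ and $\BB_2$ on $Q_2$, each generated by at most $O(\VE^{-C})$ cubes from grids $\GG_{L_0,Q_i}$ on an appropriately coarse scale $L_0$, such that $\|f-\E(f\,|\,\BB_1\vee\BB_2)\|_{\Box_L(Q_1\times Q_2)}\leq\VE$ for any $1$-bounded $f$ on $Q_1\times Q_2$. This follows from the standard energy-increment argument, exactly as in the proof of Lemma \ref{KvN-ff}: a failure of the box-norm estimate produces, via Cauchy-Schwarz, a pair of sets $B_1\subs Q_1$, $B_2\subs Q_2$ that refine the $\sigma$-algebras and raise the total energy by a quantity depending only on $\VE$.

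\textbf{Step 3: reduction to the one-simplex case.} Writing $1_S=\of+h$ with $\of:=\E(1_S\,|\,\BB_1\vee\BB_2)$, Step 1 gives $\NN^2_{\la\De^0,Q}(\{1_S\})=\NN^2_{\la\De^0,Q}(\{\of\})+O(\VE)$, and a straightforward Cauchy-Schwarz argument gives the corresponding statement for $\MM^2$. Since $\of$ is a linear combination of at most $M:=2^{O(\VE^{-C})}$ products $1_{A_\alpha}(x_1)\,1_{B_\beta}(x_2)$ with $A_\alpha,B_\beta$ atoms of $\BB_1,\BB_2$, after localizing to the cubes $Q_{t_1}(L_0)\times Q_{t_2}(L_0)$ on which $\of$ is a genuine tensor product, each resulting form factors as a product of two one-dimensional forms of type $\NN^1_{\la\De^0_i,Q_{t_i}(L_0)}$ and $\MM^1_{\la,Q_{t_i}(L_0)}$. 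Applying Proposition \ref{approx-par-1} with $R=M$ to the families $\{1_{A_\alpha\cap Q_{t_1}(L_0)}\}$ and $\{1_{B_\beta\cap Q_{t_2}(L_0)}\}$ simultaneously produces a scale window $[L_{j+1},L_j]$ on which every such $\NN^1$ agrees with its $\MM^1$ counterpart up to $O(\VE)$, outside an exceptional set $T_\VE$ of $\ut$'s of negligible measure. Recombining the pieces of $\of$ and controlling the contribution of $T_\VE$ trivially yields (\ref{approx-22}).

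\textbf{Main obstacle.} The principal difficulty is the scale bookkeeping. The regularity lemma fixes $\sigma$-algebras at a single coarse scale $L_0$ with combined complexity up to $\exp(C\VE^{-C})$, but the parametric counting lemma must then find a common window $[L_{j+1},L_j]$ valid for all $M\cdot(n_1+n_2)$ atom-indicators simultaneously. Since Proposition \ref{approx-par-1} requires a sequence of length $O(R\VE^{-4})$ with $R=M=2^{O(\VE^{-C})}$, this forces $J_2=O(\exp(C\VE^{-12}))$ after one tracks the precise exponents through Steps 1 and 2 (the hypergraph regularity at level $k=2$ contributes a complexity of $O(\VE^{-8})$, and the parametric base case adds a further factor of $\VE^{-4}$). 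Passing to a subsequence (e.g.\ $L'_j=L_{5j}$) to absorb the separation gaps $\VE^{-c}L_{j+1}\ll\lm\ll\VE L_j$ into $L_{j+1}\ll\lm\ll L_j$ is by now standard.
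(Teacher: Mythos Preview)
Your Steps 1 and 3 are essentially what the paper does, and your identification of the scale-bookkeeping issue is on target. The gap is in Step 2.

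You propose \emph{global} $\sigma$-algebras $\BB_1$ on $Q_1$ and $\BB_2$ on $Q_2$ of bounded complexity with $\|f-\E(f\,|\,\BB_1\vee\BB_2)\|_{\Box_L}\leq\eps$. But the von-Neumann inequality of Step~1 (and the paper's Lemma~\ref{vN-2}) is controlled by the \emph{local} box norm $\|\cdot\|_{\Box_L(Q_1\times Q_2)}$ at scale $L\ll\eps^c\la$, not by the global norm $\|\cdot\|_{\Box(Q_1\times Q_2)}$. A global Frieze--Kannan increment produces a single pair $B_1\subs Q_1$, $B_2\subs Q_2$ and controls only the global cut/box norm; it does not make $f-\E(f\,|\,\BB_1\vee\BB_2)$ small in $\Box_L$. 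Conversely, if you run the increment against $\Box_L$, a failure at step $j$ yields sets $B_{1,\us}\subs Q_{s_1}(L_{j+2})$, $B_{2,\us}\subs Q_{s_2}(L_{j+2})$ for each $\us$ in a positive fraction of the grid $\Ga_{L_{j+2},Q}$; aggregating these into a single global $\sigma$-algebra on $Q_i$ blows the complexity up to $(l(Q)/L_{j+2})^{n_i}$, which is uncontrolled. Your phrase ``generated by at most $O(\eps^{-C})$ cubes from grids $\GG_{L_0,Q_i}$'' conflates these two pictures and cannot deliver both bounded complexity and small local box norm.

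The paper's Lemma~\ref{KvN-2} handles exactly this: the $\sigma$-algebra $\BB$ is of scale $L_j$ and has product structure only \emph{locally}, i.e.\ $\BB|_{Q_{\ut}(L_j)}=\BB_{1,\ut}\vee\BB_{2,\ut}$ with $\BB_{i,\ut}$ depending on $\ut\in\Ga_{L_j,Q}$, but with the \emph{local} complexity $\comp(\BB_{i,\ut})=O(\eps^{-12})$ uniformly bounded. The paper even flags this point explicitly after stating Lemma~\ref{KvN-2}. Once you have this local product structure, your Step~3 goes through verbatim (this is precisely the paper's argument): localize, expand $\of_{\ut}$ into $\leq\exp(C\eps^{-12})$ tensor products of atoms $1_{A_{1,\ut}^{r_1}}\otimes 1_{A_{2,\ut}^{r_2}}$, factor $\NN^2$ and $\MM^2$ accordingly, and apply Proposition~\ref{approx-par-1} with $R=\exp(C\eps^{-12})$. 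This is what produces the exponent $12$ and the final bound $J_2=O(\exp(C\eps^{-12}))$; your suggested exponent $8$ in the obstacle paragraph undercounts the local-box-norm energy increment, which costs $\eps^{-12}$ rather than $\eps^{-8}$.
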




There are again two main ingredients in the proof of Proposition \ref{approx-2}. 
The first establishes that the our multi-linear forms $\NN^2_{\la\De^0,Q}(\{f_{kl}\})$ are controlled by an appropriate box-type norm attached to a scale $L$. 

Let $Q=Q_1\times Q_2$ be a cube. For any  scale $0<L\ll l(Q)$  and function $f:Q\to\R$ we define its local box norm at scale $L$ to be
\eq\label{box-norm-L}
\|f\|_{\Box_L(Q_1\times Q_2)}^4 := \fint_{\ut\in Q} \|f\|_{\Box(\ut+Q(L))}^4 \,d\ut
\ee
where $Q(L)=[-\frac{L}{2},\frac{L}{2}]^{n_1+n_2}$ and 
\eq\label{box-norm}
\|f\|_{\Box(\widetilde{Q})}^4 := \fint_{x_{11},x_{12}\in \widetilde{Q}_1}\fint_{x_{21},x_{22}\in \widetilde{Q}_2} f(x_{11},x_{21}) f(x_{12},x_{21}) f(x_{11},x_{22}) f(x_{12},x_{22}) \,dx_{11}\ldots dx_{22}
\ee
for any cube $\widetilde{Q}\subseteq Q$ of the form $\widetilde{Q}=\widetilde{Q}_1\times \widetilde{Q}_2$ with $\widetilde{Q}_j\subseteq Q_j$ for $j=1,2$.


 \begin{lem}[A Generalized von-Neumann inequality \cite{LM17}]\label{vN-2}

Let $\VE>0$, $0<\la\ll l(Q)$, and $0<L\ll \eps^{24} \la$. 

For any collections of functions  $f_{kl}:Q_1\times Q_2\to [-1,1]$ with $1\leq k\leq n_1$ and $1\leq l\leq n_2$  we have both
\eq\label{vN-11}
|\NN^2_{\la\De^0,Q}(\{f_{kl}\})| \leq \min_{1\leq k\leq n_1 ,\,1\leq l\leq n_2} \|f_{kl}\|_{\Box_L(Q_1\times Q_2)} + O(\eps)
\ee
\eq\label{vN-12}
|\MM^2_{\la,Q}(\{f_{kl}\})| \leq \min_{1\leq k\leq n_1 ,\,1\leq l\leq n_2} \|f_{kl}\|_{\Box_L(Q_1\times Q_2)} .
\ee
\end{lem}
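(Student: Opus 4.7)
The plan is to establish the two inequalities separately. The second, \eqref{vN-12}, is a standard Gowers--Cauchy--Schwarz estimate: for each fixed offset $\ut$, successive applications of Cauchy--Schwarz in the variables $(x_{1k})_{k\neq k_0}$ and then $(x_{2l})_{l\neq l_0}$ replace $\prod_{k,l}f_{kl}(x_{1k},x_{2l})$ by four copies of $f_{k_0 l_0}$ arranged in the symmetric $2{\times}2$ pattern defining $\|f_{k_0 l_0}\|_{\Box(\ut+Q(\la))}^4$. Averaging in $\ut$ and applying H\"older gives $|\MM^2_{\la,Q}(\{f_{kl}\})|\le\|f_{k_0 l_0}\|_{\Box_\la(Q_1\times Q_2)}$, and since $L\ll\la$, refining each cube of size $\la$ into sub-cubes of size $L$ combined with Cauchy--Schwarz establishes the monotonicity $\|f\|_{\Box_\la(Q_1\times Q_2)}\le\|f\|_{\Box_L(Q_1\times Q_2)}$, completing the estimate.

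The bound \eqref{vN-11} for the singular count $\NN^2_{\la\De^0,Q}$ is more delicate, and will be proved by two nested applications of the single-simplex generalized von Neumann inequality Lemma \ref{vN-1}, interleaved with Cauchy--Schwarz. By symmetry I fix $(k_0,l_0)=(1,1)$. For fixed $\ux_2=(x_{21},\ldots,x_{2n_2})$, set $g_{k,\ux_2}(x):=\prod_{l=1}^{n_2}f_{kl}(x,x_{2l})$, so that integration against $d\si_{\la\De_1^0}$ produces $\NN^1_{\la\De_1^0,Q_1}(g_{1,\ux_2},\ldots,g_{n_1,\ux_2})$, which by Lemma \ref{vN-1} is at most $\|g_{1,\ux_2}\|_{U^1_L(Q_1)}+O(\eta)$ provided $L\ll\eta^6\la$, where $\eta$ is an auxiliary parameter to be chosen. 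Squaring the resulting bound for $\NN^2$ and applying Cauchy--Schwarz to the average over $\ux_2$, using $1$-boundedness to absorb cross-terms, yields
\[|\NN^2_{\la\De^0,Q}(\{f_{kl}\})|^2 \le \fint_{t_1\in Q_1}\fint_{y,y'\in t_1+Q_1(L)}\NN^1_{\la\De_2^0,Q_2}(h_{1,y,y'},\ldots,h_{n_2,y,y'})\,dy\,dy'\,dt_1 + O(\eta),\]
where $h_{l,y,y'}(x):=f_{1l}(y,x)f_{1l}(y',x)$.

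The inner expression is again a single-simplex count, and Lemma \ref{vN-1} bounds it by $\|h_{1,y,y'}\|_{U^1_L(Q_2)}+O(\eta)$. Squaring once more and applying Cauchy--Schwarz in $(t_1,y,y')$ symmetrizes the second coordinate as well; expanding the $U^1_L$-norm of $h_{1,y,y'}$ then produces exactly the $2{\times}2$ local box average, so that
\[|\NN^2_{\la\De^0,Q}(\{f_{kl}\})|^4 \le \|f_{11}\|_{\Box_L(Q_1\times Q_2)}^4 + O(\eta).\]
Taking a 4th root and setting $\eta:=c\eps^4$ gives the claimed $O(\eps)$ error, and the scale requirement reduces to $L\ll\eta^6\la=O(\eps^{24}\la)$, matching the hypothesis of the lemma.

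The main obstacle is the careful bookkeeping of scales and the order of integration across the nested Cauchy--Schwarz/von Neumann steps, ensuring that the $U^1$-norms emerging from the two simplex factors combine into a single local box average at a common scale $L$ rather than at two independent scales; in particular, the doubled variables $(y,y')$ and $(z,z')$ produced by the two Cauchy--Schwarz steps must live in cubes $t_1+Q_1(L)$ and $t_2+Q_2(L)$ with the same $L$ so that the $\Box_L$-structure emerges cleanly. This strategy is the Euclidean analogue of the inductive chain leading to Lemma \ref{vN-rect-ff} in the finite-field model of Section \ref{FFSECTION}, with uniformity on $\si$-algebras replaced by averaging on local cubes at scale $L$.
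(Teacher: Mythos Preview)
Your argument is correct and follows essentially the same route as the paper's proof: freeze the $\ux_2$ variables, apply Lemma~\ref{vN-1} to the inner $\NN^1_{\la\De_1^0,Q_1}$ form, square and use Cauchy--Schwarz to symmetrize in the first coordinate, then repeat for the second simplex factor to arrive at $\|f_{11}\|_{\Box_L}^4+O(\eps^4)$. The only cosmetic difference is that the paper writes the intermediate $U^1_L$-average via the kernel $\psi_L=\chi_L*\chi_L$ rather than the equivalent $\fint_{t_1}\fint_{y,y'\in t_1+Q_1(L)}$ parametrization you use, and works directly with error $O(\eps^4)$ rather than introducing an auxiliary $\eta$ and setting $\eta=c\eps^4$ at the end.
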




The result above was essentially  proved  in \cite{LM17} for the multi-linear forms $\NN^2_{\la\De^0,Q}$ when $Q=[0,1]^{n_1+n_2}$, however a simple scaling argument transfers the result to an
arbitrary cube $Q$. 
For completeness we include its short proof in Section \ref{vN-2 KvN-2} below.

\smallskip

The second and main ingredient is  an analogue of a weak form of Szemer\'edi's regularity lemma due to Frieze and Kannan \cite{FK96}. The more probabilistic formulation, we will use below, can be found for example in \cite{Tao05}, \cite{Tao06}, and \cite{TaoMontreal}, and is also sometimes referred to as a Koopman-von Neumann type decomposition. 

For any cube $Q\subs\R^n$ and scale 
$L>0$ that divides $l(Q)$ we will let $Q(L)=[-\frac{L}{2},\frac{L}{2}]^{n}$ and $\GG_{L,Q}$ denote the collection of cubes $Q_{\ut}(L)=\ut+Q(L)$ partitioning the cube $Q$ and let $\Ga_{L,Q}$ denote grid corresponding to the centers of these cubes.
 We will say that a finite $\si$-algebra $\BB$ on $Q$ is of \emph{scale} $L$ if it contains $\GG_{L,Q}$
and for simplicity of notation will write $\BB_{\ut}$ for $\BB|_{Q_{\ut}(L)}$. 
 
Recall that if we have two $\si$-algebras $\BB_1$ on a cube $Q_1$ and $\BB_2$ on $Q_2$ then by $\BB_1\vee \BB_2$ we mean the $\si$-algebra on $Q=Q_1\times Q_2$ generated by the sets $B_1\times B_2$ with $B_1\in\BB_1$ and $B_2\in\BB_2$.
Recall also that we say the complexity of a $\si$-algebra $\BB$ is at most $m$, and write $\comp(\BB)\leq m$, if it is generated by $m$ sets.
 
\begin{lem}[Weak regularity lemma in $\R^n$]\label{KvN-2}\

Let $0<\VE\ll 1$ and $Q=Q_1\times Q_2$ with $Q_1\subs\R^{n_1}$ and $Q_2\subs\R^{n_2}$ be cubes of equal side length $l(Q)$.

There exists an integer $\bar{J}_2=O(\VE^{-12})$ such that for any $\eps^4$-admissible sequence $l(Q)\geq L_1\geq\cdots \geq L_{\bar{J}_2}$ and function $f:Q\to[-1,1]$ there is some $1\leq j\leq \bar{J}_2$ and a $\si$-algebra $\BB$ of scale $L_j$ on $Q$  
such that
\eq\label{KvN-21}
\|f-\E(f|\BB)\|_{\Box_{L_{j+1}}(Q_1\times Q_2)} \leq \eps
\ee
which has the additional local structure that for each $\ut=(t_1,t_2)\in\Ga_{L_j,Q}$ there exist $\si$-algebras $\BB_{1,\ut}$ on $Q_{t_1}(L_j)$ and $\BB_{2,\ut}$ on $Q_{t_2}(L_j)$ with  $\comp(\BB_{i,\ut})=O(j)$ for $i=1,2$ such that
$\BB_{\ut} = \BB_{1,\ut}\vee \BB_{2,\ut}.$
\end{lem}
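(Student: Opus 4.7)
The proof is an energy-increment argument of Frieze--Kannan type, directly parallel to the proof of Lemma~\ref{KvN-ff} (the finite-field model), but with the additional bookkeeping of scales forced by the Euclidean setting. The plan is to maintain a sequence of $\si$-algebras $\BB^{(j)}$ on $Q$ such that $\BB^{(j)}$ is of scale $L_j$, has product structure $\BB^{(j)}_{\ut}=\BB^{(j)}_{1,\ut}\vee\BB^{(j)}_{2,\ut}$ on each cube $Q_{\ut}(L_j)$ for $\ut\in\Ga_{L_j,Q}$, and satisfies $\comp(\BB^{(j)}_{i,\ut})=O(j)$. I initialize $\BB^{(1)}:=\GG_{L_1,Q}$ with the trivial local factors $\BB^{(1)}_{i,\ut}=\{\emptyset,Q_{t_i}(L_1)\}$. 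At stage $j$ there are two possibilities: either $\|f-\E(f|\BB^{(j)})\|_{\Box_{L_{j+1}}(Q_1\times Q_2)}\leq\eps$ and the lemma holds at this $j$, or I construct $\BB^{(j+1)}$ of scale $L_{j+1}$ with the same local product structure and complexity $O(j+1)$, which raises the normalized energy $|Q|^{-1}\|\E(f|\BB)\|_{L^2(Q)}^2$ by at least $c\eps^{12}$.

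For the refinement step I set $g:=f-\E(f|\BB^{(j)})$ and assume $\|g\|_{\Box_{L_{j+1}}(Q_1\times Q_2)}>\eps$. Unwinding \eqref{box-norm-L} and applying pigeonhole on the translation parameter (using that $Q$ tiles into grid cubes of side $L_{j+1}$), I find a shift $\tau^*\in Q(L_{j+1})$ for which at least an $\eps^4/2$-fraction of the centers $\us^*$ in the shifted grid $\tau^*+\Ga_{L_{j+1},Q}$ satisfy $\|g\|_{\Box(\us^*+Q(L_{j+1}))}^4\geq\eps^4/2$. For each such ``bad'' $\us^*$ the converse Gowers--Cauchy--Schwarz argument from the proof of Lemma~\ref{KvN-ff} -- apply Cauchy--Schwarz to the local box expression, decompose the auxiliary factors into positive and negative parts, and average them over indicator functions -- produces subsets $A_{1,\us^*}\subs \us^*_1+Q_1(L_{j+1})$ and $A_{2,\us^*}\subs\us^*_2+Q_2(L_{j+1})$ with
\[
\Bigl|\fint_{A_{1,\us^*}\times A_{2,\us^*}} g\Bigr|\gs\eps^4.
\]

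I then take $\BB^{(j+1)}$ to be the $\si$-algebra on $Q$ generated by $\BB^{(j)}$, by $\GG_{L_{j+1},Q}$, and by the product rectangles $A_{1,\us^*}\times A_{2,\us^*}$ as $\us^*$ ranges over the bad centers. Each cube $Q_{\us}(L_{j+1})$ with $\us\in\Ga_{L_{j+1},Q}$ meets at most $2^{n_1+n_2}$ of the shifted cubes $\us^*+Q(L_{j+1})$, so the local factors $\BB^{(j+1)}_{i,\us}$ on $s_i+Q_i(L_{j+1})$ are obtained from the restrictions of $\BB^{(j)}_{i,\ut(\us)}$ by adjoining only $O(1)$ additional sets, which keeps the complexity at $O(j+1)$; the product structure is preserved because every adjoined rectangle is itself a product. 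The energy increment then follows from the standard $L^2$-orthogonality of conditional expectations: each bad $\us^*$ contributes an unnormalized gain of at least $c\eps^8|\us^*+Q(L_{j+1})|$, and summing over the bad fraction yields a total normalized gain of $\gs\eps^{12}$. Since the normalized energy is bounded above by $1$, the iteration terminates within $\bar J_2=O(\eps^{-12})$ steps, at which point \eqref{KvN-21} holds.

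The principal subtlety is the mismatch between the continuous outer integral in the definition of $\|\cdot\|_{\Box_{L_{j+1}}}$ and the discrete grid structure that $\BB^{(j+1)}$ must respect; the shift-pigeonhole device above, licensed by the $\eps^4$-admissibility condition forcing $L_{j+2}\ll\eps^8 L_{j+1}$, resolves this at the cost only of a bounded multiplicative factor in the per-step complexity. Beyond that, the proof is a direct adaptation of the finite-field iteration of Lemma~\ref{KvN-ff} combined with the standard energy-increment bookkeeping as in~\cite{FK96} or~\cite{Tao06}.
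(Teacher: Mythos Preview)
Your argument is essentially correct and follows the same energy-increment scheme as the paper, but your discretization step differs from the paper's. The paper does not pigeonhole over shifts; instead it passes directly to the \emph{finer} grid $\Ga_{L_{j+2},Q}$, using $L_{j+2}\ll\eps^4 L_{j+1}$ to replace the continuous average $\fint_{\ut\in Q}\|g\|_{\Box(\ut+Q(L_{j+1}))}^4\,d\ut$ by the discrete average $\E_{\us\in\Ga_{L_{j+2},Q}}\|g\|_{\Box(\us+Q(L_{j+2}))}^4$, and then refines at scale $L_{j+2}$ rather than $L_{j+1}$. This keeps all cubes aligned with the canonical grid and avoids your overlap bookkeeping entirely. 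Your shift-pigeonhole is a legitimate alternative and in fact does not use the $\eps^4$-admissibility at all (your remark that it is ``licensed by $L_{j+2}\ll\eps^8 L_{j+1}$'' is a red herring---that gap is never invoked in your version), but the paper's route is cleaner.

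One point to tighten: your two descriptions of $\BB^{(j+1)}$ are not the same. The $\si$-algebra ``generated by $\BB^{(j)}$, $\GG_{L_{j+1},Q}$, and the product rectangles $A_{1,\us^*}\times A_{2,\us^*}$'' need not have the local product form $\BB_{1,\us}\vee\BB_{2,\us}$ (adjoining a single rectangle $A\times B$ does not put $A\times Q_2$ into the $\si$-algebra). The correct definition is the one you give a sentence later: set $\BB^{(j+1)}_{i,\us}$ to be $\BB^{(j)}_{i,\ut(\us)}|_{Q_{s_i}(L_{j+1})}$ together with the $O(1)$ restricted sets $A_{i,\us^*}\cap Q_{s_i}(L_{j+1})$, and then \emph{define} $\BB^{(j+1)}_{\us}:=\BB^{(j+1)}_{1,\us}\vee\BB^{(j+1)}_{2,\us}$. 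With this definition each rectangle $A_{1,\us^*}\times A_{2,\us^*}$ is still $\BB^{(j+1)}$-measurable, $\BB^{(j)}\subset\BB^{(j+1)}$, and the energy increment over the disjoint shifted cubes sums as you claim.
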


Comparing the above statement to Lemma \ref{KvN-ff} for $d=2$, i.e to the weak regularity lemma, note that the $\si$-algebra $\BB$ of scale $L_j$ has a direct product structure only locally, inside each cube $Q_{\ut}(L_j)$. Moreover this product structure varies with $\ut\in\Ga_{L_j,Q}$, however the ``local complexity" remains uniformly bounded.

\smallskip

Assuming for now the validity of  Lemmas \ref{vN-2} and  \ref{KvN-2} we prove Proposition \ref{approx-2}. We will make crucial use of  Proposition \ref{approx-par-1}, namely our parametric counting lemma on $\R^n$ for simplices.

\smallskip




\begin{proof}[Proof of Proposition \ref{approx-2}]

Let $0<\VE\ll 1$, $\VE_1:=\exp(-C_1\VE^{-12})$ for some $C_1\gg1$, and $\{L_j\}_{j\geq 1}$ be an $\VE_1$-admissible sequence of scales. 
Set  $R=\VE\,\VE_1^{-1}$ and $J_1(\eps_1,R)$  be the parameter appearing in  Proposition \ref{approx-par-1}, noting that $J_1(\eps_1,R)=O(\VE_1^{-5})$.

For $L\in\{L_j\}_{j\geq 1}$ write $\ind(L)=j$ if $L=L_j$.
We now choose a subsequence $\{L_j'\}\subs \{L_j\}$ so that $L'_1=L_1$ and 
$\ind(L'_{j+1})\geq \ind(L'_j) + J_1(\eps_1,R)+2.$
Applying Lemma \ref{KvN-2}, with $f_{kl}=f:=1_S$ for all $1\leq k\leq n_1$ and $1\leq l\leq n_2$, guarantees the existence of  a $\si$-algebra $\BB$ of scale $L'_j$ on $Q$  
such that
\eq\label{KvN-21}
\|f-\E(f|\BB)\|_{\Box_{L'_{j+1}}(Q_1\times Q_2)} \leq \eps
\ee
for some $1\leq j\leq C\eps^{-12}$.
Moreover, we know that $\BB$ has the additional local structure that for each $\ut=(t_1,t_2)\in\Ga_{L'_j,Q}$ there exist $\si$-algebras $\BB_{1,\ut}$ on $Q_{t_1}(L'_j)$ and $\BB_{2,\ut}$ on $Q_{t_2}(L'_j)$ with  $\comp(\BB_{i,\ut})=O(\VE^{-12})$ for $i=1,2$ such that
$\BB_{\ut} = \BB_{1,\ut}\vee \BB_{2,\ut}.$ 
Thus, if we let $R_{1,\ut}$ and $R_{2,\ut}$ denote the number of atoms in $\BB_{1,\ut}$ and $\BB_{2,\ut}$ respectively, then we can assume, by formally adding the empty set to these collections of atoms if necessary, that $R_{1,\ut}=R_{2,\ut}=R':=\exp(C\eps^{-12})$ for all $\ut\in\Ga_{L_j',Q}$.

If we let $\of:= \E(f|\BB_1\vee\BB_2)$, then by Lemma \ref{vN-2} and multi-linearity we have 
\eq\label{approx-11a}
\NN^2_{\la\De^0,Q}(\{f\}) = \NN^2_{\la\De^0,Q}(\{\of\}) + O(\eps)\quad \text{and}\quad
\MM^2_{\la,Q}(\{f\}) = \MM^2_{\la,Q}(\{\of\}) + O(\eps)
\ee
provided for $ \eps{^{-24}} L'_{j+1}\ll \lm$.
For a given $\ut\in \Ga_{Q,L_j'}$ write $\of_{\ut}$ for the restriction of $\of$ to the cube $Q_{\ut}(L_j')$. By localization, one then has
\eq\label{approx-12a}
\NN^2_{\la\De^0,Q}(\{\of\}) = \E_{\ut\in\Ga_{L_j',Q}}\, \NN^2_{\la\De^0,Q_{\ut}(L_j')}(\{\of_{\ut}\}) + O(\eps),
\ee
and
\eq\label{approx-12b}
\MM^2_{\la,Q}(\{\of\}) = \E_{\ut\in\Ga_{L_j',Q}}\, \MM^2_{\la,Q_{\ut}(L_j')}(\{\of_{\ut}\}) + O(\eps)
\ee
provided one also insists that $\la\ll\eps\, L'_j$.


For given $\ut\in\Ga_{L_j',Q}$, the functions $\of_{\ut}(x_1,x_2)$ are linear combinations of functions of the form $1_{A_{1,\ut}^{r_1}}(x_1)1_{A_{2,\ut}^{r_2}}(x_2)$, where $\{A_{1,\ut}^{r_1}\}_{1\leq r_1\leq R'}$ and $\{A_{2,\ut}^{r_2}\}_{1\leq r_2\leq R'}$ are the collections of the atoms of the $\si$-algebras $\BB_{1,\ut}$ and $\BB_{2,\ut}$ defined on the cubes $Q_{\ut_1}(L_j')$ and $Q_{\ut_2}(L_j')$. 
Thus for each $\ut\in\Ga_{L_j',Q}$ one has
\[\of_{\ut}=\sum_{r_{1}=1}^{R'}\sum_{r_{2}=1}^{R'} \al_{\ur,\ut} 1_{A_{1,\ut}^{r_{1}}}\times 1_{A_{2,\ut}^{r_{2}}}\]
where $\ur=(r_1,r_2)$.
Plugging these linear expansions into the multi-linear expressions in above one obtains
\[\NN^2_{\la\De^0,Q_{\ut}(L_j')}(\{\of_{\ut}\})=\sum_{\ur=\{\ur_{kl}\}_{kl}} \al_{\ur,\ut}\ \NN^2_{\la\De^0,Q_{\ut}(L_j')} (\{1_{A_{1,\ut}^{r_{1,kl}}}\times1_{A_{2,\ut}^{r_{2,kl}}}\})\]
using the notations $\ur_{kl}=(r_{1,kl},r_{2,kl})$,  $\al_{\ur,\ut}=\prod_{kl} \al_{\ur_{kl},\ut}$. Notice that the product
\[\prod_{k=1}^{n_1}\prod_{l=1}^{n_2} 1_{A_{1,\ut}^{r_{1,kl}}}(x_{1k})1_{A_{2,\ut}^{r_{2,kl}}}(x_{2l})
\]
is nonzero only if $A_{1,\ut}^{r_{1,kl}}=A_{1,\ut}^{r_{1,k}}$, that is if $r_{1,kl}=r_{1,k}$ for all $1\leq l\leq n_2$, as the atoms $A_{1,\ut}^r$ are all disjoint. Similarly, one has that $r_{2,kl}=r_{2,l}$ for all $1\leq k\leq n_1$. Thus, in fact
\eq\label{3.22a}
\NN^2_{\la\De^0,Q_{\ut}(L_j')}(\{\of_{\ut}\})=\sum_{\ur=\{\ur_{kl}\}_{kl}} \al_{\ur,\ut}\ \NN^2_{\la\De^0,Q_{\ut}(L_j')} (\{1_{A_{1,\ut}^{r_{1,k}}}\times1_{A_{2,\ut}^{r_{2,l}}}\})\ee
and similarly
\eq\label{3.23a}
\MM^2_{\la,Q_{\ut}(L_j')}(\{\of_{\ut}\})=\sum_{\ur=\{\ur_{kl}\}_{kl}} \al_{\ur,\ut}\ \MM^2_{\la,Q_{\ut}(L_j')} (\{1_{A_{1,\ut}^{r_{1,k}}}\times1_{A_{2,\ut}^{r_{2,l}}}\}).\ee
Note, that indices $\ur$ are running through the index set $[1,R']^{n_1}\times[1,R']^{n_2}$ of size at most $R$ if $C_1\gg1$.

\smallskip

The key observation is that
\eq\label{approx-13a}
\NN^2_{\la\De^0,Q_{\ut}(L_j')} (1_{A_{1,\ut}^{r_{1,k}}}\times1_{A_{2,\ut}^{r_{2,l}}})
= \NN^1_{\la\De^0_1,Q_{t_1}(L_j')}(1_{A_{1,\ut}^{r_{1,1}}},\dots,1_{A_{1,\ut}^{r_{1,n_1}}})\
\NN^1_{\la\De^0_2,Q_{t_2}(L_j')}(1_{A_{2,\ut}^{r_{2,1}}},\dots,1_{A_{2,\ut}^{r_{2,n_2}}})
\ee
and
\eq\label{approx-13b}
\MM^2_{\la,Q_{\ut}(L_j')} (1_{A_{1,\ut}^{r_{1,k}}}\times1_{A_{2,\ut}^{r_{2,l}}})= \MM^1_{\la,Q_{t_1}(L_j')}(1_{A_{1,\ut}^{r_{1,1}}},\dots,1_{A_{1,\ut}^{r_{1,n_1}}})\
\MM^1_{\la,Q_{t_2}(L_j')}(1_{A_{2,\ut}^{r_{2,1}}},\dots,1_{A_{2,\ut}^{r_{2,n_2}}}).
\ee

\smallskip

Let $\ur=\{(r_{1,k},r_{2,l})\}_{kl}$ and $g^{1,\ur}_{k,\ut}:= 1_{A_{1,\ut}^{r_{1,k}}}$, $g^{2,\ur}_{l,\ut}:= 1_{A_{2,\ut}^{r_{2,l}}}$. Writing $j':=\ind(L_j')$ and $J':=\ind(L_{j+1}')$, one may apply Proposition \ref{approx-par-1} for the families of functions $g^{1,\ur}_{k,\ut},\,g^{2,\ur}_{l,\ut}$, where $1\leq k\leq n_1,\,1\leq l\leq n_2$ and $\,\ur=(r_{1,k},r_{2,l})_{kl}\in [1,R']^{n_1}\times[1,R']^{n_2}$, with respect to the $\VE_1$-admissible sequence of scales \[L_{j'+1}\geq L_{j'+2}\geq\cdots\geq L_{J'-1}.\] 
This is possible as $J'-j'=J_1(\VE_1,R)$. Then there is a scale $L_j$ with $j'\leq j <J'$ so that 
\eq\label{approx-14a}
\NN^1_{\la\De^0_1,Q_{t_1}(L_j')}(g_{1,\ut}^{1,\ur},\ldots,
g_{n_1\ut}^{1,\ur}) =  \MM^1_{\la,Q_{t_1}(L_j')}(g_{1,\ut}^{1,\ur},\ldots,g_{n_1,\ut}^{1,\ur}) + O(\eps_1)
\ee
and
\eq\label{approx-14b}
\NN^1_{\la\De^0_2,Q_{t_2}(L_j')}(g_{1,\ut}^{2,\ur},\ldots,g_{n_2,\ut}^{2,\ur}) = \MM^1_{\la,Q_{t_2}(L_j')}(g_{1,\ut}^{2,\ur},\ldots,g_{n_2,\ut}^{2,\ur}) + O(\eps_1),
\ee
for all $\lm\in[L_{j+1},L_j]$ 
uniformly in $\ur=\{(r_{1,k},r_{2,l})\}_{kl}$ and $\ut\notin T_{\eps_1}\subs \Ga_{L_j',Q}$, for a set of size $|T_{\eps_1}|\leq\eps_1 |\Ga_{L_j',Q}|$. Then, by \eqref{approx-13a}-\eqref{approx-13b} and \eqref{3.22a}-\eqref{3.23a}, we have
\eq\label{approx-15}
\NN^2_{\la\De^0,Q_{\ut}(L_j')}(\{\of_{\ut}\}) = \MM^2_{\la,Q_{\ut}(L'_j)}\,(\{\of_{\ut}\}) + O(\eps)\ee
for $\ut\notin T_{\eps_1}$, as $|\al_{\ur,\ut}|\leq 1$ and $R\eps_1 \leq \eps$. Finally, since $|T_{\eps_1}|\leq \eps_1 |\Ga_{L_j',Q}|$, by averaging in $\ut\in\Ga_{L_j',Q}$, one has
\[ \NN^2_{\la\De^0,Q}(\{\of\}) = \MM^2_{\la,Q}\,(\{\of\}) + O(\eps)\]
using \eqref{approx-12a}-\eqref{approx-12b} and the Proposition follows by
\eqref{approx-11a} with an index $1\leq j< J_2=O(\VE^{-12}\VE_1^{-5})$. \end{proof}



\subsection{Proof of Lemmas \ref{vN-2} and \ref{KvN-2}}\label{vN-2 KvN-2}



\begin{proof}[Proof of Lemma \ref{vN-2}]

 First we note that if $\chi_L:= L^{-n}1_{[-L/2,L/2]^n}$ and $\psi_L:=\chi_L\ast\chi_L$, then \[\psi_L(x_2-x_1)=\int_t \chi_L(x_1-t)\chi_L(x_2-t)\,dt\] and hence for any function $f:Q\to [-1,1]$, with $Q\subs\R^n$ being a cube of side length $l(Q)$, one has
\[\|f\|_{U^1_L(Q)}^2 
= \fint_{x_1\in Q} \int_{x_2} f(x_1)f(x_2)\psi_L(x_2-x_1)\,dx_1dx_2 + O(L/l(Q)).
\]

Write $\ux':=(x_{21},\ldots,x_{2n_2})$ and let $g_{k,\ux'}(x):=\prod_{l=1}^{n_2} f_{kl}(x,x_{2l})$. Then one may write
\[
\NN^2_{\la\De^0,Q}(\{f_{kl}\}) = \fint_{x_{21}\in Q_2}\int_{x_{22},\ldots,x_{2n_2}}
\NN^1_{\la\De^0_1,Q_1}(g_{1,\ux'},\dots,g_{{n_1},\ux'})
\ d\si_{\la\De^0_2}(x_{22}-x_{21},\dots,x_{2 n_2}-x_{21})\,dx_{21}.
\]
Using estimate \eqref{vN-11a}, the above observation, and Cauchy-Schwarz one has 
\[|\NN^2_{\la\De^0,Q}(\{f_{kl}\})|^2 \leq
 \fint_{x_{11}\in Q_1}\int_{x_{12}}  \psi_L(x_{12}-x_{11}) \
\NN^1_{\la\De^0_2,Q_2} (h_{1,x_{11},x_{12}},\ldots,h_{n_2,x_{11},x_{12}}) \,dx_{11}dx_{12}+O(\eps^4)\]
provided $0<\la\ll l(Q)$ and $0<L\ll \eps^{24} \la$
where $h_{l,x_{11},x_{12}}(x)=f_{1l}(x_{11},x)f_{1l}(x_{12},x)$ for $1\leq l\leq n_2$.
Applying the same procedure again ultimately gives
\[|\NN^2_{\la\De^0,Q}(\{f_{kl}\})|^4 \leq
 \|f_{11}\|_{\Box_L(Q_1\times Q_2)}^4 + O(\eps^4).\]

The same estimate can of course be given for any function $f_{kl}$ in place of $f_{11}$. This establishes (\ref{vN-11}).
Estimate (\ref{vN-12}) is established similarly. \end{proof}


\begin{proof}[Proof of Lemma \ref{KvN-2}]

For each $\ut=(t_1,t_2)\in\Ga_{L_1,Q}$ we will let $\BB_{1,\ut}(L_1):=\{\emptyset,Q_{t_1}(L_1)\}$ and $\BB_{2,\ut}(L_1):=\{\emptyset,Q_{t_2}(L_1)\}$, in other words the trivial $\si$-algebras on $Q_{t_1}(L_1)$ and $Q_{t_2}(L_1)$ respectively. 
 If \eqref{KvN-21} holds with $\BB(L_1)=\mathcal{G}_{L_1,Q}$, noting that $\BB_{\ut}(L_1):=\BB_{1,\ut}(L_1)\vee \BB_{2,\ut}(L_1)$ in this case, then we are done.


We now assume that we have developed, for each $\ut=(t_1,t_2)\in\Ga_{L_j,Q}$,  $\si$-algebras $\BB_{1,\ut}(L_j)$  on $Q_{t_1}(L_j)$ and $\BB_{2,\ut}(L_j)$ on $Q_{t_2}(L_j)$ with  $\comp(\BB_{i,\ut}(L_j))\leq j$ for $i=1,2$. 
Let $\BB(L_j)$ be the $\si$-algebra such that $\BB_{\ut}(L_j)=\BB_{1,\ut}(L_j)\vee \BB_{2,\ut}(L_j)$ for all $\ut\in\Ga_{L_j,Q}$ and assume that \eqref{KvN-21} does not hold, namely that
\[\|g\|_{\Box_{L_{j+1}}(Q)} \geq \eps\] where $g:=f-\E(f|\BB(L_j))$.
 By the definition of the local box norm this means that
\[\fint_{\ut\in Q} \|g\|_{\Box(\ut+Q(L_{j+1}))}^4\,dt \geq \eps^4\]
and hence, as $L_{j+2}\ll\,\eps^4 L_{j+1}$, it is easy to see that
\[\E_{\us\in \Ga_{L_{j+2},Q}}\ \|g\|_{\Box(\us+Q(L_{j+2}))}^4 \geq \eps^4/2.\]

This implies that there is a set $S\subs \Ga_{L_{j+2},Q}$ of size $|S|\geq (\eps^4/4) |\Ga_{L_{j+2},Q}|$ such that for all $\us=(s_1,s_2)\in S$, one has that
$\,\|g\|_{\Box(Q_{\us}(L_{j+2}))}^4 \geq \eps^4/4$. It therefore follows, as is well-known see for example \cite{LM17} or \cite{TaoMontreal}, that there exist sets $B_{1,\us}\subs Q_{s_1}(L_{j+2})$ and $B_{2,\us}\subs Q_{s_2}(L_{j+2})$ such that
\eq\label{KvN-23}
\fint_{x_1\in Q_{s_1}(L_{j+2})}\fint_{x_2\in Q_{s_2}(L_{j+2})} g(x_1,x_2)\,1_{B_{1,\us}}(x_1)1_{B_{2,\us}}(x_2)\,dx_1\, dx_2 \geq \eps^4/16.
\ee

For a given $\us\in\Ga_{L_{j+2},Q}$ there is a unique $\ut=\ut(\us)$ such that $Q_{\us}(L_{j+2})\subs Q_{\ut}(L_j)$. Let $\BB'_{1,\us}(L_{j+2}):= \BB_{1,\ut}(L_j)|_{Q_{s_1}(L_{j+2})}$ and $\BB'_{2,\us}(L_{j+2}):= \BB_{2,\ut}(L_j)|_{Q_{s_2}(L_{j+2})}$  noting that $\comp(\BB'_{i,\us}(L_{j+2}))\leq j$ for $i=1,2$, as the complexity of a $\si$-algebra does not increase when restricted to a set. If, for $i=1,2$, we  let $\BB_{i,\us}(L_{j+2})$ denote the $\si$-algebra generated by $\BB'_{i,\us}(L_{j+2})$ and the set
$B_{i,\us}$ if $\us\in S$ and let $\BB_{i,\us}(L_{j+2}):=\BB'_{i,\us}(L_{j+2})$ otherwise, 
then clearly $\comp(\BB_{i,\us}(L_{j+2}))$ is at most $j+1$.
We now define $\BB(L_{j+2})$ to the the sigma algebra of scale $L_{j+2}$ with the property  that
$\BB_{\us}(L_{j+2})=\BB_{1,\us}(L_{j+2})\vee \BB_{2,\us}(L_{j+2})$
for all $\us\in\Ga_{L_{j+2},Q}$. 

Using the inner product notation $\langle f,g\rangle_Q=\fint_Q f(x)g(x)\,dx$ we can rewrite \eqref{KvN-23} as
\[\langle f-\E(f|\BB(L_j))\,,\,1_{B_{1,\us}}\times
1_{B_{2,\us}}\,\rangle_{Q_{\us}(L_{j+2})} \geq \eps^4/16\]
for all $\us\in S$. 
Since the function $1_{B_{1,\us}}\times1_{B_{2,\us}}$ is measurable with respect to  $\BB(L_{j+2})$ one clearly has
\[\langle f-\E(f|\BB(L_{j+2}))\,,\,1_{B_{1,\us}}\times
1_{B_{2,\us}}\rangle_{Q_{\us}(L_{j+2})} = 0\]
and hence 
\[\langle \E(f|\BB(L_{j+2}))-\E(f|\BB(L_j))
\,,\,1_{B_{1,\us}}\times1_{B_{2,\us}}\rangle_{Q_{\ut}(L_{j+2})} \geq \eps^4/16.\]

It then follows from 
 Cauchy-Schwarz and orthogonality that
\[\|\E(f|\BB(L_{j+2}))\|_{L^2(Q_{\us}(L_{j+2}))}^2 -
\|\E(f|\BB_1(L_j))\|_{L^2(Q_{\us}(L_{j+2}))}^2\geq\eps^8/256.\]

Since $|S|\geq (\eps^4/4) |\Ga_{L_{j+2},Q}|$ averaging over all $\us\in \Ga_{L_{j+2},Q}$ gives
\[\|\E(f|\BB(L_{j+2}))\|_{L^2(Q)}^2 \geq \|\E(f|\BB(L_j))\|_{L^2(Q)}^2 + \eps^{12}/2^{10}.\]
Trivially both sides are at most $1$ thus the process must stop at a step $j=O(\eps^{-12})$ where \eqref{KvN-21} holds for a $\sigma$-algebra of ``local complexity" at most $j$. This proves the Lemma. \end{proof}


\section{Proof of Theorem \ref{Prod}: The general case.}\label{generalcase}


After these preparations we will now consider the general case of Theorem \ref{Prod}. 
Let $Q=Q_1\times\cdots \times Q_d\subs\R^n$ with $Q_i\subs \R^{n_i}$ cubes of equal side length $l(Q)$ and $\De^0=\De_1^0\times\cdots\times\De_d^0$ with each $\De_i\subs\R^{n_i}$ a non-degenerate simplex of $n_i$ points for  $1\leq i\leq d$.

We will use a generalized version of the hypergraph terminology introduced in Section \ref{FFSECTION}. In particular, for a vertex set $I=\{1,2,\ldots,d\}$ and set $K=\{il;\ 1\leq i\leq d,\,1\leq l\leq n_i\}$ we will let $\pi:K\to I$ denote the projection defined by $\pi (il):=i$. As before we will let $\HH_{d,k}:=\{e\subs I;\ |e|=k\}$ denote the complete $k$-regular hypergraph with vertex set $I$, and for the multi-index $\un=(n_1,\ldots,n_d)$ define the hypergraph bundle 
\[\HH_{d,k}^{\un}:=\{e\subs K;\ |e|=|\pi(e)|=k\}\]
noting that $|\pi^{-1}(i)|=n_i$ for all $i\in I$.


In order to parameterize the vertices of direct products of simplices, i.e. sets of the form $\De=\De_1\times\cdots\times\De_d\,$ with $\De_i\subs Q_i$, we consider points
$\ux=(\ux_1,\dots,\ux_d)$
with $\ux_i=(x_{i1},\dots,x_{in_i})\in Q_i^{n_i}$ for each $i\in I$.  Now for any $1\leq k\leq d$ and any edge $e'\in\HH_{d,k}$ we will write $Q_{e'}:=\prod_{i\in e'} Q_i$, and for every $\ux\in Q_1^{n_1}\times\cdots\times Q_d^{n_d}$ and $e\in\HH_{d,k}^{\un}$ we define $\ux_e:=\pi_e(\ux)$, where $\pi_e:Q_1^{n_1}\times\cdots\times Q_d^{n_d}\to Q_{\pi(e)}$ is the natural projection map. 
Writing $\De_i=\{x_{i1},\ldots,x_{in_i}\}$ we have that
$\De_1\times\cdots\times \De_d=\{\ux_e:\ e\in\HH_{d,d}^{\un}\}$ since every edge $\ux_e$ is of the form $(x_{1l_1},\ldots,x_{dl_d})$. We can therefore identify points $\ux$ with configurations  of the form $\De_1\times\cdots\times\De_d$.

For any $0<\la\ll l(Q)$ the measures $d\si_{\la\De_i^0}$, introduced in Section \ref{S1}, are supported on points $(y_2,\ldots,y_{n_i})$ for which the simplex $\De_i=\{0,y_2,\ldots,y_{n_i}\}$ is isometric to $\la\De_i^0$. For simplicity of notation we will write
\[\int_{\ux_i} f(\ux_i)\,d\si^\la_i(\ux_i):= \fint_{x_{i1}\in Q_i}\int_{x_{i2},\ldots,x_{in_i}} f(\ux_i) \,d\si_{\la\De_i^0}(x_{i2}-x_{i1},\ldots,x_{in_i}-x_{i1})\,dx_{i1}\]

Note that the support of the measure $d\si^\la_i$ is the set of points $\ux_i$ so that the simplex $\De_i:=\{x_{i1},\ldots,x_{in_i}\}$ is isometric to $\la\De_i^0$ and $x_{i1}\in Q_i$, moreover the measure is normalized. Thus if $S\subs Q$ is a set then the density of configurations $\De$ in $S$ of the form $\De=\De_1\times\ldots\times\De_d$ with each $\De_i\subseteq Q_i$ an isometric copy of $\la\De_i^0$ is given by the expression
\eq\label{4.1}
\NN^d_{\la\De^0,Q}(1_S\,;\,e\in\HH_{d,d}^{\un}):= \int_{\ux_1}\cdots\int_{\ux_d}\prod_{e\in\HH_{d,d}^{\un}} 1_S(\ux_e)\ d\si^{\la}_1(\ux_1)\ldots d\si^{\la}_d(\ux_d).
\ee

The proof of Theorem \ref{Prod} reduces to establishing the following stronger quantitative result.

\begin{prop}\label{PropnSimpd}

For any $0<\VE\ll 1$  there exists an integer $J_d=J_d(\VE)$ with the following property: 

Given any lacunary sequence $l(Q)\geq \lm_1\geq\cdots\geq\lm_{J_d}$  and $S\subseteq Q$, there is some $1\leq j< J_d$ such that 
\eq
\NN^d_{\la\De^0,Q}(1_S\,;\,e\in\HH_{d,d}^{\un})>\left(\frac{|S|}{|Q|}\right)^{n_1\cdots\, n_d}-\VE
\ee 
for all $\lambda\in [\lm_{j+1},\lm_j]$.
\end{prop}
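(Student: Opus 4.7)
The plan is to extend the strategy of Section \ref{two} from two factors to $d$ factors, inducting on $d$. First, I would introduce the unrestricted box count
\[\MM^d_{\lm,Q}(\{f_e\}) := \fint_{\ut\in Q} \fint_{\ux_1\in (t_1+Q_1(\lm))^{n_1}}\!\!\cdots\fint_{\ux_d\in (t_d+Q_d(\lm))^{n_d}} \prod_{e\in\HH_{d,d}^{\un}} f_e(\ux_e)\,d\ux_1\cdots d\ux_d\,d\ut,\]
and verify by iterated H\"older's inequality that $\MM^d_{\lm,Q}(\{1_S\})\geq (|S|/|Q|)^{n_1\cdots n_d} - O(\eps)$ whenever $0<\lm\ll\eps\, l(Q)$, exactly as in \eqref{ML-lowerb-1} for $d=1$ and in Section \ref{two} for $d=2$. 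As in those cases, Proposition \ref{PropnSimpd} reduces, via the embedding of the given lacunary sequence into an $\eps$-admissible subsequence, to establishing a counting lemma of the form $\NN^d_{\lm\De^0,Q}(\{1_S\}) = \MM^d_{\lm,Q}(\{1_S\}) + O(\eps)$ for all $\lm\in[L_{j+1},L_j]$ at some step $j$ of the admissible sequence.

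The counting lemma rests on two ingredients generalizing Lemmas \ref{vN-2} and \ref{KvN-2}. The first is a $d$-linear generalized von-Neumann inequality: for $L\ll\eps^{c_d}\lm$ and functions $f_e:Q_{\pi(e)}\to[-1,1]$ one has
\[|\NN^d_{\lm\De^0,Q}(\{f_e\})|\,\leq\,\min_{e\in\HH_{d,d}^{\un}} \|f_e\|_{\Box_L(Q_{\pi(e)})} + O(\eps),\]
where $\|\cdot\|_{\Box_L}$ is the appropriate $d$-partite local box norm and the analogous bound holds for $\MM^d_{\lm,Q}$. This follows from $d$ successive applications of Lemma \ref{vN-1} and Cauchy--Schwarz, one in each coordinate direction, exactly mirroring the proof of Lemma \ref{vN-2} but iterated $d$ times. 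The second is a weak hypergraph regularity lemma on $Q$: for any $f:Q\to[-1,1]$ there exists a scale $L_j$ and a $\si$-algebra $\BB$ of scale $L_j$ on $Q$ with $\|f-\E(f|\BB)\|_{\Box_{L_{j+1}}(Q)}\leq\eps$ whose local product structure is $\BB_{\ut}=\BB_{1,\ut}\vee\cdots\vee\BB_{d,\ut}$, each $\BB_{i,\ut}$ a $\si$-algebra on $Q_{t_i}(L_j)$ of complexity $O_d(\eps^{-c_d'})$. This is proved by an energy-increment argument: if the box norm of $g=f-\E(f|\BB)$ exceeds $\eps$, then on a positive-density subset of local cubes one can find sets $B_{i,\us}\subs Q_{s_i}(L_{j+2})$ for $1\leq i\leq d$ whose adjunction to the factors $\BB_{i,\ut}$ increases the total $L^2$-energy of $\E(f|\BB)$ by a quantity depending only on $\eps$, forcing termination in $O_d(\eps^{-c_d'})$ steps.

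With these ingredients the counting lemma follows by induction on $d$ using as base case the parametric counting lemma for a single simplex (Proposition \ref{approx-par-1}). After the regularity decomposition, $\of:=\E(f|\BB)$ restricted to a local cube $Q_{\ut}(L_j)$ is a linear combination of tensor products $\prod_{i=1}^d 1_{A_{i,\ut}^{r_i}}$ with at most $R=\exp(O_d(\eps^{-c_d'}))$ terms. Since each simplex measure $d\si^{\lm}_i$ in \eqref{4.1} depends only on the $i$-th coordinate block, substituting these product atoms causes both $\NN^d_{\lm\De^0,Q_{\ut}(L_j)}(\{\of_{\ut}\})$ and $\MM^d_{\lm,Q_{\ut}(L_j)}(\{\of_{\ut}\})$ to factor as products over $i=1,\dots,d$ of the single-simplex counts $\NN^1_{\lm\De_i^0,Q_{t_i}(L_j)}$ and $\MM^1_{\lm,Q_{t_i}(L_j)}$ respectively, precisely as in the two-factor identities \eqref{approx-13a}--\eqref{approx-13b}. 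Proposition \ref{approx-par-1}, applied with parameter $R$ on a further subsequence of admissible scales between $L_j$ and $L_{j+1}$, equates each single-simplex factor up to $O(\eps_1)$ outside a set $T_{\eps_1}$ of bad parameters $\ut$ of density at most $\eps_1$. Choosing $\eps_1\ll R^{-1}\eps$ and averaging in $\ut$ yields $\NN^d(\{\of\})=\MM^d(\{\of\})+O(\eps)$, which combined with the von-Neumann step gives the counting lemma.

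The principal technical obstacle is the weak hypergraph regularity lemma with $d$-fold local product structure: a single decomposition must simultaneously respect all $d$ factors of $Q$, and the energy-increment step must refine $d$ coordinate $\si$-algebras at once while keeping the local complexity uniformly controlled. Iterating this refinement $d$ times (once per level of a further inductive scheme that chains scale bounds across the factors) is what produces the tower-of-exponents dependence recorded in the quantitative remark following Theorem \ref{Prod}, and is the source of essentially all of the additional complication relative to the $d=2$ argument of Section \ref{two}.
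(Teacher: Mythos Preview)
Your overall architecture is right --- reduce to a counting lemma comparing $\NN^d$ with $\MM^d$, prove a generalized von-Neumann inequality in terms of a $d$-partite local box norm, and combine it with a weak regularity lemma --- but the regularity lemma you describe is false for $d\geq 3$, and this is the essential point where the general case differs from Section~\ref{two}.

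Concretely, you claim an energy increment of the following shape: if $\|g\|_{\Box_{L}(Q)}>\eps$ with $g=f-\E(f|\BB)$, then on many local cubes there exist sets $B_{i,\us}\subseteq Q_{s_i}$, $1\leq i\leq d$, with $\bigl|\langle g,\prod_{i}1_{B_{i,\us}}\rangle\bigr|$ bounded below. For $d=2$ this is exactly Lemma~\ref{KvN-2}. For $d\geq 3$ it fails: expanding $\|g\|_{\Box}^{2^d}$ and freezing the primed variables, the remaining factors are functions of the $(d-1)$-tuples $(x_1,\ldots,\widehat{x_i},\ldots,x_d)$, not of single coordinates. What the box-norm increment actually produces are sets $B_{\f'}\subseteq Q_{\f'}$ for $\f'\in\HH_{d,d-1}$, so that $\E(f|\BB)$ is a combination of products $\prod_{\f'\in\partial e'}1_{A_{\f'}}$ over $(d-1)$-faces, \emph{not} of rank-one tensors $\prod_{i=1}^d 1_{A_i}$. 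Consequently your factorization into $d$ independent single-simplex counts (your analogue of \eqref{approx-13a}--\eqref{approx-13b}) is unavailable for $d\geq 3$, and Proposition~\ref{approx-par-1} cannot be applied in one shot.

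This is precisely why the paper does not induct on $d$ but on the level $1\leq k\leq d$ of the hypergraph bundle (Proposition~\ref{Lem4.4}). The weak hypergraph regularity lemma at level $k$ (Lemma~\ref{Lem4.6}) only drops to level $k-1$: the local $\sigma$-algebra on $Q_{\ut_{e'}}$ factors as $\bigvee_{\f'\in\partial e'}\BB_{e',\f',\us}$ with $\f'\in\HH_{d,k-1}$. Substituting the resulting atoms turns $\NN^d(1_{A_{\pi(e)}};e\in\HH_{d,k}^{\un})$ into an expression $\NN^d(g_{\f};\f\in\HH_{d,k-1}^{\un})$ at level $k-1$, to which the inductive hypothesis applies. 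The chain of $d$ such reductions, each invoking a fresh regularity step at a much smaller $\eps_1$, is what forces the parametric/local formulation and generates the tower-type bound you allude to in your last paragraph; it cannot be collapsed into a single regularity decomposition with one-dimensional factors.
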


\emph{Quantitative Remark.} A careful analysis of our proof reveals that there is a choice of $J_d(\VE)$ which is less than $W_d(\log (C_\Delta\VE^{-3}))$, where $W_k(m)$ is again the tower-exponential function defined by $W_1(m)=\exp(m)$ and $W_{k+1}(m)=\exp(W_k(m))$ for $k\geq1$.

\medskip

For any  $0<\lm\ll l(Q)$ and set $S\subs Q$ we define the expression:
\eq
\MM^d_{\lm,Q}(1_S\,;\,e\in\HH_{d,d}^{\un}):=\fint_{\ut\in Q} \MM^d_{\ut+Q(\lm)}(1_S\,;\,e\in\HH_{d,d}^{\un})\,d\ut
\ee 
where $Q(\lm)=[-\frac{\lm}{2},\frac{\lm}{2}]^{n}$ and
\eq
\MM^d_{\widetilde{Q}}(1_S\,;\,e\in\HH_{d,d}^{\un}):=\fint_{\ux_1\in \widetilde{Q}_1^{n_1}}\!\!\!\!\!\!\!\!\!\cdots \ \fint_{\ux_d\in \widetilde{Q}_d^{n_d}}\prod_{e\in\HH_{d,d}^{\un}} 1_S(\ux_e)\,d\ux_1\ldots d\ux_d
\ee 
for any cube $\widetilde{Q}\subseteq Q$ of the form $\widetilde{Q}=\widetilde{Q}_1\times\cdots\times \widetilde{Q}_d$ with $\widetilde{Q}_i\subseteq Q_i$ for $1\leq i\leq d$.
 Note that if $S\subs Q$ is
a set of
measure $|S|\geq \de |Q|$ for some $\de>0$, then careful applications of H\"older's inequality give
\[
\MM^d_{\lm,Q}(1_S\,;\,e\in\HH_{d,d}^{\un}) \geq \fint_{\ut\in Q} \left(\fint_{(x_1,\dots,x_d)\in \ut+Q(\lm)} 1_S(x_1,\dots,x_d)\,dx_1\dots dx_d\right)^{n_1\cdots\, n_d} d\ut \,
\geq\de^{n_1\cdots\, n_d}-O(\eps)\]
for all scales $0<\lm\ll\eps\, l(Q)$.

In light of the discussion above, and that preceding Proposition \ref{approx-1}, we see that Proposition \ref{PropnSimpd}, and hence Theorem \ref{Prod} in general, will follows as a consequence of the following

\begin{prop}\label{approx-d}

Let $0<\eps\ll1$. There exists an integer $J_d=J_d(\VE)$ such that for any $\eps$-admissible sequence of scales $l(Q)\geq L_1\geq\cdots \geq L_{J_d}$ and $S\subseteq Q$ there is some $1\leq j< J_d$ such that
\eq\label{approx-dd}
\NN^d_{\la\De^0,Q}(1_S\,;\,e\in\HH_{d,d}^{\un})= \MM^d_{\la,Q}(1_S\,;\,e\in\HH_{d,d}^{\un}) + O(\eps)
\ee
for all $\lm\in[L_{j+1},L_j]$.
\end{prop}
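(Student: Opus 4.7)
The plan is to induct on $d$. The base case $d=1$ is Proposition \ref{approx-1}, more usefully its parametric version Proposition \ref{approx-par-1}. The induction step closely parallels the proof of Proposition \ref{approx-2} from Section \ref{two}, requiring two ingredients at the $d$-fold level together with Proposition \ref{approx-par-1} to handle the one-dimensional factors that appear after expanding atoms.

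The first ingredient is a generalized von Neumann inequality for the $d$-fold counts. Define the local box norm at scale $L$ by
\[
\|f\|^{2^d}_{\Box_L(Q)} := \fint_{\ut\in Q} \fint_{x_1^0, x_1^1 \in Q_{t_1}(L)}\cdots\fint_{x_d^0, x_d^1 \in Q_{t_d}(L)} \prod_{\eta \in \{0,1\}^d} f(x_1^{\eta_1},\ldots, x_d^{\eta_d}).
\]
For $0 < L \ll \eps^{C} \la$ and any family $\{f_e\}_{e\in\HH^{\un}_{d,d}}$ of functions $f_e: Q\to[-1,1]$, one should have $|\NN^d_{\la\De^0,Q}(\{f_e\})|$ and $|\MM^d_{\la,Q}(\{f_e\})|$ both bounded by $\min_e \|f_e\|_{\Box_L(Q)} + O(\eps)$. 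The proof is the same iterated Cauchy-Schwarz argument as in Lemma \ref{vN-2}, applied in each coordinate $x_i$ successively, using the base estimate \eqref{vN-11a} for each single-simplex factor.

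The second ingredient is a weak regularity lemma generalizing Lemma \ref{KvN-2}: given an $\eps$-admissible sequence $L_1\ge L_2\ge\cdots$ and $f:Q\to[-1,1]$, there exists $1\le j \le O(\eps^{-C_d})$ and a $\sigma$-algebra $\BB$ of scale $L_j$ with $\|f - \E(f|\BB)\|_{\Box_{L_{j+1}}(Q)} \le \eps$, such that locally $\BB_{\ut} = \BB_{1,\ut} \vee \cdots \vee \BB_{d,\ut}$ for $\sigma$-algebras $\BB_{i,\ut}$ on $Q_{t_i}(L_j)$ of complexity $O(\eps^{-C_d})$. This follows by energy increment: if the box norm of $f - \E(f|\BB)$ is large, the defect property of the $d$-fold box norm yields on many sub-cubes $Q_{\us}(L_{j+2})$ product sets $B_{1,\us}\times\cdots\times B_{d,\us}$ with which the defect correlates; these refine each local factor $\BB_{i,\us}$ simultaneously, raising the $L^2$ energy of $\E(f|\BB)$ by $\eps^{O(1)}$ per round, which bounds the iteration.

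Now apply the regularity lemma to $f=1_S$ and write $\of:=\E(f|\BB)$. The von Neumann estimate and multilinearity give $\NN^d(\{f\})=\NN^d(\{\of\})+O(\eps)$ and similarly for $\MM^d$, provided $\eps^{-C}L_{j+1}\ll\la\ll\eps L_j$. Localizing to cubes $Q_{\ut}(L_j)$ and expanding $\of_{\ut}=\sum_{\ur}\alpha_{\ur,\ut}\prod_i 1_{A^{r_i}_{i,\ut}}$ in the atoms of the local $\sigma$-algebras, the crucial observation (exactly as in Section \ref{two}) is that since every edge $e\in\HH^{\un}_{d,d}$ satisfies $\pi(e)=\{1,\ldots,d\}$, in the multilinear product $\prod_e\of(\ux_e)$ only index patterns where the $i$-th atom index depends only on the coordinate $l_i$ of $e$ survive, by disjointness of atoms. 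This matching causes both $\NN^d(\{\of_\ut\})$ and $\MM^d(\{\of_\ut\})$ to factor across $i$ as products $\prod_{i=1}^d\NN^1_{\la\De^0_i,Q_{t_i}(L_j)}(1_{A^{r_{i,1}}_{i,\ut}},\ldots,1_{A^{r_{i,n_i}}_{i,\ut}})$ and the analogous $\MM^1$ products. Applying Proposition \ref{approx-par-1} to these local atom families, with multiplicity $R=\exp(O(\eps^{-C_d}))$ and working inside a fresh block of $J_1(\eps_1,R)$ scales embedded in the outer admissible sequence, replaces each $\NN^1$ by $\MM^1$ up to $O(\eps_1)$ for all $\ut$ outside a bad set of density $\le\eps_1$. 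Choosing $\eps_1\ll R^{-1}\eps$ and averaging in $\ut$ then yields \eqref{approx-dd}.

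The main obstacle is quantitative scale bookkeeping: each level of the induction requires the preceding one to be run simultaneously for $R=\exp(O(\eps^{-C_d}))$ parameter families, with its own admissible subsequence nested inside the outer $\eps$-admissible sequence. Unrolling this recursion across $d$ levels produces the tower-exponential bound $J_d(\eps)\le W_d(\log(C_\De\eps^{-3}))$ stated in the quantitative remark following Proposition \ref{PropnSimpd}.
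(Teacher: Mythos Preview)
Your approach works for $d=2$ (indeed it is exactly the argument of Section~\ref{two}) but has a genuine gap for $d\geq 3$: the weak regularity lemma you propose, with local structure $\BB_{\ut}=\BB_{1,\ut}\vee\cdots\vee\BB_{d,\ut}$ where each $\BB_{i,\ut}$ lives on a single coordinate factor $Q_{t_i}(L_j)$, is false in general. The ``defect property'' of the $d$-fold box norm does \emph{not} produce rank-one product sets $B_{1,\us}\times\cdots\times B_{d,\us}$ when $d\geq 3$. Unfolding $\|g\|_{\Box}^{2^d}$ and fixing the primed variables only shows that $g$ correlates with a product of functions each depending on $d-1$ of the $d$ coordinates (the faces of $\partial e'$), not with a product of one-variable functions. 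This is precisely the distinction between the cut norm and the box norm for $d$-tensors: Frieze--Kannan weak regularity with product partitions $\PP_1\times\cdots\times\PP_d$ controls the former but not the latter, and it is the latter that your generalized von-Neumann inequality requires. Consequently the energy increment you describe cannot be run with one-dimensional $\sigma$-algebras, and the subsequent factorization of $\NN^d$ into a product of $\NN^1$-forms never gets off the ground.

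The paper handles this by inducting not on $d$ but on a hypergraph level $1\le k\le d$ (Proposition~\ref{Lem4.4}). At level $k$ the regularity lemma (Lemma~\ref{Lem4.6}) produces $\sigma$-algebras with local structure $\bigvee_{\f'\in\partial e'}\BB_{e',\f',\us}$ indexed by the $(k-1)$-element faces $\f'\subset e'$, which is exactly what the box-norm defect yields. Expanding in atoms then drops the level to $k-1$ rather than all the way to $1$; only after $d$ such steps does one reach the one-dimensional situation of Proposition~\ref{approx-par-1}. Your shortcut attempts to collapse this $d$-step descent into a single step, and that is where it fails.
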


The validity of Proposition \ref{approx-d} will follow immediately from the $d=k$ case of Proposition \ref{Lem4.4} below. 

\subsection{Reduction of Proposition \ref{approx-d} to a more general ``local" counting lemma}\


For any given $1\leq k\leq d$ and collection of functions  $f_e: Q_{\pi(e)}\to[-1,1]$ with $e\in\HH_{d,k}^{\un}$ we define the following multi-linear expressions
\eq\label{4.2}
\NN^d_{\la\De^0,Q}(f_e;e\in\HH_{d,k}^{\un}):= \int_{\ux_1}\cdots\int_{\ux_d} \prod_{e\in\HH_{d,k}^{\un}} f_e(\ux_e)\ d\si^{\la}_1(\ux_1)\ldots. d\si^{\la}_d(\ux_d)
\ee
and
\eq
\MM^d_{\lm,Q}(f_e\,;\,e\in\HH_{d,k}^{\un}):=\fint_{\ut\in Q} \MM^d_{\ut+Q(\lm)}(f_e\,;\,e\in\HH_{d,k}^{\un})\,d\ut
\ee 
where $Q(\lm)=[-\frac{\lm}{2},\frac{\lm}{2}]^{n}$ and
\eq
\MM^d_{\widetilde{Q}}(f_e\,;\,e\in\HH_{d,k}^{\un}):=\fint_{\ux_1\in \widetilde{Q}_1^{n_1}}\!\!\!\!\!\!\!\!\!\cdots \ \fint_{\ux_d\in \widetilde{Q}_d^{n_d}}\prod_{e\in\HH_{d,k}^{\un}} f_e(\ux_e)\,d\ux_1\ldots d\ux_d
\ee 
for any cube $\widetilde{Q}\subseteq Q$ of the form $\widetilde{Q}=\widetilde{Q}_1\times\cdots\times \widetilde{Q}_d$ with $\widetilde{Q}_i\subseteq Q_i$ for $1\leq i\leq d$.


Our strategy to proving Proposition \ref{approx-d} is the same as illustrated in the finite field settings, that is we would like to compare averages $\NN_{\la\De^0,Q}(f_e;e\in\HH_{d,k}^{\un})$ to those of $\MM^d_{\lm,Q}(f_e\,;\,e\in\HH_{d,k}^{\un})$, at certain scales $\lm\in[L_{j+1}, L_j]$, inductively for $1\leq k\leq d$.
However in the Euclidean case, an extra complication emerges due to the fact the (hypergraph) regularity lemma, the analogue of Lemma \ref{KvN-ff}, does not produce $\si$-algebras $\BB_{\f}$, for $\f\in\HH_{d,k-1}^{\un}$, on the cubes $Q_{\f}$.  In a similar manner to the case for $d=2$ discussed in the previous section, we will only obtain $\si$-algebras  ``local" on cubes $Q_{\ut_{\f}}(L_0)$ at some scale $L_0>0$. This will have the effect that the functions $f_e$ will be replaced by a family of functions $f_{e,\ut}$, where $\ut$ runs through a grid $\Ga_{L_0,Q}$.

To be more precise, let $L>0$ be a scale dividing the side-length $l(Q)$. For $\ut\in\Ga_{L,Q}$ and $e'\in \HH_{d,k}$ we will use $\ut_{e'}$ to denote the projection of $\ut$ onto $Q_{e'}$ and
$Q_{\ut_{e'}}(L):=\ut_{e'}+ Q_{e'}(L)$ 
to denote the projection of the cube $Q_{\ut}(L)$ centered at $\ut$ onto $Q_{e'}$.
It is then easy to see that for any $\VE>0$ we have
\eq\label{4.7}
\NN^d_{\la\De^0,Q}(f_{e};e\in\HH_{d,k}^{\un})= \E_{\ut\in\Ga_{L,Q}}\, \NN^d_{\la\De^0,Q_{\ut}(L)}(f_{e,\ut}\,;e\in\HH_{d,k}^{\un})+ O(\eps)
\ee
and
\eq\label{4.8}
\MM^d_{\la,Q}(f_{e};e\in\HH_{d,k}^{\un})=\E_{\ut\in\Ga_{L,Q}}\,
\MM^d_{\la,Q_{\ut}(L)}(f_{e,\ut}\,;e\in\HH_{d,k}^{\un})+ O(\eps)
\ee
provided $0<\la\ll \eps L$ where $f_{e,\ut}$ denotes the restriction of a function  $f_e$ to the cube $Q_{\ut}(L)$.






\comment{
\begin{proof} For given $\ut\in\Ga_{L,Q}$ let $U_{\ut,L}$ be the set of points $x\in Q_{\ut,L}$ such that $x+Q(\la)\subs Q_{\ut,L}$, and let $U=\bigcup_{\ut\in \Ga_{L,Q}} U_{\ut,L}$. Since $f_e=f_{e,\ut}$ on $Q_{\ut_{\pi(e)}}(L)$ it is clear from definition \eqref{4.3} that $\MM_{x+Q(\la)}(f_{e};e\in\HH_{d,k}^{\un})=\MM_{x+Q(\la)}(f_{e,\ut};e\in\HH_{d,k}^{\un})$ for all $x\in U$. Thus by \eqref{4.4} and the assumption that $|f_e|\leq 1$ for all $e\in\HH_{d,d}^{\un}$, we have
\[\MM_{\la,Q}(f_{e};e\in\HH_{d,d}^{\un}) = \MM_{\la,Q,L}(f_{e,\ut};e\in\HH_{d,d}^{\un}) + O(\eps),\]\\
 as $|Q/U|\ls\eps\,|Q|$. The proof of \eqref{4.8} is similar. Recall that, for $1\leq i\leq d$,
\[d\si_i^{\la}(\ux_i)= d\si_{\la\De_i^0}(x_{i2}-x_{i1},\ldots,x_{in_i}-x_{i1})\,dx_{i1}.\]
Thus on the support of the measure $\,\prod_i d\si_i^{\la}$,
we have that $\ux_e\in Q_{\ut_{\pi(e)}}(L)$ for $x=(x_{11},\ldots,x_{d1})\in U_{\ut,L}$ hence $f_e=f_{e,\ut}$ for all $e\in\HH_{d,d}^{\un}$, and \eqref{4.10} follows.
\end{proof}
}

At this point the proof of Proposition \ref{approx-d} reduces to showing that the expressions in \eqref{4.7} and \eqref{4.8} only differ by $O(\eps)$ at some scales $\lm\in[L_{j+1},L_j]$, given an $\VE$-admissible sequence $L_0\geq L_1\geq \cdots\geq L_J$, for any collection of bounded functions $f_{e,\ut}$, $e\in \HH_{d,k}^{\un}$, $\ut\in\Ga_{L_0,Q}$. Indeed, our crucial result will be the following



\begin{prop}[Local Counting Lemma]\label{Lem4.4} Let 
$0<\eps\ll1$ and $M\geq 1$. There exists an integer $J_k=J_k(\eps,M)$ such that for any $\eps$-admissible sequence of scales $L_0\geq L_1\geq\cdots \geq L_{J_k}$ with the property that  $L_0$ divides $l(Q)$, and collection of functions  
\[\text{$f_{e,\ut}^m:Q_{t_{\pi(e)}}(L_0):\to [-1,1]$ \ with \ $e\in \HH_{d,k}^{\un}$, $1\leq m\leq M$ and  $\ut\in\Ga_{L_0,Q}$}\] 
there exists $1\leq j< J_k$ and a set $T_{\eps}\subs \Ga_{L_0,Q}$ of size $|T_{\eps}|\leq \eps |\Ga_{L_0,Q}|$ such that
\eq\label{4.11}
\NN^d_{\la\De^0,Q_{\ut}(L_0)}(f_{e,\ut}^m;\,e\in \HH_{d,k}^{\un}) =\MM_{\la,Q_{\ut}(L_0)}(f_{e,\ut}^m;\,e\in \HH_{d,k}^{\un}) + O(\eps)
\ee
for all $\lm\in[L_{j+1}, L_j]$ and $\ut\notin T_\eps$ 
uniformly in $e\in \HH_{d,k}^{\un}$ and $1\leq m\leq M$.
\end{prop}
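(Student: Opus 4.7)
The plan is to prove Proposition \ref{Lem4.4} by induction on $1 \leq k \leq d$. The base case $k=1$ is essentially Proposition \ref{approx-par-1}: when $k=1$, every edge $e\in\HH_{d,1}^{\un}$ has $\pi(e)$ equal to a single vertex $\{i\}$, so each $f_{e,\ut}^m$ depends on a single variable $x_{il}\in Q_{t_i}(L_0)$, and the product $\prod_e f_{e,\ut}^m(\ux_e)$ factors across the indices $i=1,\ldots,d$. Both $\NN^d$ and $\MM^d$ then factor as products of single-simplex counts, to which Proposition \ref{approx-par-1} applies directly with auxiliary parameter $R$ of size roughly $M\prod_i n_i$ (to cover each $m$ and each factor).

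For the inductive step, assume the proposition at level $k-1$. First I will establish a local generalized von Neumann inequality at level $k$, saying that for $L\ll\eps^C\lambda$ and each $\ut\in\Ga_{L_0,Q}$,
\[
|\NN^d_{\lambda\Delta^0,Q_{\ut}(L_0)}(f_{e,\ut}^m;\,e\in\HH_{d,k}^{\un})| \,\leq\, \min_{e\in\HH_{d,k}^{\un}} \|f_{e,\ut}^m\|_{\Box_L(Q_{\ut_{\pi(e)}}(L_0))} + O(\eps),
\]
with the analogous bound for $\MM^d$ given by Gowers--Cauchy--Schwarz. The proof iterates the one-variable smoothing identity underlying \eqref{k=1}/Lemma \ref{vN-2}, sequentially Cauchy--Schwarzing in the variables $\ux_i$ for $i\in\pi(e_0)$ and using that each measure $d\sigma^\lambda_i$ is, up to error $O(L/\lambda)$, essentially given by convolution with $\chi_L$. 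Next I will prove a local weak hypergraph regularity lemma extending Lemma \ref{KvN-2} to level $k$: for each $e\in\HH_{d,k}^{\un}$, $\f'\in\partial\pi(e)$, and $\ut\in\Ga_{L_0,Q}$, one can construct $\sigma$-algebras $\BB_{\f',\ut_{\f'}}$ of complexity $O_\eps(1)$ on each cell $Q_{\ut_{\f'}}(L_j)$ of scale $L_j$ such that, writing $\bar f_{e,\ut}^m:=\E(f_{e,\ut}^m\,|\,\bigvee_{\f'\in\partial\pi(e)}\BB_{\f',\ut_{\f'}})$,
\[
\|f_{e,\ut}^m-\bar f_{e,\ut}^m\|_{\Box_{L_{j+1}}(Q_{\ut_{\pi(e)}}(L_0))} \,\leq\,\eps
\]
uniformly in $e,m$ and for all $\ut$ outside a set $T_\eps\subset\Ga_{L_0,Q}$ with $|T_\eps|\leq\eps|\Ga_{L_0,Q}|$. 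This is a simultaneous energy-increment argument of the same flavour as in the proofs of Lemmas \ref{KvN-ff}, \ref{KvN-1'}, and \ref{KvN-2}: any failure of the estimate produces, via Gowers--Cauchy--Schwarz, sets $B_{\f',\ut_{\f'}}$ refining the corresponding $\BB_{\f',\ut_{\f'}}$ and increasing the total $L^2$-energy by $\gtrsim\eps^{C}$, so the procedure terminates within $O(\eps^{-C})$ steps after averaging over $\ut$.

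With the regularity and von Neumann estimates in hand I will replace every $f_{e,\ut}^m$ by $\bar f_{e,\ut}^m$ in both $\NN^d$ and $\MM^d$ at a cost of $O(\eps)$. Each $\bar f_{e,\ut}^m$ is a bounded linear combination of at most $2^{O_\eps(1)}$ atom-indicators $\1_{A_{e,\ut}}$ with $A_{e,\ut}=\bigcap_{\f'\in\partial\pi(e)}A'_{\f',\ut_{\f'}}$, and the crucial observation (which is exactly the unfolding used in the finite-field proof of Proposition \ref{main-rect-ff} and in \eqref{approx-13a}--\eqref{approx-13b}) is that the pull-back nature of these atoms allows one to reindex
\[
\prod_{e\in\HH_{d,k}^{\un}} \1_{A_{e,\ut}}(\ux_e) \;=\; \prod_{\f\in\HH_{d,k-1}^{\un}} g_{\f,\ut}(\ux_\f)
\]
for suitable $[-1,1]$-valued $g_{\f,\ut}$ built from products of the atom-indicators along fibres $p_{\f'}(e)=\f$. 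Substituting this identity into both $\NN^d$ and $\MM^d$ yields expressions of the form $\NN^d(g_{\f,\ut};\,\f\in\HH_{d,k-1}^{\un})$ and $\MM^d(g_{\f,\ut};\,\f\in\HH_{d,k-1}^{\un})$ at level $k-1$, to which the induction hypothesis applies with parameter $M'=M\cdot 2^{O_\eps(1)}$ and a smaller tolerance $\eps_1:=\exp(-C\eps^{-C})$, so that $2^{O_\eps(1)}\eps_1\ll\eps$.

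The main obstacle, and where I expect the bookkeeping to be most delicate, is the parameter explosion: at each induction step one must pass from tolerance $\eps$ to $\eps_1$ that is exponentially small in $\eps^{-C}$, and the number of auxiliary functions jumps from $M$ to $M\cdot 2^{O_\eps(1)}$. Consequently $J_k(\eps,M)$ will be a tower exponential in $\eps^{-1}$ of height $k$, consistent with the $W_d$ bound in the quantitative remark. One must also ensure that the $\eps$-admissible sequence $L_0\geq\cdots\geq L_{J_k}$ is long enough to accommodate both the regularity step (which consumes a scale $L_j$ together with its successor $L_{j+1}$) and the subsequent invocation of level $k-1$ on the tail of the sequence; passing to a subsequence $L'_j:=L_{cj}$ for some constant $c$ depending on $\eps^{-C}$ resolves this without affecting the qualitative structure of the argument, and the bad sets from the two steps combine into a single set of size $\leq\eps|\Ga_{L_0,Q}|$ after a final relabelling of $\eps$.
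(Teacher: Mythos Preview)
Your proposal is correct and follows essentially the same approach as the paper: induction on $k$ with base case handled by Proposition~\ref{approx-par-1}, and the inductive step carried out via a generalized von-Neumann inequality (Lemma~\ref{Lem4.5}) together with a parametric weak hypergraph regularity lemma (Lemma~\ref{Lem4.6}), followed by the atom-unfolding identity that drops the level from $k$ to $k-1$. The paper packages the von-Neumann and regularity ingredients as separate lemmas and is slightly more explicit about the additional localization to $\us\in\Ga_{L'_j,Q_{\ut}(L_0)}$ before invoking the induction hypothesis, but the structure, choice $\eps_1=\exp(-C\eps^{-2^{k+3}})$, subsequence trick, and handling of the exceptional sets are all as you describe.
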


\smallskip

\subsection{Proof of Proposition \ref{Lem4.4}}\

We will prove Proposition \ref{Lem4.4} by induction on $1\leq k\leq d$. For $k=1$ this is basically Proposition \ref{approx-par-1}. 

Indeed, in this case for a given $\ut=(t_1,\ldots,t_d)\in \Ga_{L_0,Q}$ and edge $e\in\HH_{d,1}^{\un}=\{il\,:\,1\leq i\leq d,\, 1\leq l\leq n_i\}$ we have that $f_{e,\ut}^m(\ux_e)=f_{il,\ut}^m(x_{il})$ with $x_{il}\in Q_{t_i}(L_0)$ and hence both
\[\NN^d_{\la\De^0,Q_{\ut}(L_0)}(f_{e,\ut}^m;\,e\in\HH_{d,1}^{\un}) = \prod_{i=1}^d\ \NN^1_{\la\De_i^0,Q_{t_i}(L_0)}
(f_{i1,\ut}^m,\ldots,f_{in_i,\ut}^m)\]
\[\MM^d_{\la,Q_{\ut}(L_0)}(f_{e,\ut}^m;\,e\in\HH_{d,1}^{\un}) =
\prod_{i=1}^d\ \MM^1_{\la,Q_{t_i}(L_0)} (f_{i1,\ut}^m,\ldots,f_{in_i,\ut}^m).\]

By Proposition \ref{approx-par-1} there exists an $1\leq j< J_1=O(M\eps^{-4})$ and an exceptional set $T_\eps\subs \Ga_{L_0,Q}$ of size $|T_\eps|\leq \eps |\Ga_{L_0,Q}|$, such that uniformly for $\ut\notin T_\eps$ and for $1\leq i\leq d$, one has
\[ \NN^1_{\la\De_i^0,Q_{t_i}(L_0)} (f_{i1,\ut}^m,\ldots,f_{in_i,\ut}^m) =
\MM^1_{\la,Q_{t_i}(L_0)} (f_{i1,\ut}^m,\ldots,f_{in_i,\ut}^m) + O(\eps)\]
hence
\[\NN^d_{\la\De^0,Q_{\ut}(L_0)}(f_{e,\ut}^m;\,e\in\HH_{d,1}^{\un}) =
\MM^d_{\la,Q_{\ut}(L_0)}(f_{e,\ut}^m;\,e\in\HH_{d,1}^{\un}) + O(\eps)\]
as the all factors are trivially bounded by 1 in magnitude. This implies \eqref{4.11} for $k=1$.

\smallskip

For the induction step we again need two main ingredients. 
The first establishes that the our multi-linear forms $\NN^d_{\la\De^0,Q}(f_e;\,e\in \HH_{d,k}^{\un})$ are controlled by an appropriate box-type norm attached to a scale $L$. 

Let $Q=Q_1\times\cdots\times Q_d$ and $1\leq k\leq d$. For any  scale $0<L\ll l(Q)$  and function $f:Q_{e'}\to[-1,1]$ with $e'\in\HH_{d,k}$  we define its local box norm at scale $L$ by
\eq\label{box-norm-Lk}
\|f\|_{\Box_L(Q_{e'})}^{2^k} := \fint_{\us\in Q_{e'}}  \|f\|_{\Box(\us+Q(L))}^{2^k}  \,d\us
\ee
where
\eq\label{box-norm-k}
\|f\|_{\Box(\widetilde{Q})}^{2^k}:= \fint_{x_{11},x_{12}\in \widetilde{Q}_1}\!\!\!\!\!\!\cdots \ \fint_{x_{k1},x_{k2}\in \widetilde{Q}_k} \prod_{(\ell_1,\dots,\ell_k)\in\{1,2\}^k}f(x_{1\ell_1},\dots,x_{k\ell_k})  \,dx_{11}\,dx_{12}\ldots \,dx_{k1}\,dx_{k2}
\ee
for any cube $\widetilde{Q}$ of the form $\widetilde{Q}=\widetilde{Q}_1\times\cdots\times\widetilde{Q}_k$.



\begin{lem}[Generalized von-Neumann inequality]\label{Lem4.5}
Let $\VE>0$, $0<\la\ll l(Q)$ and $0<L\ll (\eps^{2^k})^6\la$.

For any $1\leq k\leq d$ and collection of functions  $f_e: Q_{\pi(e)}\to[-1,1]$ with $e\in\HH_{d,k}^{\un}$ we have both
\eq\label{4.12}
|\NN^d_{\la\De^0,Q}(f_e;\,e\in \HH_{d,k}^{\un})| \leq \min_{e\in \HH_{d,k}^{\un}}\|f_e\|_{\Box_L(Q_{\pi(e)})} + O(\eps)
\ee
\eq\label{MMM}
|\MM^d_{\la,Q}(f_e;\,e\in \HH_{d,k}^{\un}) | \leq \min_{e\in \HH_{d,k}^{\un}} \|f_{e}\|_{\Box_L(Q_{\pi(e)})}.
\ee
\end{lem}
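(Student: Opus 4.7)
I would establish the hard inequality (\ref{4.12}) by a $k$-fold iteration that chains the single-simplex generalized von Neumann inequality (Lemma~\ref{vN-1}) with Cauchy--Schwarz, generalizing the two-step argument used in the $d=2$ case (Lemma~\ref{vN-2}). Fix an arbitrary edge $e_0\in\HH_{d,k}^{\un}$; after relabeling within each fiber $\pi^{-1}(i)$ we may assume $e_0=(11,21,\ldots,k1)$, so that $\pi(e_0)=\{1,\ldots,k\}$. The target is
\[
|\NN^d_{\la\De^0,Q}(f_e;\,e\in\HH_{d,k}^{\un})|^{2^k}\,\leq\,\|f_{e_0}\|_{\Box_L(Q_{\pi(e_0)})}^{2^k}+O(\eps^{2^k}),
\]
from which (\ref{4.12}) follows by taking $2^k$-th roots and varying $e_0$.

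First I would isolate the variable $\ux_1$ in the definition (\ref{4.2}) of $\NN^d$. Edges $e$ with $1\notin\pi(e)$ are independent of $\ux_1$, while each edge $e$ with $1l\in e$ contributes a factor $f_e(x_{1l},\ldots)$. Integrating against $d\si_1^\la$ produces an inner expression of the form $\NN^1_{\la\De_1^0,Q_1}(g_1^\prime,\ldots,g_{n_1}^\prime)$ for suitable parameter-dependent $1$-bounded functions $g_l^\prime$. Applying Lemma~\ref{vN-1} in the ``$l=1$'' slot, expanding the resulting $U^1_L$-norm squared via the identity $\psi_L(y-x)=\int\chi_L(x-t)\chi_L(y-t)\,dt$, and then applying Cauchy--Schwarz in $x_{11}$, the sphere measure is absorbed and the $\ux_1$-variable is replaced by a pair $(x_{11},x_{12})$ weighted by $\psi_L(x_{12}-x_{11})$. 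Edges $e$ with $1\in\pi(e)$ but $11\notin e$ end up depending on only one copy of this pair, contribute at most $1$ in magnitude, and are dropped; edges containing $11$ become $f_e(x_{11},\ldots)\,f_e(x_{12},\ldots)$. Iterating this procedure for the variables $\ux_2,\ldots,\ux_k$ in turn, each step squares the inequality and introduces a new $\psi_L$-kernel. After $k$ iterations, the surviving integrand is $\prod_{(\ell_1,\ldots,\ell_k)\in\{1,2\}^k}f_{e_0}(x_{1\ell_1},\ldots,x_{k\ell_k})$, weighted by $\prod_{i=1}^k\psi_L(x_{i2}-x_{i1})$; averaging over the overall shift exactly reproduces $\|f_{e_0}\|_{\Box_L(Q_{\pi(e_0)})}^{2^k}$, and the accumulated error stays $O(\eps^{2^k})$ under the hypothesis $L\ll(\eps^{2^k})^6\la$.

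The easy inequality (\ref{MMM}) is the standard Gowers--Cauchy--Schwarz bound for the multilinear box form applied inside each translate $\ut+Q(\la)$ and then averaged over $\ut\in Q$; since no simplex measure is involved, no error term appears. The main technical obstacle in the hard inequality is the combinatorial bookkeeping: one must verify carefully that after all $k$ duplication steps the only edges with a nontrivial contribution are those with $\pi(e)=\pi(e_0)$, and that the duplication structure forces each such edge to be replaced by $f_{e_0}$ evaluated at all $2^k$ combinations of the duplicated coordinates. This is handled exactly as in the finite-field case (Lemma~\ref{vN-rect-ff}): edges $e\neq e_0$ with $\pi(e)\subsetneq\pi(e_0)$ fail to depend on some newly duplicated variable and so are bounded in absorption steps by averaging in $[-1,1]$, while the Euclidean analogue goes through with the $\psi_L$-kernels playing the role of the sphere measures in the discrete setting.
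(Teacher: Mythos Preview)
Your approach is correct and matches the paper's: iterate Lemma~\ref{vN-1} together with Cauchy--Schwarz $k$ times, at each step replacing the simplex measure $d\si_i^\la$ by the kernel $\psi_L^i$ and collapsing the surviving functions to $f_{e_0}$, then identify the result with $\|f_{e_0}\|_{\Box_L}^{2^k}$. Two cosmetic slips to clean up when you write it out: the Cauchy--Schwarz is applied in the outer variables $(\ux_2,\ldots,\ux_d)$ rather than ``in $x_{11}$'', and edges with $1l\in e$ for $l\neq 1$ vanish already by selecting the $l=1$ slot in Lemma~\ref{vN-1} (also, $\pi(e)\subsetneq\pi(e_0)$ is impossible since every edge has $|\pi(e)|=k$; the condition you want is $\pi(e)\neq\pi(e_0)$).
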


The crucial ingredient is the following analogue of the weak hypergraph regularity lemma.



\begin{lem}[Parametric weak hypergraph regularity lemma for $\R^n$]\label{Lem4.6}

Let $0<\eps\ll1$, $M\geq 1$, and $1\leq k\leq d$. 

There exists  $\bar{J}_k=O(M\eps^{-2^{k+3}})$ such that for any $\eps^{2^k}$-admissible sequence $L_0\geq L_1\geq\cdots \geq L_{\bar{J}_k}$ with the property that  $L_0$ divides $l(Q)$ and collection of functions  
\[\text{$f^m_{e,\ut}: Q_{\ut_{\pi(e)}}(L_0)\to [-1,1]$ \ with \ 
$e\in\HH_{d,k}^{\un}$, $1\leq m\leq M$, and $\ut\in\Ga_{L_0,Q}$}\] 
there is some $1\leq j< \bar{J}_k$ and $\si$-algebras $\BB_{e',\ut}$ of scale $L_j$ on $Q_{\ut_{e'}}(L_0)$ for each $\ut \in\Ga_{L_0,Q}$ and $e'\in\HH_{d,k}$ such that
\eq\label{4.13}
\|f_{e,\ut}^m-\E(f_{e,\ut}^m|\BB_{\pi(e),\ut})\|_{\Box_{L_{j+1}}(Q_{\ut_{\pi(e)}}(L_0))} \leq \eps
\ee
uniformly for all $t\notin T_\VE$, $e\in\HH_{d,k}^{\un}$, and $1\leq m\leq M$, where $T_{\eps}\subs \Ga_{L_0,Q}$ with $|T_{\eps}|\leq \eps |\Ga_{L_0,Q}|$.

Moreover, the $\si$-algebras $\BB_{e',\ut}$ 
have the additional local structure that  the exist $\si$-algebras $\BB_{e',\f',\us}$  on $Q_{\us_{\f'}}(L_j)$ with $\comp(\BB_{e',\f',\us}) = O(j)$  for each  $\us\in\Ga_{L_j,Q}$, $e'\in\HH_{d,k}$, and $\f'\in\partial e'$ such that
 if $\us\in Q_{\ut}(L_0)$, then
\eq\label{4.14}
\BB_{e',\ut}\bigr\rvert_{Q_{\us_{e'}}(L_j)} = \bigvee_{\f'\in\partial e'}
\BB_{e',\f',\us}.\ee
\end{lem}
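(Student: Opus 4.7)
The proof will proceed by a standard energy-increment argument, combining the hypergraph structure from Lemma \ref{KvN-ff} with the local/parametric scale apparatus developed in Lemmas \ref{KvN-1'} and \ref{KvN-2}.

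For the setup, given a candidate family of $\si$-algebras $\{\BB_{e',\f',\us}\}$ indexed by $\us\in\Ga_{L_j,Q}$, $e'\in\HH_{d,k}$, $\f'\in\partial e'$, I would first globalize by defining $f^m_e:Q_{\pi(e)}\to[-1,1]$ via $f^m_e|_{Q_{\ut_{\pi(e)}}(L_0)} := f^m_{e,\ut}$ (well-defined since the cubes $Q_{\ut_{\pi(e)}}(L_0)$ partition $Q_{\pi(e)}$), and then let $\mathcal{B}_{e'}$ be the global $\si$-algebra on $Q_{e'}$ whose restriction to each $Q_{\us_{e'}}(L_j)$ equals $\bigvee_{\f'\in\partial e'}\BB_{e',\f',\us}$. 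The relevant monotone quantity is the total energy
\[
\mathcal{E} := \sum_{m=1}^M \sum_{e\in\HH_{d,k}^{\un}} \|\E(f^m_e \mid \mathcal{B}_{\pi(e)})\|^2_{L^2(Q_{\pi(e)})},
\]
which is bounded by $O(M)$. I would initialize at $j=1$ with trivial $\BB_{e',\f',\us}$ on $Q_{\us_{\f'}}(L_1)$ and iterate the following step.

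Suppose \eqref{4.13} fails on a set $T_\eps\subs\Ga_{L_0,Q}$ with $|T_\eps|>\eps|\Ga_{L_0,Q}|$. Then for each $\ut\in T_\eps$ there are indices $m,e$ with $\|g^m_{e,\ut}\|_{\Box_{L_{j+1}}(Q_{\ut_{\pi(e)}}(L_0))}>\eps$, where $g^m_{e,\ut}:=f^m_{e,\ut}-\E(f^m_{e,\ut}\mid\mathcal{B}_{\pi(e)})$. Since $L_{j+2}\ll\eps^{2^k}L_{j+1}$, the definition \eqref{box-norm-Lk} of the local box norm (after pigeonholing at the finer scale, as in Lemma \ref{KvN-2}) produces a set of small cubes $Q_{\us_{\pi(e)}}(L_{j+2})$, with $\us\in\Ga_{L_{j+2},Q}$ contained in $Q_{\ut}(L_0)$, of density $\gs\eps^{2^k}$, on which $\|g^m_{e,\ut}\|_{\Box(Q_{\us_{\pi(e)}}(L_{j+2}))}^{2^k}\gs\eps^{2^k}$. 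A Cauchy--Schwarz/dual-function argument identical to that leading to \eqref{KvN-step1-f} then yields sets $B_{\f'}\subs Q_{\us_{\f'}}(L_{j+2})$ for each $\f'\in\partial\pi(e)$ with
\[
\Bigl|\Bigl\langle g^m_{e,\ut},\,\textstyle\prod_{\f'\in\partial\pi(e)}1_{B_{\f'}}\Bigr\rangle_{Q_{\us_{\pi(e)}}(L_{j+2})}\Bigr|\gs\eps^{2^k}.
\]
I would then refine $\BB_{\pi(e),\f',\us'}$ for every $\us'\in\Ga_{L_{j+2},Q}$ contained in $Q_{\us}(L_{j+1})$ by adjoining the restriction of $B_{\f'}$, raising the complexity of each by at most one. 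Exactly as in \eqref{KvN-step3-f}, Pythagoras together with measurability of $\prod 1_{B_{\f'}}$ with respect to the refined $\si$-algebra yields a local energy jump $\gs\eps^{2^{k+1}}$ on $Q_{\us_{\pi(e)}}(L_{j+2})$. Averaging over the bad set of $\ut$ (of density $>\eps$) gives a global energy jump $\gs\eps^{2^{k+1}+O(1)}$ per iteration; since each iteration advances the scale index by $2$ and $\mathcal{E}=O(M)$, the process must terminate within $\bar J_k=O(M\eps^{-2^{k+3}})$ scales, producing the desired family.

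The main obstacle is the scale and parameter bookkeeping. The refinement sets $B_{\f'}$ live at scale $L_{j+2}$ but must be adjoined to the local components $\BB_{e',\f',\us'}$ in a way that (i) preserves the product structure \eqref{4.14} at every subsequent stage; (ii) does not inflate the per-site complexity beyond $O(j)$ — this requires that distinct $\ut\in T_\eps$ contribute refinements supported on disjoint small cubes $Q_{\us_{\f'}}(L_{j+2})$, so that they refine disjoint local $\si$-algebras, precisely as in Lemma \ref{KvN-2}; and (iii) correctly absorbs the extra $\eps$ loss from averaging the failure of \eqref{4.13} over $\ut$, analogously to Lemma \ref{KvN-1'}. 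Once these three issues are reconciled, the hypergraph energy-increment of Lemma \ref{KvN-ff} carries over locally cube-by-cube, and globalization via the functions $f^m_e$ closes the argument.
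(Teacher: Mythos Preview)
Your approach is essentially the paper's: an energy-increment iteration in which one pigeonholes the failed box norm down to scale $L_{j+2}$, extracts product sets $B_{\f'}$ via the box-norm Cauchy--Schwarz argument, and refines the local $\si$-algebras accordingly.

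There is one genuine slip. Your globalization $f^m_e|_{Q_{\ut_{\pi(e)}}(L_0)}:=f^m_{e,\ut}$ need not be well-defined: distinct $\ut\in\Ga_{L_0,Q}$ sharing the same projection $\ut_{\pi(e)}\in\Ga_{L_0,Q_{\pi(e)}}$ may carry \emph{different} functions $f^m_{e,\ut}$ on the same cube, as the lemma imposes no compatibility condition (and indeed the output $\si$-algebras $\BB_{e',\ut}$ and the local components $\BB_{e',\f',\us}$ are indexed by the full $\ut\in\Ga_{L_0,Q}$ and $\us\in\Ga_{L_j,Q}$, not by their projections). The remedy is simply to drop the globalization and work, as the paper does, with the parametrized energy
\[
\mathcal{E}=\E_{\ut\in\Ga_{L_0,Q}}\sum_{m=1}^M\sum_{e\in\HH_{d,k}^{\un}}\|\E(f^m_{e,\ut}\mid\BB_{\pi(e),\ut})\|^2_{L^2(Q_{\ut_{\pi(e)}}(L_0))};
\]
every step of your outline then carries over. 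A minor arithmetic point: the global energy jump also absorbs a factor $\eps^{2^k}$ from the density of good $\us$-cubes inside each $Q_{\ut}(L_0)$, so the increment is $\gs\eps\cdot\eps^{2^k}\cdot\eps^{2^{k+1}}$ rather than $\eps^{2^{k+1}+O(1)}$; this is why the stated bound $\bar J_k=O(M\eps^{-2^{k+3}})$ is the right order.
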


\medskip

Lemma \ref{Lem4.6} is the parametric and simultaneous version of the extension of Lemma 3.7 to the product of $d$ simplices. The difference is that in the general case one has to deal with a parametric family of functions $f_{e,\ut}^m$ as $\ut$ is running through a grid $\Ga_{L_0,Q}$. The essential new content of Lemma \ref{Lem4.6} is that one can develop  $\si$-algebras $\BB_{e',\ut}$ on the cubes $Q_{\ut}(L_0)$ with respect to the family of functions $f_{e,\ut}^m$ such that  the  local structure described above and \eqref{4.13} hold simultaneously for almost all $\ut\in \Ga_{L_0,Q}$.

\smallskip


\begin{proof}[Proof of Proposition \ref{Lem4.4}]

Assume the Proposition holds for $k-1$. 

Let $\VE>0$, $\eps_1:=\exp\,(-C_1\eps^{-2^{k+3}})$ for some large constant $C_1=C_1(n,k,d)\gg 1$, and $\{L_j\}_{j\geq 1}$ be an $\VE_1$-admissible sequence of scales.
Set  $F(\eps):=J_{k-1}(\eps_1, M)$ with $M=\VE\,\VE_1^{-1}$.

For $L\in\{L_j\}_{j\geq 1}$ we again write $\ind(L)=j$ if $L=L_j$.
We now choose a subsequence $\{L_j'\}\subs \{L_j\}$ so that $L'_0=L_0$ and 
$\ind(L'_{j+1})\geq \ind(L'_j) + F(\eps)+2.$
Lemma \ref{Lem4.6} then guarantees the existence of  $\si$-algebras $\BB_{e',\ut}$ of scale $L'_j$ on $Q_{\ut_{e'}}(L_0)$ for each $\ut \in\Ga_{L_0,Q}$ and $e'\in\HH_{d,k}$, with the  local structure described above, such that
\eq\label{4.13333}
\|f_{e,\ut}^m-\E(f_{e,\ut}^m|\BB_{\pi(e),\ut})\|_{\Box_{L'_{j+1}}(Q_{\ut_{\pi(e)}}(L_0))} \leq \eps
\ee
uniformly for all $t\notin T'_\VE$, $e\in\HH_{d,k}^{\un}$, and $1\leq m\leq M$, for some $1\leq j<\bar{J}_k(\eps,M)=O(M\eps^{-2^{k+3}})$, where $T'_{\eps}\subs \Ga_{L_0,Q}$ with $|T'_{\eps}|\leq \eps |\Ga_{L_0,Q}|$.
Let 
$\of_{e,\ut}^m := \E(f_{e,\ut}^m|\BB_{\pi(e),\ut})$
for $\ut \in\Ga_{L_0,Q}$ and $e\in\HH_{d,k}^{\un}$. If $t\notin T'_\VE$, then by \eqref{4.12}, \eqref{MMM}, and \eqref{4.13} we have both
\eq\label{4.16}
\NN^d_{\la\De^0,Q_{\ut}(L_0)} (f_{e,\ut}^m;e\in\HH_{d,k}^{\un}) =
\NN^d_{\la\De^0,Q_{\ut}(L_0)} (\of_{e,\ut}^m;e\in\HH_{d,k}^{\un})+O(\eps)
\ee
\eq\label{4.17}
\MM^d_{\la,Q_{\ut}(L_0)} (f_{e,\ut}^m;e\in\HH_{d,k}^{\un}) =
\MM^d_{\la,Q_{\ut}(L_0)} (\of_{e,\ut}^m;e\in\HH_{d,k}^{\un})+O(\eps).
\ee
provided  $ (\eps^{-2^k})^6 L'_{j+1}\ll \lm$.
For given $\us\in\Ga_{L'_j,Q_{\ut}(L_0)}$ one may write $\of_{e,\us}^m$ for the restriction of $\of_{e,\ut}^m$ on the cube $Q_{\us}(L'_j)\subs Q_{\ut}(L_0)$, as $\us$ uniquely determines $\ut$. By localization, provided $\la\ll\eps L'_j$,  we then have both
\eq\label{4.18}
\NN^d_{\la\De^0,Q_{\ut}(L_0)} (\of_{e,\ut}^m;e\in\HH_{d,k}^{\un}) =
\E_{\us\in\Ga_{L'_j,Q_{\ut}(L_0)}} \NN^d_{\la\De^0,Q_{\us}(L'_j)} (\of_{e,\us}^m;e\in\HH_{d,k}^{\un}) + O(\eps),
\ee
\eq\label{4.19}
\MM^d_{\la,Q_{\ut}(L_0)} (\of_{e,\ut}^m;e\in\HH_{d,k}^{\un}) =
\E_{\us\in\Ga_{L'_j,Q_{\ut}(L_0)}} \MM^d_{\la,Q_{\us}(L'_j)} (\of_{e,\us}^m;e\in\HH_{d,k}^{\un}) + O(\eps).
\ee

For a fixed cube $Q_{\us}(L'_j)$ we have that
\eq\label{4.20}
\of_{e,\us}^m=\sum_{r_e=1}^{R_{e,\us}} \alpha_{\us,r_e,m} \ 1_{A_{\pi(e),\us}^{r_e}}
\ee
where $\{A_{\pi(e),\us}^{r_e}\}_{1\leq r\leq R_{e,\us}}$ is the family of atoms of the $\si$-algebra $\BB_{\pi(e),\ut}$ restricted to the cube $Q_{\us}(L'_j)$. Note that $|\alpha_{\us,r_e}|\leq 1$ and $|R_{e,\us}|=O(\exp\,(C\eps^{-2^{k+3}}))$. By adding the empty set to the collection of atoms one may assume $|R_{e,\us}|=R:=\exp\,(C\eps^{-2^{k+3}})$ for all $e\in\HH_{d,k}^{\un}$ and $\us\in\Ga_{L'_j,Q}$. Then, by multi-linearity, using the notations $\ur=(r_e)_{e\in\HH_{d,k}^{\un}}$ and $\al_{\ur,\us}=\prod_e \al_{\us,r_e}$, one has both
\eq\label{4.21}
\NN^d_{\la\De^0,Q_{\us}(L'_j)}(\of_{\us,e}^m;\,e\in\HH_{d,k}^{\un})
=\sum_{\ur} \al_{\us,\ur,m}\  \NN^d_{\la\De^0,Q_{\us}(L'_j)}(1_{A_{\pi(e),\us}^{r_e}};\,e\in\HH_{d,k}^{\un})
\ee
\eq\label{4.22}
\MM^d_{\la,Q_{\us}(L'_j)} (\of_{\us,e}^m;\,e\in\HH_{d,k}^{\un})
=\sum_{\ur} \al_{\us,\ur,m}\  \MM^d_{\la,Q_{\us}(L'_j)}(1_{A_{\pi(e),\us}^{r_e}};\,e\in\HH_{d,k}^{\un}).
\ee

The key observation is that these expressions in the sum above are all at level $k-1$ instead of $k$. 
To see this let $e=(i_1l_1,\ldots,i_ml_m,\ldots,i_kl_k)$ so $e'=\pi(e)=(i_1,\ldots,i_m,\ldots,i_k)$. If $\f'=e'\backslash\{i_m\}$ then recall that the edge $p_{\f'}(e)=(i_1l_1,\ldots,i_kl_k)\in\HH_{d,k-1}^{\un}$ is obtained from $e$ by removing the $i_ml_m$-entry. Thus, for any atom $A_{e',\us}$ of $\BB_{\us,e'}(L'_j)$ we have by \eqref{4.14}, that
\eq\label{4.23}
1_{A_{e',\us}}(\ux_e) = \prod_{\f'\in\partial e'} 1_{A_{e',\f',\us,}}(\ux_{p_{\f'}(e)})
\ee
where $A_{e',\f',\us}$ is an atom of the $\si$-algebra $\BB_{e',\f',\us}$. Thus
\eq\label{4.24}
\prod_{e\in\HH_{d,k}^{\un}} 1_{A_{\pi(e),\us}^{r_e}} (\ux_e) =
\prod_{\f\in\HH_{d,k-1}^{\un}} \prod_{\substack{e\in\HH_{d,k}^{\un},\f'\in\partial \pi(e)\\p_{\f'}(e)=f}} 1_{A_{\pi(e),\f',\us}^{r_e}}(\ux_{\f})
= \prod_{\f\in\HH_{d,k-1}^{\un}} g_{\f,\us}^{\ur}\,(\ux_{\f}).
\ee
It follows that
\eq\label{4.25}
\NN^d_{\la\De^0,Q_{\us}(L'_j)}(1_{A_{\pi(e),\us}^{r_e}};\,e\in\HH_{d,k}^{\un})=
\NN^d_{\la\De^0,Q_{\us}(L'_j)}\,(g_{\f,\us}^{\ur};\,\f\in\HH_{d,k-1}^{\un})
\ee
and hence that
\eq\label{4.26}
\NN^d_{\la\De^0,Q_{\us}(L'_j)} (\of_{e,\us}^m;\,e\in\HH_{d,k}^{\un})
=\sum_{\ur} \al_{\us,\ur,m}\  \NN^d_{\la\De^0,Q_{\us}(L'_j)}\,(g_{\f,\us}^{\ur};\,\f\in\HH_{d,k-1}^{\un})
\ee
and similarly
\eq\label{4.27}
\MM^d_{\la,Q_{\us}(L'_j)} (\of_{e,\us}^m;\,e\in\HH_{d,k}^{\un})
=\sum_{\ur} \al_{\ur,\us,m}\  \MM^d_{\la,Q_{\us}(L'_j)}\,(g_{\f,\us}^{\ur};\,\f\in\HH_{d,k-1}^{\un}).
\ee

Note that number of index vectors $\ur=(r_e)_{e\in\HH_{d,k}^{\un}}$ is $R^D$ with $D:=|\HH_{d,k}^{\un}|$ and hence $R^D\leq M$ if $C_1\gg1$.

Writing $j':=\ind(L_j')$ and $J':=\ind(L_{j+1}')$ it then follows from our inductive hypothesis functions, applied with respect to the $\VE_1$-admissible sequence of scales \[L_{j'+1}\geq L_{j'+2}\geq\cdots\geq L_{J'-1}\] which is possible as $J'-j'\gg J_{k-1}(\eps_1,R^D)$, that there is a scale $L_j$ with $j'\leq j <J'$ so that 
\eq\label{4.28}
\NN_{\la\De^0,Q_{\us}(L'_j)}\,(g_{\us,\f}^{\ur};\,\f\in\HH_{d,k-1}^{\un}) =
\MM_{\la,Q_{\us}(L'_j)}\,(g_{\us,\f}^{\ur};\,\f\in\HH_{d,k-1}^{\un}) + O(\eps_1)
\ee
for all $\lm\in[L_{j+1},L_j]$ 
uniformly in $\ur$ for $\us\notin S_{\eps_1}$, where $S_{\eps_1}\subs \Ga_{L'_j,Q}$ is a set of size $|S_{\eps_1}|\leq \eps_1 |\Ga_{L'_j,Q}|$. 

Since the cubes $Q_{\ut}(L_0)$ form a partition of $Q$ as $\ut$ runs through the grid $\Ga_{L_0,Q}$ the relative density of the set $S_{\eps_1}$ can substantially increase only of a few cubes $Q_{\ut}(L_0)$. Indeed, it is easy to see that $|T''_{\eps_1}|\leq \eps_1^{1/2} |\Ga_{L_0,Q}|$ for the set
\[T''_{\eps_1}:= \{\ut\in\Ga_{L_0,Q}:\ |S_{\eps_1}\cap Q_{\ut}(L_0)|\geq\eps_1^{1/2}\,|\Ga_{L_j',Q}\cap Q_{\ut}(L_0)|\}.\]


We claim that \eqref{4.11} holds for $\lm\in[L_{j+1},L_j]$  uniformly in  $t\notin T_{\eps}:=T'_{\eps}\cup T''_{\eps_1}$, $e\in \HH_{d,k}^{\un}$, and $1\leq m\leq M$.
Indeed, from \eqref{4.26}, \eqref{4.27}, and \eqref{4.28} and the fact that $|\al_{\us,\ur}|\leq 1$, it follows
\[\NN^d_{\la\De^0,Q_{\us}(L'_j)}\, (\of_{e,\us};\,e\in\HH_{d,k}^{\un}) = \MM^d_{\la,Q_{\us}(L'_j)}\, (\of_{e,\us};\,e\in\HH_{d,k}^{\un}) + O(\eps)\]
for $\us\notin S_{\eps_1}\cap Q_{\ut}(L_0)$ since $R^D\eps_1\ll\eps$. Finally, the fact that $\ut\notin T''_{\eps_1}$ together with localization, namely \eqref{4.18} and \eqref{4.19},  ensures that averaging over $\Ga_{L'_j,Q_{\ut}(L_0)}$ gives
\[\NN^d_{\la\De^0,Q_{\ut}(L_0)}\,(\of_{e,\ut};\,e\in\HH_{d,k}^{\un}) = \MM^d_{\la,Q_{\ut}(L_0)}\, (\of_{e,\ut};\,e\in\HH_{d,k}^{\un}) + O(\eps) + O(\eps_1^{1/2})\]
which in light of \eqref{4.16}, \eqref{4.17}, and the fact that $\eps_1\ll \eps^2$ complete the proof. 
\end{proof}



\subsection{Proof of Lemmas \ref{Lem4.5} and \ref{Lem4.6}}

\begin{proof}[Proof of Lemma \ref{Lem4.5}]

The argument is similar to that of Lemma \ref{vN-rect-ff}. Fix an edge, say $e_0=(11,12,\ldots,1k)$, and partition the edges $e\in\HH_{d,k}^{\un}$ in to as follows. Let $\HH_0$ be the set of those edges $e$ for which $1\notin \pi(e)$, and for $l=1,\ldots,n_1$ let $\HH_l$ denote the collection of edges of the form $e=(1l,j_2l_2,\ldots,j_kl_k)$, in other words $e\in \HH_l$ if $e=(1l,e')$ for some edge $e'=(j_2l_2,\ldots,j_kl_k)\in\HH_{d-1,k-1}^{\un}$. Accordingly write
\[\prod_{e\in\HH_{d,k}^{\un}} f_e(\ux_e) = \prod_{e\in \HH_0} f_e(\ux_e)\  \prod_{l=1}^{n_1}\prod_{e'\in \HH_{d-1,k-1}^{\un}} f_{1l,e'}(x_{1l},\ux_{e'}).\]

For $x\in Q_1$ and $\ux'=(\ux_2,\ldots,\ux_d)$ with $\ux_i\in Q_i^{n_i}$, define
\eq\label{4.38}
g_l(x,\ux') := \prod_{e'\in \HH_{d-1,k-1}^{\un}} f_{1l,e'}(x_{1l},\ux_{e'})\ee
Then one may write
\eq\label{4.39}
\NN^d_{\la\De^0,Q}(f_e;\,e\in\HH_{d,k}^{\un}) = \fint_{\ux_2}\ldots\fint_{\ux_d}\prod_{e\in \HH_0} f_e(\ux_e)
\left(\fint_{\ux_1}\prod_{l=1}^{n_1} g_l(x_{1l},\ux')\,d\si^{\la}_1(\ux_1)\right)\,d\si^{\la}_d(\ux_d)\ldots d\si^{\la}_2(\ux_2).\ee

For the inner integrals we have, using  \eqref{vN-11a}, the estimate 
\[\left(\fint_{\ux_1}\prod_{l=1}^{n_1}
g_l(x_{1l},\ux')\,d\si^{\la}_1\right)^2 \leq \|g_1\|_{U^1_L(Q)}^2 +O(\eps^{2^k}) = \fint_{y_{11}}\int_{y_{12}} g_1(y_{11})g_1(y_{12}) \psi^1_L(y_{12}-y_{11})\,dy_{11}\,dy_{12}+O(\eps^{2^k}).\]
provided  $0<L\ll (\eps^{2^k})^6 \la$, where as in the proof of Lemma \ref{vN-2} we use the notation
\[\psi^i_L(y_2-y_1)=\int_t \chi^i_L(y_1-t)\chi^i_L(y_2-t)\,dt\]
with $\chi^i_L:= L^{-n_i}1_{[-L/2,L/2]^{n_i}}$ for $1\leq i\leq k$. 
By Cauchy-Schwarz we then have
\[
\left|\NN^d_{\la\De^0,Q}(f_e;\,e\in\HH_{d,k}^{\un}\right|^2 \leq
\int_{\uy_1}\fint_{\ux_2}\ldots\fint_{\ux_d} \prod_{e'\in\HH_{d-1,k-1}^{\un}}\!\!\!\! f_{11,e'}(x_{11},\ux_{e'})f_{11,e'}(x_{12},\ux_{e'})\,d\si^{\la}_d\ldots d\si^{\la}_2\,d\omega^1_L(\uy_1)+O(\eps^{2^k})\]
where $d\omega^i_L(\uy_i)=|Q_i|^{-1}\psi^i_L(y_{i2}-y_{i1})\,dy_{i1}\,dy_{i2}$ with $\uy_i=(y_{i1},y_{i2})\in Q_i^2$ for $1\leq i\leq k$. 

The expression we have obtained above is similar to the one in \eqref{4.2} except for the following changes. The variable $\ux_1\in Q_1^{n_1}$ is replaced by $\uy_1\in Q_1^2$ and the measure $d\si^{\la}_1$ by $d\omega^1_L$. The functions $f_{1l,e'}$ are replaced by $f_{11,e'}$, for $1\leq l\leq n_1$, while the functions $f_e$ for all $e\in \HH_{d,k}^{\un}$ such that $1\notin \pi(e)$ are eliminated, that is replaced by 1. Repeating the same procedure for $i=2,\ldots,k$ replaces all variables $\ux_i$ with variables $\uy_i$ as well as the measures $d\si^{\la}_i$ with $d\omega^i_L$. The procedure eliminates all functions $f_e$ when $e$ is an edge such that $i\notin \pi(e)$ for some $1\leq i\leq k$; for the remaining edges, when $\pi(e)=(1,\ldots,k)$, it replaces the functions $f_e$ with $f_{e_0}=f_{11,21,\ldots,1k}$. For $k<i$ the variables $\ux_i$ and the measures $d\si^{\la}_i$ are not changed, however integrating in these variables will have no contribution as the measures are normalized. Thus one obtains the following final estimate
\eq\label{4.41}
\left|\NN_{\la\De^0,Q}(f_e;\,e\in\HH_{d,k}^{\un}\right|^{2^k} \leq  \frac{1}{|Q_1|}\int_{\uy_1}\ldots\frac{1}{|Q_k|}\int_{\uy_k} \prod_{e\in\HH_{k,k}^{\uu}} f_{e_0}(\uy_e) \prod_{i=1}^k \psi^i_L(y_{i2}-y_{i1})\,dy_{i1}\,dy_{i2}+O(\eps^{2^k})\ee
noting that these integrals are not normalized.
 Thus, one may write the expression in \eqref{4.41}, using a change of variables $y_{i1}:=y_{i1}-t_i$, $y_{i2}:=y_{i2}-t_i$, as
\eq\label{4.42}
\frac{1}{|Q_1|}\int_{t_1}\fint_{\uy_1\in t_1+Q_1}\!\!\!\!\!\! \ldots
 \ \frac{1}{|Q_k|}\int_{t_k}\fint_{\uy_k\in t_k+Q_k} \prod_{e\in\HH_{k,k}^{\uu}} f_{e_0}(\uy_e)\,d\uy_1\ldots d\uy_k\,d\ut =
\|f_{e_0}\|_{\Box_L (Q_{\pi(e_0)})}^{2^k} +O(\eps^{2^k})\ee
where the last equality follows from the facts that the function $f_{e_0}$ is supported on the cube $Q_{\pi(e_0)}$ and hence the integration in $\ut$ is restricted to the cube $Q+Q(L)$, giving rise an error of $O(L/l(Q))$. Estimate (\ref{4.12}) follows from \eqref{4.41} and \eqref{4.42} noting that the above procedure can be applied to any  $e\in\HH_{d,k}^{\un}$ in place of $e_0$. Estimate (\ref{MMM}) is established similarly. \end{proof}

\begin{proof}[Proof of Lemma \ref{Lem4.6}]

 For $j=0$ we set $\BB_{e',\ut}(L_0):=\{Q_{\ut}(L_0),\emptyset\}$ and $\BB_{e',\f',\us}(L_0):=\{Q_{\us_{\f'}}(L_0),\emptyset\}$ for $e'\in\HH_{d,k}$, $\f'\in\partial e'$, and  $\ut,\us\in\Ga_{L_0,Q}$. We will develop $\si$-algebras $\BB_{e',\ut}(L_j)$ of scale $L_j$ such that \eqref{4.14} holds with $\comp(\BB_{e',\f',\us}(L_j))\leq j$. 
 
 We define the \emph{total energy} of a family of functions $f^m_{e,\ut}$ with respect to a family of $\si$-algebras $\BB_{e',\ut}(L_j)$ as
\eq\label{4.29}
\mathcal{E}(f^m_{e,\ut}|\BB_{e',\ut}(L_j)):=\E_{\ut\in\Ga_{L_0,Q}} \sum_{m=1}^M\sum_{e\in \HH_{d,k}^{\un}} \|\E(f^m_{e,\ut}|\BB_{\pi(e),\ut}(L_j))\|_{L^2(Q_{\ut_{\pi(e)}}
(L_0))}^2.\ee

Since $|f^m_{e,\ut}|\leq 1$ for all $e$, $m$, and $\ut$ it follows that the total energy is bounded by $M\cdot|\HH_{d,k}^{\un}|=O(M)$. 
Our strategy will be to show that if \eqref{4.13} does not hold then there exist a family of $\si$-algebras $\BB_{e',\ut}(L_{j+2})$ such that the total energy of the family of functions $f^m_{e,\ut}$ is increased by at least $c_k \eps^{2^{k+3}}$ with respect to this new family of $\si$-algebras, and at the same time ensuring that \eqref{4.14} remains valid with  $\comp(\BB_{e',\f',\us}(L_{j+2}))\leq j+2$. This iterative process must stop at some $j=O(M\,\eps^{-2^{k+3}})$ proving the Lemma.

Assume that we have developed $\si$-algebras $\BB_{e',\ut}(L_j)$ and $\BB_{e',\f',\us}(L_{j})$ of scale $L_j$ such that \eqref{4.14} holds with $\comp(\BB_{e',\f',\us}(L_j))\leq j$. If \eqref{4.13} does not hold then $|T_{\eps}|\geq\eps |\Ga_{L_0,Q}|$ for the set
\[
T_{\eps}:= \{\ut\in\Ga_{L_0,Q}\,:\, \|f^m_{e,\ut}-\E(f^m_{e,\ut}|\BB_{\pi(e),\ut}(L_j))\|_
{\Box_{L_{j+1}}(Q_{\ut_{\pi(e)}}(L_0))}\geq\eps\text{ \ for some $e\in\HH_{d,k}^{\un}$ and $1\leq m\leq M$}\}.
\]

Fix $\ut\in T_{\eps}$ and let $e\in\HH_{d,k}^{\un}$ and $1\leq m\leq M$ be such that \[\|f^m_{e,\ut}-\E(f^m_{e,\ut}|\BB_{\pi(e),\ut}(L_j))\|_
{\Box_{L_{j+1}}(Q_{\ut_{\pi(e)}}(L_0))}\geq\eps\] and write $e':=\pi(e)$. Consider the partition of the cube $Q_{\ut_{e'}}(L_0)$ into small cubes $Q_{\us_{e'}}(L_{j+2})$ where $\us_{e'}\in\Ga_{L_{j+2},Q_{e'}}\cap Q_{\ut_{e'}}(L_0)$. By the localization properties of the $\Box_{L_{j+1}}(Q)$-norm, and the fact that $L_{j+2}\ll \eps^{2^k} L_{j+1}$ we have that
\[\|f\|_{\Box_{L_{j+1}}(Q_{\ut_{e'}}(L_0))}^{2^k}\leq
\E_{\us_{e'}\in\Ga_{L_{j+2},Q_{\ut_{e'}}(L_0)}}\ \|f\|_{\Box(Q_{\us_{e'}}(L_{j+2}))}^{2^k} +\frac{\eps^{2^k}}{2}\]
for any function $f:Q_{\ut_{e'}}(L_0)\to [-1,1]$.
Thus there exists a set $S_{\eps,e,\ut}\subs \Ga_{L_{j+2},Q_{\ut_{e'}}(L_0)}$ of size \[|S_{\eps,e,\ut}|\geq \frac{\eps^{2^k}}{4}|\Ga_{L_{j+2},Q_{\ut_{e'}}(L_0)}|\] such that 
\eq\label{4.31}
\|f^m_{e,\ut}-\E(f^m_{e,\ut}|\BB_{e',\ut}(L_j))\|_
{\Box(Q_{\us_{e'}}(L_{j+2})}^{2^k} \geq \frac{\eps^{2^k}}{4}
\ee
for all $\us_{e'}\in S_{\eps,e,\ut}$.

For a given cube $Q$ and functions $f,g:Q\to\R$, define the normalized inner product of $f$ and $g$ as
\[\langle f,g \rangle_Q := \fint_Q f(x)g(x)\,dx.\] Then by the well-known property of the $\Box$-norm, see for example \cite{TaoMontreal} or the proof of Lemma \ref{KvN-ff}, it follows from \eqref{4.31} that there exits sets \[B_{\f',\us_{e'},\ut}\subs Q_{\us_{\f'}}(L_{j+2})\] for $\f'\in\partial e'$ such that
\eq\label{4.32}
\Bigl\langle f^m_{e,\ut}-\E(f^m_{e,\ut}|\BB_{e',\ut}(L_j))\,,\,\prod_{\f'\in\partial e'} 1_{B_{\f',\us_{e'},\ut}}\Bigr\rangle_{Q_{\us_{e'}}(L_{j+2})}\geq \frac{\eps^{2^k}}{2^{k+2}}.
\ee

If $\us\in\Ga_{L_{j+2},Q}\,$ then there is a unique $\,\ut=\ut(\us)\in\Ga_{L_0,Q}$ such that $\,\us\in Q_{\ut}(L_0)$. If $\ut\in T_{\eps}$ and $\us_{e'}\in S_{\eps,e,\ut}$ then we define the $\si$-algebras $\BB_{\f',e',\us}(L_{j+2})$ on $\,Q_{\us_{\f'}}(L_{j+2})$ as follows. Write $B_{\f',e',\us}=B_{\f',\us_{e'},\ut}$ where $\ut=\ut(\us)$ and let $\BB_{\f',e',\us}(L_{j+2})$ be the $\si$-algebra generated by the set $B_{\f',e',\us}$ and the $\si$-algebra $\BB_{\f',e',\us'}(L_j)$ restricted to $Q_{\us_{\f'}}(L_{j+2})$ where $\us'\in\Ga_{L_j,Q}$ is the unique element so that $\us\in Q_{\us'}(L_j)$. Note that that the complexity of the $\si$-algebra $\BB_{\f',e',\us}(L_{j+2})$ is at most one larger then the  complexity of the $\si$-algebra $\BB_{\f',e',\us'}(L_j)$ as restricting a $\si$-algebra to a set does not increase its complexity. If $\ut=\ut(\us)\notin T_{\eps}$ or $\us_{e'}\notin S_{\eps,e,\ut}$ then let $\BB_{\f',e',\us}(L_{j+2})$ be simply the restriction of $\BB_{\f',e',\us'}(L_j)$ to the cube $Q_{\us_{\f'}}(L_{j+2})$, or equivalently define the sets $B_{\f',e',\us}:=Q_{\us_{\f'}}(L_{j+2})$.  Finally, let
\eq\label{4.33}
\BB_{e',\us}(L_{j+2}):=\bigvee_{\f'\in \partial e'} \BB_{\f',e',\us}(L_{j+2})
\ee
be the corresponding $\si$-algebra on the cube $Q_{\us_{e'}}(L_{j+2})$. 

Since the cubes $Q_{\us_{e'}}(L_{j+2})$ partition the cube $Q_{\ut_{e'}}(L_0)$ as $\us_{e'}$ runs through the grid $\Ga_{L_{j+2},Q_{e'}}\cap Q_{\ut_{e'}}(L_0)$, these $\si$-algebras define a $\si$-algebra $\BB_{e',\ut}(L_{j+2})$ on $Q_{\ut_{e'}}(L_0)$, such that its restriction to the cubes $Q_{\us_{e'}}(L_{j+2})$ is equal to the $\si$-algebras $\BB_{e',\us}(L_{j+2})$.

Since the function $\prod_{\f'\in \partial e'} 1_{B_{\f',e',\us}}$ is measurable with respect to the $\si$-algebra $\BB_{e',\ut}(L_{j+2})$ restricted to the cube $Q_{\us_{e'}}(L_{j+2})$ one clearly has
\eq\label{4.34}
\langle\,f^m_{e,\ut}-\E(f^m_{e,\ut}|\BB_{e',\ut}(L_{j+2})),\,\prod_{\f'\in\partial e'} 1_{B_{\f',e',\us}}\,\rangle_{Q_{\us_{e'}}(L_{j+2})} = 0.
\ee
and hence, by \eqref{4.32}, that
\eq\label{4.35}
\langle\,\E(f^m_{e,\ut}|\BB_{e',\ut}(L_{j+2}))-\E(f^m_{e,\ut}|\BB_{e',\ut}(L_j)),\,\prod_{\f'\in\partial e'} 1_{B_{\f',e',\us}}\,\rangle_{Q_{\us_{e'}}(L_{j+2})}\geq \frac{\eps^{2^k}}{2^{k+2}}.
\ee

It then follows from Cauchy-Schwarz and orthogonality, using the fact that the $\si$-algebra $\BB_{e',\ut}(L_{j+2}))$ is a refinement of $\BB_{e',\ut}(L_{j+2})$, that
\begin{align}\label{4.35}
 \quad\ \|\E(f^m_{e,\ut}|\BB_{e',\ut}(L_{j+2}))-&\E(f^m_{e,\ut}|\BB_{e',\ut}(L_j))\|_{L^2(Q_{\us_{e'}}(L_{j+2}))}^2\\
&= \|\E(f^m_{e,\ut}|\BB_{e',\ut}(L_{j+2}))\|_{L^2(Q_{\us_{e'}}
(L_{j+2}))}^2 -
\|\E(f^m_{e,\ut}|\BB_{e',\ut}(L_j))\|_{L^2(Q_{\us_{e'}}(L_{j+2}))}^2 \nonumber\\
&\geq\Bigl( \frac{\eps^{2^k}}{2^{k+2}} \Bigr)^2
\nonumber
\end{align}
for $\us_{e'}\in S_{\eps,e,\ut}$.
Since $|S_{\eps,e,\ut}|\geq \dfrac{\eps^{2^k}}{4} |\Ga_{L_{j+2},Q_{\ut_{e'}}}(L_0)|$ averaging over $\,\us_{e'}\in
\Ga_{L_{j+2},Q_{\ut_{e'}}}(L_0)$ implies
\eq\label{4.36}
\|\E(f^m_{e,\ut}|\BB_{e',\ut}(L_{j+2}))\|_{L^2(Q_{\ut_{e'}}
(L_0))}^2\,\geq\,\|\E(f^m_{e,\ut}|\BB_{e',\ut}(L_j))\|_{L^2(Q_{\ut_{e'}}(L_0))}^2 + \frac{\eps^{2^{k+2}}}{2^{2k+6}}.
\ee

At this point we have shown that if $\ut\in T_{\eps}$ then there exists an edge $e\in\HH_{d,k}^{\un}$, $1\leq m\leq M$, and $\si$-algebras $\BB_{e',\ut}(L_{j+2}))$ of scale $L_{j+2}$ on $Q_{\ut_{e'}}(L_0)$, with $e'=\pi(e)$, such that \eqref{4.36} holds. 

For all $e''\in\HH_{d,k}$ with $e''\neq e'$ let $\BB_{\f',e'',\us}(L_{j+2})$ be the restriction of the $\si$-algebra $\BB_{\f',e'',\us'}(L_j)$ to the cube $Q_{\us_{\f'}}(L_{j+2})$, where $\us'$ is such that $\us\in Q_{\us'}(L_j)$. By \eqref{4.33} this implies that $\BB_{e'',\us}(L_{j+2})$ is also the restriction of $\BB_{e'',\us'}(L_j)$ to the cube $Q_{\us_{e''}}(L_{j+2})$, and hence the $\si$-algebra $\BB_{e'',\ut}(L_{j+2})$ is generated by the grid $\GG_{L_{j+2},Q_{\ut_{e''}}(L_0)}$ and the $\si$-algebra $\BB_{e'',\ut}(L_j)$. 

We have therefore defined a family of the $\si$-algebras $\BB_{e',\ut}(L_{j+2})$ for $e'\in\HH_{d,k}$, satisfying
\[
\sum_{m=1}^M\sum_{e\in\HH_{d,k}^{\un}}\|\E(f^m_{e,\ut}|\BB_{\pi(e),\ut}(L_{j+2}))\|_{L^2(Q_{\ut_{\pi(e)}}
(L_0))}^2\geq\sum_{m=1}^M\sum_{e'\in\HH_{d,k}^{\un}}\|\E(f^m_{e,\ut}|\BB_{\pi(e),\ut}(L_j))\|_{L^2(Q_{\ut_{\pi(e)}}(L_0))}^2 +\frac{\eps^{2^{k+2}}}{2^{2k+6}}. \]
Using the fact that $|T_{\eps}|\geq \eps |\Ga_{L_0,Q}|$ and averaging over $\ut\in \Ga_{L_0,Q}$ it follows using the notations of \eqref{4.29} that
\[\mathcal{E}(f^m_{e,\ut}|\BB_{e',\ut}(L_{j+2}))\,\geq\,
\mathcal{E}(f^m_{e,\ut}|\BB_{e',\ut}(L_j)) +\frac{\eps^{2^{k+3}}}{2^{2k+6}}.\]

As the total energy $\mathcal{E}(f^m_{e,\ut}|\BB_{e',\ut}(L_j))$ is bounded by $O(M)$, the process must stop at a step $j=O(M\,\eps^{-2^{k+3}})$ where \eqref{4.13} holds for a $\sigma$-algebra of ``local complexity" at most $j$,  completing the proof of Lemma \ref{Lem4.6}. \end{proof}


\section{The base case of an inductive strategy to establish Theorem \ref{ProdZ}}\label{BaseZ}

In this section we will ultimately establish the base case of our more general inductive argument. 
We will however start by giving a (new) proof of Theorem B$^\prime$, namely the case $d=1$ of Theorem \ref{ProdZ}. 


\subsection{A Single Simplex in $\Z^n$}

Let $\De^0=\{v_1=0,v_2,\dots,v_{n_1}\}$ be a fixed non-degenerate simplex of $n_1$ points in $\Z^n$ with $n=2n_1+3$ and define $t_{kl}:=v_k\cdot v_l$ for $2\leq
k,l\leq n_1$.
Recall, see \cite{Magy09}, that a simplex $\De=\{m_1=0,\dots,m_{n_1}\}\subs \Z^n$ is isometric to $\la\De^0$ if and only if
$m_k\cdot m_l =\la^2 t_{kl}$ for all $2\leq k,l\leq n_1$.

For any positive integer $q$ and $\lm\in q\sqrt{\N}$ we define $S_{\la\De^0,q}(m_2,\dots,m_{n_1}):\Z^{n(n_1-1)}\to\{0,1\}$ be the function whose value is 1 if $m_k\cdot m_l=\la^2 t_{kl}$ with both $m_k$ and $m_l$ in $(q\Z)^n$
for all $2\leq k,l\leq n_1$   and is equal to 0 otherwise. It is a well-known fact in number theory, see \cite{Kitaoka} or \cite{Magy09}, that for $n\geq2n_1+1$ we have that
\[\sum_{m_2,\ldots,m_{n_1}} S_{\la\De^0,q} (m_2,\ldots,m_{n_1}) =\rho(\De^0)\, (\la/q)^{(n-n_1)(n_1-1)}(1+O(\la^{-\tau}))\]
for some absolute constant $\tau>0$ and some constant $\rho(\De^0)>0$, the so-called singular series, which can be interpreted as the product of the densities of the solutions of the above system of equations among the $p$-adics and among the reals.
Thus if we define \[\si_{\la\De^0,q}:=\rho(\De^0)^{-1}(\la/q)^{-(n-n_1)(n_1-1)}S_{\lm\De^0,q}\] then $\si_{\la\De^0,q}$ is normalized in so much that \[\sum_{m_2,\ldots,m_{n_1}}\si_{\la\De^0,q}(m_2,\ldots,m_{n_1})= 1+O(\la^{-\tau})\] for some absolute constant $\tau>0$.

\smallskip

Let $Q\subs\Z^n$ be a fixed cube and let $l(Q)$ denotes its side length.
For any family of functions \[f_1,\ldots,f_{n_1}:Q\to [-1,1]\] and $0<\lm\ll l(Q)$ we define the following two multi-linear expressions
\eq\label{multilin-config-1}
\NN^1_{\la\De^0,q,Q}(f_1,\dots,f_{n_1}) := \E_{m_1\in Q} \sum_{m_2,\ldots,m_{n_1}} f_1(m_1)\ldots f_{n_1}(m_{n_1})\,\si_{\la\De^0,q} (m_2-m_1,\ldots,m_{n_1}-m_1)
\ee
and 
\eq
\MM^1_{\lm,q,Q}(f_1,\dots,f_{n_1}) := \E_{t\in Q} \, \E_{m_1,\ldots,m_{n_1}\in t+Q(q,\lm)}\ f_1(m_1)\ldots f_{n_1}(m_{n_1})
\ee
where $Q(q,\lm):=[-\frac{\lm}{2},\frac{\lm}{2}]^n\cap(q\Z)^n$.
Note that if $S\subs Q$ and $\NN^1_{\la\De^0,q,Q}(1_S,\dots,1_S)>0$ then $S$ must contain an isometric copy of $\la\De^0$, while if $|S|\geq \de |Q|$ for some $\de>0$ then as before H\"older implies that
 \eq\label{ML-lowerb-1Z}
\MM^1_{\lm,q,Q}(1_S,\dots,1_S) \geq\de^n-O(\eps)\ee
for all scales $\lm\in q\sqrt{\N}$ with $0<\lm\ll\eps\, l(Q)$.

Recall that for any  $0<\VE\ll1$ and positive integer $q$ we call a sequence $L_1\geq \cdots \geq L_J$ $(\eps,q)$-\emph{admissible} if $L_j/L_{j+1}\in\N$ and $L_{j+1}\ll\eps^2 L_j$ for all $1\leq j<J$ and $L_J/q\in\N$.  Note that if $ \lm_1\geq\cdots\geq\lm_{J'}\geq1$ is any lacunary sequence in $q\sqrt{\N}$ with $J'\gg ( \log\VE^{-1})\,J+\log q$,
one can always finds an $(\eps,q)$-admissible sequence of scales  $ L_1\geq\cdots \geq L_{J}$ with the property that for each $1\leq j<J$ the interval $[L_{j+1}, L_j]$  contains at least two consecutive elements  from the original lacunary sequence.

In light of these observations we see that the following ``counting lemma" ultimately establishes a quantitatively stronger version of Proposition B$^\prime$ that appeared in Section \ref{d&sZ} and hence immediately establishes Theorem \ref{ProdZ} for $d=1$.



\begin{prop}\label{Lem5.5}

Let $0<\eps\ll1$ and $q_j:=q_1(\eps)^j$ for $j\geq 1$ with $q_1(\VE):=\lcm\{1\leq q\leq C\VE^{-10}\}$. 

There exists $J_1=O(\VE^{-2})$ such that for any $(\eps,q_{J_1})$-admissible sequence of scales $l(Q)\geq L_1\geq\cdots \geq L_{J_1}$ and $S\subseteq Q$ there is some $1\leq j< J_1$ such that
\eq\label{1.4}
\NN^1_{\la\De^0,q_j,Q}(1_S,\dots,1_S)=\MM^1_{\la,q_j,Q}(1_S,\dots,1_S)+O(\eps)\ee
for all $\lm\in q_j\sqrt{\N}$ with $L_{j+1}\leq \lm\leq L_j$.
\end{prop}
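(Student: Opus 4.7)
The plan is to follow the blueprint of Proposition~\ref{approx-1}, comparing the arithmetic count $\NN^1_{\la\De^0,q,Q}$ to the unrestricted count $\MM^1_{\la,q,Q}$ via a discrete generalized von-Neumann inequality and a Koopman--von Neumann decomposition, with additional care to align the Fourier analysis with the arithmetic constraint $\lm\in q\sqrt{\N}$.

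First I would establish a discrete generalized von-Neumann inequality: if $\lm\in q\sqrt{\N}$ with $q_1(\VE)\mid q$, $0<L\ll\VE^6\lm$ with $q\mid L$, and $f_1,\ldots,f_{n_1}:Q\to[-1,1]$, then
\[
|\NN^1_{\la\De^0,q,Q}(f_1,\ldots,f_{n_1})| \leq \min_{i} \|f_i\|_{U^1_{L,q}(Q)} + O(\VE) + O(\lm^{-\tau}),
\]
where $\|f\|_{U^1_{L,q}(Q)}^2 := \E_{t\in Q}|\E_{m\in t+Q(q,L)}f(m)|^2$, and similarly for $\MM^1_{\la,q,Q}$. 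This follows from Fourier analysis on $\Z^n$ applied to the renormalized counting measure $\si_{\la\De^0,q}$: the singular-series asymptotic supplies the main term, and the minor-arc bounds of Magyar \cite{Magy08,Magy09} control the off-main-term contributions by $O(\VE)+O(\lm^{-\tau})$ precisely when $q$ is divisible by all integers up to $C\VE^{-10}$, which is exactly the content of the choice $q_1(\VE):=\lcm\{1\leq q\leq C\VE^{-10}\}$.

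Second, I would prove a discrete Koopman--von Neumann decomposition: for any $(\VE,q)$-admissible sequence $L_1\geq\cdots\geq L_{\bar J_1}$ with $\bar J_1=O(\VE^{-2})$, every $f:Q\to[-1,1]$ satisfies $\|f-\E(f|\GG_{L_j,q,Q})\|_{U^1_{L_{j+1},q}(Q)}\leq\VE$ for some $1\leq j<\bar J_1$, where $\GG_{L_j,q,Q}$ partitions $Q$ into cubes of side $L_j$ on the sub-lattice $(q\Z)^n$. The standard energy-increment argument of Lemma~\ref{KvN-1'} applies verbatim after replacing Lebesgue averages with counting averages on $(q\Z)^n$. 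Combining these two ingredients with $f=1_S$ and multilinearity gives, for $\of:=\E(1_S|\GG_{L_j,q,Q})$, both $\NN^1_{\la\De^0,q,Q}(1_S)=\NN^1_{\la\De^0,q,Q}(\of)+O(\VE)$ and $\MM^1_{\la,q,Q}(1_S)=\MM^1_{\la,q,Q}(\of)+O(\VE)$; the fact that $\of$ is constant on each cube of side $L_j\gg\lm$ then forces these two quantities to agree up to $O(\VE)$.

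The main obstacle is that the above machinery requires $q$ to absorb all bad moduli up to $C\VE^{-10}$, and a single fixed modulus $q_1(\VE)$ only suffices when $1_S$ is suitably uniformly distributed on cosets of $(q_1(\VE)\Z)^n$. The resolution, echoing the hint in Section~\ref{d&sZ}, is a density-increment iteration: if at level $j$ with modulus $q_j=q_1(\VE)^j$ the KvN/von-Neumann approximation fails, then $1_S$ admits a nontrivial correlation with some character on $(q_j\Z)^n/(q_{j+1}\Z)^n$, equivalently its relative density substantially exceeds the ambient density on some coset of $(q_{j+1}\Z)^n$. Passing to that coset and iterating, the $L^2$-energy of the associated conditional expectation increases by at least $c\VE^2$ per step, so the procedure terminates after $J_1=O(\VE^{-2})$ steps with final modulus at most $q_{J_1}=q_1(\VE)^{J_1}$. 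The hypothesis that the sequence is $(\VE,q_{J_1})$-admissible ensures the divisibility $q_j\mid L_j$ at every intermediate level, which is exactly what is needed to run the von-Neumann/KvN steps at modulus $q_j$.
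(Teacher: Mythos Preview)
Your overall architecture---a discrete generalized von-Neumann inequality plus a Koopman--von Neumann decomposition, mirroring Proposition~\ref{approx-1}---is exactly what the paper does. But there is a genuine gap in how the moduli enter, and your proposed remedy in the third paragraph is the wrong one.

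The von-Neumann inequality cannot be stated with the same modulus $q$ on both sides. After rescaling by $q$, the Fourier transform of $\si_{\la\De^0,q}$ has major arcs near every rational $l/(qq_1(\eps))$, not merely near $l/q$, so $\hat\chi_{q,L}$ fails to equal $1$ on most of them. One genuinely needs $qq_1(\eps)\mid q'$ in the controlling norm $\|\cdot\|_{U^1_{q',L}(Q)}$; this is precisely the content of Lemma~\ref{Lem5.4}. Consequently, to control $\NN^1_{\la\De^0,q_j,Q}$ one is forced to use $\|\cdot\|_{U^1_{q_{j+1},L_{j+1}}(Q)}$, and the KvN decomposition must carry the \emph{increasing} moduli $q_j=q_1(\eps)^j$ along with the decreasing scales $L_j$. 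The paper simply bakes this into the $\sigma$-algebras: Lemma~\ref{Lem5.3} runs the standard $L^2$-energy increment on the grids $\GG_{q_j,L_j,Q}$ (atoms are cubes of side $L_j$ on cosets of $(q_j\Z)^n$) and outputs
\[
\|f-\E(f|\GG_{q_j,L_j,Q})\|_{U^1_{q_{j+1},L_{j+1}}(Q)}\le\eps
\]
for some $j$, after which the deduction of \eqref{1.4} is identical to the continuous case. No separate device is required.

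Your third paragraph conflates two different mechanisms. A large $U^1_{q_{j+1},L_{j+1}}$-norm of $1_S-\E(1_S|\GG_{q_j,L_j,Q})$ is \emph{not} equivalent to $S$ having excess density on a single coset of $(q_{j+1}\Z)^n$; it says only that the finer conditional expectation $\E(1_S|\GG_{q_{j+2},L_{j+2},Q})$ has strictly larger $L^2(Q)$-norm, and one stays on the original $S$ and $Q$ throughout---this is an energy increment, not a density increment. The coset-passing density increment you invoke is the strategy of Section~\ref{Appendix} for proving Theorem~B$'$ \emph{directly}; since it replaces $S$ by its trace on a sublattice and rescales the ambient cube, it cannot establish the counting identity \eqref{1.4} for the given $S\subseteq Q$, which is what Proposition~\ref{Lem5.5} actually asserts.
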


As in the continuous setting the proof of Proposition \ref{Lem5.5} has two main ingredients, namely Lemmas \ref{Lem5.4} and \ref{Lem5.3} below. 
 In these lemmas, and for the remainder Sections \ref{BaseZ} and \ref{generalcaseZ}, we will continue to use the notation 
\[q_1(\VE):=\lcm\{1\leq q\leq C\VE^{-10}\}\] for any given $\VE>0$.

\begin{lem}[A Generalized von Neumann inequality]\label{Lem5.4}\

Let $0<\eps\ll 1$, $q,q'\in\N$ with $qq_1(\eps)|q'$, and $\lm\in q\sqrt{\N}$ with $\lm\ll l(Q)$ and $1\ll L\ll\VE^{10}\lm$. 
For any collection of functions $f_1,\dots,f_{n_1}:Q\to [-1,1]$ we have
\eq\label{5.24}
|\NN^1_{\la\Delta^0,q,Q}(f_1,\ldots,f_{n_1})|\leq
\min_{1\leq i\leq n_1} \|f_i\|_{U^1_{q',L}(Q)}+ O(\eps)\ee
where for any function $f:Q\to[-1,1]$ we define
\eq
\|f\|_{U^1_{q,L}(Q)}:=\Bigl(\frac{1}{|Q|}\sum_{t\in Q}|f*\chi_{q,L}(t)|^2\Bigr)^{1/2}
\ee
with $\chi_{q,L}$ denoting the  normalized characteristic function of the cubes $Q(q,L):=[-\frac{L}{2},\frac{L}{2}]^n\cap (q \Z)^n$.
\end{lem}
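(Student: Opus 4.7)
The plan is to adapt the argument used for the continuous analogue (Lemma \ref{vN-1} from \cite{LM17}) to the arithmetic setting, combining a Plancherel-type identity with the classical circle-method analysis of the singular measure $\si_{\la\De^0, q}$. By symmetry it suffices to establish the bound $|\NN^1_{\la\Delta^0,q,Q}(f_1,\ldots,f_{n_1})| \leq \|f_1\|_{U^1_{q',L}(Q)} + O(\eps)$. Writing
\[\NN^1_{\la\Delta^0,q,Q}(f_1,\ldots,f_{n_1}) = \langle f_1, F\rangle_Q, \quad F(m_1) := \sum_{m_2,\ldots,m_{n_1}} \prod_{k=2}^{n_1} f_k(m_k)\,\si_{\la\De^0,q}(m_2 - m_1, \ldots, m_{n_1} - m_1),\]
the heart of the matter will be to show $\|F - F \ast \chi_{q',L}\|_{\ell^2(Q)} = O(\eps)$. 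Granting this, the conclusion will follow from Cauchy--Schwarz and the fact that $\chi_{q',L}$ is symmetric:
\[ |\langle f_1, F\rangle_Q| \leq |\langle f_1, F\ast \chi_{q',L}\rangle_Q| + O(\eps) = |\langle f_1 \ast \chi_{q',L}, F\rangle_Q| + O(\eps) \leq \|f_1\|_{U^1_{q',L}(Q)}\|F\|_{\ell^2(Q)} + O(\eps), \]
where $\|F\|_{\ell^2(Q)} = 1 + O(\la^{-\tau})$ from the normalization of $\si_{\la\De^0, q}$ and $|f_k|\leq 1$.

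To establish the key Fourier comparison, I would pass to the Fourier side on a sufficiently large ambient box and reduce matters to showing $\sum_\xi |\widehat{F}(\xi)|^2 |1 - \widehat{\chi}_{q',L}(\xi)|^2 = O(\eps^2)$. Here the classical circle-method decomposition of $\widehat{\si}_{\la\De^0,q}$ developed in \cite{Magy08, Magy09} is the essential input: one writes $\widehat{\si}_{\la\De^0,q} = \widehat{\si}^{\mathrm{maj}} + \widehat{\si}^{\mathrm{min}}$ where the major arc piece is supported near rationals $a/q''$ with $q''$ dividing a small multiple of $q$, while the minor arc piece satisfies $|\widehat{\si}^{\mathrm{min}}(\xi)| = O(\la^{-\tau})$ for some $\tau = \tau(n_1) > 0$. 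The hypothesis $\la \in q\sqrt{\N}$ ensures that all Gauss-sum moduli arising in $\widehat{\si}^{\mathrm{maj}}$ divide $q$; the divisibility condition $qq_1(\eps) \mid q'$ with $q_1(\eps) = \lcm\{1 \leq q \leq C\eps^{-10}\}$ then guarantees that the entire major arc frequency set lies on the dual sublattice $(1/q')\Z^n \pmod 1$ where $\widehat{\chi}_{q',L}$ is close to $1$, and the condition $L \ll \eps^{10}\la$ forces $\widehat{\chi}_{q',L}(\xi) = 1 + O(\eps)$ uniformly on major arcs of width $\ll 1/\la$. The minor arc contribution is absorbed using $\la^{-\tau} \ll \eps$, which is forced by $1 \ll L \ll \eps^{10}\la$.

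The main obstacle will be the clean identification of the major arcs of $\widehat{\si}_{\la\De^0,q}$ and the verification that $q_1(\eps)$ is calibrated to absorb all relevant Gauss-sum denominators arising from the Diophantine representation problem associated to $\De^0$ under the congruence constraint $m_k \in m_1 + q\Z^n$. This is essentially a standard but technical circle-method calculation; however, care will be required to ensure the error terms are uniform in the side length $l(Q)$, in the scale $L$, and in the individual functions $f_k$. Once this Fourier comparison is in place, repeating the argument with $f_i$ playing the role of $f_1$ for each $1 \leq i \leq n_1$ yields the minimum on the right-hand side of \eqref{5.24}.
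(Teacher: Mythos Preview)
Your route differs from the paper's, and for $n_1\geq 3$ the key step is not justified. The paper does not attempt to control the dual function $F$ on the Fourier side. Instead it singles out the \emph{last} variable: writing
\[\si_{\la\De^0,q}(m_2,\ldots,m_{n_1}) = \si_{\la\De^{0\prime},q}(m_2,\ldots,m_{n_1-1})\,\si_{\la,q}^{m_2,\ldots,m_{n_1-1}}(m_{n_1}),\]
a single Cauchy--Schwarz plus Plancherel in $m_{n_1}$ gives
\[|\NN^1_{\la\De^0,q,Q}(f_1,\ldots,f_{n_1})|^2 \leq |Q|^{-1}\!\int_{\mathbb T^n} |\widehat{f}_{n_1}(\xi)|^2\, H_{\la,q}(\xi)\,d\xi,\qquad H_{\la,q}(\xi)=\sum_{m_2,\ldots,m_{n_1-1}}\si_{\la\De^{0\prime},q}\,\bigl|\widehat{\si_{\la,q}^{\,m_2,\ldots,m_{n_1-1}}}(\xi)\bigr|^2.\]
The crucial input is then the \emph{pointwise} bound $H_{\la,q}(\xi)=O(\eps)$ away from $(qq_1(\eps))^{-1}\Z^n$, quoted from Propositions~4.2 and 4.4 of \cite{Magy09}; combined with $0\leq 1-\widehat{\chi}_{q',L}(\xi)^2\leq CL\,|\xi-l/q'|$ this yields $H_{\la,q}(\xi)(1-\widehat{\chi}_{q',L}(\xi)^2)=O(\eps)$ uniformly, and the lemma follows. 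The case $n_1=2$ is handled separately by citing \cite{LM19}.

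The gap in your proposal is the passage from the major/minor arc structure of $\widehat{\si}_{\la\De^0,q}$ (a function on $\mathbb T^{n(n_1-1)}$) to control of $\widehat F$ (a function on $\mathbb T^n$). For $n_1\geq 3$ one has, formally,
\[\widehat F(\xi)=\int_{\eta_2+\cdots+\eta_{n_1}=\xi}\ \prod_{k\geq 2}\widehat f_k(\eta_k)\ \widehat{\si}_{\la\De^0,q}(-\eta_2,\ldots,-\eta_{n_1}),\]
an integral over an affine slice. Knowing that $\widehat{\si}_{\la\De^0,q}$ is small off its major arcs does not by itself force $\widehat F(\xi)$ to be small for minor-arc $\xi$, nor does it give $\|F-F\ast\chi_{q',L}\|_{\ell^2}=O(\eps)$: the slice $\{\sum\eta_k=\xi\}$ can still carry large contributions, and a Fourier $\ell^\infty$ bound on $\widehat{\si}^{\mathrm{min}}$ does not yield the $\ell^1$ (or operator-type) control on $\si^{\mathrm{min}}$ that would make $F^{\mathrm{min}}$ small. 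Making your approach work would require either establishing a tensor structure for $\si^{\mathrm{maj}}$ that places the $m_1$-Fourier support of $F^{\mathrm{maj}}$ inside $(1/q')\Z^n$, or an $\ell^1$ bound on $\si^{\mathrm{min}}$---neither of which is the ``standard'' circle-method statement you invoke. The paper's Cauchy--Schwarz linearization in one variable bypasses this entirely, because $H_{\la,q}$ lives already on $\mathbb T^n$.
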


For any cube $Q\subseteq\Z^n$ of side length $l(Q)$ and $q,L\in \N$ satisfying $q\ll L$ with $L$ dividing $l(Q)$, we shall now partition $Q$ into cubic grids $Q_{t}(q,L)=t+((q\Z)^n \cap Q(L))$, with $Q(L)=[-\frac{L}{2},\frac{L}{2}]^n$ as usual. 
These grids form the atoms of a $\si$-algebra $\GG_{q,L,Q}$. 
Note that if $q|q'$ and $L'|L$ then $\GG_{q,L,Q}\subs \GG_{q',L',Q}$.

\begin{lem}[A Koopman-von Neumann type decomposition]\label{Lem5.3}\

Let $0<\eps\ll1$ and  $q_j:=q_1(\eps)^j$ for all $j\geq 1$.
There exists an integer
$\bar{J}_1=O(\VE^{-2})$ such that any $(\eps,q_{\bar{J}_1})$-admissible sequence of scales $l(Q)\geq L_1\geq\cdots \geq L_{\bar{J}_1}$  and function $f: Q\to [-1,1]$ there is some $1\leq j< \bar{J}_1$ such that
\eq\label{5.21}
\|f-\E(f|\GG_{q_j,L_j,Q})\|_{U^1_{q_{j+1},L_{j+1}}(Q)}\,\leq\,
\eps.\ee
\end{lem}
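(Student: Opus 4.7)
The plan is to mimic the energy-increment strategy behind the continuous Koopman--von Neumann decomposition (Lemma~\ref{KvN-1'}), but to simultaneously track both scale parameters $(q_j,L_j)$.

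The key technical input is a core comparison inequality: if $q\mid q'$, $L'\mid L$, and $L'\ll \eps^2 L$, then for every $g:Q\to[-1,1]$ one has
\[
\|g\|_{U^1_{q,L}(Q)}^2 \,\leq\, \|\E(g|\GG_{q',L',Q})\|_{L^2(Q)}^2 + O(\eps^2).
\]
To prove this, one writes $g*\chi_{q,L}(t)=\E_{y\in Q_t(q,L)}\,g(y)$, observes that this average varies by at most $O(L'/L)$ as $t$ ranges over any single atom $Q_s(q',L')$ of $\GG_{q',L',Q}$, and then uses Cauchy--Schwarz to pass from a pointwise-squared average over $Q$ to the $L^2$-norm of the conditional expectation $\E(g|\GG_{q',L',Q})$.

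Given the core comparison, suppose inductively that \eqref{5.21} fails at some $j<\bar J_1$: with $g:=f-\E(f|\GG_{q_j,L_j,Q})$ this means $\|g\|_{U^1_{q_{j+1},L_{j+1}}(Q)}>\eps$. Applying the core comparison with $(q,L)=(q_{j+1},L_{j+1})$ and $(q',L')=(q_{j+2},L_{j+2})$ --- the compatibilities $q_{j+1}\mid q_{j+2}$, $L_{j+2}\mid L_{j+1}$, and $L_{j+2}\ll \eps^2 L_{j+1}$ are all guaranteed by the definition $q_{j+1}=q_1(\eps)\,q_j$ and by $(\eps,q_{\bar J_1})$-admissibility --- one obtains $\|\E(g|\GG_{q_{j+2},L_{j+2},Q})\|_{L^2(Q)}^2 \geq c\,\eps^2$ for some absolute $c>0$. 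Using the nesting $\GG_{q_j,L_j,Q}\subs \GG_{q_{j+2},L_{j+2},Q}$ and orthogonality of conditional expectations, this rewrites as an energy jump
\[
\|\E(f|\GG_{q_{j+2},L_{j+2},Q})\|_{L^2(Q)}^2 \,\geq\, \|\E(f|\GG_{q_j,L_j,Q})\|_{L^2(Q)}^2 + c\,\eps^2.
\]
Since $|f|\leq 1$ confines these energies to $[0,1]$, the jump can occur at most $O(\eps^{-2})$ times, so choosing $\bar J_1=C\eps^{-2}$ with $C$ sufficiently large forces some $j<\bar J_1$ at which \eqref{5.21} must hold.

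The main obstacle is verifying the core comparison; once it is available the remainder is structurally identical to the continuous argument in Section~\ref{Base}. The specific choice $q_1(\eps)=\lcm\{1\leq q\leq C\eps^{-10}\}$ plays no essential role at this level --- any sequence with $q_j\mid q_{j+1}$ suffices for the decomposition alone --- but is tailored for the companion Lemma~\ref{Lem5.4}, where it absorbs the small-denominator major-arc contributions in the Fourier expansion of $\si_{\la\Delta^0,q}$.
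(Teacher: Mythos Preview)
Your proposal is correct and follows essentially the same approach as the paper: both establish the core comparison $\|g\|_{U^1_{q,L}(Q)}^2 \leq \|\E(g|\GG_{q',L',Q})\|_{L^2(Q)}^2 + O(\eps^2)$ via the same translation-stability and Cauchy--Schwarz steps, and then run an identical energy-increment from $\GG_{q_j,L_j,Q}$ to $\GG_{q_{j+2},L_{j+2},Q}$ using orthogonality. Your closing remark that the specific form of $q_1(\eps)$ is irrelevant here and only matters for Lemma~\ref{Lem5.4} is a correct and useful observation.
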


The  reduction of Proposition \ref{Lem5.5} to these two lemmas is essentially identical to the analogous argument in the continuous setting as presented at the end of Section \ref{S4.1}, we choose to omit the details.

\begin{proof}[Proof of Lemma \ref{Lem5.4}]

We will rely on some prior exponential sum estimates, specifically Propositions 4.2 and 4.4  in \cite{Magy09}.
First we deal with the case $n_1\geq 3$. By the change of variables $m_1:=m_1,\ m_i:=m_i-m_1$ for $2\leq i\leq n_1$, one may write
\[\NN^1_{\la\Delta^0,q,Q}(f_1,\ldots,f_{n_1}):=\E_{m_1\in Q_N} \sum_{m_2,\ldots,m_{n_1}} f_1(m_1)f_2(m_1+m_2)\cdots f_{n_1}(m_1+m_{n_1})\,\si_{\la\De^0,q} (m_2,\ldots,m_{n_1}).\]


We now write
\[\si_{\la\De^0,q}(m_2,\ldots,m_{n_1}) = \si_{\la\De^{0\prime},q}(m_2,\ldots,m_{n_1-1}) \, \si_{\la,q}^{m_2,\ldots,m_{n_1-1}}(m_{n_1})\]
where $\De^{0\prime}=\{v_1=0,v_2,\dots,v_{n_1-1}\}$ and for each $m_2,\ldots,m_{n_1-1} \in (q\Z)^n$ we are using $\si_{\la,q}^{m_2,\dots,m_{n_1-1}}(m)$ denote the (essentially) normalized indicator function of the subset of $(q\Z)^n$ that contains $m$ if and only if $m\cdot m_k=\la^2 t_{kn_1}$  for all $2\leq k\leq n_1$. 

Using the fact that $|f_i|\leq 1$, together with Cauchy-Schwarz and Plancherel, one can then easily see that
\eq\label{5.25}
|\NN^1_{\la\Delta^0,q,Q}(f_1,\ldots,f_{n_1})|^2 \leq |Q|^{-1} \int_{\xi\in \mathbb{T}^n} |\wh{f}_{n_1}(\xi)|^2 H_{\la,q}(\xi)\,d\xi\ee
with
\[H_{\la,q}(\xi) = \sum_{m_2,\ldots,m_{n_1}} \si_{\la\De^{0\prime},q}(m_2,\ldots,m_{n_1-1})\,|\widehat{\si_{\la,q}^{m_2,\ldots,m_{n_1-1}}}(\xi)|^2.\]

It then follows by Propositions 4.2 and  4.4 in \cite{Magy09}, with $\de=\eps^4$ and after rescaling by $q$, that
in addition to being non-negative and
uniformly bounded in $\xi$ we in fact have
\eq\label{5.26}
H_{\la,q}(\xi) =O(\eps)\quad \text{
whenever}\quad
\left| q\xi -\frac{l}{q_1(\eps)}\right| \geq \frac{q}{\eps^4\la},\ee
for all $l\in\Z^n$.

We note that the expression $H_{\la,q}(\xi)$ may be interpreted as the Fourier transform of the indicator function of the set of integer points on a certain variety, and  estimate \eqref{5.26} indicates that this concentrates near rational points of small denominator. It is this crucial fact from number theory which makes results like Theorem B$^\prime$ possible.

\smallskip

Since
\[
\widehat{\chi}_{q,L}(\xi) = \frac{q^n}{L^n} \sum_{m\in [-\frac{L}{2},\frac{L}{2})^n,\ q|m} e^{-2\pi i m\cdot\xi}\]
it is easy to see that  $\widehat{\chi}_{q,L}(l/q)=1$ for all $l\in\Z^n$ and that there exists some absolute constant $C>0$ such that
\eq\label{5.19}
0\leq 1-\widehat{\chi}_{q,L}(\xi)^2\leq C\, L\,|\xi-l/q|\ee
for all $\xi\in\mathbb{T}^n$ and $l\in\Z^n$. It is then easy to see using our assumption that $q q_1(\eps)|q'$ that
\eq
0\leq H_{\la,q}(\xi) (1- \wh{\chi}_{q',L}(\xi)^2) \leq C \eps
\ee
for some constant $C>0$ uniformly in $\xi\in\mathbb{T}^n$ provided $L\ll\eps^5\la$.
Substituting inequality \eqref{5.21} into \eqref{5.25}, we obtain
\begin{align*}
|\NN^1_{\la\Delta^0,q,Q}(f_1,\ldots,f_{n_1})|^2 &\leq |Q|^{-1} \left(
\int |\hat{f}_{n_1}(\xi)|^2 H_{\la}(\xi) \wh{\chi}_{q',L}(\xi)^2\,d\xi\,+\,\int |\hat{f}_{n_1}(\xi)|^2 H_{\la}(\xi) (1-\wh{\chi}_{q',L}(\xi)^2)\,d\xi\right)\\
&\leq \|f_{n_1}\|_{U^1_{q',L}(Q)}^2 + O(\eps)
\end{align*}
provided $L\ll\eps^5\la$.
This proves Lemma \ref{Lem5.4} for $k\geq 3$, as it is clear that by re-indexing the above estimate holds for any of the functions $f_i$ in place of $f_{n_1}$.
 For $n_1=2$ an easy modification of arguments in \cite{LM19}, specifically the proof of Lemma 3 therein, establishes that
\[|\NN^1_{\la\Delta^0,q,Q}(f_1,f_{2})|^2\leq \|f_i\|_{U^1_{q',L}(Q)}^2 + O(\eps)\]for $i=1,2$ provided $L\ll\eps^5\la$. \end{proof}

\begin{proof}[Proof of Lemma \ref{Lem5.3}]

Let $q,L\in\N$ such that $L|N$, $q|L$. The ``modulo $q$" grids $Q_t(q,L)=t+Q(q,L)$ partition the cube $Q$ with $t$ running through the set $\Ga_{q,L,Q}=\{1,\ldots,q\}^n + \Ga_{L,Q}$, where $\Ga_{L,Q}$ denote the centers of the ``integer" grids $t+Q(L)$ in an initial partition of $Q$. Let $q',L'$ be positive integers so that $q|q'$, $L'|L$ and $L'\ll \eps^2 L$. If $s\in\Ga_{q',L',Q}$ and $t\in Q_s(q',L')$ then $|t-s|=O(L')$ and hence
\[\E_{x\in Q_t(q,L)} g(x) = \E_{x\in Q_s(q,L)} g(x) + O(L'/L)\]
for any function $g:Q\to [-1,1]$. Moreover, since the cube $Q_s(q,L)$ is partitioned into the smaller cubes $Q_t(q',L')$, we have by Cauchy-Schwarz
\[|\E_{x\in Q_s(q,L)}\, g(x)|^2 \leq \E_{t\in \Ga_{q',L',Q_s(q,L)}} |\E_{x\in Q_t(q',L')} g(x)|^2.\]

From this it is easy to see that
\[ \|g\|_{U^1_{q,L}(Q)}^2 = \E_{t\in Q}|\E_{x\in Q_t(q,L)} g(x)|^2 \leq \E_{t\in \Ga_{q',L',Q}}\ |\E_{x\in Q_t(q',L')}g(x)|^2+O(L'/L)\]
and we note that the right side of the above expression is $\|\E(g|\GG_{q',L',Q})\|_{L^2(Q)}^2$ since the conditional expectation function $\E(g|\GG_{q',L',Q})$ is constant and equal to $\E_{x\in Q_t(q',L')} g(x)$ on the cubes $Q_t(q',L')$.

Now suppose \eqref{5.21} does not hold for some $j\geq 1$, that is 
\[\|f-\E(f|\GG_{q_j,L_j,Q})\|_{U^1_{q_{j+1},L_{j+1}}(Q)}^2\geq\eps^2.\]
Since $L_{j+2}\ll\eps^2 L_{j+1}$, $L_{j+2}|L_j$, and $q_{j+1}|q_{j+2}$ we
 can apply the above observations to $g:=f-\E(f|\GG_{q_j,L_j,Q})$ and obtain, by orthogonality, that
 \eq
 \|\E(f|\GG_{q_{j+2},L_{j+2},Q})\|_{L^2(Q)}^2 \geq \|\E(f|\GG_{q_j,L_j,Q})\|_{L^2(Q)}^2 +c\eps^2
 \ee
 for some constant $c>0$. 
Since the above expressions are clearly bounded by $1$, the above procedure must stop in $O(\eps^{-2})$ steps at which \eqref{5.21} must hold for some $1\leq j\leq \bar{J}_1(\VE)$ with $\bar{J}_1(\VE)=O(\eps^{-2})$.
\end{proof}

\subsection{The base case of our general inductive strategy}\label{baseZ}\

Let $Q=Q_1\times\ldots\times Q_d$ with $Q_i\subs\Z^{2n_i+3}$ be cubes of equal side length $l(Q)$ and $\De_i^0\subs \Z^{2n_i+3}$ be a non-degenerate simplex of $n_i$ points for $1\leq i\leq d$. 

We note that for any $q_0\in\N$ and scale $L_0$ dividing $l(Q)$ if $\ut=(t_1,\ldots,t_d)\in\Ga_{q_0,L_0,Q}$, then the corresponding grids $Q_{\ut}(q_0,L_0)$ in the partition of $Q$ take the form $Q_{\ut}(q_0,L_0)=Q_{t_1}(q_0,L_0)\times\dots\times Q_{t_d}(q_0,L_0)$.

As in the continuous setting we will ultimately need a parametric version of Proposition \ref{Lem5.5}, namely Proposition \ref{Cor5.1} below.

\begin{prop}[Parametric  Counting Lemma on $\Z^n$ for Simplices]\label{Cor5.1} 

Let $0<\VE\leq 1$ and $R\geq1$.

There exists an integer $J_1=J_1(\VE,R)=O(R\,\VE^{-4})$ such that for any  $(\eps,q_{J_1})$-admissible sequence of scales $L_0\geq L_1\geq\cdots \geq L_{J_1}$
with $L_0$ dividing $l(Q)$ and $q_j:=q_0 q_1(\eps)^j$ for $0\leq j\leq J_1$ with $q_0\in\N$, and collection of functions 
\[\text{$f_{k,\ut}^{i,r}:\,Q_{t_i}(q_0,L_0)\to [-1,1]$ \ with \ $1\leq i\leq d$, $1\leq k\leq n_i$, $1\leq r\leq R$ and $\ut\in \Ga_{q_0,L_0,Q}$}\] 
there exists  $1\leq j< J_1$ and a set $T_\eps\subs\Ga_{q_0,L_0,Q}$ of size $|T_\eps |\leq \eps |\Ga_{q_0,L_0,Q}|$ such that
\eq\label{5.37}
\NN^1_{\la\De_i^0,q_j,Q_{t_i}(q_0,L_0)}(f_{1,\ut}^{i,r},\ldots,f_{n_i,\ut}^{i,r}) = \MM^1_{\la,q_j,Q_{t_i}(q_0,L_0)}(f_{1,\ut}^{i,r},\ldots,f_{n_i,\ut}^{i,r}) + O(\eps)
\ee
for all $\lm\in q_j\sqrt{\N}$ with $L_{j+1}\leq \lm\leq L_j$ and $\ut\notin T_\eps$ 
uniformly in $1\leq i\leq d$ and $1\leq r\leq R$.
\end{prop}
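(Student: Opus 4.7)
The approach will mirror the proof of Proposition~\ref{approx-par-1} in the continuous setting, with Lemma~\ref{Lem5.3} and Lemma~\ref{Lem5.4} replacing Lemma~\ref{KvN-1} and Lemma~\ref{vN-1}. The first step is to establish a discrete simultaneous analogue of Lemma~\ref{KvN-1'}: for any $m\geq 1$, cube $Q\subseteq\Z^n$, and family of functions $f_{1,\ut},\ldots,f_{m,\ut}: Q_{\ut}(q_0,L_0)\to[-1,1]$ indexed by $\ut\in\Ga_{q_0,L_0,Q}$, there is some $1\le j<\bar{J}_1=O(m\,\eps^{-3})$ and an exceptional set $T_\eps\subs\Ga_{q_0,L_0,Q}$ with $|T_\eps|\le\eps\,|\Ga_{q_0,L_0,Q}|$ such that
\[\|f_{i,\ut}-\E(f_{i,\ut}|\GG_{q_j,L_j,Q_{\ut}(q_0,L_0)})\|_{U^1_{q_{j+1},L_{j+1}}(Q_{\ut}(q_0,L_0))}\le\eps\]
uniformly for $1\le i\le m$ and $\ut\notin T_\eps$, along any $(\eps,q_{\bar{J}_1})$-admissible sequence of scales $L_0\ge L_1\ge\cdots\ge L_{\bar{J}_1}$ with $L_0$ dividing $l(Q)$. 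The proof combines the orthogonality/energy-increment argument from Lemma~\ref{Lem5.3} with the averaging device from Lemma~\ref{KvN-1'}: assuming failure at step $j$, one aggregates the squared $U^1_{q_{j+1},L_{j+1}}$-deficit over the $m$ functions and the exceptional $\ut$'s to produce a uniform gain of $\gtrsim\eps^3$ per step in the total energy
\[\sum_{i=1}^m \|\E(f_i|\GG_{q_j,L_j,Q})\|_{L^2(Q)}^2,\]
where $f_i:Q\to[-1,1]$ is obtained by gluing the $f_{i,\ut}$ across the partition of $Q$. Since this total is bounded by $m$, the process terminates in $O(m\,\eps^{-3})$ steps.

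In the second step I apply this simultaneous decomposition with $m=(n_1+\cdots+n_d)R$ to the entire family $\{f_{k,\ut}^{i,r}\}$, regarded as functions on $Q_{\ut}(q_0,L_0)$ that depend only on the coordinate block $x_i\in Q_{t_i}(q_0,L_0)$. Because each $f_{k,\ut}^{i,r}$ is a function of only the $i$-th block, both the conditional expectation and the $U^1_{q_{j+1},L_{j+1}}$-norm factor through the $i$-th face, yielding
\[\|f_{k,\ut}^{i,r}-\E(f_{k,\ut}^{i,r}|\GG_{q_j,L_j,Q_{t_i}(q_0,L_0)})\|_{U^1_{q_{j+1},L_{j+1}}(Q_{t_i}(q_0,L_0))}\le\eps\]
uniformly in $(i,k,r)$ for all $\ut\notin T_\eps$, with $j\le J_1=O(R\,\eps^{-4})$ (the extra factor $\eps^{-1}$ coming from the subsequence passage below).

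Writing $\of_{k,\ut}^{i,r}:=\E(f_{k,\ut}^{i,r}|\GG_{q_j,L_j,Q_{t_i}(q_0,L_0)})$, I then apply Lemma~\ref{Lem5.4} (with $q=q_j$, $q'=q_{j+1}$, and $L=L_{j+1}$, noting that $q_j\,q_1(\eps)=q_{j+1}$) together with multi-linearity to conclude
\[\NN^1_{\la\De_i^0,q_j,Q_{t_i}(q_0,L_0)}(f_{1,\ut}^{i,r},\ldots,f_{n_i,\ut}^{i,r})=\NN^1_{\la\De_i^0,q_j,Q_{t_i}(q_0,L_0)}(\of_{1,\ut}^{i,r},\ldots,\of_{n_i,\ut}^{i,r})+O(\eps),\]
and an analogous identity for $\MM^1$ (which is more elementary, following from Cauchy--Schwarz), provided $L_{j+1}\ll\eps^{10}\lm$. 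Finally, when additionally $\lm\ll\eps L_j$ and $\lm\in q_j\sqrt{\N}$, the conditional expectation $\of_{k,\ut}^{i,r}$ is constant on each grid $Q_s(q_j,L_j)$: in the defining expression for $\NN^1$ the differences $m_k-m_1$ lie in $(q_j\Z)^n$ and have magnitude $O(\lm)\ll L_j$, so all $m_k$ share a common grid $Q_s(q_j,L_j)$, and both $\NN^1$ and $\MM^1$ collapse to $\E_{m_1\in Q_{t_i}(q_0,L_0)}\prod_k\of_{k,\ut}^{i,r}(m_1)+O(\eps)$. Passing to an appropriate subsequence of scales (e.g.\ $L_j':=L_{Cj}$ for a sufficiently large absolute constant $C$) then converts the nested condition $\eps^{-10}L_{j+1}\ll\lm\ll\eps L_j$ into the desired interval $L_{j+1}\le\lm\le L_j$.

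The main obstacle I expect is the careful bookkeeping of compatibility between the scales $L_j$ and the moduli $q_j=q_0\,q_1(\eps)^j$. One must preserve the divisibility relations $q_j\mid L_j$, $q_j\mid q_{j+1}$, and $L_{j+1}\mid L_j$ throughout, while ensuring that the gap $L_{j+1}\ll\eps^2 L_j$ inherited from $(\eps,q_{J_1})$-admissibility, together with the Lemma~\ref{Lem5.4} requirement $L_{j+1}\ll\eps^{10}\lm$ and the constancy requirement $\lm\ll\eps L_j$, leaves a nontrivial window of scales $\lm$ at each step. Propagating all these conditions through the energy-increment iteration, while keeping the exceptional parameter set $T_\eps$ uniformly small across all $O(R\,\eps^{-3})$ iterations and across all $R$ values of $r$, will be the most delicate accounting in the argument.
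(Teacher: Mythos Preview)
Your proposal is correct and follows essentially the same route as the paper: the paper states Lemma~\ref{Lem5.8} (the discrete simultaneous Koopman--von Neumann decomposition you sketch in your first step) and then remarks that the proof of Proposition~\ref{Cor5.1} from Lemmas~\ref{Lem5.4} and~\ref{Lem5.8} is ``almost identical to the arguments presented in Section~\ref{base}'' and omits the details. Your outline of the factoring through the $i$-th face, the constancy of $\of$ on grids $Q_s(q_j,L_j)$, and the subsequence passage matches the continuous template in the proof of Proposition~\ref{approx-par-1} exactly as the paper intends.
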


This proposition follows, as the analogous result did in the continuous setting, from Lemma \ref{Lem5.4} and the follow
 parametric version of Lemma \ref{Lem5.3}.


\begin{lem}[A simultaneous Koopman-von Neumann type decomposition]\label{Lem5.8}\

Let $0<\VE\ll1$, $m\geq1$, and $Q\subseteq\Z^n$ be a cube. 
There exists an integer $\bar{J}_1=O(m\VE^{-3})$ such that for any  $(\eps,q_{\bar{J}_1})$-admissible sequence $L_0\geq L_1\geq\cdots \geq L_{\bar{J}_1}$
with $L_0$ dividing $l(Q)$ and $q_j:=q_0 q_1(\eps)^j$ for $0\leq j\leq \bar{J}_1$ with $q_0\in\N$, and collection of functions \[f_{1,t},\ldots f_{m,t}: Q_t(q_0,L_0)\to [-1,1]\] defined for each $t\in\Ga_{q_0,L_0,Q}$,
there is some $1\leq j< \bar{J}_1$ and a set $T_{\eps}\subs \Ga_{q_0,L_0,Q}$ of size $|T_{\eps}|\leq \eps |\Ga_{q_0,L_0,Q}|$ such that
\eq\label{5.35}
\|f_{i,t}-\E(f_{i,t}|\GG_{q_j,L_j,Q_t(q_0,L_0)}\|_{U^1_{q_{j+1},L_{j+1}}(Q_t(q_0,L_0))}\,\leq\eps
\ee
for all $1\leq i\leq m$ and  $t\notin T_{\eps}$.
\end{lem}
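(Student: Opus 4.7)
The plan is to combine the energy-increment mechanism from Lemma \ref{Lem5.3} with the averaging-over-atoms device used in the proof of Lemma \ref{KvN-1'}. First I would record the single-function inequality implicit in the proof of Lemma \ref{Lem5.3}: for any cube $\widetilde{Q}$, any $g:\widetilde{Q}\to[-1,1]$, and any consecutive pair of scales $(q_{j+1},L_{j+1}), (q_{j+2},L_{j+2})$ from our sequence one has
\[\|g\|_{U^1_{q_{j+1},L_{j+1}}(\widetilde{Q})}^2 \,\leq\, \|\E(g|\GG_{q_{j+2},L_{j+2},\widetilde{Q}})\|_{L^2(\widetilde{Q})}^2 + O(\eps^2),\]
whose hypotheses $q_{j+1}\mid q_{j+2}$, $L_{j+2}\mid L_{j+1}$, and $L_{j+2}\ll\eps^2 L_{j+1}$ are precisely what $(\eps,q_{\bar{J}_1})$-admissibility together with $q_j=q_0 q_1(\eps)^j$ delivers. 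Applied to $g:=f_{i,t}-\E(f_{i,t}|\GG_{q_j,L_j,Q_t(q_0,L_0)})$ on $\widetilde{Q}=Q_t(q_0,L_0)$, and combined with the orthogonal-decomposition identity $\E(g|\GG_{q_{j+2},L_{j+2}})=\E(f_{i,t}|\GG_{q_{j+2},L_{j+2}})-\E(f_{i,t}|\GG_{q_j,L_j})$ (valid since $\GG_{q_j,L_j}\subs\GG_{q_{j+2},L_{j+2}}$), this yields the single-pair energy boost
\[\|\E(f_{i,t}|\GG_{q_{j+2},L_{j+2},Q_t(q_0,L_0)})\|_{L^2}^2 \,\geq\, \|\E(f_{i,t}|\GG_{q_j,L_j,Q_t(q_0,L_0)})\|_{L^2}^2 + c\eps^2\]
at every $(i,t)$ where \eqref{5.35} fails at step $j$.

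Next I would glue the parametric family into global functions $F_i:Q\to[-1,1]$ via $F_i|_{Q_t(q_0,L_0)}:=f_{i,t}$ and define the total energy
\[\mathcal{E}_j:=\sum_{i=1}^m \|\E(F_i|\GG_{q_j,L_j,Q})\|_{L^2(Q)}^2 = \E_{t\in\Ga_{q_0,L_0,Q}} \sum_{i=1}^m \|\E(f_{i,t}|\GG_{q_j,L_j,Q_t(q_0,L_0)})\|_{L^2(Q_t(q_0,L_0))}^2,\]
where the second equality exploits $\GG_{q_0,L_0,Q}\subs\GG_{q_j,L_j,Q}$, itself a consequence of $q_0\mid q_j$ and $L_j\mid L_0$. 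If \eqref{5.35} fails at step $j$, then by definition there is a set $T\subs\Ga_{q_0,L_0,Q}$ of density at least $\eps$ such that for every $t\in T$ some index $i(t)$ satisfies the hypothesis of the single-pair boost; the remaining pairs contribute non-negatively to $\mathcal{E}_{j+2}-\mathcal{E}_j$ by orthogonality, so averaging over $t$ gives the total-energy increment $\mathcal{E}_{j+2}\geq\mathcal{E}_j+c\eps^3$.

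Since $0\leq\mathcal{E}_j\leq m$ uniformly in $j$, this increment can be triggered along the sequence $j=1,3,5,\ldots$ at most $O(m\eps^{-3})$ times, so for some $1\leq j<\bar{J}_1=O(m\eps^{-3})$ the conclusion \eqref{5.35} must hold simultaneously over all $1\leq i\leq m$ for every $t$ outside a set $T_\eps$ of density at most $\eps$. The main (only mildly delicate) obstacle is the careful verification that the restriction identity used in the definition of $\mathcal{E}_j$ faithfully transfers local conditional expectations onto the global ones; the discrete divisibility assumptions $q_0\mid q_j$ and $L_j\mid L_0$ are calibrated precisely for this, and no Fourier or number-theoretic ingredient beyond what already drives Lemma \ref{Lem5.3} is required.
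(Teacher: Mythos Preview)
Your proposal is correct and follows essentially the same approach the paper intends: the paper explicitly omits the proof of this lemma, referring to it as ``almost identical'' to the argument for Lemma~\ref{KvN-1'} in Section~\ref{base}, and what you have written is precisely that argument transported to the discrete setting using the $U^1_{q,L}$-to-$L^2$ inequality established in the proof of Lemma~\ref{Lem5.3}. The energy increment, the gluing $F_i|_{Q_t(q_0,L_0)}:=f_{i,t}$, the density-$\eps$ exceptional set, and the resulting bound $\bar J_1=O(m\eps^{-3})$ all match the paper's (implicit) route.
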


Lemma \ref{Lem5.8} above is of course the discrete analogue of Lemma \ref{KvN-1}.
Since the proofs of Proposition \ref{Cor5.1} and Lemma \ref{Lem5.8} are almost identical to the arguments presented in Section \ref{base} we choose to omit these details.



\section{Proof of Theorem \ref{ProdZ}: The general case}\label{generalcaseZ}

 After the preparations in Section \ref{BaseZ} we can proceed very similarly as in Section \ref{generalcase} to prove our main result in the discrete case, namely Theorem \ref{ProdZ}. The main difference will be that given $0<\eps\ll1$ and $1\leq k\leq d$, we construct a positive integer $q_k(\eps)$ and assume that all our sequences of scales will be $(\eps,q_k(\eps))$-admissible. The cubes $Q_{\ut}(L)$ will be naturally now be replaced by the grids $Q_{\ut}(q,L)$ of the form that already appear in Section \ref{BaseZ} where we always assume $q|L$. 

Let $\De^0=\De_1^0\times\ldots\times\De_d^0$ with each $\De_i^0\subs \Z^{2n_i+3}$ a non-degenerate simplex of $n_i$ points for $1\leq i\leq d$ and $Q=Q_1\times\ldots\times Q_d\subseteq\Z^n$ with $Q_i\subs\Z^{2n_i+3}$  cubes of equal side length $l(Q)$ (taken much larger than the diameter of $\De^0$).   We will use the same parameterizations in terms of hypergraph bundles $\HH_{d,k}^{\un}$ and corresponding notations as in Section \ref{generalcase} to count the configurations $\De=\De_1\times\ldots\times\De_d\subs Q$
 with each $\De_i\subseteq Q_i$ an isometric copy of $\la\De_i^0$ for some $\la\in\sqrt{\N}$.

Given any positive integer $q$ and $\la\in q\sqrt{\N}$ we will make use of the notation
\eq\label{5.28}
\sum_{\ux_i} f(\ux_i)\,\si^i_{\la,q}(\ux_i):= \E_{x_{i1}\in Q_i}\sum_{x_{i2},\ldots,x_{in_i}} f(\ux_i) \,\si_{\la\Delta^0_i,q}(x_{i2}-x_{i1},\ldots,x_{in_i}-x_{i1})\,dx_{i1}\ee
with $\si_{\la\Delta^0_i,q}$ as defined in the previous section and $\ux_i=(x_{i1},\ldots,x_{in_i})\in Q_i^{n_i}$.

 Note that if $S\subs Q$ then the density of configurations $\De$ in $S$, of the form $\De=\De_1\times\ldots\times\De_d$ with each $\De_i\subseteq Q_i$ an isometric copy of $\la\De_i^0$ for some $\la\in q\sqrt{\N}$ is given by the expression
\eq\label{5.29}
\NN^d_{\la\Delta^0,q,Q}(1_S\,;\,e\in\HH_{d,d}^{\un}):= \sum_{\ux_1}\cdots\sum_{\ux_d}\prod_{e\in\HH_{d,d}^{\un}} 1_S(\ux_e)\ \si^1_{\la,q}(\ux_1)\ldots \si^d_{\la,q}(\ux_d).
\ee

More generally, for any given $1\leq k\leq d$ and a family of functions $f_e: Q_{\pi(e)}\to[-1,1]$ with $e\in\HH_{d,k}^{\un}$ we define the multi-linear expression
\eq\label{5.30}
\NN^d_{\la\Delta^0,q,Q}(f_e;e\in\HH_{d,k}^{\un}):= \sum_{\ux_1}\cdots\sum_{\ux_d} \prod_{e\in\HH_{d,k}^{\un}} f_e(\ux_e)\ \si^1_{\la,q}(\ux_1)\ldots. \si^d_{\la,q}(\ux_d).
\ee
as well as
\eq\label{5.31}
\MM^d_{\lm,q,Q}(f_e;e\in\HH_{d,k}^{\un}):= \E_{\ut\in Q}\  \MM^d_{\ut+Q(q,L)}\,(f_e;e\in\HH_{d,k}^{\un})
\ee
where $Q(q,L)=Q_1(q,L)\times\cdots\times Q_d(q,L)$ with each $Q_i(q,L)=(q\Z\cap[-\frac{L}{2},\frac{L}{2}])^{2n_i+3}$ and
\eq\label{5.32}
\MM^d_{\widetilde{Q}}(f_e;e\in\HH_{d,k}^{\un}):= \E_{\ux_1\in \widetilde{Q}_1^{n_1}}\cdots\ \E_{\ux_d\in \widetilde{Q}_d^{n_d}}\,\prod_{e\in\HH_{d,k}^{\un}} f_e(\ux_e)
\ee
for any cube $\widetilde{Q}\subseteq Q$ of the form $\widetilde{Q}=\widetilde{Q}_1\times\cdots\times \widetilde{Q}_d$ with $\widetilde{Q}_i\subseteq Q_i$ for $1\leq i\leq d$.


We note that it is easy to show, as in the continuous, that
if $S\subseteq Q$ with
$|S|\geq \de |Q|$ for some $\de>0$ then
\eq
\MM^d_{\lm,q,Q}(1_S;e\in\HH_{d,d}^{\un})\geq\de^{n_1\cdots\, n_d}-O(\eps)
\ee
for all scales $\lm\in q\sqrt{\N}$ with $0<\lm\ll\eps\, l(Q)$. In light of this observation and the discussion preceding Proposition \ref{Lem5.5}  the proof of Theorem \ref{ProdZ} reduces, as it did in the continuous setting, to the following

\begin{prop}\label{Lem5.7}
Let $0<\eps\ll1$. 
There exist   positive integers $J_d=J_d(\eps)$ and $q_d(\eps)$ such that for any $(\eps,q_d(\VE)^{J_d})$-admissible sequence of scales $l(Q)\geq L_1\geq\cdots \geq L_{J_1}$  and $S\subseteq Q$ there is some $1\leq j< J_d$ such that
\eq\label{5.34}
\NN^d_{\la\Delta^0,q_j,Q}(1_S\,;\,e\in\HH_{d,d}^{\un}) =\MM^d_{\lm,q_j,Q}(1_S;e\in\HH_{d,d}^{\un}) + O(\eps),
\ee
for all $\lm\in q_j\sqrt{\N}$ with $L_{j+1}\leq \lm\leq L_j$ with $q_j:=q_d(\eps)^j$.
\end{prop}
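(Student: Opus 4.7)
The plan is to mirror the continuous strategy from Section \ref{generalcase} almost verbatim, with all cubes $Q_{\ut}(L)$ replaced by arithmetic grids $Q_{\ut}(q,L)$, and with an additional arithmetic modulus $q=q_j$ that grows geometrically along the sequence of scales. As in the continuous case, Proposition \ref{Lem5.7} will follow from the case $k=d$ of a discrete local counting lemma: for any $0<\eps\ll 1$, any $M\geq 1$ and any $1\leq k\leq d$ there exist integers $J_k=J_k(\eps,M)$ and $q_k(\eps)$ such that for every $(\eps,q_k(\eps)^{J_k})$-admissible sequence $L_0\geq\cdots\geq L_{J_k}$ with $L_0\,|\,l(Q)$ and every family $f^m_{e,\ut}:Q_{\ut_{\pi(e)}}(q_0,L_0)\to[-1,1]$ with $e\in\HH_{d,k}^{\un}$, $1\leq m\leq M$, $\ut\in\Ga_{q_0,L_0,Q}$, there is some $1\leq j<J_k$ and an exceptional set $T_\eps\subs\Ga_{q_0,L_0,Q}$ of relative density at most $\eps$ such that
\[
\NN^d_{\la\De^0,q_j,Q_{\ut}(q_0,L_0)}(f^m_{e,\ut};\,e\in\HH_{d,k}^{\un})=\MM^d_{\la,q_j,Q_{\ut}(q_0,L_0)}(f^m_{e,\ut};\,e\in\HH_{d,k}^{\un})+O(\eps)
\]
for all $\la\in q_j\sqrt{\N}$ with $L_{j+1}\leq\la\leq L_j$ and all $\ut\notin T_\eps$, uniformly in $e$ and $m$, where $q_j:=q_0\,q_k(\eps)^j$. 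The base case $k=1$ is Proposition \ref{Cor5.1} after factoring $\NN^d$ and $\MM^d$ into products over $1\leq i\leq d$, exactly as done at the start of the proof of Proposition \ref{Lem4.4}.

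For the inductive step $k-1\Rightarrow k$ I would need two discrete ingredients. The first is a generalized von Neumann inequality which asserts that
\[
|\NN^d_{\la\De^0,q,Q}(f_e;\,e\in\HH_{d,k}^{\un})|\,\leq\,\min_{e\in\HH_{d,k}^{\un}}\|f_e\|_{\Box_{q',L}(Q_{\pi(e)})}+O(\eps),
\]
together with the analogous bound for $\MM^d$, where the local box norm $\|\cdot\|_{\Box_{q',L}(Q_{e'})}$ is defined by replacing the averages in (\ref{box-norm-Lk})--(\ref{box-norm-k}) by averages over arithmetic sub-grids $Q_{\us}(q',L)$ and where $q\,q_1(\eps)\,|\,q'$ and $L\ll\eps^{10}\la$. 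This will follow by $k$-fold iteration of Cauchy--Schwarz exactly as in Lemma \ref{Lem4.5}, with the Fourier smoothing step at each iteration supplied by the single-simplex estimate of Lemma \ref{Lem5.4} in place of its continuous analogue. The second ingredient is a parametric weak hypergraph regularity lemma producing $\si$-algebras $\BB_{e',\ut}$ locally on each $Q_{\ut_{e'}}(q_0,L_0)$, generated by the grid $\GG_{q_j,L_j,\cdot}$ and $O(j)$ extra local sets, with a boundary-product structure as in (\ref{4.14}), such that $\|f^m_{e,\ut}-\E(f^m_{e,\ut}|\BB_{\pi(e),\ut})\|_{\Box_{q_{j+1},L_{j+1}}(Q_{\ut_{\pi(e)}}(q_0,L_0))}\leq\eps$ outside an exceptional $\ut$-set of relative density at most $\eps$. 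This is proved by the same energy-increment argument as Lemma \ref{Lem4.6}, since restricting a $\si$-algebra to a set does not increase complexity and the total energy is bounded by $O(M)$.

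With these two ingredients the proof of the local counting lemma at level $k$ proceeds exactly as in the continuous case: apply the parametric regularity lemma to $f^m_{e,\ut}$ at the refined scale, replace each $f^m_{e,\ut}$ by its conditional expectation $\of^m_{e,\ut}$ paying $O(\eps)$ via the von Neumann inequality, localize from scale $L_0$ to $L_j'$, expand $\of^m_{e,\us}$ as a bounded linear combination of indicators of atoms of $\BB_{e',\us}$, use the boundary-product decomposition (\ref{4.14}) to rewrite the resulting indicator products as products over edges in $\HH_{d,k-1}^{\un}$, and apply the inductive hypothesis at level $k-1$ with $M$ replaced by the total number of atom-tuples, which is at most $\exp(C\eps^{-2^{k+3}})$. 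The modulus $q_k(\eps)$ is then defined recursively so that it is divisible both by $q_1(\eps)$ (required by the level-$k$ von Neumann inequality and by the rational-concentration estimate (\ref{5.26})) and by $q_{k-1}(\eps_1)^{J_{k-1}(\eps_1,M)}$ where $\eps_1=\exp(-C\eps^{-2^{k+3}})$, forcing $q_d(\eps)$ to grow as a $d$-fold tower in $\eps^{-1}$, in agreement with the quantitative remark stated after Theorem \ref{ProdZ}.

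The main obstacle is purely the bookkeeping of the arithmetic moduli: at each inductive step one must choose $q_k(\eps)$ large enough to absorb both the modulus required by the regularity/von Neumann step at level $k$ and the modulus needed by the recursive call at level $k-1$ applied with the much smaller parameter $\eps_1$, while simultaneously keeping the sequence of scales $(\eps,q_k(\eps)^{J_k})$-admissible. This is accomplished by choosing $q_k(\eps)$ as a suitable common multiple and then extracting a subsequence $\{L_j'\}\subseteq\{L_j\}$ with $\mathrm{ind}(L_{j+1}')-\mathrm{ind}(L_j')\geq J_{k-1}(\eps_1,M)+2$, exactly as in the proof of Proposition \ref{Lem4.4}. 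Once the moduli and scales are synchronized in this way, every displayed identity from the continuous proof transfers verbatim to the discrete setting, completing the induction and yielding Proposition \ref{Lem5.7} with $J_d=J_d(\eps)$ and $q_d(\eps)$ of tower-exponential type in $\eps^{-1}$.
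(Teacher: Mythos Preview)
Your proposal is correct and follows essentially the same approach as the paper: reduction to a local counting lemma (the paper's Proposition \ref{Lem5.10}) proved by induction on $k$, with the base case given by Proposition \ref{Cor5.1}, the inductive step driven by a discrete generalized von Neumann inequality (Lemma \ref{Lem5.11}) and a parametric weak hypergraph regularity lemma (Lemma \ref{Lem5.12}), and the recursive definition of $q_k(\eps)$ in terms of $q_{k-1}(\eps_1)$ with $\eps_1=\exp(-C\eps^{-2^{k+3}})$. The only minor differences are cosmetic: the paper sets $q_k(\eps):=q_{k-1}(\eps_1)$ directly rather than as a common multiple, and the admissibility condition on $L$ in the von Neumann inequality is $L\ll(\eps^{2^k})^{10}\la$ rather than $L\ll\eps^{10}\la$, but these do not affect the structure of the argument.
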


\emph{Quantitative Remark.} A careful analysis of our proof reveals that there exist choices of $J_d(\VE)$ and $q_d(\eps)$ which are less than $W_d(\log (C_\Delta\VE^{-3}))$ and $W_d(C_\Delta\VE^{-13})$ respectively where $W_k(m)$ is again the tower-exponential function defined by $W_1(m)=\exp(m)$ and $W_{k+1}(m)=\exp(W_k(m))$ for $k\geq1$.

The proof of Proposition \ref{Lem5.7} follows along the same lines as the analogous result in the continuous setting. As before we will compare the averages $\NN^d_{\la\Delta^0,q,Q}(f_e;e\in\HH_{d,k}^{\un})$ to those of $\MM^d_{\lm,q,Q}(f_e;e\in\HH_{d,k}^{\un})$, at certain scales $q$ and $\la\in q\sqrt{\N}$ with with $L_{j+1}\leq \lm\leq L_j$, inductively for $1\leq k\leq d$. As the arguments closely follow those given in Section \ref{generalcase} we will be brief and emphasize mainly just the additional features.



\subsection{Reduction of Proposition \ref{Lem5.7} to a more general ``local" counting lemma}\

For any given $1\leq k\leq d$ and a family of functions $f_e: Q_{\pi(e)}\to[-1,1]$ with $e\in\HH_{d,k}^{\un}$
it is easy to see that for any $\VE>0$, scale $L_0>0$ dividing the side-length $l(Q)$, and $q_0|q$ we have
\eq\label{4.7}
\NN^d_{\la\Delta^0,q,Q}(f_e;e\in\HH_{d,k}^{\un})= \E_{\ut\in\Ga_{q_0,L_0,Q}}\, \NN^d_{\la\Delta^0,q,Q_{\ut}(q_0,L_0)}(f_{e,\ut};e\in\HH_{d,k}^{\un})+ O(\eps)
\ee
and
\eq\label{4.8}
\MM^d_{\lm,q,Q}(f_e;e\in\HH_{d,k}^{\un})=\E_{\ut\in\Ga_{L,Q}}\,
\MM^d_{\lm,q,Q_{\ut}(q_0,L_0)}(f_{e,\ut};e\in\HH_{d,k}^{\un})+ O(\eps)
\ee
provided $0<\la\ll \eps L_0$ where $f_{e,\ut}$ denotes the restriction of a function  $f_e$ to the cube $Q_{\ut}(q_0,L_0)$.

Thus the proof of Proposition \ref{Lem5.7} reduces to showing that the expressions in \eqref{4.7} and \eqref{4.8} only differ by $O(\eps)$ for all scales $\lm\in q\sqrt{\N}$ with $L_{j+1}\leq \lm\leq L_j$, given an $(\VE,q)$-admissible sequence $L_0\geq L_1\geq \cdots\geq L_J$, for any collection of bounded functions $f_{e,\ut}$, $e\in \HH_{d,k}^{\un}$, $\ut\in\Ga_{q_0,L_0,Q}$. Indeed, our crucial result will be the following


\begin{prop}[Local Counting Lemma in $\Z^n$]\label{Lem5.10} Let $0<\eps\ll 1$ and $q_0,M\in\N$. 

There exist positive integers $J_k=J_k(\eps,M)$ and $q_k(\eps)$ such that for any $(\eps,q_{J_d})$-admissible sequence of scales $L_0\geq L_1\geq\cdots \geq L_{J_1}$
with $L_0$ dividing $l(Q)$ and $q_j:=q_0\,q_k(\eps)^j$ for $j\geq 1$, and collection of functions  
\[\text{$f_{e,\ut}^m:Q_{t_{\pi(e)}}(q_0,L_0):\to [-1,1]$ \ with \ $e\in \HH_{d,k}^{\un}$, $1\leq m\leq M$ and  $\ut\in\Ga_{q_0,L_0,Q}$}\] 
there exists $1\leq j< J_k$ and a set $T_{\eps}\subs \Ga_{q_0,L_0,Q}$ of size $|T_{\eps}|\leq \eps |\Ga_{q_0,L_0,Q}|$ such that
\eq\label{5.42}
\NN^d_{\la\Delta^0,q_j,Q_{\ut}(q_0,L_0)}(f_{e,\ut};e\in\HH_{d,k}^{\un})=\MM^d_{\lm,q_j,Q_{\ut}(q_0,L_0)}(f_{e,\ut};e\in\HH_{d,k}^{\un})+ O(\eps)
\ee
for all $\lm\in q_j\sqrt{\N}$ with $L_{j+1}\leq \lm\leq L_j$ and $\ut\notin T_\eps$ 
uniformly in $e\in \HH_{d,k}^{\un}$ and $1\leq m\leq M$.
\end{prop}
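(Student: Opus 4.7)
The proof proceeds by induction on $1 \leq k \leq d$, mirroring the argument for Proposition \ref{Lem4.4} in the continuous setting. The base case $k=1$ reduces directly to Proposition \ref{Cor5.1}: for $\ut=(t_1,\ldots,t_d)\in \Ga_{q_0,L_0,Q}$ and $e=il\in\HH_{d,1}^{\un}$, each function $f_{e,\ut}^m$ depends only on the $i$th coordinate block, so both $\NN^d_{\la\De^0,q,Q_{\ut}(q_0,L_0)}$ and $\MM^d_{\la,q,Q_{\ut}(q_0,L_0)}$ factor as products over $1\leq i\leq d$ of single-simplex counts on $Q_{t_i}(q_0,L_0)$, and the parametric single-simplex counting lemma produces a common scale and exceptional set.

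The induction step requires two discrete analogues of the Euclidean tools. First, a generalized von Neumann inequality: for any collection of functions $f_e:Q_{\pi(e)}\to[-1,1]$ with $e\in\HH_{d,k}^{\un}$, $\la\in q\sqrt{\N}$, and $q'$ with $qq_1(\eps)\mid q'$ and $L\ll \eps^{10\cdot 2^k}\la$,
\[
|\NN^d_{\la\De^0,q,Q}(f_e;\,e\in\HH_{d,k}^{\un})|\,\leq\,\min_{e\in\HH_{d,k}^{\un}} \|f_e\|_{\Box_{q',L}(Q_{\pi(e)})}+O(\eps),
\]
where $\|\cdot\|_{\Box_{q',L}}$ is defined by the obvious discrete analogue of \eqref{box-norm-Lk} over the grids $Q_\ut(q',L)$, and likewise for $\MM^d$. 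This follows by the iterative Cauchy--Schwarz scheme used to prove Lemma \ref{Lem4.5}, with the single-simplex estimate of Lemma \ref{vN-1} replaced at each of the $k$ stages by the discrete estimate of Lemma \ref{Lem5.4}; the modulus parameter $q'$ is precisely what absorbs the concentration of $\widehat{\si_{\la\De^0,q}}$ near rationals with denominator $q_1(\eps)$. Second, a parametric weak hypergraph regularity lemma for $\Z^n$: given an $(\eps,q_{\bar J_k})$-admissible sequence with $q_j=q_0 q_k(\eps)^j$ and a family $\{f_{e,\ut}^m\}$, there exists $1\leq j<\bar J_k=O(M\eps^{-2^{k+3}})$, an exceptional set $T_\eps\subs\Ga_{q_0,L_0,Q}$ of density at most $\eps$, and $\si$-algebras $\BB_{e',\ut}$ of scale $(q_j,L_j)$ on $Q_{\ut_{e'}}(q_0,L_0)$ with the hypergraph-local product structure $\BB_{e',\ut}|_{Q_{\us_{e'}}(q_j,L_j)}=\bigvee_{\f'\in\partial e'}\BB_{e',\f',\us}$ (each factor of complexity $O(j)$), such that $\|f_{e,\ut}^m-\E(f_{e,\ut}^m|\BB_{\pi(e),\ut})\|_{\Box_{q_{j+1},L_{j+1}}(Q_{\ut_{\pi(e)}}(q_0,L_0))}\leq\eps$ uniformly for $\ut\notin T_\eps$, $e\in\HH_{d,k}^{\un}$, and $1\leq m\leq M$. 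This is proved by the energy increment scheme of Lemma \ref{Lem4.6}, with cubic atoms at scale $L$ throughout replaced by ``modulo $q$'' atoms at scale $(q,L)$.

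With these two tools in hand, the induction step follows the exact script of Section \ref{generalcase}. Set $\eps_1:=\exp(-C_1\eps^{-2^{k+3}})$, let $M_1:=\eps\eps_1^{-1}$, let $F(\eps):=J_{k-1}(\eps_1,M_1)$, and define $q_k(\eps):=q_1(\eps)\cdot q_{k-1}(\eps_1)^{F(\eps)+2}$ so that the arising moduli along the subsequence $\{L_j'\}$ (chosen with $\ind(L_{j+1}')-\ind(L_j')\geq F(\eps)+2$) are all divisible by whatever the inductive application demands. Apply the regularity lemma to obtain $\si$-algebras $\BB_{\pi(e),\ut}$ and set $\of_{e,\ut}^m:=\E(f_{e,\ut}^m|\BB_{\pi(e),\ut})$; by the von Neumann inequality and localization to the $Q_\us(q_j',L_j')$-level (valid provided $\la\ll\eps L_j'$ and $L_{j+1}'$ is sufficiently small compared to $\eps^{10\cdot 2^k}\la$) we reduce to comparing $\NN^d$ and $\MM^d$ for linear combinations of indicators of atoms. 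Expanding $\of_{e,\us}^m=\sum_{r}\al_{\us,r,m} 1_{A_{\pi(e),\us}^r}$, the hypergraph-local product identity $1_{A_{e',\us}}(\ux_e)=\prod_{\f'\in\partial e'} 1_{A_{e',\f',\us}}(\ux_{p_{\f'}(e)})$ rewrites the level-$k$ multilinear expressions as level-$(k-1)$ ones in terms of auxiliary functions $g_{\f,\us}^{\ur}$, which by choice of the scale gaps lie in the range of applicability of the induction hypothesis for parameters $(\eps_1,R^D)$ with $R^D\leq M_1$. The induction hypothesis then yields the desired approximation for $\us$ outside a further exceptional set $S_{\eps_1}$, and the standard argument in Section \ref{generalcase} (controlling the set $T''_{\eps_1}$ of $\ut$ where $S_{\eps_1}$ has anomalously large density) completes the proof with $J_k(\eps,M)=O(\eps^{-2^{k+3}}\cdot M\cdot F(\eps))$. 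The main obstacle is the careful bookkeeping of the moduli $q_j$: at every stage the parameter $q'$ forced by the von Neumann inequality, the modulus on which the local atoms are supported, and the modulus required for the inductive application must all be compatible with the single geometric progression $q_j=q_0 q_k(\eps)^j$, which is the principal new feature relative to the Euclidean argument.
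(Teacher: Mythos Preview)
Your proposal follows the same inductive scheme as the paper, and the structure---base case via Proposition~\ref{Cor5.1}, induction step via a discrete von Neumann inequality (the paper's Lemma~\ref{Lem5.11}) and a parametric weak hypergraph regularity lemma (the paper's Lemma~\ref{Lem5.12}), followed by the level-drop from $k$ to $k-1$ exactly as in the proof of Proposition~\ref{Lem4.4}---is correct.

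The one substantive discrepancy is your definition of $q_k(\eps)$. You take $q_k(\eps):=q_1(\eps)\cdot q_{k-1}(\eps_1)^{F(\eps)+2}$, whereas the paper takes the simpler $q_k(\eps):=q_{k-1}(\eps_1)$. The paper's choice is what makes the inductive call go through cleanly: since then $q_j=q_0\,q_k(\eps)^j=q_0\,q_{k-1}(\eps_1)^j$, the restriction of the modulus sequence to any sub-block $j'\leq l\leq J'$ is automatically $q_l=q_{j'}\,q_{k-1}(\eps_1)^{l-j'}$, which is exactly the form required by Proposition~\ref{Lem5.10} at level $k-1$ with base modulus $q_{j'}$ and parameter $\eps_1$. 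With your larger step size, each increment of the fine index multiplies the modulus by $q_{k-1}(\eps_1)^{F(\eps)+2}$ rather than by $q_{k-1}(\eps_1)$, so the intermediate moduli $q_{j'+1},\ldots,q_{J'-1}$ are not of the form the inductive hypothesis demands; you would have to introduce auxiliary moduli not equal to any $q_l$, which is awkward to reconcile with the statement. The divisibility $q_1(\eps)\mid q_k(\eps)$ needed for Lemma~\ref{Lem5.11} follows in the paper's setup from the easy inductive facts $q_{k-1}(\eps)\mid q_k(\eps)$ and $q_k(\eps')\mid q_k(\eps)$ for $\eps\leq\eps'$, so your extra factor $q_1(\eps)$ is unnecessary. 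Otherwise your sketch matches the paper's argument.
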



Note that if $k=d$, $L_0=l(Q)$, $q_0=M=1$, then $|\Ga_{q_0,L_0,Q}|=1$, and moreover if $f_{e,\ut}=1_S$
for all $e\in\HH_{d,k}^{\un}$ for a set $S\subs Q$, then Proposition \ref{Lem5.10}  reduces to precisely Proposition \ref{Lem5.7}. In fact, Proposition \ref{Lem5.10} is a parametric, multi-linear and simultaneous extension of Proposition \ref{Lem5.7} which we need in the induction step, i.e. when going from level $k-1$ to level $k$.

\subsection{Proof of Proposition \ref{Lem5.10}}

We will prove Proposition \ref{Lem5.10} by induction on $1\leq k\leq d$. 

For $k=1$ this is basically Proposition \ref{Cor5.1}, exactly as it was in the base case of the proof of Proposition \ref{Lem4.4}. 

For the induction step we will again need two main ingredients. 
The first establishes that the our multi-linear forms $\NN^d_{\la\Delta^0,q,Q}(f_e;e\in\HH_{d,k}^{\un})$ are controlled by a box-type norm attached to scales $q'$ and $L$. 

Let $Q=Q_1\times\ldots\times Q_d$ with $Q_i\subs\Z^{2n_i+3}$ be cubes of equal side length $l(Q)$ and $1\leq k\leq d$. For any  scale $0<L\ll l(Q)$  and function $f:Q_{e'}\to[-1,1]$ with $e'\in\HH_{d,k}$  we define its local box norm at scales $q'$ and $L$ by
\eq\label{5.43}
\|f\|_{\Box_{q',L}(Q_{e'})}^{2^k} := \E_{\us\in Q_{e'}}  \|f\|_{\Box(Q_{\us}(q',L))}^{2^k}  
\ee
where
\eq\label{5.44}
\|f\|_{\Box(\widetilde{Q})}^{2^k}:= \E_{x_{11},x_{12}\in \widetilde{Q}_{1}}\cdots \ \E_{x_{k1},x_{k2}\in \widetilde{Q}_{k}} \prod_{(\ell_1,\dots,\ell_k)\in\{1,2\}^k}f(x_{1\ell_1},\dots,x_{k\ell_k})
\ee
for any cube $\widetilde{Q}$ of the form $\widetilde{Q}=\widetilde{Q}_1\times\cdots\times\widetilde{Q}_k$.
We note that \eqref{5.31} and \eqref{5.32}  are special cases of \eqref{5.43} and \eqref{5.44} with $k=d$, $\un=(2,\ldots,2)$, and $f_e=f$ for all $e\in\HH_{d,d}^{\un}$.



\begin{lem}[A Generalized von-Neumann inequality on $\Z^n$]\label{Lem5.11}

Let $1\leq k\leq d$.

Let $0<\eps\ll1$, $q,q'\in\N$ with $q q_1(\eps)|q'$, and $\lm\in q\sqrt{\N}$
with $\lm\ll l(Q)$ and $1\ll L\ll(\VE^{2^k})^{10}\lm$.
For any collection of functions  $f_e: Q_{\pi(e)}\to[-1,1]$ with $e\in\HH_{d,k}^{\un}$ we have both
\eq\label{5.45}
|\NN^d_{\la\Delta^0,q,Q}(f_e;e\in\HH_{d,k}^{\un})| \leq \min_{e\in \HH_{d,k}^{\un}}\,\|f_e\|_{\Box_{q',L'}(Q_{\pi(e)})} + O(\eps)
\ee
and 
\eq\label{5.46}
|\MM^d_{\lm,q,Q}(f_e;e\in\HH_{d,k}^{\un})|\leq \min_{e\in \HH_{d,k}^{\un}}\,\|f_e\|_{\Box_{q',L'}(Q_{\pi(e)})}.
\ee
\end{lem}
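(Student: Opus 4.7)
The plan is to mimic the argument of Lemma~\ref{Lem4.5} from the continuous setting almost verbatim, replacing the single-simplex generalized von~Neumann inequality on $\R^n$ (Lemma~\ref{vN-1}) with its discrete counterpart (Lemma~\ref{Lem5.4}), and replacing the convolution kernel $\psi_L$ there by the kernel $\chi_{q',L}*\chi_{q',L}$ attached to the discrete norm $\|\cdot\|_{U^1_{q',L}}$. The divisibility assumption $qq_1(\eps)\mid q'$ is precisely what makes Lemma~\ref{Lem5.4} applicable at each step, so no further number-theoretic input is needed and the rest of the argument is essentially combinatorial.

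First I would fix an edge, say $e_0=(11,21,\ldots,k1)\in\HH_{d,k}^{\un}$, and partition the edges of $\HH_{d,k}^{\un}$ into the three groups used in the continuous argument: those with $1\notin\pi(e)$, those of the form $(11,e')$ with $e'\in\HH_{d-1,k-1}^{\un}$, and those of the form $(12,e')$. Setting $g_l(x,\ux'):=\prod_{e'\in\HH_{d-1,k-1}^{\un}} f_{1l,e'}(x,\ux_{e'})$ for $l=1,\ldots,n_1$, the innermost sum in \eqref{5.30} would take the form $\sum_{\ux_1}\prod_l g_l(x_{1l},\ux')\,\si^1_{\la,q}(\ux_1)$. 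Applying Lemma~\ref{Lem5.4} pointwise in $\ux'$ to the family $g_1,\ldots,g_{n_1}$ bounds the square of this inner sum by $\|g_1\|_{U^1_{q',L}(Q_1)}^2+O(\eps^{2^k})$, under the stated constraint $L\ll(\eps^{2^k})^{10}\la$. Expanding this norm as a convolution and then applying Cauchy--Schwarz in the outer variables reduces the $n_1$-fold variable $\ux_1$ to a pair $\uy_1=(y_{11},y_{12})\in Q_1^2$ integrated against $\chi_{q',L}*\chi_{q',L}(y_{12}-y_{11})$, at the cost of forcing every $f_{12,e'}$ to be replaced by $f_{11,e'}$.

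Next I would iterate this step for $i=2,\ldots,k$. Each iteration reduces the $n_i$-fold variable $\ux_i$ to a pair $(y_{i1},y_{i2})$, replaces the measure $\si^i_{\la,q}$ by the appropriate convolution kernel, and sums out every $f_e$ indexed by an edge with $i\notin\pi(e)$ against the normalized measure. After $k$ such iterations only $f_{e_0}$ survives, and the resulting $2^k$-fold expression matches, up to a cumulative error $O(\eps^{2^k})$, the definition \eqref{5.43}--\eqref{5.44} of $\|f_{e_0}\|_{\Box_{q',L}(Q_{\pi(e_0)})}^{2^k}$; a change of variables $y_{ij}\mapsto y_{ij}-t_i$ together with an exchange of summation, as in \eqref{4.42}, makes the identification with the local box norm explicit. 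Taking $2^k$-th roots and noting that $e_0$ was arbitrary would then give \eqref{5.45}. The companion estimate \eqref{5.46} for $\MM^d_{\lm,q,Q}$ is the standard Gowers--Cauchy--Schwarz inequality applied to the multilinear form \eqref{5.31}--\eqref{5.32} and requires no hypothesis on $L$ or $q'$.

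The main obstacle, as in the continuous case, is not the induction on $k$ but rather the bookkeeping: one must verify that a single pair $(q',L)$ works simultaneously across all $k$ applications of Lemma~\ref{Lem5.4}, and that the $k$ Cauchy--Schwarz losses combine to yield a final error of $O(\eps)$. Both of these are already encoded in the hypotheses $qq_1(\eps)\mid q'$ and $L\ll(\eps^{2^k})^{10}\la$, the latter providing enough slack so that the cumulative error at each of the $k$ steps remains $O(\eps)$ after extracting $2^k$-th roots.
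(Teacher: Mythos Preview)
Your proposal is correct and follows exactly the approach the paper itself indicates: the paper's proof of Lemma~\ref{Lem5.11} consists solely of the remark that it ``follows exactly as in the continuous case, see Lemma~\ref{Lem4.5}, using Lemma~\ref{Lem5.4} in place of Lemma~\ref{vN-1},'' and you have faithfully unpacked precisely that substitution. One small slip: you describe partitioning $\HH_{d,k}^{\un}$ into \emph{three} groups (as in the finite-field rectangle case of Section~\ref{FFSECTION}), but for general $\un$ there are $n_1+1$ groups---one for $1\notin\pi(e)$ and one for each $l=1,\ldots,n_1$---which is in fact what you use when you define $g_l$ for $l=1,\ldots,n_1$ and apply Lemma~\ref{Lem5.4} to the full family $g_1,\ldots,g_{n_1}$.
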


The proof of inequalities \eqref{5.45} and \eqref{5.46} follow exactly as in the continuous case, see Lemma \ref{Lem4.5}, using Lemma \ref{Lem5.4} in place of Lemma \ref{vN-1}. We omit the details.



\medskip

The crucial ingredient is again a parametric weak hypergraph regularity lemma, i.e. Lemma \ref{Lem4.6} adapted to the discrete settings. The proof is essentially the same as in the continuous case, with exception that the $\Box_{L_j}$-norms are replaced by $\Box_{q_j,L_j}$-norms where $q_j=q_0 q^j$ is a given sequence of positive integers and $L_0\geq L_1\geq\cdots \geq L_{J}$ is an $(\eps,q_J)$-admissible sequence of scales. To state it we say that a $\si$-algebra $\BB$ on a cube $Q$ is of \emph{scale} $(q,L)$ if it is refinement of the grid $\GG_{q,L,Q}$, i.e. if its atoms partition each cube $Q_{\ut}(q,L)$ of the grid. We will always assume that $q|L$ and $L|l(Q)$. 
Recall also that we say the complexity of a $\si$-algebra $\BB$ is at most $m$, and write $\comp(\BB)\leq m$, if it is generated by $m$ sets.

\begin{lem}[Parametric weak hypergraph regularity lemma for $\Z^n$]\label{Lem5.12}\

Let $0<\eps\ll 1$, $1\leq k\leq d$, $q_0,q,L_0, M\in\N$, and let $q_j:=q_0 q^j$ for $j\geq 1$. 
There exists $\bar{J}_k= O(M\eps^{-2^{k+3}})$ such that for any $(\eps^{2^k},q_{\bar{J}_k})$-admissible sequence $L_0\geq L_1\geq\cdots \geq L_{\bar{J}_k}$ with the property that  $L_0$ divides $l(Q)$ and collection of functions 
\[\text{$f^m_{e,\ut}: Q_{\ut_{\pi(e)}}(q_0,L_0)\to [-1,1]$ \ with \ 
$e\in\HH_{d,k}^{\un}$, $1\leq m\leq M$, and $\ut\in\Ga_{q_0,L_0,Q}$}\] 
there is some $1\leq j< \bar{J}_k$ and $\si$-algebras $\BB_{e',\ut}$ of scale $(q_j,L_j)$ on $Q_{\ut_{e'}}(q_0,L_0)$ for each $\ut \in\Ga_{q_0,L_0,Q}$ and $e'\in\HH_{d,k}$ such that
\eq\label{5.47}
\|f_{e,\ut}^m-\E(f_{e,\ut}^m|\BB_{\pi(e),\ut})\|_{\Box_{q_{j+1},L_{j+1}}(Q_{\ut_{\pi(e)}}(L_0))} \leq \eps
\ee
uniformly for all $t\notin T_\VE$, $e\in\HH_{d,k}^{\un}$, and $1\leq m\leq M$, where $T_{\eps}\subs \Ga_{q_0,L_0,Q}$ with $|T_{\eps}|\leq \eps |\Ga_{q_0,L_0,Q}|$.

Moreover, the $\si$-algebras $\BB_{e',\ut}$ 
have the additional local structure that  the exist $\si$-algebras $\BB_{e',\f',\us}$  on $Q_{\us_{\f'}}(q_j,L_j)$ with $\comp(\BB_{e',\f',\us}) = O(j)$  for each  $\us\in\Ga_{q_j,L_j,Q}$, $e'\in\HH_{d,k}$, and $\f'\in\partial e'$ such that
 if $\us\in Q_{\ut}(q_0,L_0)$, then
\eq\label{5.48}
\BB_{e',\ut}\bigr\rvert_{Q_{\us_{e'}}(q_j,L_j)} = \bigvee_{\f'\in\partial e'}
\BB_{e',\f',\us}.\ee
\end{lem}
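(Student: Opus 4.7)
The proof will be the discrete analogue of the argument for Lemma \ref{Lem4.6}, with the continuous cubes $Q_{\ut}(L)$ replaced throughout by the modular grids $Q_{\ut}(q,L)$ and the $\Box_L$-norms by $\Box_{q,L}$-norms. The strategy is energy increment. I define the total energy of the family $\{f^m_{e,\ut}\}$ with respect to a family of $\sigma$-algebras $\{\BB_{e',\ut}\}$ by
\[
\mathcal{E}(f^m_{e,\ut}|\BB_{e',\ut}) := \E_{\ut\in\Ga_{q_0,L_0,Q}}\sum_{m=1}^M\sum_{e\in\HH_{d,k}^{\un}} \|\E(f^m_{e,\ut}|\BB_{\pi(e),\ut})\|_{L^2(Q_{\ut_{\pi(e)}}(q_0,L_0))}^2,
\]
which is trivially bounded by $M\cdot|\HH_{d,k}^{\un}|=O(M)$. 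Starting from the trivial $\sigma$-algebras $\BB_{e',\ut}(L_0)=\{Q_{\ut_{e'}}(q_0,L_0),\emptyset\}$ and $\BB_{e',\f',\us}(L_0)=\{Q_{\us_{\f'}}(q_0,L_0),\emptyset\}$, the iterative step shows that whenever \eqref{5.47} fails at a stage $j$, the $\sigma$-algebras can be refined to a new family of scale $(q_{j+2},L_{j+2})$, preserving the local product structure \eqref{5.48} and raising local complexity by at most one, while increasing the total energy by at least $c_k\,\eps^{2^{k+3}}$. Since the total energy is $O(M)$, the process terminates in $\bar{J}_k=O(M\eps^{-2^{k+3}})$ steps.

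The heart of the iteration is the construction of the sets used to refine the $\sigma$-algebras. If \eqref{5.47} fails, then on a set $T_\eps\subs\Ga_{q_0,L_0,Q}$ of density at least $\eps$ there is some edge $e$, with $e'=\pi(e)$, and index $m$ for which
\[
\|f^m_{e,\ut}-\E(f^m_{e,\ut}|\BB_{e',\ut}(L_j))\|_{\Box_{q_{j+1},L_{j+1}}(Q_{\ut_{e'}}(q_0,L_0))}\geq \eps.
\]
Localizing this norm to the finer grids of scale $(q_{j+2},L_{j+2})$, using the assumptions $L_{j+2}\ll\eps^{2^k}L_{j+1}$ and $q_{j+1}|q_{j+2}$ exactly as in the continuous case, yields a set $S_{\eps,e,\ut}$ of grid-centers $\us_{e'}\in\Ga_{q_{j+2},L_{j+2},Q_{\ut_{e'}}(q_0,L_0)}$, of relative size $\gs\eps^{2^k}$, on which the pure box norm $\|\cdot\|_{\Box(Q_{\us_{e'}}(q_{j+2},L_{j+2}))}$ is still at least $\eps/2$. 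The standard atom extraction for the $\Box$-norm, applied exactly as in the proof of Lemma \ref{KvN-ff}, then produces sets $B_{\f',\us_{e'},\ut}\subs Q_{\us_{\f'}}(q_{j+2},L_{j+2})$ for each $\f'\in\partial e'$ such that
\[
\Bigl\langle f^m_{e,\ut}-\E(f^m_{e,\ut}|\BB_{e',\ut}(L_j))\,,\,\prod_{\f'\in\partial e'}1_{B_{\f',\us_{e'},\ut}}\Bigr\rangle_{Q_{\us_{e'}}(q_{j+2},L_{j+2})}\gs\eps^{2^k}.
\]
I adjoin $B_{\f',\us_{e'},\ut}$ to the restriction of $\BB_{e',\f',\us'}(L_j)$, where $\us'\in\Ga_{q_j,L_j,Q}$ is the unique element with $\us\in Q_{\us'}(q_j,L_j)$, to obtain $\BB_{e',\f',\us}(L_{j+2})$, set $\BB_{e',\us}(L_{j+2}):=\bigvee_{\f'\in\partial e'}\BB_{e',\f',\us}(L_{j+2})$, and glue across $\us$ to produce $\BB_{e',\ut}(L_{j+2})$. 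For the remaining edges $e''\in\HH_{d,k}$ with $e''\neq e'$ I define $\BB_{e'',\f',\us}(L_{j+2})$ as the restriction of $\BB_{e'',\f',\us'}(L_j)$ to $Q_{\us_{\f'}}(q_{j+2},L_{j+2})$, so that no complexity is added. Cauchy--Schwarz and the orthogonality of conditional expectation then give
\[
\|\E(f^m_{e,\ut}|\BB_{e',\ut}(L_{j+2}))\|_{L^2(Q_{\us_{e'}}(q_{j+2},L_{j+2}))}^2 -\|\E(f^m_{e,\ut}|\BB_{e',\ut}(L_j))\|_{L^2(Q_{\us_{e'}}(q_{j+2},L_{j+2}))}^2 \gs\eps^{2^{k+1}}
\]
for $\us_{e'}\in S_{\eps,e,\ut}$, and averaging over $\us_{e'}$ and then over $\ut\in T_\eps$ produces the desired increment $\gs\eps^{2^{k+3}}$ in $\mathcal{E}$, while the $L^2$-monotonicity of conditional expectation under refinement guarantees no loss on the other summands.

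The main obstacles are mild and largely bookkeeping: one needs the divisibility chain $q_0|q_1|\cdots|q_{\bar{J}_k}$ together with $L_{\bar{J}_k}|\cdots|L_0|l(Q)$ so that the modular grids nest and the restriction and join operations on $\sigma$-algebras behave coherently, and one needs a localization estimate for the $\Box_{q,L}$-norm from scale $L_{j+1}$ down to $L_{j+2}$. Both are immediate from the $(\eps^{2^k},q_{\bar{J}_k})$-admissibility of the sequence and a direct Cauchy--Schwarz argument identical to its continuous counterpart, so no genuinely new idea is required beyond what was already used in Lemma \ref{Lem4.6}.
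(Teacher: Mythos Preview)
Your proposal is correct and follows exactly the approach the paper intends: the paper itself simply states that the proof of Lemma~\ref{Lem5.12} ``follows exactly as the corresponding proof of Lemma~\ref{Lem4.6} in the continuous setting'' and omits the details, so your write-up of the energy-increment argument with the modular grids $Q_{\ut}(q,L)$, the $\Box_{q,L}$-norms, and the divisibility bookkeeping $q_0\mid q_1\mid\cdots$ is precisely what is meant. No gap or alternative route is present.
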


The proof of Lemma \ref{Lem5.12} follows exactly as the corresponding proof of Lemma \ref{Lem4.6} in the continuous setting, so we will omit the details.
We will however provide some details of how one deduces Proposition  \ref{Lem5.10},  from Lemmas \ref{Lem5.11} and \ref{Lem5.12}. The arguments are again very similar to those in the continuous setting, however one needs to make a careful choice of the integers $q_k(\eps)$, appearing in the statement of the Proposition.



\begin{proof}[Proof of Proposition \ref{Lem5.10}] Let $2\leq k\leq d$ and assume that the lemma holds for $k-1$. 

Let  $0<\eps\ll1$ and $\eps_1:=\exp\,(-C_1\eps^{-2^{k+3}})$ for some large constant $C_1=C_1(n,k,d)\gg 1$. 

We then define $q_k(\eps):=q_{k-1}(\eps_1)$ recalling that $q_1(\eps):=\lcm\{1\leq q\leq C\eps^{-10}\}$ and note that it is easy to see by induction that $q_k(\eps)|q_k(\eps')$ for $0<\eps'\leq\eps$ and $q_{k-1}(\eps)|q_k(\eps)$. 
We further define the function $F(\eps):=J_{k-1}(\eps_1, M)$ with $M=\VE\,\VE_1^{-1}$ 
and recall that $q_j:=q_0\,q_k(\eps)^j$ for $j\geq 1$. 

We now proceed exactly as in the proof of Proposition \ref{Lem4.4} but with $\{L_j\}_{j\geq 1}$ being a
$(\VE_1,q_{\widetilde{J}})$-admissible sequence of scales, with $\widetilde{J}\gg F(\eps)\,\bar{J}_k(\eps,M)$. We again choose a subsequence $\{L_j'\}\subs \{L_j\}$ so that $L'_0=L_0$ and 
$\ind(L'_{j+1})\geq \ind(L'_j) + F(\eps)+2$, but also now set $q'_j=q_{j'}$, where $j':=\ind (L'_j)$.  Lemma \ref{Lem5.12} then  guarantees the existence of  $\si$-algebras $\BB_{e',\ut}$ of scale $(q'_j,L'_j)$ on $Q_{\ut_{e'}}(q_0,L_0)$ for each $\ut \in\Ga_{q_0,L_0,Q}$ and $e'\in\HH_{d,k}$, with the  local structure described above, such that
(\ref{5.47}) holds
uniformly for all $t\notin T'_\VE$, $e\in\HH_{d,k}^{\un}$, and $1\leq m\leq M$, for some $1\leq j<\bar{J}_k(\eps,M)=O(M\eps^{-2^{k+3}})$, where $T'_{\eps}\subs \Ga_{q_0,L_0,Q}$ with $|T'_{\eps}|\leq \eps |\Ga_{q_0,L_0,Q}|$.

Arguing as in the proof of Proposition \ref{Lem4.4} we can  conclude from this that for each $j'\leq l<J'$ we have
\eq\label{4.26}
\NN^d_{\la\De^0,q_l,Q_{\us}(q_j',L'_j)} (f_{e,\us}^m;\,e\in\HH_{d,k}^{\un})
=\sum_{\ur} \al_{\us,\ur,m}\  \NN^d_{\la\De^0,q_l,Q_{\us}(q_j',L'_j)}\,(g_{\f,\us}^{\ur};\,\f\in\HH_{d,k-1}^{\un})+O(\VE)
\ee
and
\eq\label{4.27}
\MM^d_{\la,q_l,Q_{\us}(q_j',L'_j)} (f_{e,\us}^m;\,e\in\HH_{d,k}^{\un})
=\sum_{\ur} \al_{\ur,\us,m}\  \MM^d_{\la,q_l,Q_{\us}(q_j',L'_j)}\,(g_{\f,\us}^{\ur};\,\f\in\HH_{d,k-1}^{\un})+O(\VE)
\ee
provided $ (\eps^{-2^k})^{10} L'_{j+1}\ll \lm$ with $\lm\in q_l\sqrt{\N}$, where each $|\alpha_{\us,r_e}|\leq 1$ and number of index vectors $\ur=(r_e)_{e\in\HH_{d,k}^{\un}}$ is $R^D$ with $D:=|\HH_{d,k}^{\un}|$ and hence $R^D\leq M$ if $C_1\gg1$.

By induction, we apply Proposition \ref{Lem5.10} to the sequence of scales $L'_j=L_{j'}\geq L_{j'+1}\geq\cdots \geq L_{J'}=L'_{j+1}$ with $\eps_1>0$ and for $q_l:=q'_j\,q_k(\eps)^{l-j'}=q_{j'}\,q_{k-1}(\eps_1)^{l-j'}$ where $j'\leq l\leq J'$ with respect to the family of functions $\ g_{\us,\f}^{\ur}: Q_{\us_{\f}}(q'_j,L'_j)\to [-1,1]\,$. This is possible as $J'-j'\gg J_{k-1}(\eps_1,R^D)$ and our sequence of scales is $(\eps_1,q_{J'})$-admissible. Thus there exists an index $j'\leq l<J'$ such that  for all $\lm\in q_l\sqrt{\N}$ with $L_{l+1}\leq \la\leq L_l$ we have
\eq\label{5.65}
\NN^d_{\la\De^0,q_l,Q_{\us}(q_j',L'_j)}\,(g_{\f,\us}^{\ur};\,\f\in\HH_{d,k-1}^{\un}) =
\MM^d_{\la,q_l,Q_{\us}(q_j',L'_j)}\,(g_{\f,\us}^{\ur};\,\f\in\HH_{d,k-1}^{\un}) + O(\eps_1)
\ee 
uniformly in $\ur$ for $\us\notin S_{\eps_1}$, where $S_{\eps_1}\subs \Ga_{q_j',L'_j,Q}$ is a set of size $|S_{\eps_1}|\leq \eps_1 |\Ga_{q_j',L'_j,Q}|$. 

The remainder of the proof follows as just as it did for Proposition \ref{Lem4.4}.
\end{proof}




\section{Appendix: A short direct proof of Part (i) of Theorem B$^\prime$}\label{Appendix}

We conclude by providing a short direct proof of Part (i) of Theorem B$^\prime$, namely the following
 \begin{thm}[Magyar \cite{Magy09}]\label{9}
Let $0<\delta\leq1$ and $\Delta\subseteq\Z^{2k+3}$ be a non-degenerate simplex of $k$ points.

If $S\subseteq\Z^{2k+3}$ has upper Banach density at least $\delta$, 
then
there exists an integer $q_0=q_0(\delta)$ and $\lm_0=\lm_0(S, \Delta)$ such that $S$ contains an isometric copy of $q_0\lm\Delta$ for all $\lm\in\sqrt{\N}$ with $\lambda\geq \lm_0$.
\end{thm}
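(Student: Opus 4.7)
The plan is a density increment argument driven by a Fourier-theoretic dichotomy, in the spirit of the new proof of Theorem A$^\prime$ given in \cite{LM19}. Fix $\eps=c\de^k$ for a suitable small $c>0$, set $Q:=q_1(\eps)=\lcm\{1\le q\le C\eps^{-10}\}$ (the integer introduced in Section \ref{BaseZ}), and say that a set $T\subseteq\Z^{2k+3}$ with $\D^*(T)\ge\de'$ is \emph{uniform modulo $Q$ at density $\de'$} if, for every residue $a\in(\Z/Q)^{2k+3}$, the relative upper Banach density of $T$ on the coset $a+Q\Z^{2k+3}$ is at most $\de'+\eps$. My dichotomy is: either $S$ is uniform modulo $Q$ at density $\de$, or there is a residue $a$ on which $S$ has relative upper Banach density at least $\de+\eps$, yielding a density increment.

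In the uniform case I would argue as follows. Pick a cube $Q_N\subseteq\Z^{2k+3}$ with $|S\cap Q_N|\ge(\de-\eps)|Q_N|$ (available by the definition of $\D^*(S)\ge\de$, with $N$ large depending on $S$), and expand the count $\NN^1_{\la\Delta^0,1,Q_N}(1_S,\ldots,1_S)$ of isometric copies of $\la\Delta^0$ in $S\cap Q_N$ by Plancherel on $\mathbb{T}^{2k+3}$ into an integral of $\prod_i \widehat{1_{S\cap Q_N}}$ weighted by the Fourier transform of the spherical measure $\si_{\la\Delta^0,1}$. The concentration estimates of \cite[Propositions 4.2 and 4.4]{Magy09} (precisely as used in the proof of Lemma \ref{Lem5.4} via estimate \eqref{5.26}) show that this weight is $O(\eps)$ outside an $(\eps^{-5}\la^{-1})$-neighborhood of the rationals $\{l/Q:\ l\in\Z^{2k+3}\}$. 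Meanwhile, uniformity modulo $Q$ says that $\widehat{1_{S\cap Q_N}}(l/Q+\eta)$ equals $\de|Q_N|+O(\eps|Q_N|)$ when $l=0$ and $|\eta|\ll N^{-1}$, and is $O(\eps|Q_N|)$ when $l\ne 0$ and $|\eta|\ll N^{-1}$. A standard Fourier manipulation combining these two facts yields
\[
\NN^1_{\la\Delta^0,1,Q_N}(1_S,\ldots,1_S)\ \ge\ \de^k-O(\eps)
\]
for all $\la\in\sqrt{\N}$ with $\la_0(\de,\Delta)\le\la\ll\eps N$; with our choice $\eps=c\de^k$ this count is strictly positive, producing a copy of $\la\Delta$ in $S\cap Q_N$. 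Letting $N\to\infty$ for each fixed $\la$ then gives the conclusion of the theorem with $q_0=1$.

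If instead $S$ is not uniform modulo $Q$, pick a residue $a$ on which $S$ has relative upper density at least $\de+\eps$ and form $S':=\{(x-a)/Q:\ x\in S\cap(a+Q\Z^{2k+3})\}\subseteq\Z^{2k+3}$, which has $\D^*(S')\ge\de+\eps$. Iterate the dichotomy on $S'$; since densities are bounded by $1$, the process terminates in at most $M=O(\eps^{-1})=O_\de(1)$ rounds at a rescaled set $S^{(m)}$ of density $\ge\de$ that is uniform modulo $Q$, to which the uniform case applies. Unwinding the $m$ successive dilations by $Q$, an isometric copy of $\la\Delta^0$ in $S^{(m)}$ lifts to an isometric copy of $q_0\la\Delta^0$ in $S$, with $q_0:=Q^m\le q_1(\eps)^{O(\eps^{-1})}=\exp(O(\de^{-O(1)}))$ depending only on $\de$. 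The hard part is the uniform case, whose Fourier-analytic core is exactly the concentration estimate \eqref{5.26} isolated in the proof of Lemma \ref{Lem5.4}; given that ingredient, the density-increment bookkeeping is routine.
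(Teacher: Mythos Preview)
Your overall architecture---a density increment on cosets of $(q_\eps\Z)^{2k+3}$ with $q_\eps=q_1(\eps)$, reducing to a ``uniform'' case---is exactly the paper's. The non-uniform step is fine (the paper uses a multiplicative increment $(1+\eps^2)\de$ rather than your additive $\de+\eps$, but both terminate in $O_\de(1)$ rounds and yield $q_0=q_\eps^{O_\de(1)}$).

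The gap is in your uniform case. Your equidistribution hypothesis controls $\widehat{1_{S\cap Q_N}}(l/Q+\eta)$ only for $|\eta|\ll N^{-1}$, whereas the concentration estimate \eqref{5.26} localizes the weight $H_\la$ to boxes of width $\sim(\eps^4\la)^{-1}$ around the rationals $l/Q$. Since $\la\ll N$, there is an intermediate annulus $N^{-1}\ll|\eta|\ll(\eps^4\la)^{-1}$ on which neither your Fourier control nor the decay of $H_\la$ applies, so the ``standard Fourier manipulation'' you invoke cannot close. Concretely: bounded relative upper Banach density on each coset gives (for $N$ large) pointwise smallness of $\widehat{1_{S\cap Q_N}}(l/Q)$ for $l\ne 0$, but says nothing about $\widehat{1_{S\cap Q_N}}$ at scale $1/\la$ near those rationals.

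The paper fixes this by inserting an intermediate scale $L=L(S,\eps)$ and working with the $U^1_{q_\eps,L}$-norm rather than pointwise Fourier values. Lemma~\ref{Lem5.1} (quoted from \cite{LM19}) is precisely the missing bridge: it asserts that $\eps$-uniform distribution modulo $q_\eps$ yields, on suitably chosen large cubes $Q$, the bound $\|1_S-\de 1_Q\|_{U^1_{q_\eps,L}(Q)}=O(\eps)$. This $L^2$-averaged control at scale $L$ (with $L\ll\eps^{10}\la$) is exactly what feeds into the generalized von Neumann inequality (Lemma~\ref{Lem5.2}), after which the multilinear expansion $1_S=\de 1_Q+g$ gives $\NN^1_{\la\De^0,Q}(1_S,\dots,1_S)\ge\de^k-O(\eps)$. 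Once you replace your direct Fourier sketch with an appeal to Lemma~\ref{Lem5.1} and Lemma~\ref{Lem5.2}, your proof coincides with the paper's.
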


For any $\VE>0$ we define
\[q_\VE:=\lcm\{1\leq q\leq C\VE^{-10}\}\]
with $C>0$ a (sufficiently) large absolute constant.
Following \cite{LM19} we further define $S\subseteq\Z^n$  to be \emph{$\VE$-uniformly distributed (modulo $q_\VE$)}
 if its \emph{relative} upper Banach density on any residue class modulo $q_\VE$ never exceeds $(1+\VE^2)$ times its density on $\Z^n$, namely if
\[\D^*(S\,|\,s+(q_\VE \Z)^d)\leq(1+\VE^2)\,\D^*(S)\]
for all $s\in\{1,\dots,q_\VE\}^d$. 
It turns out that this notion is closely related to the $U^1_{q,L}(Q)$-norm introduced in Section \ref{BaseZ}. Recall that for any cube $Q\subseteq\Z^n$ and function $f:Q\to[-1,1]$ we define
\eq
\|f\|_{U^1_{q,L}(Q)}:=\Bigl(\frac{1}{|Q|}\sum_{t\in Q}|f*\chi_{q,L}(t)|^2\Bigr)^{1/2}
\ee
with $\chi_{q,L}$ denoting the  normalized characteristic function of the cubes $Q(q,L):=[-\frac{L}{2},\frac{L}{2}]^n\cap (q \Z)^n$.
Note that the $U^1_{q,L}(Q)$-norm measures the mean square oscillation of a function with respect to cubic grids of size $L$ and gap $q$.

The following observation from \cite{LM19} (specifically Lemmas 1 and 2) is  key to our short proof of Theorem \ref{9}.

\begin{lem}\label{Lem5.1} Let $\eps >0$. If $S\subs\Z^n$ be $\VE$-uniformly distributed with $\delta:=\delta^*(S)>0$, then there exists an integer $L=L(S,\VE)>0$ and cubes $Q$ of arbitrarily large side length $l(Q)$ with $l(Q)\gg\VE^{-4}L$ such that
 \[\|1_S-\delta1_Q\|_{U^1_{q_\VE,L}(Q)}=O(\VE).\]  
 \end{lem}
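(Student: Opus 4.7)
The plan is to interpret $f \ast \chi_{q_\VE, L}(t)$, for $f := 1_S - \delta 1_Q$, as the density of $S$ on the shifted sparse cube $t + Q(q_\VE, L)$ minus $\delta$ (in the interior of $Q$), and then to control the $L^2$ mean of this function via a one-sided pointwise bound coming from the $\VE$-uniform distribution hypothesis. No iteration or density increment is needed: the estimate comes down to a single ``pointwise-upper-bound plus $L^1$-mean $\Rightarrow$ $L^2$-bound'' argument.

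First I choose parameters. By the definition of upper Banach density, there exist cubes $Q$ of arbitrarily large side length with $\bigl||S \cap Q|/|Q|-\delta\bigr|=O(\VE^2)$; I fix one with $l(Q) \gg \VE^{-4} L$, where $L$ is chosen next. The $\VE$-uniform distribution hypothesis says that on each of the (finitely many) residue classes $s + (q_\VE\Z)^n$, the relative upper Banach density of $S$ is at most $(1+\VE^2)\delta$. Hence, for each class $s$ there is a threshold $L_s$ such that every cube of side $\geq L_s$ inside $s + (q_\VE\Z)^n$ has density of $S$ bounded by $(1+\VE^2)\delta + O(\VE^2)$. Setting $L = L(S,\VE) := \max_s L_s$ produces a uniform statement: the density of $S$ on $t + Q(q_\VE, L)$ is at most $(1+\VE^2)\delta + O(\VE^2)$ for every $t\in\Z^n$.

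Second, set $g(t) := f \ast \chi_{q_\VE, L}(t)$. For $t$ in the $L$-interior of $Q$ one has $1_Q \ast \chi_{q_\VE, L}(t) = 1$, so $g(t)$ equals the density of $S$ on $t + Q(q_\VE, L)$ minus $\delta$. By the choice of $L$ this gives the pointwise upper bound $g(t) \leq \VE^2 \delta + O(\VE^2) = O(\VE^2)$ on the interior. A Fubini computation yields $\E_{t \in Q} g(t) = |S \cap Q|/|Q| - \delta + O(L/l(Q)) = O(\VE^2)$, using the choice of $Q$ and $l(Q) \gg \VE^{-4} L$. The $L$-boundary region of $Q$ has relative measure $O(L/l(Q)) = O(\VE^4)$ and $|g|\leq 1$ there, so it contributes only $O(\VE^4)$ to $\E g^2$. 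Decomposing $g = g_+ - g_-$ with $g_+, g_- \geq 0$, the strong pointwise bound $g_+ \leq O(\VE^2)$ on the interior yields $\E g_+^2 \leq (\sup g_+)\cdot \E g_+ = O(\VE^4)$. Since $g_- \leq \delta$ and $\E g_- = \E g_+ - \E g = O(\VE^2)$, one gets $\E g_-^2 \leq \delta\cdot \E g_- = O(\VE^2)$. Summing, $\|f\|_{U^1_{q_\VE, L}(Q)}^2 = \E g^2 = O(\VE^2)$, i.e.\ $\|f\|_{U^1_{q_\VE, L}(Q)} = O(\VE)$.

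The main obstacle is the passage from the qualitative $\VE$-uniform distribution hypothesis (a bound on a lim-sup over very large cubes in each residue class) to the uniform bound on densities of cubes of a \emph{specific} size $L$. This is exactly where the non-effective dependence $L = L(S,\VE)$ enters: the rate at which the density of $S$ on a residue class approaches its upper Banach density is not quantitative without further information on $S$. Note also that the asymmetry $g_+ \ll \VE^2$ versus $g_- \leq \delta$ is essential; there is no matching pointwise lower bound on $g$, and we exploit the fact that the negative part of $g$ enters $\E g^2$ only through its $L^1$ mean.
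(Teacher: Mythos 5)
Your proof is correct. Note that the paper does not actually prove this lemma itself --- it is quoted from \cite{LM19} (Lemmas 1 and 2 there) --- and your argument is essentially the one used in that source: fix $L$ uniformly over the $q_\VE^n$ residue classes so that every grid $t+Q(q_\VE,L)$ carries density at most $(1+O(\VE^2))\delta$, pick $Q$ nearly attaining the Banach density, and then combine the one-sided pointwise bound on $g_+=\bigl((1_S-\delta 1_Q)*\chi_{q_\VE,L}\bigr)_+$ with the control of the mean $\E_{t\in Q}\,g(t)$ to bound the $L^2$ average, the boundary strip contributing only $O(\VE^4)$ since $l(Q)\gg \VE^{-4}L$.
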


 Let $\De^0=\{v_1=0,v_2,\dots,v_{k}\}$ be a fixed non-degenerate simplex of $k$ points in $\Z^n$ with $n=2k+3$ and define $t_{ij}:=v_i\cdot v_j$ for $2\leq
i,j\leq k$. We now define a function which counts isometric copies of $\lm\De^0$.

Recall, see \cite{Magy09}, that a simplex $\De=\{m_1=0,\dots,m_{k}\}\subs \Z^n$ is isometric to $\la\De^0$ if and only if
$m_i\cdot m_j =\la^2 t_{ij}$ for all $2\leq i,j\leq k$.
For any  $\lm\in \sqrt{\N}$ we define $S_{\la\De^0}(m_2,\dots,m_{k}):\Z^{n(k-1)}\to\{0,1\}$ be the function whose value is 1 if $m_i\cdot m_j=\la^2 t_{ij}$ 
for all $2\leq i,j\leq k$   and is equal to 0 otherwise. It is a well-known fact in number theory, see \cite{Kitaoka} or \cite{Magy09}, that for $n\geq2k+1$ we have that
\[\sum_{m_2,\ldots,m_{k}} S_{\la\De^0} (m_2,\ldots,m_{k}) =\rho(\De^0)\, \la^{(n-k)(k-1)}(1+O(\la^{-\tau}))\]
for some absolute constant $\tau>0$ and constant $\rho(\De^0)>0$, the so-called singular series, which can be interpreted as the product of the densities of the solutions of the above system of equations among the $p$-adics and among the reals.
Thus if we define \[\si_{\la\De^0}:=\rho(\De^0)^{-1}\la^{-(n-k)(k-1)}S_{\lm\De^0}\] then $\si_{\la\De^0}$ is normalized in so much that \[\sum_{m_2,\ldots,m_{k}}\si_{\la\De^0}(m_2,\ldots,m_{k})= 1+O(\la^{-\tau})\] for some absolute constant $\tau>0$.

Let $Q\subs\Z^n$ be a fixed cube and let $l(Q)$ denotes its side length.
For any family of functions \[f_1,\ldots,f_{k}:Q\to [-1,1]\] and $0<\lm\ll l(Q)$ we define 
\eq
\NN^1_{\la\De^0,Q}(f_1,\dots,f_{k}) := \E_{m_1\in Q} \sum_{m_2,\ldots,m_{k}} f_1(m_1)\ldots f_{k}(m_{k})\,\si_{\la\De^0} (m_2-m_1,\ldots,m_{k}-m_1).
\ee

It is clear that if $f_1=\cdots =f_k=1_S$ restricted to $Q$, then the above expression is a normalized count of the isometric copies of $\lm\De^0$ in $S\cap Q$. Thus, Theorem \ref{9} will follow from Lemma \ref{Lem5.1} and the following special case (with $q=1$) of Lemma \ref{Lem5.4}.



\begin{lem}[A Generalized von Neumann inequality]\label{Lem5.2}

Let $0<\eps\ll1$.

If $\lm\in\sqrt{\N}$ with $\lm\ll l(Q)$ and $1\ll L\ll \VE^{10}\lm$ then for any collection of functions $f_1,\dots,f_k:Q\to [-1,1]$ we have
\eq\label{5.10}
|\NN^1_{\la\De^0,Q}(f_1,\dots,f_{k})|\leq
\min_{1\leq j\leq k} \|f_j\|_{U^1_{q_\VE,L}(Q)}+ O(\eps).\ee
\end{lem}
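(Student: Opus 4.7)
The plan is to derive Lemma \ref{Lem5.2} essentially as the specialization $q=1$, $q'=q_\eps$ of Lemma \ref{Lem5.4} already established earlier in the paper, following Bourgain's Fourier-analytic strategy adapted to the discrete setting. The heart of the argument is a concentration estimate for the Fourier transform of a certain exponential sum encoding the arithmetic geometry of the simplex $\la\De^0$.

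First I would handle the case $k \geq 3$. After the change of variables $m_i \mapsto m_i - m_1$ for $2 \leq i \leq k$, the measure factors as
\[\si_{\la\De^0}(m_2,\ldots,m_k) = \si_{\la\De^{0\prime}}(m_2,\ldots,m_{k-1})\,\si_\la^{m_2,\ldots,m_{k-1}}(m_k),\]
where $\De^{0\prime} = \{v_1,\ldots,v_{k-1}\}$ and $\si_\la^{m_2,\ldots,m_{k-1}}(m)$ is the normalized indicator of those $m \in \Z^n$ satisfying $m \cdot m_i = \la^2 t_{ik}$ for $2 \leq i \leq k-1$. Isolating $f_k$ via Cauchy-Schwarz in the $m_1$-average, using $|f_i| \leq 1$ for $i \neq k$, and applying Plancherel yields
\[|\NN^1_{\la\De^0,Q}(f_1,\ldots,f_k)|^2 \leq |Q|^{-1} \int_{\mathbb{T}^n} |\wh{f}_k(\xi)|^2 H_\la(\xi)\,d\xi,\]
where $H_\la(\xi) := \sum_{m_2,\ldots,m_{k-1}} \si_{\la\De^{0\prime}}(m_2,\ldots,m_{k-1})\,|\wh{\si_\la^{m_2,\ldots,m_{k-1}}}(\xi)|^2$.

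The crucial input, taken from Propositions 4.2 and 4.4 of \cite{Magy09}, is the concentration estimate $H_\la(\xi) = O(\eps)$ unless $\xi$ lies within $O(1/(\eps^4 \la))$ of some rational $l/q_\eps$ with $l \in \Z^n$. Combining this with the elementary bound $0 \leq 1 - \wh{\chi}_{q_\eps,L}(\xi)^2 \leq CL\,|\xi - l/q_\eps|$ near such rationals (and the fact that $\wh{\chi}_{q_\eps,L}(l/q_\eps) = 1$ for all $l$), I would obtain $H_\la(\xi)(1 - \wh{\chi}_{q_\eps,L}(\xi)^2) = O(\eps)$ uniformly in $\xi$, provided $L \ll \eps^{10}\la$ (the extra room comfortably absorbs various $\eps$- and $q_\eps$-factors). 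Splitting the Plancherel integral into the parts weighted by $\wh{\chi}_{q_\eps,L}(\xi)^2$ and by $1 - \wh{\chi}_{q_\eps,L}(\xi)^2$ then gives $|\NN^1_{\la\De^0,Q}(f_1,\ldots,f_k)|^2 \leq \|f_k\|_{U^1_{q_\eps,L}(Q)}^2 + O(\eps)$, and re-indexing handles any other $f_j$ in place of $f_k$.

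The case $k = 2$ requires a small modification adapting the proof of Lemma 3 in \cite{LM19}, which directly exploits Fourier decay and the same rational-concentration phenomenon for the normalized surface measure on discrete spheres. The main obstacle throughout is precisely the number-theoretic concentration estimate for $H_\la$; this is the deep input from \cite{Magy09}, and is the reason why the modulus $q_\eps$ growing with $\eps^{-1}$ is unavoidable in the discrete setting.
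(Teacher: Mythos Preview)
Your proposal is correct and follows essentially the same approach as the paper: the paper explicitly notes that Lemma \ref{Lem5.2} is the special case $q=1$, $q'=q_\eps$ of Lemma \ref{Lem5.4}, whose proof proceeds exactly via the Cauchy--Schwarz/Plancherel reduction to $H_\la(\xi)$, the rational-concentration estimate from Propositions 4.2 and 4.4 of \cite{Magy09}, and the splitting with $\wh{\chi}_{q_\eps,L}(\xi)^2$, with the $k=2$ case deferred to \cite{LM19}. One minor slip: your definition of $\si_\la^{m_2,\ldots,m_{k-1}}(m)$ should also include the diagonal constraint $m\cdot m=\la^2 t_{kk}$, not only the cross conditions $m\cdot m_i=\la^2 t_{ik}$ for $2\le i\le k-1$.
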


This compares with the purely number theoretic fact that the number of simplices $\De=\{v_1=0,v_2,\ldots,v_k\}\subs \Z^n$ isometric to $\la\De^0$ is asymptotic to $\rho(\De^0)\,\la^{(n-k)(k-1)}$. Thus, under the same conditions as in Lemma \ref{Lem5.2}, we have 
\eq\label{5.11}
\NN^1_{\la\De^0,Q}(1_Q,\dots,1_Q)=  1+O(\la^{-\tau})+O(\eps)\ee
provided one also has $\lm\ll \VE l(Q)$.

\begin{proof}[Proof of Theorem \ref{9}] 
Let $0<\VE\ll \delta^k$
and  $S\subs\Z^n$ be a set of upper Banach density $\de$. 

We assume first that $S$ is $\VE$-uniformly distributed.
Select a scale $L=L(\eps,S)$ and a sufficiently large cube $Q$ so that the conclusion of Lemma \ref{Lem5.1} holds. For a given $\la\in\sqrt{\N}$ with $\lm\ll \VE l(Q)$ and $L\ll \eps^{10}\la$ write $1_S=\de1_Q+g$ and substitute this decomposition into the multi-linear expression $\NN^1_{\la\De^0,Q}(1_S,\dots,1_S)$. Then by Lemma \ref{Lem5.2} and \eqref{5.10}-\eqref{5.11}, we have that
\eq\label{5.12}
\NN^1_{\la\De^0,Q}(1_S,\dots,1_S)\geq\de^k-O(\eps)\ee
and we can conclude that $S$ must contain an isometric copy of $\lm \De^0$.

If $S$ is not $\eps$-uniformly distributed, then its upper Banach density is increased to at least $\de_1:=(1+\eps^2)\de$ when restricted to a residue class $s+(q_\VE\Z)^n$. Identify $s+(q_\VE\Z)^n$ with $\Z^n$ and simultaneously the set $S|_{s+(q_\VE\Z)^n}$ with a set $S_1\subs\Z^n$, via the map $y\to q_\VE^{-1}(y-s)$. Note that if $S_1$ is $\eps$-uniformly distributed then it contains an isometric copy of $\la\De^0$ for all sufficiently large $\la\in\sqrt{\N}$ and hence $S$ contains an isometric copy of $q_\VE\la\De^0$. 

Repeating the above procedure one arrives to a set $S_j=q_\VE^{-j}(S-s_j)\subs\Z^n$ for some $s_j\in\Z^n$ in $j=O(\log\,\eps^{-1})$ steps which contains an isometric copy of $\la\De^0$ for all sufficiently large $\la\in\sqrt{\N}$. \end{proof}




\end{document}